\documentclass[a4paper,11pt,reqno]{article}

\usepackage[utf8]{inputenc}
\usepackage[T1]{fontenc}
\usepackage{lmodern}
\usepackage[english]{babel}
\usepackage{microtype}

\usepackage{amsmath,amssymb,amsfonts,amsthm,upgreek}
\usepackage{mathtools,accents}
\usepackage{mathrsfs}
\usepackage{aliascnt}
\usepackage{braket}
\usepackage{bm}
\usepackage{esint}

\mathtoolsset{showonlyrefs} 

\usepackage[a4paper,margin=3cm]{geometry}
\usepackage[citecolor=blue,colorlinks]{hyperref}

\usepackage{enumerate}
\usepackage{xcolor}


\makeatletter
\g@addto@macro\@floatboxreset\centering
\makeatother


\makeatletter
\def\newaliasedtheorem#1[#2]#3{
  \newaliascnt{#1@alt}{#2}
  \newtheorem{#1}[#1@alt]{#3}
  \expandafter\newcommand\csname #1@altname\endcsname{#3}
}
\makeatother

\numberwithin{equation}{section}

\newtheoremstyle{slanted}{\topsep}{\topsep}{\slshape}{}{\bfseries}{.}{.5em}{}

\theoremstyle{plain}
\newtheorem{theorem}{Theorem}[section]
\newaliasedtheorem{proposition}[theorem]{Proposition}
\newaliasedtheorem{lemma}[theorem]{Lemma}
\newaliasedtheorem{claim}[theorem]{Claim}
\newaliasedtheorem{corollary}[theorem]{Corollary}
\newaliasedtheorem{counterexample}[theorem]{Counterexample}

\theoremstyle{definition}
\newaliasedtheorem{definition}[theorem]{Definition}
\newaliasedtheorem{question}[theorem]{Question}
\newaliasedtheorem{example}[theorem]{Example}
\newaliasedtheorem{conjecture}[theorem]{Conjecture}

\theoremstyle{remark}
\newaliasedtheorem{remark}[theorem]{Remark}

\usepackage{comment}

\newcommand{\setC}{\mathbb{C}}
\newcommand{\setN}{\mathbb{N}}

\newcommand{\setR}{\mathbb{R}}

\newcommand{\bP}{\mathbb{P}}
\newcommand{\setB}{\mathbb{B}}
\newcommand{\cE}{\mathcal{E}}
\newcommand{\cV}{\mathcal{V}}
\newcommand{\ucV}{\underline{\mathcal{V}}}
\newcommand{\cW}{\underline{\mathcal{W}}}
\newcommand{\cD}{\mathcal{D}}

\newcommand{\eps}{\upepsilon}

\let\phi\varphi




\DeclareMathOperator{\Rad}{\mathrm{Rad}}

\newcommand{\di}{\mathop{}\!\mathrm{d}}

\newcommand{\bs}{{\rm bs}}
\newcommand{\loc}{{\rm loc}}

\DeclareMathOperator{\supp}{supp}

\newcommand{\scal}[2]{\ensuremath{\langle #1 , #2 \rangle}} 

\DeclareMathOperator{\Lip}{Lip}


\newcommand{\leb}{\mathscr{L}}

\newcommand{\measrestr}{%
  \,\raisebox{-.127ex}{\reflectbox{\rotatebox[origin=br]{-90}{$\lnot$}}}\,%
}


\newcommand{\dist}{\mathsf{d}}


\DeclareMathOperator{\RCD}{RCD}
\DeclareMathOperator{\Span}{Span}
\DeclareMathOperator{\vol}{\mathrm{vol}}

\DeclareMathOperator{\Ric}{Ric}

\newfont{\tmpf}{cmsy10 scaled 2500}

\DeclareMathOperator{\uX}{\underline{X}}
\DeclareMathOperator{\uB}{\underline{B}}
\DeclareMathOperator{\ub}{\underline{b}}
\DeclareMathOperator{\usigma}{\underline{\sigma}}
\DeclareMathOperator{\udist}{\underline{\dist}}
\DeclareMathOperator{\uo}{\underline{o}}
\DeclareMathOperator{\ux}{\underline{x}}
\DeclareMathOperator{\uz}{\underline{z}}
\DeclareMathOperator{\uc}{\underline{c}}
\DeclareMathOperator{\uL}{\underline{L}}
\DeclareMathOperator{\uphi}{\underline{\phi}}
\DeclareMathOperator{\uD}{\underline{D}}
\DeclareMathOperator{\uH}{\underline{H}}
\DeclareMathOperator{\uS}{\underline{S}}
\DeclareMathOperator{\umu}{\underline{\mu}}
\DeclareMathOperator{\unu}{\underline{\nu}}
\DeclareMathOperator{\ubeta}{\underline{\beta}}
\DeclareMathOperator{\ugamma}{\underline{\gamma}}
\newcommand{\uy}{\underline{y}}
\newcommand{\up}{\underline{p}}
\newcommand{\uh}{\underline{h}}

\DeclareMathOperator*{\essssup}{ess\,sup}
\DeclareMathOperator*{\esssinf}{ess\,inf}

\title{Main file}

\begin{document}

\title{A rigidity result for metric measure spaces with Euclidean heat kernel}
\author{Gilles Carron
\thanks{Université de Nantes, \url{Gilles.Carron@univ-nantes.fr}} \and
David Tewodrose
\thanks{CY Cergy Paris University,  \url{david.tewodrose@u-cergy.fr}}} \maketitle

\begin{abstract}
We prove that a metric measure space equipped with a Dirichlet form admitting an Euclidean heat kernel is necessarily isometric to the Euclidean space. This helps us providing an alternative proof of Colding's celebrated almost rigidity volume theorem via a quantitative version of our main result. We also discuss the case of a metric measure space equipped with a Dirichlet form admitting a spherical heat kernel.
\end{abstract}

\tableofcontents

\section{Introduction}

In $\setR^n$, the classical Dirichlet energy is the functional defined on $H^1$ by
$$E(u):= \int_{\setR^n} |\nabla u|^2$$ for any $u \in H^1$. As well-known, it is related to the Laplace operator $\Delta:=\sum_{k=1}^{n}\partial_{kk}$ by the integration by parts formula, namely
$$
E(u,v)=-\int_{\setR^n} (\Delta u) v
$$
for any $u,v \in H^1$ such that  $\nabla u\in H^1$, where $E(u,v):=\int_{\setR^n} \langle \nabla u, \nabla v\rangle$. Standard tools from spectral theory show that $\Delta$ generates a semi-group of operators $(e^{t\Delta})_{t>0}$ sending any $u_0 \in L^2$ to the family $(u_t)_{t>0}\subset H^1$ satisfying the heat equation $\partial_t u_t = \Delta u_t$ with $u_0$ as an initial condition. The semi-group $(e^{t\Delta})_{t>0}$ admits a smooth kernel $p$, so that for any $f \in L^2$, $x \in \setR^n$ and $t>0$,
$$
e^{t\Delta} f(x) = \int_{\setR^n} p(x,y,t)f(y) \di y.
$$
The explicit expression of this heat kernel is well-known: for any $x,y \in \setR^n$ and $t>0$,
$$
p(x,y,t) = \frac{1}{(4 \pi t)^{n/2}} e^{-\frac{|x-y|^2}{4t}}.
$$

In the more general context of a measured space $(X,\mu)$, the Dirichlet energy possesses abstract analogues called Dirichlet forms. Associated with any such a form $\cE$ is a self-adjoint operator $L$ whose properties are similar to the Laplace operator; in particular, the spectral theorem applies to it and provides a semi-group $(P_t)_{t>0}$ delivering the solution of the equation $\partial_t u_t = L u_t$ starting from any square integrable initial condition. Under suitable assumptions, this semi-group admits a kernel. When the space $X$ is equipped with a metric $\dist$ generating the $\sigma$-algebra on which $\mu$ is defined, this kernel is often compared with the Gaussian term
$$
\frac{1}{(4 \pi t)^{n/2}} e^{-\frac{\dist^2(x,y)}{4t}}
$$
through upper and lower estimates: see \cite{Sturm2}, for instance. From this perspective, a natural question arises: what happens when the kernel of $\cE$ coincides with this Gaussian term? In this article, we answer this question by showing that the unique metric measure space admitting such a kernel is the Euclidean space. The precise statement of our main result is the following:

\begin{theorem}\label{th:main}
Let $(X,\dist)$ be a complete metric space equipped with a non-negative regular Borel measure $\mu$.  Assume that there exists a symmetric Dirichlet form $\cE$ on $(X,\mu)$ admitting a heat kernel $p$ such that for some $\alpha>0$,
\begin{equation}\label{eq:heatkernel}
p(x,y,t) = \frac{1}{(4 \pi t)^{\alpha/2}} e^{-\frac{\dist^2(x,y)}{4t}}
\end{equation}
holds for any $x,y \in X$ and any $t>0$. Then $\alpha$ is an integer, $(X,\dist)$ is isometric to $(\setR^\alpha,\dist_e)$ where $\dist_e$ stands for the classical Euclidean distance, and $\mu$ is the $\alpha$-dimensional Hausdorff measure. 
\end{theorem}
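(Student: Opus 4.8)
The starting point is to exploit the semigroup property of the kernel. Since $\cE$ is symmetric, the Chapman–Kolmogorov identity with coinciding endpoints reads $p(x,x,2t)=\int_X p(x,y,t)^2\di\mu(y)$ for every $x\in X$ and $t>0$; substituting \eqref{eq:heatkernel} and simplifying yields the clean identity
\begin{equation}\label{eq:volident}
\int_X e^{-\dist^2(x,y)/(2t)}\di\mu(y)=(2\pi t)^{\alpha/2},\qquad x\in X,\ t>0.
\end{equation}
Rewriting the left-hand side via the layer-cake formula as a Laplace transform (in the variable $\dist^2$) of $r\mapsto\mu(B(x,r))$ and inverting it, one obtains the \emph{exact} volume rigidity
\begin{equation}\label{eq:ball}
\mu(B(x,r))=\omega_\alpha\, r^{\alpha},\qquad \omega_\alpha:=\frac{\pi^{\alpha/2}}{\Gamma(\alpha/2+1)},
\end{equation}
for every $x\in X$ and $r>0$. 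In particular $\mu$ is Ahlfors $\alpha$-regular, hence doubling; $\alpha$ is the Hausdorff dimension of $(X,\dist)$; and $\mu$ is a constant multiple of $\haus^\alpha$, the exact constant being pinned down only at the very end.

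Next I would extract the metric/analytic structure forced by the Gaussian decay of $p$. The exponential off-diagonal decay $e^{-\dist^2/4t}$ excludes any jump or killing part, so $\cE$ is a strongly local Dirichlet form admitting a carré du champ, which I denote $\Gamma$; moreover the \emph{sharp} exponent $\dist^2/4t$ forces $\dist$ to coincide with the intrinsic metric $\dist_\cE$ of $\cE$, via the theory of intrinsic metrics (Sturm, Stollmann–Lenz–Wingert) and the heat-kernel characterizations of Grigor'yan–Saloff-Coste–Sturm. Consequently $(X,\dist)$ is a length space; being complete and doubling it is proper, hence geodesic by Hopf–Rinow, and it carries a scale-invariant $L^2$-Poincaré inequality, so $(X,\dist,\mu)$ is a geodesic PI space.

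I would then read off the genuinely \emph{Euclidean} identities encoded in \eqref{eq:heatkernel}. Applying the generator $L$ in the $x$-variable to $p(\cdot,x_0,t)$, using the heat equation $L_x p=\partial_t p$ and the chain rule for $\Gamma$ on a strongly local form, one checks that \eqref{eq:heatkernel} is equivalent to the pair of pointwise identities, valid for every fixed $x_0\in X$,
\begin{equation}\label{eq:Lid}
\Gamma\big(\dist(\cdot,x_0)\big)=1\quad\mu\text{-a.e.},\qquad L\big(\dist^2(\cdot,x_0)\big)=2\alpha .
\end{equation}
The first is the eikonal equation (already implied by $\dist=\dist_\cE$); the second says that $b_{x_0}:=\tfrac14\dist^2(\cdot,x_0)$ has constant Laplacian $\alpha/2$ and satisfies $\Gamma(b_{x_0})=b_{x_0}$ — exactly the relations satisfied by $\tfrac14|{\cdot}-x_0|^2$ on $\setR^\alpha$, whose Hessian equals $\tfrac12$ times the metric. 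Equivalently, for all $x_0,x_1$ the function $\dist^2(\cdot,x_0)-\dist^2(\cdot,x_1)$ is $L$-harmonic with at most linear growth, mimicking the affine functions on $\setR^\alpha$.

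The main obstacle is the final rigidity step: upgrading \eqref{eq:ball}–\eqref{eq:Lid} to an isometry with $(\setR^\alpha,\dist_e)$. Here I would argue that the existence on a PI space of a function $b_{x_0}$ with $\Gamma(b_{x_0})=b_{x_0}$ and $Lb_{x_0}\equiv\alpha/2$ forces the Hessian of $b_{x_0}$ to equal $\tfrac12$ times the metric tensor in the sense of the measurable differential structure — a nonsmooth Tashiro/Obata-type phenomenon — which, combined with the exact volume growth \eqref{eq:ball}, allows one to invoke the nonsmooth volume-cone rigidity of Cheeger–Colding (in the synthetic formulation of De Philippis–Gigli), after first checking that the above data places $(X,\dist,\mu)$ in the class $\RCD(0,\alpha)$ through the Bakry–Émery gradient estimate, itself extracted from the explicit kernel via $\nabla_x p(x,y,t)=-\tfrac{\dist(x,y)}{2t}\,p(x,y,t)\,\nabla_x\dist(\cdot,y)$. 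Once $X$ is identified with $\setR^\alpha$, the exponent $\alpha$ is necessarily an integer, and comparing \eqref{eq:ball} with the Euclidean ball volume forces $\mu=\haus^\alpha$. I expect the delicate points to be the rigorous justification of the differential identities \eqref{eq:Lid} (regularity of $\dist^2(\cdot,x_0)$ in the domain of $\cE$) and the passage through a synthetic lower Ricci bound, rather than the volume identity \eqref{eq:volident}, which is essentially algebraic.
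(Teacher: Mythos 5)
Your proposal tracks the paper faithfully through the preparatory stages: the Chapman--Kolmogorov/Laplace-transform argument giving $\mu(B_r(x))=\omega_\alpha r^\alpha$, strong locality and the comparison of $\dist$ with the intrinsic metric (the paper in fact only proves $C_2\dist\le\dist_\cE\le\dist$, not equality, but equivalence already suffices for the Grigor'yan--Saloff-Coste--Sturm characterization and hence for the doubling and Poincar\'e properties), and the identities $\Gamma(\dist(\cdot,x_0))=1$, $L\dist^2(\cdot,x_0)=2\alpha$. Up to that point the two arguments essentially coincide.

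The gap is in your final step. You propose to place $(X,\dist,\mu)$ in $\RCD(0,\alpha)$ via a Bakry--\'Emery gradient estimate ``extracted from the explicit kernel'' through the formula $\nabla_x p(x,y,t)=-\tfrac{\dist(x,y)}{2t}\,p\,\nabla_x\dist(\cdot,y)$. That pointwise chain-rule identity only restates the eikonal equation $\Gamma(\dist(\cdot,y))=1$; it gives no control of $\Gamma(P_tf)$ by $P_t\Gamma(f)$ for general $f$ and no Bochner inequality, and a synthetic lower Ricci bound is exactly the kind of structural conclusion the theorem is meant to deliver, not an input one can read off the kernel. The step is therefore circular as stated. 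Likewise, the ``nonsmooth Tashiro/Obata'' assertion that $\Gamma(b_{x_0})=b_{x_0}$ and $Lb_{x_0}=\alpha/2$ force the Hessian of $b_{x_0}$ to equal half the metric presupposes a second-order calculus that is only available once an $\RCD$-type condition has been established; and the De Philippis--Gigli volume-cone rigidity you invoke requires that condition as a hypothesis. The paper avoids curvature-dimension conditions altogether: it uses the Colding--Minicozzi finite-dimensionality of linear-growth harmonic functions (valid on any doubling--Poincar\'e space), observes that the cosine-rule functions $B(x,\cdot)=\tfrac12(\dist^2(o,x)+\dist^2(o,\cdot)-\dist^2(x,\cdot))$ are locally $L$-harmonic of linear growth, builds from a dual basis an explicit map $H:X\to\setR^l$ with $Q(H(x)-H(y))=\dist^2(x,y)$ for a quadratic form $Q$, and then analyses Busemann functions and tangent cones at infinity to prove $l=\alpha$ and that $Q$ is positive definite. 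To salvage your route you would need an independent derivation of the Bochner or gradient estimate from \eqref{eq:heatkernel} alone, which is not easier than the theorem itself.
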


Then we show that this rigidity result can be turned quantitative via a suitable contradiction argument. Denoting by $\dist_{GH}$ the Gromov-Hausdorff distance and by $\setB^n_r$ any Euclidean ball in $\setR^n$ with radius $r>0$, we obtain the following:

\begin{theorem}\label{th:almostrigidity} For any $\upepsilon>0$, there exists $\updelta>0$ depending only on $\upepsilon$ and $n$ such that if $(X,\dist,\mu)$ is a complete metric measure space endowed with a symmetric Dirichlet form $\cE$ 
 admitting a heat kernel $p$ satisfying 
\begin{equation}\label{eq:almostheatkernel}
(1-\updelta) \frac{1}{(4 \pi t)^{n/2}} e^{-\frac{\dist^2(x,y)}{4(1-\updelta)t}}\le p(x,y,t) \le (1+\updelta) \frac{1}{(4 \pi t)^{n/2}} e^{-\frac{\dist^2(x,y)}{4(1+\updelta)t}}
\end{equation}
for any $x,y \in X$ and $t \in (0,T]$, for some given $T>0$, then for any $x \in X$ and $r \in (0,\sqrt{T})$,
\begin{equation}\label{eq:Reifenberg}
\dist_{\mathrm GH}\left( B_r(x), \setB^n_r\right)< \upepsilon r.
\end{equation}
\end{theorem}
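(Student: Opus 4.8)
The strategy is a compactness-contradiction argument built on Theorem~\ref{th:main}. Suppose the statement fails: then there exist $\upepsilon>0$, a sequence $\updelta_j\downarrow 0$, metric measure spaces $(X_j,\dist_j,\mu_j)$ with Dirichlet forms $\cE_j$ whose heat kernels $p_j$ satisfy \eqref{eq:almostheatkernel} with $\updelta=\updelta_j$ on some time interval $(0,T_j]$, and points $x_j\in X_j$, radii $r_j\in(0,\sqrt{T_j})$ with $\dist_{\mathrm GH}(B_{r_j}(x_j),\setB^n_{r_j})\ge\upepsilon r_j$. By the scaling invariance of the whole setup — rescaling $\dist_j\mapsto \dist_j/r_j$, $\mu_j\mapsto \mu_j/r_j^n$, and $t\mapsto t/r_j^2$ preserves the form of \eqref{eq:almostheatkernel} — we may assume $r_j=1$ and $T_j\to\infty$ (or at least $T_j$ bounded below, then sent to infinity); so $\dist_{\mathrm GH}(B_1(x_j),\setB^n_1)\ge\upepsilon$ for all $j$.

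The next step is to extract a limit. The two-sided Gaussian bounds \eqref{eq:almostheatkernel} with $\updelta\le 1$ give, by the standard Sturm theory (e.g.\ via on-diagonal upper bounds plus a Harnack-type argument, as in \cite{Sturm2}), uniform volume doubling and a uniform scale-invariant Poincaré inequality for $(X_j,\dist_j,\mu_j)$, with constants depending only on $n$. In particular the pointed spaces $(X_j,\dist_j,x_j)$ are uniformly doubling, hence precompact in the pointed Gromov-Hausdorff topology; passing to a subsequence, $(X_j,\dist_j,\mu_j,x_j)\to(X_\infty,\dist_\infty,\mu_\infty,x_\infty)$ in the pointed measured Gromov-Hausdorff sense (after normalizing the measures, say $\mu_j(B_1(x_j))=1$). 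One then invokes Mosco/$\Gamma$-convergence stability of Dirichlet forms under mGH convergence (Kuwae-Shioya type results, applicable here because of the uniform doubling + Poincaré, which guarantee the forms are strongly local, regular, and the associated heat semigroups converge): the limit carries a Dirichlet form $\cE_\infty$ whose heat kernel $p_\infty$ is the locally uniform limit of the $p_j$. Since $\updelta_j\to 0$, the bounds \eqref{eq:almostheatkernel} pass to the limit and force
\[
p_\infty(x,y,t)=\frac{1}{(4\pi t)^{n/2}}e^{-\frac{\dist_\infty^2(x,y)}{4t}}
\]
for all $x,y\in X_\infty$, $t>0$. By Theorem~\ref{th:main}, $(X_\infty,\dist_\infty)$ is isometric to $(\setR^n,\dist_e)$ and $\mu_\infty=\haus^n$.

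Finally, the contradiction: by pointed GH convergence, $B_1(x_j)\to B_1(x_\infty)$, which is a Euclidean unit ball $\setB^n_1$; hence $\dist_{\mathrm GH}(B_1(x_j),\setB^n_1)\to 0$, contradicting $\dist_{\mathrm GH}(B_1(x_j),\setB^n_1)\ge\upepsilon$. (A small technical point: GH-convergence of pointed spaces gives convergence of closed balls of \emph{almost} every radius; since the volume is continuous in the limit and equals the Euclidean one, one can either argue at radius $1$ directly using non-collapsing, or absorb an arbitrarily small radius perturbation into $\upepsilon$.) I expect the main obstacle to be the analytic stability step — verifying that the uniform Gaussian bounds genuinely yield (i) uniform doubling and Poincaré, and (ii) Mosco-convergence of the Dirichlet forms along the mGH-convergent subsequence with convergence of heat kernels — since this requires carefully quoting and assembling the quantitative De Giorgi-Nash-Moser / Sturm machinery and the Kuwae-Shioya convergence framework; the compactness and the final contradiction are then routine.
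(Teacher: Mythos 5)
Your overall architecture (contradiction, rescale to $r=1$, extract a pointed measured Gromov--Hausdorff limit, show the limit has an exactly Euclidean heat kernel, apply Theorem~\ref{th:main}) is the paper's strategy. The difference lies in the two analytic steps, and in both the paper takes a more elementary route than the one you propose. For precompactness, the paper does not invoke the doubling--Poincar\'e--Gaussian equivalence of Proposition~\ref{prop:important} (which would anyway require knowing a priori that each $\cE_j$ is strongly local, regular and satisfies assumption \eqref{eq:A}, and whose Gaussian bounds are phrased in terms of $\mu(B_{\sqrt t}(x))$ rather than $(4\pi t)^{-n/2}$); it instead proves directly, in Lemma~\ref{lem:voleu}, two-sided volume bounds $c(n,\gamma)r^n\le\mu(B_r(x))\le C(n,\gamma)r^n$ for $r\le\sqrt T$ from the heat kernel sandwich alone (the lower bound being the nontrivial part), which gives uniform local doubling and hence Gromov precompactness.

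The more substantial divergence is in how the limit space acquires a Euclidean heat kernel. You propose Mosco/Kuwae--Shioya stability of the Dirichlet forms under mGH convergence together with locally uniform convergence of the kernels $p_j\to p_\infty$. As you yourself flag, this is the weak link: that framework needs each $\cE_j$ to be regular and strongly local with uniform doubling and Poincar\'e (none of which is assumed, and the bounds hold only on $(0,T]$), and even granting Mosco convergence of the forms, convergence of the \emph{kernels} requires an extra equicontinuity input (e.g.\ a uniform parabolic Harnack inequality). The paper sidesteps all of this: it never passes the forms $\cE_j$ to the limit. Instead it sandwiches the convolution $\int_{X_j}\bP_n(x,z,t)\bP_n(z,y,s)\,d\mu_j(z)$ of the \emph{explicit} Gaussian $\bP_n$ between $(1\pm\updelta_j)$-perturbations of $\bP_n(x,y,t+s)$ using Chapman--Kolmogorov for $p_j$ and \eqref{eq:almostheatkernel}; since the integrand is a fixed continuous function, this identity passes to the limit measure $\mu_\infty$ by the definition of pmGH convergence, yielding the exact semigroup identity for $\bP_n$ on $(X_\infty,\mu_\infty)$ for $t+s<1$. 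Proposition~\ref{prop:5.3} then \emph{constructs} a Dirichlet form on the limit space from this identity (extending it to all $t,s>0$ by holomorphy and invoking Grigor'yan's standard construction), after which Theorem~\ref{th:main} applies. If you want to complete your version, you would either have to assemble the Kuwae--Shioya machinery with all its hypotheses verified, or adopt the paper's Chapman--Kolmogorov trick, which reduces the stability step to weak convergence of measures against fixed continuous test functions.
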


The intrinsic Reifenberg theorem of Cheeger and Colding \cite[Theorem  A.1.1.]{CheegerColding} provides the following immediate topological consequence, where $\Psi(\cdot|n)$ is a function depending only on $n$ with $\Psi(r|n) \to 0$ when $r \to 0^+$.

\begin{corollary}\label{cor:topo} There exists $\updelta_n>0$ depending only on $n$ such that if $(X,\dist,\mu)$ is a complete metric measure space endowed with a symmetric Dirichlet form $\cE$ admitting a heat kernel $p$ such that for some numbers $\delta \in (0,\delta_n)$ and $T>0$,
$$
(1-\updelta)\frac{1}{(4 \pi t)^{n/2}} e^{-\frac{\dist^2(x,y)}{4(1-\updelta) t}}\le p(x,y,t) \le (1+\updelta)\frac{1}{(4 \pi t)^{n/2}} e^{-\frac{\dist^2(x,y)}{4(1+\updelta) t}}
$$holds for all $x, y \in X$ and $t \in (0,T)$, then  for any $x \in X$, there exists a topological embedding of $\mathbb{B}_{\sqrt{T}}^{n}$ into $B_{\sqrt{T}}(x)$ whose image contains $B_{(1-\Psi(\updelta|n))\sqrt{T}}(x)$.\end{corollary}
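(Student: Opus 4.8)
\emph{Proof sketch.} The plan is to derive Corollary~\ref{cor:topo} by feeding the scale-invariant Gromov--Hausdorff estimate of Theorem~\ref{th:almostrigidity} into the intrinsic Reifenberg theorem of Cheeger and Colding \cite[Theorem~A.1.1]{CheegerColding}. Recall that the latter supplies a dimensional threshold $\upepsilon(n)>0$ and a function $\Psi_0(\cdot\,|n)$, with $\Psi_0(s|n)\to 0$ as $s\to 0^+$, such that the following holds: if $(Y,\dist_Y)$ is a metric space, $y\in Y$, $\rho>0$, and $\dist_{\mathrm{GH}}(B_s(z),\mathbb{B}^n_s)<\eta\,s$ for every $z\in B_\rho(y)$ and every $s\in(0,\rho)$ with $\eta\le\upepsilon(n)$, then there is a topological embedding of $\mathbb{B}^n_\rho$ into $B_\rho(y)$ whose image contains $B_{(1-\Psi_0(\eta|n))\rho}(y)$; the statement at a general scale $\rho$ reduces to the unit-scale one after rescaling $\dist_Y$ by $\rho^{-1}$.

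First I would recast Theorem~\ref{th:almostrigidity} as a single modulus in the pinching parameter. For $k\in\setN$ let $\updelta_k>0$ be the constant associated by Theorem~\ref{th:almostrigidity} with $\upepsilon=1/k$; arranging $(\updelta_k)_k$ to be strictly decreasing, define the step function $\omega_n$ by $\omega_n(\updelta):=1/k$ for $\updelta\in[\updelta_{k+1},\updelta_k)$ (and constant for $\updelta\ge\updelta_1$). Then $\omega_n(\updelta)\to 0$ as $\updelta\to 0^+$, and whenever the heat-kernel pinching holds with parameter $\updelta$ on $(0,T)$, Theorem~\ref{th:almostrigidity} gives $\dist_{\mathrm{GH}}(B_r(x),\mathbb{B}^n_r)<\omega_n(\updelta)\,r$ for every $x\in X$ and every $r\in(0,\sqrt T)$. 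This is the only piece of genuine bookkeeping, and it is routine.

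Then I would take $\updelta_n>0$ small enough that $\omega_n(\updelta)\le\upepsilon(n)$ for all $\updelta\in(0,\updelta_n)$. Fixing such a $\updelta$ and a point $x\in X$, I would apply the intrinsic Reifenberg theorem with $Y=X$, $y=x$, $\rho=\sqrt T$ and $\eta=\omega_n(\updelta)$: the required Reifenberg bound is available for every $z\in B_{\sqrt T}(x)$ and every $s\in(0,\sqrt T)$ by the previous paragraph. This produces a topological embedding of $\mathbb{B}^n_{\sqrt T}$ into $B_{\sqrt T}(x)$ whose image contains $B_{(1-\Psi_0(\omega_n(\updelta)|n))\sqrt T}(x)$, and the conclusion follows upon setting $\Psi(\updelta|n):=\Psi_0(\omega_n(\updelta)|n)$, which depends only on $\updelta$ and $n$ and vanishes as $\updelta\to 0^+$.

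There is no real obstacle here, but two points deserve attention. The first is that Theorem~\ref{th:almostrigidity} and \cite[Theorem~A.1.1]{CheegerColding} must be invoked with the same metric on the sub-balls $B_r(x)$ (the restriction of $\dist$, rather than the induced length metric); since the heat-kernel pinching is local and scale invariant and already yields the Gromov--Hausdorff bounds of Theorem~\ref{th:almostrigidity} for that metric, this is merely a matter of quoting the appropriate version of the intrinsic Reifenberg theorem. The second is to make sure the form of that theorem one cites really delivers a topological embedding of the Euclidean ball with image squeezed between $B_{(1-\Psi)\sqrt T}(x)$ and $B_{\sqrt T}(x)$, and not merely a bi-Hölder homeomorphism of a half-ball onto an open subset of $\setR^n$; the refined version used throughout the Cheeger--Colding theory does provide this.
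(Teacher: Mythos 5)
Your argument is correct and coincides with the paper's, which simply invokes Theorem \ref{th:almostrigidity} together with the intrinsic Reifenberg theorem \cite[Theorem A.1.1]{CheegerColding} and records the conclusion as an immediate consequence. The conversion of the $\upepsilon\mapsto\updelta(\upepsilon,n)$ dependence into a modulus $\omega_n(\updelta)$ and the definition $\Psi(\updelta|n):=\Psi_0(\omega_n(\updelta)|n)$ are exactly the routine bookkeeping the paper leaves implicit.
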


We point out the two previous results are also true in case $T=+\infty$. Moreover, Theorem \ref{th:almostrigidity} can be used to give an alternative proof of a celebrated result established by T-H. Colding \cite[Theorem 0.8]{Colding}, namely the almost rigidity of the volume for Riemannian manifolds with non-negative Ricci curvature. Let us recall this statement:

\begin{theorem}[Colding]\label{th:Colding}For any $\upepsilon>0$, there exists $\updelta>0$ depending only on $\upepsilon$ and $n$ such that if 
$(M^n,g)$ is a complete Riemannian manifold with non-negative Ricci curvature such that for any $x \in M$ and $r>0$,
\begin{equation}\label{eq:vol}\vol B_r(x) \ge \left(1-\updelta\right) \omega_n\, r^n,\end{equation}
then  for
any $x\in M$ and $r>0$, 
$$\dist_{\mathrm GH}\left( B_r(x), \setB^n_r\right)\le \upepsilon r.$$
\end{theorem}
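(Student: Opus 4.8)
The plan is to derive Colding's theorem from Theorem~\ref{th:almostrigidity} by showing that, on a complete manifold with $\Ric \geq 0$, a near-Euclidean volume lower bound \eqref{eq:vol} forces the heat kernel to satisfy the two-sided near-Euclidean bounds \eqref{eq:almostheatkernel} on a scale controlled by $\updelta$. The natural Dirichlet form is the standard one, $\cE(u,v) = \int_M \langle \nabla u, \nabla v\rangle \di \vol$, whose associated operator is the Laplace--Beltrami operator; since $M$ is complete it is stochastically complete and the heat semigroup admits a smooth positive kernel $p$. So the whole content is a heat-kernel estimate: under $\Ric \geq 0$ and a volume pinching, $p$ is $\updelta'$-close to the Gaussian, where $\updelta' = \Psi(\updelta \mid n) \to 0$.

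The key steps, in order, are as follows. First, by the Bishop--Gromov inequality the volume ratio $r \mapsto \vol B_r(x)/(\omega_n r^n)$ is monotone non-increasing and bounded above by $1$; combined with the hypothesis \eqref{eq:vol}, which says it is $\geq 1-\updelta$ for \emph{all} $r$, we get that $\vol B_r(x)$ is within a factor $1 \pm \updelta$ of $\omega_n r^n$ at \emph{every} scale and every point --- in particular the manifold is ``almost Euclidean volume-wise'' uniformly. Second, I invoke the Li--Yau Gaussian bounds: on a complete manifold with $\Ric \geq 0$ one has, for constants $C(n)$,
\begin{equation}
\frac{c(n)}{\vol B_{\sqrt t}(x)} e^{-\frac{\dist^2(x,y)}{3t}} \le p(x,y,t) \le \frac{C(n)}{\vol B_{\sqrt t}(x)} e^{-\frac{\dist^2(x,y)}{5t}},
\end{equation}
but these alone are not sharp enough; instead I use the sharper fact (Li--Yau gradient estimate, or the characterization via the parabolic mean-value property) that when the volume ratio is pinched close to $1$, the heat kernel is close to the Euclidean one. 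Concretely, the cleanest route is Colding's own ``$L^2$-almost rigidity'' machinery reversed: one shows that $\int_M p(x,y,t)\dist^2(x,y)\di\vol(y)$ and the ``heat-kernel volume'' $(4\pi t)^{n/2} p(x,x,t)^{-1}$ both converge, uniformly in $\updelta$, to their Euclidean values, and then an integrated Gaussian upper/lower bound plus the semigroup law $p_{2t}(x,x) = \int p_t(x,y)^2 \di\vol(y)$ upgrades these integral controls to the pointwise two-sided bound \eqref{eq:almostheatkernel}. Third, having produced \eqref{eq:almostheatkernel} with $\updelta$ replaced by some $\Psi(\updelta\mid n)$ and $T = +\infty$ (the bound holds for all $t$ since the volume pinching holds at all scales), I apply Theorem~\ref{th:almostrigidity} to conclude $\dist_{GH}(B_r(x), \setB^n_r) < \upepsilon r$ for every $r$, after adjusting constants so that $\Psi(\updelta\mid n) < \updelta(\upepsilon)$.

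The main obstacle is the second step: turning a volume pinching into a \emph{sharp} heat-kernel pinching of the form \eqref{eq:almostheatkernel} with exactly the Euclidean exponential rate $e^{-\dist^2/(4(1\pm\updelta)t)}$, not merely $e^{-\dist^2/(5t)}$. The crude Li--Yau bounds lose a universal constant in the exponent and cannot be pinched to $4$, so one genuinely needs a refined argument: either (i) a Harnack-type estimate with constant tending to $1$ as the curvature-and-volume defect tends to $0$, obtained by tracking the Li--Yau differential Harnack inequality $\partial_t \log p + |\nabla \log p|^2 \le \frac{n}{2t}$ together with the pinched Bishop--Gromov inequality and a quantitative maximum principle; or (ii) a compactness/contradiction argument in the spirit of the proof of Theorem~\ref{th:almostrigidity} itself, using that a pointed measured-Gromov--Hausdorff limit of such manifolds is an $\RCD(0,n)$ space with Euclidean volume growth, hence $\setR^n$ by the volume cone implies metric cone theorem plus the rigidity in the Bishop--Gromov inequality, and on $\setR^n$ the heat kernel is exactly Gaussian; stability of heat kernels under mGH convergence then yields the contradiction. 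I would carry out route (ii), since it parallels and reuses the techniques already developed for Theorem~\ref{th:main} and Theorem~\ref{th:almostrigidity} and avoids delicate constant-tracking in the Harnack inequality.
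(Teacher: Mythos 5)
Your overall reduction is the same as the paper's: deduce Colding's theorem from Theorem~\ref{th:almostrigidity} by showing that $\Ric\ge 0$ together with the volume pinching \eqref{eq:vol} forces the two-sided near-Gaussian bound \eqref{eq:almostheatkernel} (this is the paper's Theorem~\ref{th:estimate}), and you correctly identify the crux, namely that crude Li--Yau bounds lose constants in the exponent and must be sharpened to $e^{-\dist^2/(4(1\pm\updelta)t)}$. However, the route you commit to --- route (ii), the compactness/contradiction argument --- has a genuine gap. Stability of heat kernels under pointed measured Gromov--Hausdorff convergence is a \emph{locally uniform} statement: it controls $p_\ell(x_\ell,y_\ell,t)$ for $x_\ell,y_\ell$ in balls of fixed radius and $t$ in compact subsets of $(0,\infty)$. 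But the estimate \eqref{eq:almostheatkernel} has its entire content in the regime $\dist^2(x,y)/t\to\infty$, where both $p$ and the Gaussian tend to $0$ and only the \emph{rate} in the exponent matters; a sequence of counterexamples can perfectly well have $\dist(x_\ell,y_\ell)\to\infty$ after the normalization $t_\ell=1$ (and renormalizing so that $\dist(x_\ell,y_\ell)=1$ instead sends $t_\ell\to 0$, where mGH heat-kernel convergence is again not uniform). So the limit being $\setR^n$ with exactly Gaussian kernel does not contradict the failure of \eqref{eq:almostheatkernel} along the sequence. Sharp off-diagonal Gaussian upper bounds are simply not soft compactness statements. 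Your middle paragraph's alternative ("integral controls plus the semigroup law upgrade to the pointwise two-sided bound") asserts precisely the hard step without an argument: the identity $p_{2t}(x,x)=\int p_t(x,y)^2$ only gives on-diagonal control.

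What the paper actually does is your route (i), carried out following Li--Tam--Wang: the lower bound is Cheeger--Yau; for the upper bound one combines the integrated Li--Yau comparisons \eqref{eq:LY1}--\eqref{eq:LY2}, the Li--Yau Harnack inequality applied between times $t$ and $\tau=(1+\delta)t$ and averaged over a small ball $B_{\delta r}(y)$ disjoint from $B_r(x)$, and the pinched Bishop--Gromov lower bound $\vol B_{\delta r}(y)\ge\theta\omega_n(\delta r)^n$, then optimizes $\delta$ as an explicit function of the volume defect $\theta^{-1}-1$ so that all constants tend to $1$ as $\theta\to 1$. This constant-tracking is exactly the "delicate" work you chose to avoid, but it is unavoidable; I would redirect your effort there. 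A secondary remark: invoking RCD volume rigidity and "volume cone implies metric cone" in the limit would also run against the spirit of the exercise, since the paper's point is to identify the limit via its own Theorem~\ref{th:main} (through Proposition~\ref{prop:5.3} and the limiting Chapman--Kolmogorov identity) rather than through the Cheeger--Colding/RCD structure theory.
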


This theorem is a direct consequence of our almost rigidity theorem coupled with an intermediary result, Theorem \ref{th:estimate}, which states, roughly speaking, that a complete Riemannian manifold satisfying the volume estimate \eqref{eq:vol} has necessarily an almost Euclidean heat kernel. Our proof of this result is based on previous works by J.~Cheeger and S.-T.~Yau \cite{CheegerYau}, P.~Li and S.-T.~Yau \cite{LiYau} and especially P.~Li, L.-F.~Tam and J.~Wang \cite{LiTamWang}.

Finally, in the last section of this paper, we investigate the case of a metric measure space equipped with a spherical heat kernel. To be precise, the sphere $\mathbb{S}^n$ has a heat kernel which can be written as
$$
K_t^{(n)}(\dist_{\mathbb{S}^n}(x,y))
$$
where $K_t^{(n)}$ is an explicit function and $\dist_{\mathbb{S}^n}$ is the classical round Riemannian distance. We show that if a metric measure space $(X,\dist,\mu)$ is equipped with a Dirichlet form admitting a heat kernel $p$ such that $$p(x,y,t)=K_t^{(n)}(\dist(x,y))$$ for all $x, y \in X$ and $t>0$, then $(X,\dist)$ is isometric to $(\mathbb{S}^n,\dist_{\mathbb{S}^n})$.

Let us spend some words to describe our proof of Theorem \ref{th:main}. A key point is the celebrated result of T.H.~Colding and W.P.~Minicozzi II asserting that on any complete Riemannian manifold satisfying the doubling and Poincaré properties, the space of harmonic maps with linear growth is finite-dimensional \cite{ColdingMinicozzi}. As already observed in non-smooth contexts \cite{Hua, HuaKellXia}, the proof of this result can be carried out on any complete metric measure spaces satisfying the doubling and Poincaré properties. It turns out that admitting a Dirichlet form with an Euclidean heat kernel forces the metric measure space to satisfy these two properties, see Proposition \ref{prop:important}.

Then we consider the functions
$$B(x,\cdot):=\frac{1}{2}(\dist^2(o,x)+\dist^2(o,\cdot)-\dist^2(x,\cdot)) \qquad x \in X$$
which are easily shown to have linear growth. When $(X,\dist,\mu)$ is equipped with a Dirichlet form $\cE$ satisfying the assumptions of Theorem \ref{th:main}, these functions are locally $L$-harmonic: this follows from establishing
$$
L\textbf{1}=0 \qquad \text{and} \qquad L\dist^2(x,\cdot) = 2 \alpha.
$$
Therefore, the vector space $\cV$ generated by the functions $B(x,\cdot)$ has a finite dimension $n$. Choosing a suitable basis $(h_1,\ldots,h_n)$ of this space, we embed $X$ into $\setR^n$ by setting
$$
H(x)=(h_1(x),\ldots,h_n(x))
$$
for any $x \in X$. More precisely, there exists $x_1,\ldots,x_n \in X$ such that $(\delta_{x_1},\ldots,\delta_{x_n})$ is a basis of $\cV^{*}$, and $(h_1,\ldots,h_n)$ is chosen as the dual of this basis. Setting $Q(\xi) := \sum_{i,j} B(x_i,x_j)\xi_i \xi_j$ for any $\xi=(\xi_1,\cdots,\xi_n) \in \setR^n$, we easily get
\begin{equation}\label{eq:isometry}
Q(H(x)-H(y))=\dist^2(x,y)
\end{equation}
for any $x, y \in X$, thus $H$ is an embedding.

To conclude, we establish $\alpha = n$ and show that $Q$ is non-degenerate, so that $\dist_Q(\xi,\xi')=\sqrt{Q(\xi-\xi')}$ defines a distance on $\setR^n$ that is isometric to the Euclidean distance: then \eqref{eq:isometry} shows that $H$ is an isometric embedding onto its image a final argument proves to be $\setR^n$. We prove these two concluding assertions by the study of asymptotic cones at infinity of $(X,\dist,\mu)$.

It is worth mentioning that in the case of $(X,\dist,\mu,\cE)$ equipped with a spherical heat kernel, we embed $X$ into $E_1:=\mathrm{Ker}(-L-\lambda_1 I)$, where $\lambda_1$ is the first non-zero eigenvalue of $-L$, and show that $H(X)$ is isometric to $\Sigma:=\{Q=1\}$ for some suitable quadratic form $Q$.

The paper is organized as follows. Our proof of Theorem \ref{th:main} relies on several notions and results from different areas that we collect in the preliminary Section 2. Then in Section 3 we establish simple rigidity results for metric measure spaces with an Euclidean heat kernel. We use these results in Section 4 which is dedicated to the proof of Theorem \ref{th:main}. Section 5 is devoted to the almost rigidity result, namely Theorem \ref{th:almostrigidity}, and Section 6 explains our new proof of Colding's volume almost rigidity theorem. Finally Section 7 contains our study of the case of metric measure spaces equipped with a spherical heat kernel.

\smallskip\noindent
\textbf{Acknowledgement.}
The first author thanks the Centre Henri Lebesgue ANR-11-LABX-0020-01 for creating an attractive mathematical environment; he was also  partially supported by the ANR grants: {\bf ANR-17-CE40-0034}: {\em CCEM} and {\bf ANR-18-CE40-0012}: {\em RAGE}.
The second author thanks S.~Honda for interesting remarks and questions at a late stage of this work and for the good working conditions in Tohoku University that grandly helped completing this article.

\section{Preliminaries}

\quad \, Throughout the article, we shall call metric measure space any triple $(X,\dist,\mu)$ where $(X,\dist)$ is a $\sigma$-compact metric space and $\mu$ is a non-negative $\sigma$-finite Radon measure on $(X,\dist)$ such that $\supp \mu = X$. Here $\supp \mu$ denotes the support of $\mu$. We shall keep fixed a number $\alpha>0$ and denote by $\omega_\alpha$ the quantity
\begin{equation}\label{eq:omega_n}
\omega_\alpha = \frac{\pi^{\alpha/2}}{\Gamma(\alpha/2+1)}
\end{equation}
where $\Gamma$ denotes the usual Gamma function $\{\mathrm{Re}>0\} \ni z \mapsto \int_0^{+\infty} t^{z-1}e^{-t} \di t$. Note that when $\alpha$ is an integer $n$, then $\omega_n$ is the volume of the unit Euclidean ball in $\setR^n$.

We shall use classical notations for the functional spaces defined on $(X,\dist, \mu)$, like $C(X)$ (resp.~$C_c(X)$) for the space of continuous (resp.~compactly supported continuous) functions, $\Lip(X)$ (resp.~$\Lip_c(X)$)) for the space of Lipschitz (resp.~compactly supported Lipschitz) functions, $L^p(X,\mu)$, where $p \in [1,+\infty)$, for the space of (equivalent classes of) $\mu$-measurable functions whose $p$-th power is $\mu$-integrable, $L^\infty(X,\mu)$ for the space of $\mu$-essentially bounded functions, and so on.
We shall write $\supp f$ for the support of a function $f$ and $1_A$ for the characteristic function of a set $A \subset X$.

A generic open ball in $(X,\dist)$ will be denoted by $B$, and we will write $\lambda B$ for the ball with same center as $B$ but radius multiplied by $\lambda>0$.

We will extensively make use of the following definition.

\begin{definition}
We say that a metric measure space $(X,\dist,\mu)$ has an $\alpha$-dimensional volume whenever $\mu(B)=\omega_\alpha r^\alpha$ for any metric ball $B\subset X$ with radius $r>0$.\\
\end{definition}

\textbf{Dirichlet forms.}

Let us recall some basic facts about Dirichlet forms, refering to e.g. \cite{FukushimaOshidaTakeda, Sturm1, KoskelaZhou} for more details. Let $(X,\tau)$ be a topological space equipped with a $\sigma$-finite Borel measure $\mu$. A Dirichlet form $\cE$ on $(X,\mu)$ is a non-negative definite bilinear map $\cE : \cD(\cE) \times \cD(\cE) \to \setR$, with $\cD(\cE)$ being a dense subset of $L^2(X,\mu)$, satisfying closedness, meaning that the space $\cD(\cE)$ is a Hilbert space once equipped with the scalar product
\begin{equation}\label{eq:scalE}
\langle f, g \rangle_{\cE} := \int_{X} f g \di \mu + \cE(f,g)\qquad \forall f, g \in \cD(\cE),
\end{equation}
and the Markov property: for any $f \in \cD(\cE)$, the function $f_{0}^{1} = \min ( \max(f,0),1)$ belongs to $\cD(\cE)$ and $\cE(f_{0}^{1},f_{0}^{1}) \le \cE(f,f)$. We denote by $|\cdot|_\cE$ the norm associated with $\langle \cdot, \cdot \rangle_\cE$.

We focus only on symmetric Dirichlet forms, i.e.~those $\cE$ for which $\cE(f,g)=\cE(g,f)$ holds for all $f,g \in \cD(\cE)$. Therefore, in the rest of the article, by Dirichlet form we will always tacitly mean \textit{symmetric} Dirichlet form.

Finally, let us recall that any Dirichlet form is associated with a non-negative definite self-adjoint operator $L$ with dense domain $\cD(L) \subset L^2(X,\mu)$ characterized by the following:
$$
\cD(L):=\left\{f \in \cD(\cE) \, : \, \exists h=:Lf \in L^2(X,\mu)\, \, \text{s.t.}\, \, \cE(f,g)= -\int_X h g \di \mu \, \, \, \forall g \in \cD(\cE)\right\}.
$$

\hfill
We now additionally assume that $(X,\tau)$ is locally compact and separable and that $\mu$ is a Radon measure such that $\supp \mu = X$. A Dirichlet form $\cE$ on $(X,\mu)$ is called \textit{strongly local} if $\cE(f,g)=0$ for any $f, g \in \cD(\cE)$ such that $f$ is constant on a neighborhood of $\supp g$, and \textit{regular} if $C_c(X) \cap \cD(\cE)$ contains a subset (called a \textit{core}) which is both dense in $C_c(X)$ for $\|\cdot\|_{\infty}$ and in $\cD(\cE)$ for $|\cdot|_\cE$. A celebrated result by A.~Beurling and J.~Deny \cite{BeurlingDeny} implies that any strongly local regular Dirichlet form $\cE$ on $(X,\mu)$ admits a non-negative definite symmetric bilinear map $\Gamma : \cD(\cE) \times \cD(\cE) \to \mathrm{Rad}$, where $\mathrm{Rad}$ denotes the set of signed Radon measures on $(X,\tau)$, such that
$$
\cE(f,g) = \int_X \di \Gamma(f,g) \qquad \forall f,g \in \cD(\cE),
$$
where $\int_X \di \Gamma(f,g)$ denotes the total mass of the measure $\Gamma(f,g)$. From now until the end of this paragraph, we assume that $\cE$ is strongly local and regular. 

Let us mention that the map $\Gamma$ is concretely given as follows: for any $f \in \cD(\cE) \cap L^\infty(X,\mu)$, the measure $\Gamma(f):=\Gamma(f,f)$ is defined by its action on test functions:
\begin{equation}\label{eq:2000}
\int_X \phi \di \Gamma(f) := \cE(f,f\phi) - \frac{1}{2}\cE(f^2, \phi) \qquad \forall \phi \in \cD(\cE) \cap C_c(X).
\end{equation}
Regularity of $\cE$ allows to extend \eqref{eq:2000} to the set of functions $\phi \in C_c(X)$, providing a well-posed definition of $\Gamma(f)$ by duality between $C_c(X)$ and $\Rad$. In case $f \in \cD(\cE)$ is not essentially bounded, $\Gamma(f)$ is obtained as the limit of the increasing sequence of measures $(\Gamma(f_{-n}^{n}))_{n\in\setN}$ where $f_{-n}^n:=\min(\max(f,-n),n)$ for any $n \in \setN$. The general expression of $\Gamma(f,g)$ for any $f,g \in \cD(\cE)$ is then obtained by polarization:
$$
\Gamma(f,g) := \frac{1}{4}(\Gamma(f+g,f+g) - \Gamma(f-g,f-g)).
$$

Strong locality of $\cE$ implies locality of $\Gamma$, that is
$$
\int_A \di \Gamma(u,w) = \int_A \di \Gamma(v,w)
$$
for any open set $A\subset X$ and any functions $u,v,w \in \cD(\cE)$ such that $u=v$ on $A$. This property allows to extend $\Gamma$ to the set $\cD_{loc}(\cE)$ made of those $\mu$-measurable functions $f$ for which for any compact set $K\subset X$ there exists $g \in \cD(\cE)$ such that $f=g$ $\mu$-a.e.~on $K$. Then $\Gamma$ satisfies the Leibniz rule:
\begin{equation}\label{eq:Leibniz}
\Gamma(fg,h)=f \Gamma(g,h) + g \Gamma(f,h) \qquad \forall u,v \in \cD_{loc}(\cE) \cap L^{\infty}_{loc}(X,\mu), \, \, \forall h \in \cD_{loc}(\cE),
\end{equation}
and the chain rule:
\begin{align}\label{eq:chain}
\Gamma(\eta \circ f,g)=(\eta'\circ f) \Gamma(f,g)  \qquad \qquad  & \forall \eta \in C^1_{b,bd}(\setR), \, \, \forall f \in \cD_{loc}(\cE),\nonumber \\
\text{and} \quad & \forall \eta \in C^1(\setR), \, \, \forall f \in \cD_{loc}(\cE)\cap L^{\infty}(X,\mu),
\end{align}
where $C^1_{b,bd}(\setR)$ stands for the set of bounded $C^1$ functions on $\setR$ with bounded derivative.

For our purposes, we also need to define $\cD_{loc}(\Omega,\cE)$ as the set of functions $f\in L^2_{\loc}(\Omega)$ for which for any compact set $K\subset \Omega$ there exists $g \in \cD(\cE)$ such that $f=g$ $\mu$-a.e.~on $K$; here $\Omega$ is an open subset of $X$.

The so-called \textit{intrinsic} extended pseudo-distance $\dist_\cE$ associated with $\cE$ is defined by:
\begin{equation}\label{eq:defdist}
\dist_\cE(x,y):=\sup \{|f(x)-f(y)| \, : \, f \in C(X) \cap \cD_{loc}(\cE) \, \, \, \text{s.t.} \, \, \Gamma(f) \le \mu\}\quad \forall x,y \in X.
\end{equation}
Here $\Gamma(f) \le \mu$ means that $\Gamma(f)$ is absolutely continuous with respect to $\mu$ with density lower than $1$ $\mu$-a.e.~on $X$, and ``extended'' refers to the fact that $\dist_\cE(x,y)$ may be infinite. When the topology $\tau$ is generated by a distance $\dist$ on $X$, we call asumption (A) the following statement:
\begin{equation}\label{eq:A}\tag{A}
\text{$\dist_\cE$ is a distance inducing the same topology as $\dist$}.
\end{equation}

A final consequence of strong locality and regularity is that the operator $L$ canonically associated to $\cE$ satisfies the classical chain rule:
\begin{equation}\label{eq:chainrule}
L(\phi \circ f) =  (\phi' \circ f) Lf + (\phi''\circ f) \Gamma(f) \qquad \forall f \in \mathbb{G}, \,\, \, \forall \phi \in C^\infty([0,+\infty),\setR),
\end{equation}
where $\mathbb{G}$ is the set of functions $f \in \cD(L)$ such that $\Gamma(f)$ is absolutely continuous with respect to $\mu$ with density also denoted by $\Gamma(f)$. In particular:
\begin{equation}\label{eq:chainrulesquare}
Lf^2 = 2fLf + 2 \Gamma(f) \qquad \forall f \in \mathbb{G}.
\end{equation}

\vspace{4mm}

\textbf{Heat kernel associated to a Dirichlet form.}

Let $(X,\tau)$ be a topological space equipped with a $\sigma$-finite Borel measure $\mu$. The spectral theorem (see e.g.~\cite[Th.~VIII.5]{ReedSimon}) implies that the operator $L$ associated to any Dirichlet form $\cE$ on $(X,\tau)$ defines an analytic sub-Markovian semi-group $(P_t)_{t>0}$ acting on $L^2(X,\mu)$ where for any $f \in L^2(X,\mu)$, the map $t \mapsto P_tf$ is characterized as the unique $C^1$ map $(0,+\infty)\to L^2(X,\mu)$, with values in $\cD(L)$, such that
$$
\begin{cases}
\frac{\di}{\di t} P_tf = L(P_t f) \qquad \forall t>0,\\
\lim\limits_{t \to 0} \|P_t f - f\|_{L^2(X,\mu)}=0.
\end{cases}
$$
One can then recover $\cD(L)$ and $L$ from $(P_t)_{t>0}$ in the following manner:
$$\cD(L)=\left\{f \in L^2(X,\mu) \, : \, \left(\frac{P_t f - f}{t}\right)_{t>0} \, \text{converges in the $L^2$-norm when $t\downarrow 0$}\right\}
,$$
\begin{equation}\label{eq:carL}
Lf = \lim\limits_{t \downarrow 0} \frac{P_t f-f}{t} \qquad \forall f \in \cD(L).
\end{equation}

We say that $\cE$ admits a heat kernel if there exists a family of $(\mu \otimes \mu)$-measurable functions $(p(\cdot, \cdot, t))_{t>0}$ on $X\times X$ such that for all $t>0$ and $f\in L^2(X,\mu)$, one has
$$P_t f (x) = \int_X p(x,y,t) f(y) \di \mu(y)  \qquad \text{for $\mu$-a.e.~$x \in X$};$$
the function $p=p(\cdot,\cdot,\cdot)$ is then called the heat kernel of $\cE$.
In this case, the semi-group property (namely $P_{s+t}=P_s \circ P_t$ for any $s,t>0$) implies that $p$ satisfies the so-called Chapman-Kolmogorov property:
\begin{equation}\label{eq:ChapmanKolmogorov}
\int_X p(x,z,t)p(z,y,s) \di \mu(z) = p(x,y,t+s), \qquad \forall x,y \in X, \, \, \forall s,t>0.
\end{equation}
Moreover, for any $t>0$, $p(\cdot,\cdot,t)$ is symmetric and uniquely determined up to a $(\mu\otimes \mu)$-negligible subset of $X\times X$.

When $\cE$ admits a heat kernel, the space $(X,\tau,\mu,\cE)$ is called stochastically complete whenever
$$
\int_X p(x,y,t) \di \mu(y) = 1 \qquad \forall x \in X, \forall t >0.
$$
Under stochastic completeness, one can show that the domain of $\cE$ coincides with
$$
\left\{f \in L^2(X,\mu) \, : \, t \mapsto \frac{1}{2t} \iint_{X\times X} (f(x)-f(y))^2 p(x,y,t) \di \mu(x) \di \mu(y) \, \, \text{is bounded} \right\}
$$
and that
\begin{equation}\label{eq:Grigor'yan}
\cE(f,g) = \lim\limits_{t \downarrow 0} \frac{1}{2t} \iint_{X\times X} (f(x)-f(y))(g(x)-g(y)) p(x,y,t) \di \mu(x) \di \mu(y)
\end{equation}
for any $f, g \in \cD(\cE)$ and
\begin{equation}\label{eq:Grigor'yan2}
\int_X \phi \di \Gamma(f) = \lim\limits_{t \downarrow 0} \frac{1}{2t} \iint_{X\times X} \phi(x)(f(x)-f(y))^2 p(x,y,t) \di \mu(x) \di \mu(y)
\end{equation}
for any $f \in \cD_{loc}(\cE)$ and $\phi \in C_c(X)$: see \cite[2.2]{Grigor'yan}, for instance.

As well-known, the classical Dirichet energy on $\setR^n$ admits the Gaussian heat kernel
$$
p(x,y,t) = \frac{1}{(4 \pi t)^{n/2}} e^{-\frac{\dist_e^2(x,y)}{4t}} \qquad \forall x,y \in X, \, \forall t >0,
$$
where $\dist_e$ is the usual Euclidean distance. This motivates the next definition.

\begin{definition}\label{def:Euclideanheatkernel}
Let $(X,\dist,\mu)$ be a metric measure space and $\cE$ a Dirichlet form on $(X,\mu)$. We say that $(X,\dist,\mu,\cE)$ has an $\alpha$-dimensional Euclidean heat kernel if $\cE$ admits a heat kernel $p$ such that:
$$
p(x,y,t) = \frac{1}{(4 \pi t)^{\alpha/2}} e^{-\frac{\dist^2(x,y)}{4t}}\qquad \forall x,y \in X, \, \forall t>0.$$
\end{definition}

\hfill

\textbf{Harnack inequalities.}

Let $(X,\dist,\mu)$ be a metric measure space equipped with a Dirichlet form $\cE$ with associated operator $L$. Let $\leb^1$ be the Lebesgue measure on $\setR$. In order to properly state what a Harnack inequality means for $(X,\dist,\mu,\cE)$, let us introduce some notions. We refer e.g.~to \cite{Sturm2} and the references therein for more details. Note first that any element $f \in L^2(X,\mu)$ uniquely defines a continuous linear form on $\cD(\cE)$, namely $g \mapsto \int_X fg \di \mu$. Thus $L^2(X,\mu)$ embeds into $\cD(\cE)^*$ whose norm we denote $|\cdot|_{\cE,*}$.

For any open interval $I \subset \setR$, we consider the following functional spaces:
\begin{itemize}
\item $L^2(I,\cD(\cE))$ is the space of $\leb^1$-measurable functions $u:I\to \cD(\cE)$, $u_t:=u(t)$, equipped with the Hilbert norm $\|u\|_{L^2(I,\cD(\cE))}:=(\int_I |u_t|_{\cE}^2 \di t)^{1/2}$;

\item $H^1(I,\cD(\cE)^*)$ is the space of $\leb^1$-measurable functions $u:I\to \cD(\cE)^*$ admitting a distributional derivative $\partial_t u \in L^2(I,\cD(\cE)^*)$ on $I$ equipped with the Hilbert norm $\|u\|_{H^1(I,\cD(\cE)^*)}:=(\int_I |u_t|_{\cE,*}^2 \di t + \int_I |(\partial_t u)_t  |_{\cE,*}^2 \di t  )^{1/2}$, where $(\partial_t u)_t  := \partial_t u(t)$;

\item $\cD_{par,I}(\cE):=L^2(I,\cD(\cE))\cap H^1(I,\cD(\cE)^*)$ equipped with the Hilbert norm $\|u\|_{par,I}:=(\int_I |u_t|_{\cE}^2 \di t + \int_I |(\partial_t u)_t  |_{\cE,*}^2 \di t  )^{1/2}$.
\end{itemize}
We can define a Dirichlet form $\cE_I$ on $\cD_{par,I}(\cE)$ by setting
$$
\cE_I(u,v):=\int_I\cE(u_t,v_t) \di t - \int_I (\partial_t u)_t  \cdot v_t \di t \qquad \forall u, v \in \cD_{par,I}(\cE).
$$
Let $\Omega \subset X$ be an open set. Denote by $Q$ the parabolic cylinder $I \times \Omega$. Let $\cD_{Q}(\cE)$ be the set of $(\leb^1 \otimes \mu)$-measurable functions defined on $Q$ such that for every relatively compact open set $\Omega' \Subset \Omega$ and every open interval $I' \Subset I$ there exists a function $u' \in \cD_{par,I}(\cE)$ such that $u=u'$ on $I' \times \Omega'$. We also define $\cD_{Q,c}(\cE)$ as the set of functions $u \in \cD_{Q}(\cE)$ such that for any $t \in I$, the function $u_t$ has compact support in $\Omega$.

\begin{definition}
We call local solution on $Q$ of the equation $(\partial_t+L)u = 0$ any function $u \in \cD_Q(\cE)$ such that $\cE_I(u,\phi)=0$ holds for any $\phi \in \cD_{Q,c}(\cE)$.
\end{definition}

When $\cE$ admits a heat kernel $p$, one can show that for any $x \in X$ and $t>0$ the function $p(x,\cdot,t)$ is a local solution of the equation $(\partial_t+L)u=0$.

The next important proposition is a combination of several famous results \cite{Grigor'yan92,Saloff-Coste,Sturm3}.

\begin{proposition}\label{prop:important}
Let $(X,\dist,\mu)$ be a metric measure space equipped with a strongly local and regular Dirichlet form $\cE$ satisfying assumption \eqref{eq:A}. Let $L$ be the operator canonically associated to $\cE$. Then the following statements are equivalent:
\begin{enumerate}
\item the combination of

a) the doubling property: there exists a constant $C_D>0$ such that  for any ball $B \subset X$,
\begin{equation}\label{eq:doubling}
\mu(2B)\le C_D \mu(B),
\end{equation}

b) the local Poincaré inequality: there exists a constant $C_P>0$ such that for any $f \in \cD(\cE)$ and any ball $B \subset X$ with radius $r>0$, setting $f_B := \mu(B)^{-1}\int_B f \di \mu$,
\begin{equation}\label{eq:Poincare}
\int_B |f-f_B|^2 \di \mu \le C_P r^2 \cE(f),
\end{equation}

\item the existence of a heat kernel $p$ for $\cE$ satisfying double-sided Gaussian estimates: there exists $C_G>0$ such that for any $x,y \in X$ and any $t>0$,
\begin{equation}\label{eq:LiYau}
\frac{C_G^{-1}}{\mu(B_{\sqrt{t}}(x))} e^{-\frac{\dist^2(x,y)}{4t}} \le p(x,y,t) \le \frac{C_G}{\mu(B_{\sqrt{t}}(x))} e^{-\frac{\dist^2(x,y)}{4t}},
\end{equation}

\item the parabolic Harnack inequality: there exists a constant $C_H>0$ such that {\color{blue} for any $s\in \setR$}, any ball $B$ with radius $r>0$ and any non-negative local solution $u$ on $(s-r^2,s)\times B$ of the parabolic equation $(\partial_t + L)u=0$, we have
\begin{equation}\label{eq:parabolicHarnack}
\essssup_{Q_-} (u) \le C_H \esssinf_{Q_+} (u)
\end{equation}
where $Q_-:=(s-(3/4)r^2,s-(1/2)r^2) \times (1/2)B$ and $Q_+:=(s-(1/4)r^2,s)\times (1/2)B$.
\end{enumerate}
\end{proposition}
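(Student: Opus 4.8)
The plan is to establish the cycle of implications $(1)\Rightarrow(3)\Rightarrow(2)\Rightarrow(1)$; the whole statement is the metric-measure incarnation of the Grigor'yan--Saloff-Coste characterisation of the parabolic Harnack inequality, in the Dirichlet-form generality worked out by Sturm, so at several points I will invoke the estimates of \cite{Grigor'yan92,Saloff-Coste,Sturm3} rather than reproduce them. I expect $(1)\Rightarrow(3)$ to be the main obstacle: this is where the genuine analysis (Moser iteration) lives, and one must check that every step survives the passage from the Riemannian or graph setting to an abstract strongly local regular Dirichlet form, using only the energy measure $\Gamma$, its Leibniz and chain rules \eqref{eq:Leibniz}--\eqref{eq:chain}, and the weak formulation of $(\partial_t+L)u=0$ through the bilinear form $\cE_I$.

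For $(1)\Rightarrow(3)$ I would first extract from the doubling property \eqref{eq:doubling} and the Poincaré inequality \eqref{eq:Poincare} a scale-invariant $L^2$ Sobolev--Poincaré inequality on balls, with a ``dimension'' exponent depending only on $C_D$; this is the purely functional-analytic core, valid in any doubling space supporting a Poincaré inequality. With it I would run Moser's iteration on the parabolic cylinders $Q_\pm$: (a) a local boundedness estimate bounding the $L^\infty$ norm of a non-negative local subsolution on a smaller cylinder by its $L^2$ average on a larger one; (b) the dual lower estimate for non-negative supersolutions, together with a logarithmic bound on $\log u$; and (c) an abstract John--Nirenberg / Bombieri--Giusti crossover lemma turning (a) and (b) into \eqref{eq:parabolicHarnack}. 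Each cut-off argument uses the Leibniz and chain rules for $\Gamma$, while the time-derivative terms are handled through $\cE_I$; the verification that all constants depend only on $C_D$ and $C_P$ is precisely the content of \cite{Sturm3}.

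For $(3)\Rightarrow(2)$ I would use that $(y,t)\mapsto p(x,y,t)$ is, for each fixed $x$, a non-negative local solution of $(\partial_t+L)u=0$, so \eqref{eq:parabolicHarnack} applies to it; it is also classical that \eqref{eq:parabolicHarnack} forces the doubling property \eqref{eq:doubling}, so the volume factors $\mu(B_{\sqrt t}(\cdot))$ can be compared freely. A Grigor'yan-type argument then gives the on-diagonal estimate $p(x,x,t)\simeq \mu(B_{\sqrt t}(x))^{-1}$ --- upper bound from the $L^1\to L^\infty$ smoothing of $(P_t)$ combined with the mean value estimate, lower bound by applying \eqref{eq:parabolicHarnack} to $p(x,\cdot,\cdot)$ between the space-time points $(x,t)$ and $(x,2t)$ and using the Chapman--Kolmogorov identity \eqref{eq:ChapmanKolmogorov} with $y=x$, $s=t$. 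Iterating the Harnack inequality along a chain of order $\dist^2(x,y)/t$ balls joining $x$ to $y$ at comparable time scales then transfers this to the off-diagonal estimate and produces the Gaussian factors; the sharp exponent $1/4$ in the upper bound of \eqref{eq:LiYau} is recovered by Davies' exponential perturbation method, and the matching Gaussian lower bound by the chaining.

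For $(2)\Rightarrow(1)$, the doubling property is elementary: evaluating \eqref{eq:LiYau} at $y=x$ for the times $t$ and $2t$ and using $p(x,x,2t)=\int_X p(x,z,t)^2\,\di\mu(z)\ge \int_{B_{\sqrt t}(x)} p(x,z,t)^2\,\di\mu(z)$ (by \eqref{eq:ChapmanKolmogorov} and symmetry) gives $\mu(B_{\sqrt{2t}}(x))\le C\,\mu(B_{\sqrt t}(x))$ for all $x$ and $t$, whence \eqref{eq:doubling} by a routine chaining argument. The Poincaré inequality is the subtle part and the second place where one genuinely works: for $f\in\cD(\cE)$ and a ball $B$ of radius $r$, I would split $\int_B|f-f_B|^2\,\di\mu$ into a contribution of $f-P_{r^2}f$, controlled by $r^2\,\cE(f)$ via the spectral bound $\|f-P_s f\|_{L^2}^2\le s\,\cE(f)$, and a contribution of $P_{r^2}f$, whose oscillation on $B$ is estimated using the two-sided Gaussian bounds \eqref{eq:LiYau} together with the doubling just proved; this is the argument of \cite{Saloff-Coste,Sturm3}. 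Alternatively one may first bootstrap \eqref{eq:LiYau} up to \eqref{eq:parabolicHarnack} and then deduce \eqref{eq:Poincare} by the classical ``Harnack implies Poincaré'' mechanism. Either route closes the cycle, completing the proof.
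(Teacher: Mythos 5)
Your proposal is correct and takes essentially the same route as the paper: the paper gives no proof of Proposition \ref{prop:important} at all, presenting it as ``a combination of several famous results'' \cite{Grigor'yan92,Saloff-Coste,Sturm3}, and your cycle $(1)\Rightarrow(3)\Rightarrow(2)\Rightarrow(1)$ is precisely the standard argument carried out in those references (Moser iteration for $(1)\Rightarrow(3)$, Harnack chaining plus Davies' perturbation for $(3)\Rightarrow(2)$, and the on-diagonal/spectral argument for $(2)\Rightarrow(1)$). The only caveat --- a looseness of the statement itself rather than of your argument --- is that chaining yields the Gaussian lower bound with an exponent $e^{-C\dist^2(x,y)/t}$ for some $C$, not the sharp $e^{-\dist^2(x,y)/(4t)}$ written in \eqref{eq:LiYau}, so the constants in the two exponents there should really be allowed to differ from $4$.
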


Note that the parabolic Harnack inequality \eqref{eq:parabolicHarnack} implies the elliptic one introduced below in Lemma \ref{lem:ellipticHarnack}.\\

\textbf{Locally $L$-harmonic functions.}

Let $(X,\dist,\mu)$ be a metric measure space equipped with a  strongly local and regular Dirichlet form $\cE$ with associated operator $L$. We set $$
\cD_c(\cE):=\{ \phi \in \cD(\cE) \, \text{with compact support}\}.$$

\begin{definition}\label{def:localsol}  Let $\Omega \subset X$ be an open set.

1. We call local solution on $\Omega$ of the Laplace equation $Lu=0$ any function $u \in \cD_{loc}(\Omega,\cE)$ such that $\cE(u,\phi)=0$ holds true for any $\phi \in  \cD_c(\cE)$ with $\supp \phi \subset \Omega$.

2. We call locally $L$-harmonic function any function $u \in \cD(\cE)$ such that $\cE(u,\phi)=0$ holds true for any $\phi \in  \cD_c(\cE)$.

3. For any $f \in L^1_{loc}(X,\mu)$, we call local solution on $\Omega$ of the Poisson equation $Lu=f$ any function $u \in \cD_{loc}(\Omega, \cE)$ such that $\cE(u,\phi)=-\int_X f \phi \di \mu$ holds true for any $\phi \in \cD_c(\cE)$ with $\supp \phi \subset \Omega$.
\end{definition}

We shall often simply write ``$Lu=f$ on $\Omega$'' to mean that $u \in \cD_{loc}(\Omega, \cE)$ is a local solution on $\Omega$ of the equation $Lu=f$, and ``$Lv=0$'' to express that $v \in \cD_{loc}(\cE)$ is locally $L$-harmonic. Lastly, we point out that strong locality directly implies that constant functions are locally $L$-harmonic, i.e.
$$
L\mathbf{1}=0.
$$

Let us state a classical lemma (Liouville theorem under elliptic Harnack inequality) whose proof is omitted here (see e.g.~\cite[Lem.~6.3]{AldanaCarronTapie}).

\begin{lemma}\label{lem:ellipticHarnack}
Let $(X,\dist,\mu)$ be a metric measure space equipped with a Dirichlet form $\cE$ whose associated operator $L$ satisfies an elliptic Harnack inequality, meaning that there exists a constant $C_E>0$ such that for any ball $B \subset X$ and any non-negative local solution $h$ of $Lu=0$ on $B$, we have 
\begin{equation}\label{eq:ellipticHarnack}
\essssup_{(1/2)B} \, h \le C_E \esssinf_{(1/2)B} \, h.
\end{equation}
Then any non-negative locally $L$-harmonic function is constant.\\
\end{lemma}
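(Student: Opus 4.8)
The plan is to run the classical argument: first reduce to the case where the global lower bound of $h$ is attained in an $\esssinf$ sense by subtracting a constant, and then exhaust $X$ by concentric balls and invoke the Harnack inequality on each. Concretely, I would let $h\in\cD(\cE)$ be a non-negative locally $L$-harmonic function, set $m:=\esssinf_X h$ (finite and non-negative), and verify that $h-m$ is again a non-negative local solution of $Lu=0$ on every ball $B\subset X$. Non-negativity is clear from the definition of $m$. Membership in $\cD_{\loc}(\cE)$ follows because constant functions belong to $\cD_{\loc}(\cE)$: on any compact set they coincide with a cutoff function taken from $\cD(\cE)\cap C_c(X)$, which exists by regularity of $\cE$. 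Finally, for every $\phi\in\cD_c(\cE)$ with $\supp\phi\subset B$ one has $\Gamma(h-m,\phi)=\Gamma(h,\phi)$, since strong locality forces $\Gamma(\,\cdot\,,\phi)$ to vanish on any function that is constant on a neighbourhood of $\supp\phi$; hence $\cE(h-m,\phi)=\cE(h,\phi)=0$.

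Next I would feed $h-m$ into the elliptic Harnack inequality \eqref{eq:ellipticHarnack}: applied on the ball $2B$, whose half is $B$, it gives, for every ball $B\subset X$,
$$
\essssup_{B}(h-m)\le C_E\,\esssinf_{B}(h-m).
$$
Now fix $o\in X$ and take $B=B_k(o)$ for $k\in\setN$; since every point of $X$ is at finite distance from $o$, one has $X=\bigcup_k B_k(o)$. The sequence $\esssinf_{B_k(o)}(h-m)$ is non-increasing in $k$ with infimum $\esssinf_X(h-m)=0$, hence it tends to $0$, and by the inequality above $\essssup_{B_k(o)}(h-m)\to 0$ as well. Since every ball $B'\subset X$ sits inside $B_k(o)$ for $k$ large, $\essssup_{B'}(h-m)\le\essssup_{B_k(o)}(h-m)\to 0$, so $\essssup_{B'}(h-m)=0$ for every ball $B'$. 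As $X$ is covered by countably many balls, $h-m\le 0$ $\mu$-almost everywhere; combined with $h-m\ge 0$ this yields $h\equiv m$, i.e.~$h$ is constant.

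I expect the only genuine (and still mild) obstacle to lie in the first step, namely checking that subtracting the constant $m$ keeps us within the class of non-negative local solutions to which \eqref{eq:ellipticHarnack} applies; this rests on the existence of cutoff functions for regular Dirichlet forms together with strong locality of $\Gamma$ (both available in the standing assumptions of this subsection). Everything after that is a formal manipulation of essential suprema and infima over an increasing exhaustion of $X$ by balls.
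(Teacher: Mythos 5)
Your proof is correct: subtracting the essential infimum, checking via strong locality and the existence of cutoffs that $h-\esssinf_X h$ remains a non-negative local solution on every ball, and then letting the Harnack inequality propagate the vanishing of the essential infimum over an exhaustion by concentric balls is exactly the classical Liouville argument. The paper itself omits the proof and simply refers to \cite[Lem.~6.3]{AldanaCarronTapie}, which runs along the same lines, so there is nothing to reconcile.
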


\textbf{Strongly harmonic functions.}

Let $(X,\dist,\mu)$ be a metric measure space. Following the terminology adopted in \cite{GaczkowskiGorka, AGG19}, for any open set $\Omega \subset X$ we call strongly harmonic function on $\Omega$ any function $h:\Omega\to\setR$ satisfying the mean value property:
$$h(x) = \fint_{B_r(x)} h \di \mu \qquad \forall x \in \Omega, \, \, \forall \, r\in(0, \dist(x,^c \Omega)).$$
\begin{remark}\label{rem:defalternative}
It can easily be checked that a function $h:\Omega\to\setR$ is strongly harmonic if and only if for any $x\in \Omega$ and any $u\in C_c^1\left( \left[0, \dist(x,^c \Omega)\right]\right)$ with 
$\int_X u(\dist(x,y))\di \mu(y) = 1$ one has
$$h(x) = \int_{X}u\left(\dist(x,y)\right) h(y) \di \mu(y).$$
\end{remark}
Under mild assumptions on $(X,\dist,\mu)$, an elliptic Harnack inequality holds true for strongly harmonic functions, provided the doubling condition \eqref{eq:doubling} is satisfied: see \cite[Lemma 4.1]{AGG19}. The next lemma is an easy consequence of this fact. We recall that a metric space is called proper if any closed ball is compact, and that proper metric spaces are complete and locally compact.

\begin{lemma}\label{lem:Harnackstrongly}
Let $(X,\dist)$ be a proper metric space equipped with a regular Borel measure $\mu$ such that $0<\mu(B)<+\infty$ for any metric ball $B \subset X$. Assume that $(X,\dist,\mu)$ satisfies the doubling condition \eqref{eq:doubling}. Then any non-negative strongly harmonic function on $X$ is constant.
\end{lemma}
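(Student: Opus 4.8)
The plan is to deduce this from the elliptic Harnack inequality for strongly harmonic functions quoted from \cite[Lemma 4.1]{AGG19} together with a standard Liouville-type scaling argument, essentially the same one underlying Lemma \ref{lem:ellipticHarnack}. First I would check that the hypotheses of \cite[Lemma 4.1]{AGG19} are met: $(X,\dist)$ is proper, hence complete, locally compact and separable; $\mu$ is a regular Borel measure with $0<\mu(B)<+\infty$ on every ball, so $\supp\mu=X$ and $(X,\dist,\mu)$ is a genuine metric measure space in the sense fixed in the preliminaries; and the doubling condition \eqref{eq:doubling} holds by assumption. This yields a constant $C_E>0$ (depending only on the doubling constant $C_D$) such that every non-negative strongly harmonic function $h$ on a ball $B$ satisfies $\esssup_{(1/2)B} h \le C_E \essinf_{(1/2)B} h$; in fact, since strongly harmonic functions are continuous (they satisfy the mean value property and $\mu$ is doubling, so the usual oscillation estimate applies), the $\esssup$/$\essinf$ are just $\sup$/$\inf$.

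Next I would run the contradiction/scaling argument. Let $h\ge 0$ be strongly harmonic on all of $X$; replacing $h$ by $h-\inf_X h$ is \emph{not} allowed here (constants shift strong harmonicity but we want to show $h$ itself is constant), so instead fix a point $o\in X$ and set $m:=\inf_X h\ge 0$. For $R>0$ apply the elliptic Harnack inequality on the ball $B_{2R}(o)$: this gives
\begin{equation}\label{eq:HS}
\sup_{B_R(o)} h \le C_E \inf_{B_R(o)} h \le C_E\, h(o).
\end{equation}
Since $R>0$ is arbitrary and $X=\bigcup_{R>0} B_R(o)$ (using that $X$ is, say, connected — or more simply that properness plus $\supp\mu=X$ forces every ball to be non-trivial, while the mean value property propagates along chains of balls), we conclude $\sup_X h \le C_E h(o)<+\infty$, i.e. $h$ is bounded. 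Then I would apply the Harnack inequality to $h-\inf_X h$: wait — here one must be careful, because $h-c$ need not be strongly harmonic for a constant $c$ unless $\mathbf 1$ is strongly harmonic, which it is ($\fint_{B_r(x)}\mathbf 1\,\di\mu = 1$ trivially). So $h-\inf_X h\ge 0$ is strongly harmonic, and the Harnack inequality on $B_{2R}(o)$ gives $\sup_{B_R(o)}(h-\inf_X h)\le C_E\inf_{B_R(o)}(h-\inf_X h)$. Letting $R\to\infty$, the right-hand side tends to $C_E\,(\inf_X h-\inf_X h)=0$, whence $h\equiv \inf_X h$ is constant.

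The one genuine point requiring care — and the main (minor) obstacle — is justifying that $\inf_{B_R(o)} h \to \inf_X h$ and $\inf_{B_R(o)}(h-\inf_X h)\to 0$ as $R\to\infty$, together with the passage to the supremum over all of $X$; this is where one uses that $X$ is proper (so bounded sets are relatively compact and exhaustions by balls behave well) and that strongly harmonic functions are continuous. A clean way to organize it is: by \eqref{eq:HS}, $h$ is globally bounded, so $M:=\sup_X h<\infty$ and $m:=\inf_X h$ are finite; pick $x_k$ with $h(x_k)\to m$; for each $k$ choose $R_k$ with $x_k\in B_{R_k}(o)$ and apply Harnack to $h-m\ge 0$ on $B_{2R_k}(o)$, getting $M-m \le \sup_{B_{R_k}(o)}(h-m)\le C_E\inf_{B_{R_k}(o)}(h-m)\le C_E\,(h(x_k)-m)\to 0$, hence $M=m$. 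I expect no serious difficulty beyond this bookkeeping, since all the analytic content is already packaged in \cite[Lemma 4.1]{AGG19}.
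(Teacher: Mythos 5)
Your proof is correct and follows exactly the route the paper intends: the lemma is presented as an easy consequence of the elliptic Harnack inequality for strongly harmonic functions from \cite[Lemma 4.1]{AGG19}, and your Liouville argument (constants are strongly harmonic, so $h-\inf_X h\ge 0$ is strongly harmonic, and Harnack on an exhaustion by balls forces it to vanish) is the standard one. The only slip is the direction of the first inequality in your final chain — for fixed $k$ one has $\sup_{B_{R_k}(o)}(h-m)\le M-m$, not the reverse — but this is repaired by fixing $y\in X$, taking $R_k\uparrow\infty$, and concluding $h(y)-m\le C_E\,(h(x_k)-m)\to 0$ once $y\in B_{R_k}(o)$.
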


When $(X,\dist,\mu)$ has an $\alpha$-dimensional volume, strongly harmonic functions satisfy the following two properties:

\begin{lemma}\label{lem:0109}
Let $(X,\dist,\mu)$ be with an $\alpha$-dimensional volume and $h:X\to\setR$ be strongly harmonic. Then:
\begin{enumerate}
\item[(i)] if $h$ has linear growth -- meaning that there exists $C>0$ such that $|h|\le C(1+\dist(o,\cdot))$ for some $o \in X$ -- then $h$ is Lipschitz;
\item[(ii)] if $h$ is continuous and such that $\sup_{\partial B_{r_i}(o)} |h| =o(r_i)$ for some point $o \in X$ and some sequence $\{r_i\}_i \subset (0,+\infty)$ such that $r_i \to +\infty$, then $h$ is constant.
\end{enumerate}
\end{lemma}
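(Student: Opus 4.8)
The plan is to exploit the mean value property together with the explicit formula $\mu(B_r(x)) = \omega_\alpha r^\alpha$ to get quantitative control on oscillations of $h$. For (i), I would first observe that a space with an $\alpha$-dimensional volume is automatically doubling (with $C_D = 2^\alpha$), proper (closed balls are bounded and the space is complete — indeed one checks $\sigma$-compactness plus the volume growth forces properness), and that $0 < \mu(B) < \infty$ for every ball; hence Lemma \ref{lem:Harnackstrongly} and the elliptic Harnack inequality of \cite[Lemma 4.1]{AGG19} apply. The key computational step is the following: given two points $x, y \in X$ with $\dist(x,y) = \rho$ small compared to $R$, compare $h(x) = \fint_{B_R(x)} h \di\mu$ and $h(y) = \fint_{B_R(y)} h \di\mu$. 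Writing the difference as an integral over the symmetric difference $B_R(x) \triangle B_R(y)$, and using that $B_R(x) \triangle B_R(y) \subset B_{R+\rho}(x) \setminus B_{R-\rho}(x)$, one bounds
\begin{equation}
|h(x) - h(y)| \le \frac{1}{\omega_\alpha R^\alpha}\int_{B_{R+\rho}(x)\setminus B_{R-\rho}(x)} |h| \di\mu \le \frac{\omega_\alpha((R+\rho)^\alpha - (R-\rho)^\alpha)}{\omega_\alpha R^\alpha}\, \sup_{B_{R+\rho}(x)} |h|.
\end{equation}
The prefactor is $O(\rho/R)$ as $\rho/R \to 0$, while the linear growth hypothesis gives $\sup_{B_{R+\rho}(x)}|h| \le C(1 + \dist(o,x) + R + \rho)$. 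Choosing $R$ comparable to $1 + \dist(o,x)$ (say $R = 1 + \dist(o,x)$) makes the right-hand side $\le C' \rho$ with $C'$ independent of $x$, i.e. $h$ is Lipschitz. (One must be slightly careful that the mean-value identity requires $R < \dist(x,{}^c\Omega) = +\infty$ here since $\Omega = X$, so there is no domain restriction; the only real content is the annulus volume estimate.)

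For (ii), the idea is a maximum-principle-type argument driven by the same mean value property. Since $h$ is strongly harmonic on all of $X$, for the point $o$ and any $r > 0$ we have $h(o) = \fint_{B_r(o)} h\di\mu$; more useful is that $h$ satisfies the maximum principle on each ball, so $\sup_{\overline{B_r(o)}} h = \sup_{\partial B_r(o)} h$ and likewise for the infimum (this follows from continuity, properness, and the mean value property: an interior maximum would force $h$ to be locally constant, and one propagates along a chain of balls to the boundary). Applying this with $r = r_i$ and letting $i \to \infty$, the hypothesis $\sup_{\partial B_{r_i}(o)} |h| = o(r_i)$ — wait, this alone is not enough; I would instead combine it with part (i): since $h$ has at most linear growth on each compact region it is in particular locally bounded, and applying the maximum principle $\sup_{B_{r_i}(o)}|h| = \sup_{\partial B_{r_i}(o)}|h| = o(r_i) \to$ well this does not immediately give boundedness. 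The correct route is: fix $R_0 > 0$; for $i$ large with $r_i > R_0$, the maximum principle gives $\sup_{B_{R_0}(o)} |h| \le \sup_{\partial B_{r_i}(o)} |h| \le \varepsilon_i r_i$ where $\varepsilon_i \to 0$ — this is still growing. So instead I use part (i) differently: I would show $h$ is \emph{bounded} (hence constant by Lemma \ref{lem:Harnackstrongly} applied to $h + \|h\|_\infty$), by the following refinement. For fixed $x$ and large $r$, iterate the oscillation estimate from (i): $|h(x) - h(o)| \le C \frac{\dist(o,x)}{r}\sup_{B_{2r}(o)}|h|$ for all large $r$; combined with $\sup_{B_{2r}(o)}|h| = \sup_{\partial B_{2r}(o)}|h| = o(r)$ via the maximum principle, the right-hand side tends to $0$, forcing $h(x) = h(o)$.

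The main obstacle I anticipate is establishing the maximum principle $\sup_{\overline{B_r(o)}}h = \sup_{\partial B_r(o)}h$ rigorously from the mean value property alone — the standard argument needs continuity of $h$ (assumed in (ii)), compactness of closed balls (properness, which follows from the $\alpha$-dimensional volume), and a connectedness/chaining argument to propagate an interior extremum to the boundary; the set where $h$ attains its max over $\overline{B_r(o)}$ is shown to be open (by the mean value property, if $h(x)$ equals the max then $h \equiv \max$ on a small ball around $x$) and closed, so by connectedness of $X$ (which also needs justification — a space with $\alpha$-dimensional volume is connected, since a separation would contradict the exact ball volume formula near a separating point) it is either everything or touches $\partial B_r(o)$. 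Once the maximum principle is in hand, the rest is the elementary annulus-volume computation above. I would expect to invoke \cite[Lemma 4.1]{AGG19} and Lemma \ref{lem:Harnackstrongly} to streamline part of this, reducing (ii) to showing $h$ is bounded.
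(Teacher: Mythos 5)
Your proposal is correct and follows essentially the same route as the paper: the paper compares the nested balls $B_{r+d}(x)\supset B_r(y)$ and lets $r\to+\infty$ to get the Lipschitz bound $|h(x)-h(y)|\le \alpha M\dist(x,y)$, which is the same annulus-volume estimate as your symmetric-difference computation, and for (ii) it applies the same oscillation inequality with radius $R_i=r_i-d-\dist(o,x)$ together with the weak maximum principle of \cite[Cor.~4.3]{AGG19}. Two small points to tighten: in (i) your prefactor is only $O(\rho/R)$ when $\rho/R$ stays bounded, so the fixed choice $R=1+\dist(o,x)$ must be replaced by $R\to+\infty$ (or symmetrized) to cover pairs with $\dist(x,y)\gg 1+\dist(o,x)$; and in (ii) the hypothesis $\sup_{\partial B_{r_i}(o)}|h|=o(r_i)$ holds only along the given sequence, so the radii in your final estimate must be tied to the $r_i$ rather than to arbitrary $r$.
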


\begin{proof}
Let us first prove $(i)$. Assuming $h$ to have linear growth, we know that there exists $o \in X$, $r_o>0$ and $M>0$ such that $|h(z)| \le M\dist(o,z)$ for all $z \in X \backslash B_{r_o}(o)$. Since $h$ is strongly harmonic, we have
$$
\mu(B_{r+d}(x)) h(x) - \mu(B_r(y)) h(y) = \int_{B_{r+d}(x) \backslash B_r(y)} h \di \mu
$$
for all $r>0$ and any given $x,y \in X$, where we have set $d:=\dist(x,y)$. Since $\mu(B_{r+d}(x)) = \omega_\alpha (r+d)^\alpha$ and $\mu(B_r(y)) = \omega_\alpha r^\alpha$, we obtain
\begin{align}\label{eq:D4.1}
|\omega_\alpha (r+d)^\alpha h(x) - \omega_\alpha r^\alpha h(y)| & \le \omega_\alpha ((r+d)^\alpha - r^\alpha)\, \mathrm{sup}_{B_{r+d}(x){\color{blue}\backslash B_r(y)}} \, |h| \nonumber \\
& \le \omega_\alpha ((r+d)^\alpha - r^\alpha) \, \mathrm{sup}_{B_{r+d+\dist(o,x)}(o){\color{blue}\backslash B_r(y)}} \, |h|
\end{align}
since $B_{r+d}(x) \subset B_{r+d+\dist(o,x)}(o)$. Choosing $r>r_o{\color{blue}+\dist(o,y)}$ in order to ensure that  $B_r(y)$ contains $B_{r_o}(o)$, we get $ \mathrm{sup}_{B_{r+d+\dist(o,x)}(o){\color{blue}\backslash B_r(y)}} \, |h| \le M(r+d+\dist(o,x)),$
hence $$|(1+d/r)^\alpha h(x) - h(y)| \le ((1+d/r)^\alpha-1)M(r+d+\dist(o,x)).$$ Letting $r \to +\infty$ and applying $(1+d/r)^\alpha-1 =\alpha d/r +o(1/r)$ yields to $|h(x) - h(y)| \le \alpha dM$.

To prove $(ii)$, apply \eqref{eq:D4.1} with $r=R_i:=r_i-d-\dist(o,x)$ to get
$$
\left| h(x) (1+d/R_i)^\alpha - h(y)\right| \le ((1+d/R_i)^\alpha-1) \,\mathrm{sup}_{B_{r_i}(o)} |h|.
$$
By the weak maximum principle \cite[Cor.~4.3]{AGG19}, we have $\sup_{B_{r_i}(o)} |h| = \sup_{\partial B_{r_i}(o)} |h|$. Since $(1+d/R_i)^\alpha-1= \alpha d/ R_i + o(1/R_i) = O(1/r_i)$  when $i\to +\infty$, then there exists $i_o$ and $C>0$ such that
$$\left| h(x) (1+d/R_i)^\alpha - h(y)\right| \le C r_i^{-1}\mathrm{sup}_{\partial B_{r_i}(o)} |h|
$$
for all $i \ge i_o$. This implies $h(x)=h(y)$ by letting $i$ tend to $+\infty$.
\end{proof}

\hfill

\textbf{Tangent cones at infinity.}

We refer to \cite{Gromov} for a definition of the Gromov-Hausdorff distance $\dist_{GH}$ between compact metric spaces and only mention here that a sequence of compact metric spaces $\{(X_i,\dist_i)\}$ converges to another compact metric space $(X,\dist)$ with respect to the Gromov-Hausdorff distance (what we denote by $\dist_{GH}(X_i,X) \to 0$) if and only if there exists an infinitesimal sequence $\{\eps_i\}_i \subset (0,+\infty)$ and functions $\phi_i : X_i \to X$ called $\eps_i$-isometries such that $|\dist(\phi_i(x),\phi_i(x'))-\dist_i(x,x')|\le \eps_i$ for any $x,x' \in X_i$ and any $i$. If $x_i \in X_i$ for any $i$ and $x \in X$ are such that $\dist(\phi_i(x_i),x) \to 0$, we write $x_i \stackrel{GH}{\to} x$.

When dealing with non-compact spaces, we say that a sequence of pointed metric spaces $\{(X_i, \dist_i, x_i)\}_i$ converges in the pointed Gromov-Hausdorff topology to $(X, \dist, x)$ if there exist sequences of positive numbers $\eps_i \downarrow 0$, $R_i \uparrow \infty$, and of Borel maps $\phi_i:B_{R_i}(x_i) \to X$, also called $\eps_i$-isometries, such that such that for any $i$ the ball $B_{R_i}(x)$ is included in the $\eps_i$-neighborhood of $\phi_i(B_{R_i}(x_i))$, namely $ \bigcup_{y \in \phi_i(B_{R_i}(x_i))} B_{\eps_i}(y)$, $|\dist_i(y, z)-\dist(\phi_i(y), \phi_i(z))|<\eps_i$ for any $y,\, z \in B_{R_i}(x_i)$, and $\dist(\phi_i(x_i),x)\to 0$ (which we also abbreviate to  $x_i \stackrel{GH}{\to} x$).

Pointed measured Gromov-Hausdorff convergence of a sequence of pointed metric measure spaces $\{(X_i, \dist_i, \mu_i, x_i)\}$ to $(X, \dist,\mu,x)$ is set as pointed Gromov-Hausdorff convergence of $\{(X_i, \dist_i, x_i)\}$ to $(X, \dist, x)$ with the additional requirement $(\phi_i)_{\sharp}\mu_i \stackrel{C_\bs(X)}{\rightharpoonup} \mu$ where $C_\bs(X)$ is the space of continuous functions with bounded support and $ f_\sharp $ is
the push forward operator between measures induced by a Borel map $f$.

A metric space $(X,\dist)$ is called metric doubling if there exists a positive integer $N$ such that any ball in $(X,\dist)$ can be covered by at most $N$ balls with half its radius. Whenever $(X,\dist)$ is a doubling space, for any $o \in X$, the family of pointed spaces $\{(X,r^{-1}\dist,o)\}_{r>1}$ satisfies the assumptions of Gromov's precompactness theorem \cite[Prop.~5.2]{Gromov}, henceforth it admits limit points in the pointed Gromov-Haudorff topology as $r\uparrow +\infty$. These pointed metric spaces are called tangent cones at infinity of $(X,\dist)$ in $o$.

It is well-known that when $(X,\dist,\mu)$ is satisfies the doubling property \eqref{eq:doubling}, then the metric space $(X,\dist)$ is metric doubling: see e.g.~\cite[Section 2.5]{AmbrosioColomboDiMarino}.

When a metric measure space $(X,\dist,\mu)$ has an $\alpha$-dimensional volume, a simple computation shows that it is measure doubling, with $C_D=2^\alpha$. Moreover, one can equip any of its tangent cones at infinity $(\uX,\udist,\uo)$ with a limit measure $\umu$ in the following way. Let $\{r_i\}_i$ be a sequence of positive real numbers diverging to $+\infty$ such that $(\uX,\udist,\uo)$ is the pointed Gromov-Hausdorff limit of $\{(X,r_i^{-1}\dist,o)\}_i$. Set $\mu_i:=r_i^{-\alpha}\mu$ for any $i$, and note that
$$
\mu_i(B_r^{\dist_i}(x)) = \mu_i(B_{r r_i}(x)) = \omega_\alpha r^\alpha \qquad \forall x \in X, r>0.
$$
Set $\underline{V}(x,r):=\omega_\alpha r^\alpha$ for any $x \in X$ and $r>0$. Then for any $\delta>0$ and any Borel set $A$ of $(\uX,\udist)$, setting
$$
\umu_\delta(A) := \inf\left\{ \sum_{i} \underline{V}(z_i,r) \, : \, \{B_{r_i}(z_i)\}_i\, \, \text{s.t. $A \subset \bigcup_i B_{r_i}(z_i)$ and $r_i \le \delta$} \right\}
$$
and then $\umu(A) = \lim\limits_{\delta \to 0} \umu_\delta(A)$ defines a metric outer measure $\umu$ on $(\uX,\udist)$ whose canonically associated measure, still denoted by $\umu$, is a Radon measure satisfying $\umu(B_r(\ux)) = \omega_\alpha r^\alpha$ for any $\ux \in \uX$ and $r>0$.
This shows that $(\uX,\udist,\umu)$ has an $\alpha$-dimensional volume. Moreover, we obviously have $\umu(B_r(\ux))=\lim_{i \to +\infty} \mu_i(B_r^{\dist_i}(x_i))$ for any $r>0$ and any sequence $x_i \stackrel{GH}{\to} x$; by density in $C_\bs(\uX)$ of the space spanned by the collection of characteristic functions of balls, this implies the pointed measured Gromov-Hausdorff convergence $(X,r_i^{-1}\dist,\mu_i,o) \to (\uX,\udist,\umu,\uo)$.\\

\textbf{Ascoli-Arzelà type theorems}

Let $\{(X_i,\dist_i,x_i)\}_i, (X,\dist,x)$ be pointed proper metric spaces such that $$(X_i,\dist_i,x_i) \to (X,\dist,x)$$ in the pointed Gromov-Hausdorff topology and $\phi_i:B_{R_i}(x_i) \to X$ be $\eps_i$-isometries, where $\{\eps_i\}_i, \{R_i\}_i \subset (0,+\infty)$ are such that $\eps_i \downarrow 0$ and $R_i \uparrow +\infty$. For any $i$, let $K_i$ be a compact subset of $X_i$, and assume that there exists $K \subset X$ compact such that $\dist_{GH}(K_i,K) \to 0$. We say that functions $f_i:X_i\to\setR$ converge to $f:X\to\setR$ uniformly over $K_i \to K$ if $\sup_{K_i} |f_i - f \circ \phi_i| \to 0$. Note that this definition depends on the choice of the $\eps_i$-isometries $\phi_i$ that we keep fixed for the rest of this paragraph.

\begin{remark}
In the rest of the article, whenever we consider a convergent sequence of pointed metric spaces $(X_i,\dist_i,x_i) \to (X,\dist,x)$, we always implicitly assume that sequences $\{\eps_i\}_i, \{R_i\}_i \subset (0,+\infty)$ with $\eps_i\downarrow 0, R_i \uparrow +\infty$ and $\eps_i$-isometries $\phi_i:B_{R_i}(x_i)\to X$ have been chosen a priori and that the statements ``$x_i \stackrel{GH}{\to} x$'' and ``$f_i \to f$ uniformly on compact sets'' are meant with these $\eps_i$-isometries.
\end{remark}

 In this context, we have the following Ascoli-Arzelà theorem:

\begin{proposition}\label{prop:AA1}
Let $\{(X_i,\dist_i,x_i)\}_i, (X,\dist,x)$ be as above, and $r>0$. For any $i$, let $f_i \in C(X_i)$ be such that:
\begin{itemize}
\item $\sup_i \|f_i\|_{L^{\infty}(\overline{B}_r(x_i))} < +\infty$,
\item the sequence $\{f_i\}_i$ is asymptotically uniformly continuous on $\overline{B}_r(x)$ (see \cite[Def.~3.2]{Honda}).
\end{itemize}
Then $\{f_i\}_i$ admits a subsequence $(f_{i(j)})_j$ which converges to $f$ uniformly over $\overline{B}_r(x_i) \to \overline{B}_r(x)$.
\end{proposition}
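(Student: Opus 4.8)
The statement is the Gromov--Hausdorff counterpart of the classical Arzel\`a--Ascoli theorem, and I would follow the classical scheme: a diagonal extraction over a countable dense net in the limit space, followed by a propagation of the modulus of continuity. \emph{First}, since $(X,\dist)$ is proper, $\overline B_r(x)$ is compact; fix a countable set $D=\{p_k\}_{k\in\setN}\subset B_r(x)$ dense in $\overline B_r(x)$. For each $k$, using that $R_i\uparrow+\infty$ and that $B_{R_i}(x)$ lies in the $\eps_i$-neighborhood of $\phi_i(B_{R_i}(x_i))$, choose for all large $i$ a point $p_k^i\in X_i$ with $\dist(\phi_i(p_k^i),p_k)\to 0$; since $\dist(p_k,x)<r$ and $\phi_i$ is an $\eps_i$-isometry with $\phi_i(x_i)\to x$, one has $p_k^i\in\overline B_r(x_i)$ for $i$ large, and moreover $\dist_i(p_k^i,p_l^i)\to\dist(p_k,p_l)$ for all $k,l$.

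\emph{Next}, for each fixed $k$ the real sequence $(f_i(p_k^i))_i$ is bounded because $\sup_i\|f_i\|_{L^\infty(\overline B_r(x_i))}<+\infty$, so a Cantor diagonal argument produces a subsequence --- which we do not relabel --- along which $f_i(p_k^i)$ converges for every $k$; set $f(p_k):=\lim_i f_i(p_k^i)$. To see that $f$ is uniformly continuous on $D$, fix $\vareps>0$: asymptotic uniform continuity of $\{f_i\}$ on $\overline B_r(x)$ in the sense of \cite[Def.~3.2]{Honda} yields $\delta>0$ and $i_0$ such that $|f_i(y)-f_i(z)|<\vareps$ whenever $i\ge i_0$ and $y,z$ lie over $\overline B_r(x)$ with $\dist_i(y,z)<\delta$; if $\dist(p_k,p_l)<\delta/2$ then $\dist_i(p_k^i,p_l^i)<\delta$ for $i$ large, hence $|f(p_k)-f(p_l)|\le\vareps$. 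Thus $f$ extends uniquely to a continuous function on $\overline B_r(x)$, and then (e.g. via the usual modulus-of-continuity formula) to some $f\in C(X)$ --- the choice of extension being irrelevant, since the convergence over $\overline B_r(x_i)\to\overline B_r(x)$ only involves the values of $f$ near $\overline B_r(x)$.

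\emph{Finally}, I would prove $\sup_{\overline B_r(x_i)}|f_i-f\circ\phi_i|\to0$. Fix $\vareps>0$ and pick $\delta>0$ simultaneously good for the modulus of continuity of $f$ and for the asymptotic uniform continuity of $\{f_i\}$. By compactness there are $p_1,\dots,p_N\in D$ forming a $(\delta/8)$-net of $\overline B_r(x)$; for $i$ large, $|f_i(p_j^i)-f(p_j)|<\vareps$ for all $j\le N$. Given $q\in\overline B_r(x_i)$, the point $\phi_i(q)$ lies within $\eps_i+o(1)$ of $\overline B_r(x)$, so for $i$ large there is $j$ with $\dist(\phi_i(q),p_j)<\delta/2$, and then
$$
\dist_i(q,p_j^i)\le\dist(\phi_i(q),\phi_i(p_j^i))+\eps_i\le\dist(\phi_i(q),p_j)+\dist(p_j,\phi_i(p_j^i))+\eps_i<\delta
$$
for $i$ large; combining the three estimates $|f_i(q)-f_i(p_j^i)|<\vareps$, $|f_i(p_j^i)-f(p_j)|<\vareps$ and $|f(p_j)-f(\phi_i(q))|<\vareps$ gives $|f_i(q)-f(\phi_i(q))|<3\vareps$. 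Taking the supremum over $q\in\overline B_r(x_i)$ and letting $i\to\infty$, then $\vareps\to0$, concludes.

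\emph{On the main obstacle.} There is no deep difficulty; the argument is the textbook one. The only genuine care lies in the bookkeeping forced by the approximate isometries: one must check that the lifts $p_k^i$ remain inside $\overline B_r(x_i)$ (so that the uniform bound is available), that the points entering the asymptotic uniform continuity really ``lie over'' $\overline B_r(x)$ up to an $O(\eps_i)$ error, and that the modulus of continuity passes back and forth through $\phi_i$ with errors controlled by $\eps_i\downarrow0$. Once the precise content of \cite[Def.~3.2]{Honda} is unwound, every displayed inequality above is a triangle inequality with an $O(\eps_i)$ correction.
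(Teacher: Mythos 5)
Your argument is correct, but it follows a genuinely different route from the paper's. You rebuild the whole Arzel\`a--Ascoli machinery from scratch: a diagonal extraction over a countable net lifted through the $\eps_i$-isometries, uniform continuity of the limit on the net via the asymptotic uniform continuity hypothesis, extension of $f$ to $C(X)$, and a three-$\vareps$ net argument for the uniform convergence. The paper instead outsources the compactness step entirely to \cite[Prop.~3.3]{Honda}, which already produces a continuous limit $f$ together with the pointwise convergence $f_{i(j)}(x_j)\to f(x)$ along all GH-convergent sequences $x_j\stackrel{GH}{\to}x$, and then upgrades pointwise to uniform convergence by contradiction: if uniformity failed, one extracts points $y_i\in\overline B_r(x_i)$ realizing a gap $\ge\eps/2$, uses properness to make $\phi_i(y_i)$ converge to some $z$, and contradicts the pointwise convergence plus continuity of $f$. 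Your version is self-contained and makes the role of each hypothesis explicit (the $L^\infty$ bound for pointwise compactness, asymptotic uniform continuity for equicontinuity of the limit and for the net argument), at the cost of the bookkeeping you acknowledge; the paper's version is shorter but leans on Honda's result as a black box and only has to handle the uniformity upgrade. The single point in your write-up deserving a word of care is the choice of $D\subset B_r(x)$ dense in $\overline B_r(x)$: for a general proper metric space the closed ball need not be the closure of the open ball, so if some $p_k$ sits at distance exactly $r$ its lift $p_k^i$ may fall just outside $\overline B_r(x_i)$ and the stated $L^\infty$ bound does not literally apply to it; this is repaired in one line by comparing $f_i(p_k^i)$ with $f_i$ at a nearby point of $\overline B_r(x_i)$ via asymptotic uniform continuity (and in the paper's applications the spaces are geodesic, so the issue does not arise).
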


\begin{proof}
From \cite[Prop.~3.3]{Honda}, we know that for $\{f_i\}_i$ satisfying the above assumptions, there exists $f \in C(B_r(x))$ and a subsequence $(f_{i(j)})_j$ such that $f_{i(j)}(x_j) \to f(x)$ whenever $x_j \stackrel{GH}{\to} x \in B_r(x)$. With no loss of generality, we can assume that the subsequence is the whole sequence itself. By contradiction, assume that the uniform convergence $f_i\to f$ over $\overline{B}_r(x_i)\to \overline{B}_r(x)$ is not satisfied. Then there is some $\eps>0$ and  a subsequence $(f_{i(\ell)})_\ell$ such that $\inf_\ell \{\sup_{\overline{B}_r(x_{i(\ell)})} |f_{i(\ell)}-f \circ \phi_{i(\ell)}|\} \ge \eps$. Again, we can assume that the subsequence is the whole sequence itself. For any $i$, choose $y_i \in \overline{B}_r(x_i)$ such that $|f_i(y_i)-f \circ \phi_i(y_i)| \ge \eps/2$ and set $z_i:=\phi_i(y_i) \in \overline{B}_{r+\eps_i}(x)$. Properness of $X$ implies that the sequence $\{z_i\}_i$ converges to some $z \in \overline{B}_r(x)$, up to extraction. In particular, $y_i \stackrel{GH}{\to} z$. Then in
$$
\eps/2 \le |f_i(y_i)-f(z)| + |f(z)-f\circ \phi_i(y_i)|,
$$
the first term in the right-hand side goes to $0$ when $i$ tend to $+\infty$.  Since $f$ is continuous, we also have $|f(z)-f\circ \phi_i(y_i)|\to 0$ when $i$ tend to $+\infty$, hence a contradiction.
\end{proof}

Let $\{(X_i,\dist_i,x_i)\}_i, (X,\dist,x),\{\phi_i\}_i$ be as above. Let $(Y,\dist_Y)$ be another metric space. We say that $f_i : Y\to X_i$ converge to $f : Y\to X$ uniformly on compact subsets of $Y$ if $\sup_K \dist_i(\phi_i \circ f_i,f) \to 0$ for any compact set $K \subset Y$.

An Ascoli-Arzelà theorem is also available in this context. We state it with an equi-Lipschitz assumption which is enough for our purposes. The proof is omitted for brevity.

\begin{proposition}\label{prop:AA2}
Let $\{(X_i,\dist_i,x_i)\}_i, (X,\dist,x)$ be as above. Let $(Y,\dist_Y)$ be a metric space and $f_i : Y\to X_i$ be Lipschitz functions such that:
\begin{itemize}
\item $L:=\sup_i \Lip(f_i) < +\infty$,
\item there exists $y\in Y$ and $r>0$ such that $\dist_i(f_i(y),x_i)\le  r$ for any $i$.
\end{itemize} Then $\{f_i\}_i$ admits a subsequence converging uniformly on compact sets of $Y$ to some Lipschitz function $f:Y\to X$, and $\Lip(f) \le L$.
\end{proposition}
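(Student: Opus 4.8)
The plan is to run a standard Arzelà--Ascoli argument: fix a countable dense subset of $Y$, extract by a diagonal procedure a subsequence along which the maps converge at every point of that set, and then upgrade pointwise convergence on a dense set to uniform convergence on compacta using the equi-Lipschitz hypothesis. Properness of $X$ supplies the compactness needed for the diagonal extraction, while the two hypotheses (the equi-Lipschitz bound $L$ and the anchoring bound $\dist_i(f_i(y),x_i)\le r$) serve to confine, uniformly in $i$, all the relevant images to a fixed bounded region of $X_i$; since $R_i\uparrow+\infty$, every such region lies inside $B_{R_i}(x_i)$ for all $i$ large enough, so the $\eps_i$-isometries $\phi_i$ are defined there and $\phi_i\circ f_i$ makes sense.

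First I fix a countable dense set $\{y_k\}_{k\in\setN}\subset Y$ with $y_0=y$ the anchor point. For each $k$ and each $i$ one has $\dist_i(f_i(y_k),x_i)\le L\,\dist_Y(y_k,y)+r$, a bound independent of $i$; hence for $i$ large $f_i(y_k)\in B_{R_i}(x_i)$ and $\phi_i(f_i(y_k))$ lies in a bounded subset of $X$. Properness of $X$ then allows a diagonal extraction of a subsequence (not relabeled) with $f_i(y_k)\stackrel{GH}{\to}z_k$ for some $z_k\in X$, for every $k$. Because each $\phi_i$ is an $\eps_i$-isometry, $\abs{\dist_i(f_i(y_k),f_i(y_l))-\dist(\phi_i f_i(y_k),\phi_i f_i(y_l))}<\eps_i$, and $\dist_i(f_i(y_k),f_i(y_l))\le L\,\dist_Y(y_k,y_l)$; letting $i\to+\infty$ gives $\dist(z_k,z_l)\le L\,\dist_Y(y_k,y_l)$. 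Thus $y_k\mapsto z_k$ is $L$-Lipschitz on the dense set $\{y_k\}$, and since $X$ is complete (being proper) it extends uniquely to an $L$-Lipschitz map $f:Y\to X$ with $f(y_k)=z_k$.

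Finally I verify $f_i\to f$ uniformly on compacta. Fix a compact $K\subset Y$ and $\eps>0$. By compactness and density, cover $K$ by finitely many balls $B_{\delta}(y_{k_1}),\dots,B_{\delta}(y_{k_m})$ with $2L\delta<\eps/2$ and with centers among the $y_k$. For $i$ large, $\sup_{z\in K}\dist_i(f_i(z),x_i)$ is bounded by a constant independent of $i$ (again via $L$ and $r$), so $\phi_i\circ f_i$ is defined on $K$, and $\dist(\phi_i f_i(y_{k_s}),f(y_{k_s}))<\eps/4$ for all $s$ while $\eps_i<\eps/4$. For arbitrary $z\in K$, picking $y_{k_s}$ with $\dist_Y(z,y_{k_s})<\delta$ and using the $\eps_i$-isometry property together with $\Lip(f_i)\le L$ and $\Lip(f)\le L$,
$$
\dist(\phi_i f_i(z),f(z))\le \dist_i(f_i(z),f_i(y_{k_s}))+\eps_i+\dist(\phi_i f_i(y_{k_s}),f(y_{k_s}))+\dist(f(y_{k_s}),f(z))< \eps
$$
for $i$ large, whence $\sup_{K}\dist_i(\phi_i\circ f_i,f)\to0$. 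The argument is entirely routine; the only point that genuinely needs care is that the $\phi_i$ are a priori defined only on $B_{R_i}(x_i)$, so at each use one must check that the images involved remain in a fixed bounded subset of $X_i$ as $i$ varies — which is precisely what the equi-Lipschitz bound combined with $\dist_i(f_i(y),x_i)\le r$ guarantees, with $R_i\uparrow+\infty$ doing the rest. No serious obstacle is expected.
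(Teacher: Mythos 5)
Your proof is correct and is precisely the standard diagonal Arzel\`a--Ascoli argument that the paper omits ``for brevity'': the anchoring bound plus the equi-Lipschitz bound confine all relevant images to balls $B_{C}(x_i)$ with $C$ independent of $i$, properness of $X$ gives the compactness needed for the diagonal extraction, the $\eps_i$-isometry property transfers the Lipschitz bound to the limit on a dense set, and the three-term estimate upgrades pointwise convergence to uniform convergence on compacta. The only caveat is that your first step presupposes a countable dense subset of $Y$, i.e.\ separability of $Y$, which is not among the stated hypotheses but is genuinely needed for a single subsequence to work for all points (and holds trivially in the paper's only application, $Y=[0,+\infty)$); apart from making that assumption explicit, there is no gap.
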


Let us conclude this paragraph with a stability result for strongly harmonic functions.

\begin{proposition}\label{prop:stabstrongharmonic}
Let $\{(X_i,\dist_i,\mu_i,x_i)\}_i, (X,\dist,\mu,x)$ be proper pointed metric measured spaces such that $(X_i,\dist_i,\mu_i,x_i) \to (X,\dist,\mu,x)$ in the pointed measured Gromov-Hausdorff topology. Let $f_i \in C(X_i)$ be converging to $f \in C(X)$ uniformly over $\overline{B}_r(x_i) \to \overline{B}_r(x)$ for any $r>0$. Assume that $f_i$ is strongly harmonic for any $i$. Then $f$ is strongly harmonic.
\end{proposition}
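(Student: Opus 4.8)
The plan is to reduce strong harmonicity to the mollified mean value identity of Remark~\ref{rem:defalternative} and then pass to the limit. Each $f_i$ is strongly harmonic on the whole space $X_i$, so (the complement being empty, $\dist_i(\cdot,{}^cX_i)\equiv+\infty$) it satisfies, for every $z'\in X_i$ and every non-negative $u\in C_c^1([0,+\infty))$,
$$f_i(z')\int_{X_i} u(\dist_i(z',y))\di\mu_i(y)=\int_{X_i} u(\dist_i(z',y))\,f_i(y)\di\mu_i(y).$$
It suffices to prove the same identity for $f$ at every $z\in X$ and every such $u$: indeed, testing it against an increasing sequence $u_n\uparrow\mathbf 1_{[0,r)}$ and passing to the limit (dominated convergence being licit since $f$ is continuous and $\mu$ finite on balls) recovers the mean value property, which characterizes strong harmonicity (this is the content of Remark~\ref{rem:defalternative}). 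So I would fix $z\in X$ and $u$, choose $z_i\in B_{R_i}(x_i)$ with $z_i\stackrel{GH}{\to}z$ (possible since $R_i\uparrow+\infty$ and $\phi_i(B_{R_i}(x_i))$ is $\eps_i$-dense in $B_{R_i}(x)$), write the displayed identity for $f_i$ at $z'=z_i$, and let $i\to+\infty$.

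The crux is the convergence of both sides. That $f_i(z_i)\to f(z)$ is immediate: $\phi_i(z_i)\to z$ gives $f(\phi_i(z_i))\to f(z)$ by continuity, and $|f_i(z_i)-f(\phi_i(z_i))|\to 0$ by uniform convergence of $f_i$ to $f$ over $\overline B_\rho(x_i)\to\overline B_\rho(x)$ for any fixed $\rho$ large enough that $z_i\in\overline B_\rho(x_i)$ eventually. For the integrals I would establish a transplantation fact: if $g_i\in C(X_i)$, $g\in C(X)$ have supports contained in a fixed ball $B_{R''}(x_i)$, resp.\ $B_{R''}(x)$, and $g_i\to g$ uniformly over $\overline B_{R'}(x_i)\to\overline B_{R'}(x)$ for some $R'>R''$, then $\int_{X_i}g_i\di\mu_i\to\int_X g\di\mu$. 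This follows by writing, for $i$ large, $\int_{X_i}g_i\di\mu_i=\int_{\overline B_{R'}(x_i)}(g_i-g\circ\phi_i)\di\mu_i+\int_{B_{R_i}(x_i)}g\circ\phi_i\di\mu_i$ (the support hypotheses make both truncations exact and kill the complementary region), bounding the first term by $\|g_i-g\circ\phi_i\|_{L^\infty(\overline B_{R'}(x_i))}\,\mu_i(\overline B_{R'}(x_i))$, and recognising the second as $\int_X g\di(\phi_i)_\sharp\mu_i\to\int_X g\di\mu$ by the defining $C_\bs(X)$-weak convergence $(\phi_i)_\sharp\mu_i\rightharpoonup\mu$; the uniform mass bound $\sup_i\mu_i(\overline B_{R'}(x_i))<+\infty$ needed here is itself obtained by testing that weak convergence against a cutoff. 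I then apply this twice: to $g_i=u(\dist_i(z_i,\cdot))$, $g=u(\dist(z,\cdot))$ --- the support localization being automatic (these vanish once $\dist_i(z_i,\cdot)$, resp.\ $\dist(z,\cdot)$, exceeds $\max\supp u$) and the uniform convergence following from $|\dist_i(z_i,y)-\dist(z,\phi_i(y))|\le\eps_i+\dist(\phi_i(z_i),z)\to 0$ on $B_{R_i}(x_i)$ together with the Lipschitz continuity of $u$ --- and to $g_i=u(\dist_i(z_i,\cdot))f_i$, $g=u(\dist(z,\cdot))f$, where the extra factor is controlled using $\sup_i\|f_i\|_{L^\infty(\overline B_{R'}(x_i))}<+\infty$, a consequence of uniform convergence to the (locally bounded, being continuous) limit $f$.

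Combining these, the identity for $f_i$ at $z_i$ passes to the limit and yields $f(z)\int_X u(\dist(z,y))\di\mu(y)=\int_X u(\dist(z,y))f(y)\di\mu(y)$; arbitrariness of $z$ and $u$ then gives that $f$ is strongly harmonic. The main obstacle is exactly the transplantation step: the preliminaries contain no ready-made ``convergence of integrals under pointed measured Gromov--Hausdorff convergence'' statement, so one has to assemble it by hand from support localization, the quantitative estimates afforded by the $\eps_i$-isometries, a uniform mass bound on balls, and the weak convergence of the pushforward measures --- each ingredient elementary, but the bookkeeping (ensuring all integrands involved are eventually supported in one common ball, and that the $L^\infty$-errors are genuinely uniform there) requires care.
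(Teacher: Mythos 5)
Your proposal is correct and follows essentially the same route as the paper: reduce to the mollified mean-value identity of Remark \ref{rem:defalternative}, transfer it to points $z_i\stackrel{GH}{\to}z$, and pass to the limit using the weak convergence $(\phi_i)_\sharp\mu_i\rightharpoonup\mu$ together with the uniform convergence of the integrands. The only (cosmetic) difference is that the paper normalizes the weight $u$ at each stage and checks via Cavalieri's principle that the normalizing constants tend to $1$, whereas you keep the un-normalized two-sided identity and prove the convergence of each integral directly through your transplantation lemma; both rest on the same convergence-of-integrals fact that the paper leaves implicit.
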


\begin{proof} 
By the characterization of strongly harmonic functions stated in Remark \ref{rem:defalternative}, it is enough to establish
\begin{equation}\label{eq:fstrongharm}
f(y) = \fint_{B_r(y)}u(\dist(y,z))f(z)\di \mu(z)
\end{equation}
for any given $r>0$, $y \in X$ and $u \in C^1_c([0,+\infty))$ such that $\int_X u(\dist(y,z)) \di \mu(z)=1$.
Let $y_i \in X_i$ for any $i$ be such that $y_i \stackrel{GH}{\to} y$. For any $i$, set $$u_i:=\frac{u}{\int_{X_i} u(\dist_i(y_i,z)) \di \mu_i(z)}$$ and note that
\begin{equation}
\int_{X_i} u_i(\dist_i(y_i,z)) \di \mu_i(z) = 1
\end{equation}
so that $f_i$ being strongly harmonic implies
\begin{equation}\label{eq:fistronharm}
f_i(y_i) = \fint_{B_r(y_i)}u_i(\dist_i(y_i,z))f_i(z)\di \mu_i(z).
\end{equation}
But
\begin{align*}
\int_{X_i} u(\dist_i(y_i,z)) \di \mu_i(z)  & = - \int_0^{+\infty} u'(r) \mu_i(B_r^{\dist_i}(y_i)) \di r\\
& \to - \int_0^{+\infty} u'(r) \mu(B_r^{\dist}(y)) \di r = \int_{X} u(\dist(y,z)) \di \mu(z)=1,
\end{align*}
so $u_i \to u$ uniformly on $(0,+\infty)$: this implies that the functions $u_i(\dist_i(y_i,\cdot))f_i \in C(X_i)$ converge uniformly over all compact sets to $u(\dist(y,\cdot))f \in C(X)$. Therefore, letting $i$ tend to $+\infty$ in \eqref{eq:fistronharm} provides \eqref{eq:fstrongharm}.
\end{proof}

\hfill

\textbf{Length structures.}

Let $(X,\dist)$ be a metric space. A path in $X$ is a continuous map $c:[0,1]\to X$. It is called rectifiable if its length
\[
L_\dist(c):=\sup \left\{ \sum_{i=1}^n \dist(c(t_i),c(t_{i-1})) \, \, : \, \, 0 = t_0 < \ldots < t_n = 1, \, \,  n \in \setN\backslash\{0\} \right\}
\]
is finite. $(X,\dist)$ is called length metric space if for any $x, y \in X$,
$$
\dist(x,y) = \inf\{L_\dist (c) \, : \, c \in \Omega_{xy}\},
$$
where $\Omega_{xy}$ is the set of rectifiable paths in $X$ such that $c(0)=x$ and $c(1)=y$. A geodesic space is a trivial example of length space. Equivalently, $(X,\dist)$ is length if $\dist$ coincides with its associated length distance $\overline{\dist}$ defined by:
$$
\overline{\dist}(x,y) := \inf\{L_\dist (c) \, : \, c \in \Omega_{xy}\}\qquad \forall x,y \in X,
$$
in which case we say that $\dist$ is a length distance. Note that we always have $\dist \le \overline{\dist}$ and $L_{\dist}(c)=L_{\overline{\dist}}(c)$ whenever $c$ is a rectifiable path in $X$. Moreover,
$$
L_\dist(c) = \lim\limits_{\alpha \to 0^+} L_{\dist,\alpha}(c)
$$
where
\[
L_{\dist,\alpha}(c) = \sup \left\{ \sum_{i=1}^n \dist(c(t_i),c(t_{i-1})) \,  :  \, 0 = t_0 < \ldots < t_n = 1, |t_i - t_{i+1}|<\alpha \, \,  \forall i,\, \,  n \in \setN\backslash\{0\} \right\}.
\]
In this context, we have the following lemma.

\begin{lemma}\label{lem:length}
Let $(X,\delta)$ be a length metric space. Assume that $\dist$ defined as $\dist:=2\sin(\delta/2)$ is a distance. Then its associated length distance $\overline{\dist}$ coincides with $\delta$.
\end{lemma}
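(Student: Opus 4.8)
The plan is to show that $\delta$ and $\overline{\dist}$ are bi-Lipschitz near the diagonal with Lipschitz constant tending to $1$, so that the lengths $L_\delta(c)$ and $L_{\dist}(c)$ of any rectifiable path agree; once lengths agree and one of the two distances ($\delta$, being a length distance by hypothesis on $(X,\delta)$) is intrinsic, the induced length distances coincide. First I would record the elementary inequality
$$
\tfrac{2}{\pi}\,s \le 2\sin(s/2) \le s \qquad \text{for } s \in [0,\pi],
$$
and, more importantly, the infinitesimal comparison: for every $\eps>0$ there is $\eta>0$ such that $(1-\eps)s \le 2\sin(s/2) \le s$ whenever $0 \le s \le \eta$. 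This follows from $\lim_{s\to 0^+} \frac{2\sin(s/2)}{s}=1$. Hence $\dist \le \delta$ everywhere, and locally $\dist$ is $(1-\eps)$-bi-Lipschitz to $\delta$.

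Next I would compare the refined length functionals $L_{\delta,\eta}(c)$ and $L_{\dist,\eta}(c)$ from the excerpt. Fix a rectifiable path $c$ and $\eps>0$, and take $\eta>0$ as above, shrinking it further so that along $c$ consecutive points at parameter-distance $<\eta$ are at $\delta$-distance $<\eta$ (possible by uniform continuity of $c$ on $[0,1]$, since $c$ has finite $\delta$-length hence is $\delta$-continuous). Then for any admissible partition with mesh $<\eta$, each term satisfies $(1-\eps)\,\delta(c(t_i),c(t_{i-1})) \le \dist(c(t_i),c(t_{i-1})) \le \delta(c(t_i),c(t_{i-1}))$, so summing gives $(1-\eps) L_{\delta,\eta}(c) \le L_{\dist,\eta}(c) \le L_{\delta,\eta}(c)$. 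Letting the mesh parameter go to $0$ (using $L_{\dist}(c)=\lim_{\alpha\to 0^+}L_{\dist,\alpha}(c)$, and the analogous identity for $\delta$) and then $\eps\to 0$ yields $L_{\dist}(c)=L_{\delta}(c)$ for every rectifiable path $c$.

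Finally I would conclude: since the two length functionals agree on all paths and $\Omega_{xy}$ is the same for $\dist$ and $\delta$ (same underlying topology, as $\dist\le\delta$ and locally $\delta \le (1-\eps)^{-1}\dist$, so $\delta$-continuity and $\dist$-continuity of a path coincide; in particular a path is $\dist$-rectifiable iff $\delta$-rectifiable), we get
$$
\overline{\dist}(x,y)=\inf_{c\in\Omega_{xy}} L_{\dist}(c)=\inf_{c\in\Omega_{xy}} L_{\delta}(c)=\overline{\delta}(x,y)=\delta(x,y),
$$
the last equality because $(X,\delta)$ is a length space by assumption. The main obstacle is the bookkeeping in the second step: one must be careful that the mesh threshold $\eta$ controlling the parameter interval also controls the $\delta$-distance between consecutive points (so that the local comparison $2\sin(s/2)\ge(1-\eps)s$ applies term by term), which requires invoking continuity of $c$ for the metric $\delta$ rather than for $\dist$, and then checking that the class of rectifiable paths is the same for both metrics so that the two infima are over the same set.
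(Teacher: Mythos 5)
Your proposal is correct and follows essentially the same route as the paper: both arguments reduce to showing $L_\dist(c)=L_\delta(c)$ for every rectifiable path, using $\dist\le\delta$ for one inequality and, for the other, uniform continuity of $c$ together with the near-diagonal comparison $\delta-\dist\le\eps\,\delta$ (which the paper derives from $x-2\sin(x/2)\le x^2$, equivalent to your $(1-\eps)s\le 2\sin(s/2)$ for small $s$), applied termwise to partitions of small mesh in the functionals $L_{\dist,\alpha}$ before letting $\alpha\to 0$ and then $\eps\to 0$. The only cosmetic difference is that the paper phrases the conclusion as the two inequalities $\overline{\dist}\le\delta$ and $\delta\le\overline{\dist}$ rather than via equality of the two length functionals on a common class of rectifiable paths, but the content is identical.
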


\begin{proof}
First note that $(2/\pi) \delta \le \dist \le \delta$ because $(2\pi)x\le2\sin(x/2)\le x$ for any $x\ge 0$. In particular, a map $c:[0,1]\to X$ is continous for $\delta$ if and only if it is for $\dist$. Moreover, since $\delta$ is a length distance, $\dist \le \delta$ implies $\overline{\dist} \le \delta$, so we are left with proving the converse inequality. Let $c$ be a path in $X$. Being continuous, $c$ is also uniformly continuous: for any $\epsilon\in (0,1)$, there exists $\alpha>0$ such that for any $t,s \in [0,1]$,
$$
|t-s| < \alpha \quad \Rightarrow \quad \delta(c(t),c(s)) < \epsilon.
$$
Since $x- 2\sin(x/2)\le x^2$ for any $x\ge0$, then $$\delta - \dist \le \delta^2,$$ so that for any $t,s \in [0,1]$:
$$
|t-s| < \alpha \quad \Rightarrow \quad \delta(c(t),c(s)) - \dist(c(t),c(s)) \le \epsilon \delta(c(t),c(s)).
$$
This implies $L_{\delta,\alpha}(c) - L_{\dist,\alpha}(c) \le \epsilon L_{\delta,\alpha}(c)$ and thus $(1-\eps)L_\delta(c) \le L_\dist(c)$ by letting $\alpha$ tend to $0$. Letting $\eps$ tend to $0$ provides $L_\delta(c) \le L_\dist(c)$. This implies $\delta \le \overline{\dist}$.
\end{proof}

\hfill

\textbf{Busemann functions.}

Let $(X,\dist)$ be a metric space. A geodesic ray in $X$ is a continuous function $\gamma:[0,+\infty)\to X$ such that $\dist(\gamma(t),\gamma(s))=|t-s|$ for any $s,t\ge 0$. The Busemann function associated to a geodesic ray $\gamma$ is defined by
$$
b_\gamma(x)=\lim\limits_{t \to +\infty} t - \dist(x,\gamma(t)).
$$
Note that this limit is well-defined for any $x \in X$ since the function $t \mapsto t - \dist(x,\gamma(t))$ is non-decreasing and bounded from above by $\dist(o,x)$. Note also that $b_\gamma$ is $1$-Lipschitz, since for any $x, y \in X$ and any $t>0$, one has $t - \dist(x,\gamma(t)) - (t - \dist(y,\gamma(t)) \le \dist(x,y)$, henceforth $b_\gamma(x) - b_\gamma(y) \le \dist(x,y)$ by letting $t \to +\infty$. Moreover, for any $s>0$, one can easily check that
\begin{equation}\label{eq:Busemann}
b_\gamma(\gamma(s))=s.
\end{equation}

We shall need the following lemma.

\begin{lemma}\label{lem:preparatory}
Let $(X,\dist,\mu)$ be a metric measure space equipped with a Dirichlet form $\cE$ with associated operator $L$, and $\alpha \in \setR$. Assume
\[
L\textbf{1}=0, \qquad \dist(x,\cdot) \in \cD_{loc}(\cE) \quad \, \text{and} \quad \, L\dist(x,\cdot)=\alpha/\dist(x,\cdot) \, \, \text{on $X\backslash \{x\}$},
\] for any $x \in X$. Then any Busemann function on $(X,\dist)$ is  locally $L$-harmonic.
\end{lemma}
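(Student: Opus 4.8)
The plan is to realise the Busemann function $b_\gamma$ as a monotone pointwise limit of explicit functions solving a Poisson equation, and then to pass to the limit in the weak formulation. Set $o:=\gamma(0)$ and, for each $t>0$, $f_t:=t\,\mathbf{1}-\dist(\gamma(t),\cdot)$. By hypothesis $\mathbf 1\in\cD_{loc}(\cE)$ and $\dist(\gamma(t),\cdot)\in\cD_{loc}(\cE)$, so $f_t\in\cD_{loc}(\cE)$; moreover, combining $L\mathbf 1=0$ with $L\dist(\gamma(t),\cdot)=\alpha/\dist(\gamma(t),\cdot)$ on $X\setminus\{\gamma(t)\}$ gives, for every $\phi\in\cD_c(\cE)$ with $\gamma(t)\notin\supp\phi$,
\[
\cE(f_t,\phi)=\alpha\int_X\frac{\phi}{\dist(\gamma(t),\cdot)}\di\mu .
\]
Since $t\mapsto t-\dist(x,\gamma(t))$ is non-decreasing with limit $b_\gamma(x)$, we have $f_t\nearrow b_\gamma$ pointwise; both $f_t$ and $b_\gamma$ are $1$-Lipschitz and vanish at $o$, so $|f_t|,|b_\gamma|\le R$ on any ball $B_R(o)$, whence $f_t\to b_\gamma$ in $L^2_{loc}(X,\mu)$. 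Fixing $\phi\in\cD_c(\cE)$ with $\supp\phi\subset B_R(o)$, one has $\gamma(t)\notin\supp\phi$ and $\dist(\gamma(t),\cdot)\ge t-R$ on $\supp\phi$ once $t>R$, so the displayed identity forces $|\cE(f_t,\phi)|\le|\alpha|(t-R)^{-1}\|\phi\|_{L^1(X,\mu)}\to 0$.

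The substantive step is to show $b_\gamma\in\cD_{loc}(\cE)$, for which I would establish a Caccioppoli-type bound. Fix $R>0$ and a cutoff $\chi\in\cD(\cE)\cap C_c(X)$ with $0\le\chi\le1$, $\chi\equiv1$ on $B_R(o)$ and $\supp\chi\subset B_{2R}(o)$. For $t>2R$ we have $\gamma(t)\notin\supp\chi$, so we may test the equation for $f_t$ against $\chi^2f_t$ (legitimate since $f_t$ is bounded by $2R$ on $\supp\chi$, so $\chi^2f_t\in\cD_c(\cE)$); using the Leibniz rule for $\Gamma$ and the Cauchy--Schwarz inequality for the energy measure, this yields
\[
\int_{B_R(o)}\di\Gamma(f_t)\le C\Big(\int_X f_t^2\,\di\Gamma(\chi)+\int_X|Lf_t|\,\chi^2|f_t|\,\di\mu\Big),
\]
and the right-hand side remains bounded as $t\to+\infty$ since $|f_t|\le2R$ on $\supp\chi$, $\Gamma(\chi)(X)=\cE(\chi)<+\infty$, and $|Lf_t|\le|\alpha|(t-2R)^{-1}$ on $\supp\chi$. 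Hence $\{\chi f_t\}_{t>2R}$ is bounded in the Hilbert space $\cD(\cE)$, so along some sequence $t_k\to+\infty$ it converges weakly in $\cD(\cE)$; the weak limit must agree with the $L^2$-limit $\chi b_\gamma$, so $\chi b_\gamma\in\cD(\cE)$. As $\chi\equiv1$ on $B_R(o)$ and $R$ is arbitrary, $b_\gamma\in\cD_{loc}(\cE)$.

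To finish, fix $\phi\in\cD_c(\cE)$, choose $R$ with $\supp\phi\subset B_R(o)$ and $\chi$ as above with $\chi\equiv1$ on $B_R(o)$. Then $\cE(\chi f_{t_k},\phi)=\cE(f_{t_k},\phi)$ by locality, the left-hand side tends to $\cE(\chi b_\gamma,\phi)=\cE(b_\gamma,\phi)$ by the weak convergence $\chi f_{t_k}\weakto\chi b_\gamma$ in $\cD(\cE)$, and the right-hand side tends to $0$ by the first paragraph. Thus $\cE(b_\gamma,\phi)=0$ for all $\phi\in\cD_c(\cE)$, i.e.~$b_\gamma$ is locally $L$-harmonic. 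The main obstacle is exactly the uniform-in-$t$ energy estimate for the $f_t$: it is what upgrades the pointwise monotone convergence $f_t\nearrow b_\gamma$ to the membership $b_\gamma\in\cD_{loc}(\cE)$ and to the weak $\cD(\cE)$-convergence needed to pass to the limit, and it relies crucially on the Poisson equation for $f_t$ having a right-hand side that decays far from $\gamma(t)$.
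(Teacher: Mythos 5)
Your proof is correct and follows the same route as the paper: both approximate $b_\gamma$ by the functions $f_t=t-\dist(\gamma(t),\cdot)$, use the hypotheses to see that these solve a Poisson equation whose right-hand side is $O(1/\dist(\gamma(t),\supp\phi))$ on the support of a fixed test function, and pass to the limit. The only point of divergence is how the limit passage is justified: the paper asserts that $\cD_{loc}(\cE)$ is a Fréchet space for a family of mixed $L^2$/energy semi-norms and deduces $b_\gamma\in\cD_{loc}(\cE)$ with $\cE(b_\gamma,\phi)=\lim_t\cE(f_t,\phi)=0$ from completeness, whereas you prove a Caccioppoli-type bound $\int_{B_R(o)}\di\Gamma(f_t)\le C(R)$ uniform in $t$ and extract a weak limit of $\chi f_t$ in the Hilbert space $\cD(\cE)$. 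Your version is more self-contained and actually substantiates the step the paper leaves implicit; the one cosmetic caveat is that, since the lemma is stated for a general metric measure space, you should run the cutoff argument over a compact exhaustion (as the paper does) rather than over balls, unless closed balls are known to be compact — which they are in every application made of this lemma.
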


\begin{proof}
Let $b_\gamma$ be a Busemann function on $(X,\dist)$. Set $f_s:=s-\dist(\gamma(s),\cdot)$ for any $s>0$ and observe that the assumptions imply that $f_s$ is a local solution on $X \backslash \{x\}$ of $Lu = -\alpha/\dist(\gamma(s),\cdot)$. For any $\phi \in \cD_c(\cE)$, since $\gamma(s) \notin \supp \phi$ and thus $\dist(\gamma(s),\cdot)>0$ on $\supp \phi$ for $s$ large enough, then
$$
|\cE(f_s,\phi)|= \left|\int_X (Lf_s) \phi \di \mu\right| \le \frac{|\alpha|}{\dist(\gamma(s),\supp \phi)} \int_X |\phi| \di \mu \to 0 \quad \text{when $s \to +\infty$}.
$$
Moreover, as $(f_s)_s$ is increasing and converges pointwise to $b_\gamma$, then $f_s \to b_\gamma$ in $L^2(X,\mu)$. Since $\cD_{loc}(\cE)$ is a Fréchet space that can be equipped with the family of semi-norms $\{p_i(g):=(\|g\|_{L^2(K_i)}^2 +( \sup\{\cE(g,\phi):\phi \in \cD(\cE), \, \supp \phi \subset K_i\})^2)^{1/2}\}_i$, where $\{K_i\}_i$ is an exhaustion of $X$ by compact sets, then $b_\gamma \in \cD_{loc}(\cE)$ and $\cE(b_\gamma,\phi)=0$ for any $\phi \in \cD_c(\cE)$. In particular, $b_\gamma$ is locally $L$-harmonic.\\
\end{proof}

\textbf{Laplace transform.}

Let $F:[0,+\infty)\to \setR$ be a locally integrable function such that $F(t)=O(e^{\gamma t})$ when $t \to +\infty$ for some $\gamma \in \setR$. The Laplace transform of $F$ is the complex-valued function $\mathcal{L}\{F\}$ defined by:
$$
\mathcal{L}\{F\}(z) = \int_0^{+\infty} F(\xi)e^{-z\xi}\di \xi, \qquad  \forall z \in \{\mathrm{Re}(\cdot) > \gamma\}.
$$
Lerch's theorem asserts that if $F_1, F_2 : [0,+\infty)\to\setR$ are two continuous, locally integrable functions satisfying $F_1(t), F_2(t)=O(e^{\gamma t})$ when $t \to +\infty$ and $\mathcal{L}\{F_1\} = \mathcal{L}\{F_2\}$ on $\{\mathrm{Re}>\gamma\}$, then $F_1=F_2$ (see e.g.~\cite[Th.~2.1]{Cohen}). This provides the following lemma.

\begin{lemma}\label{lem:Laplace}
Let $F:(0,+\infty)\to \setR$ be a continuous and locally integrable function such that $F(t)=O(e^{\eps t})$ when $t \to +\infty$ for any $\eps>0$ and
\begin{equation}\label{eq:Laplace}
\mathcal{L}\{F\}(\lambda) = \lambda^{-\alpha-1} \qquad \forall \lambda >\lambda_o
\end{equation}
for some $\alpha>0$ and $\lambda_o \ge 0$. Then $F(\xi)=\xi^\alpha/\Gamma(\alpha+1)$ for any $\xi\ge0$.
\end{lemma}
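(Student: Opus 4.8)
The plan is to reduce the statement to Lerch's theorem, which is exactly an injectivity property of the Laplace transform, after fixing two minor mismatches with its hypotheses. First I would identify the target: the function $G(\xi):=\xi^\alpha/\Gamma(\alpha+1)$ satisfies $\mathcal{L}\{G\}(\lambda)=\lambda^{-\alpha-1}$ for every $\lambda>0$, as one checks by the change of variables $u=\lambda\xi$ together with the definition of $\Gamma$. Thus $F$ and $G$ have the same Laplace transform on the ray $(\lambda_o,+\infty)$, and it only remains to deduce $F=G$.

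Lerch's theorem as recalled above is stated for functions that are continuous on the \emph{closed} half-line $[0,+\infty)$ and for equality of Laplace transforms on a full right half-plane, whereas here $F$ is only continuous on $(0,+\infty)$ and I only have equality on a real ray. To handle the endpoint, I would pass to antiderivatives: put $\Phi(\xi):=\int_0^\xi F(u)\,\di u$ and $G_1(\xi):=\int_0^\xi G(u)\,\di u=\xi^{\alpha+1}/\Gamma(\alpha+2)$. Since $F$ is integrable on bounded intervals, $\Phi$ is continuous on $[0,+\infty)$ with $\Phi(0)=0$, it is locally integrable, and the bound $F(t)=O(e^{\eps t})$ (valid for every $\eps>0$) transfers to $\Phi$ by integration; the function $G_1$ is continuous and of polynomial growth. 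A Fubini computation gives the elementary integration rule $\mathcal{L}\{\Phi\}(z)=z^{-1}\mathcal{L}\{F\}(z)$ on $\{\mathrm{Re}(z)>0\}$, hence $\mathcal{L}\{\Phi\}(\lambda)=\lambda^{-\alpha-2}$ for $\lambda>\lambda_o$, while $\mathcal{L}\{G_1\}(\lambda)=\lambda^{-\alpha-2}$ for all $\lambda>0$; thus the two transforms agree on the ray $(\lambda_o,+\infty)$.

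To upgrade this to agreement on a half-plane, I would use that $\mathcal{L}\{\Phi\}$ and $\mathcal{L}\{G_1\}$ are holomorphic on $\{\mathrm{Re}(z)>0\}$ — differentiation under the integral sign, using the $O(e^{\eps t})$ growth — so by the identity theorem they coincide on $\{\mathrm{Re}(z)>\gamma\}$ for any fixed $\gamma>\max(\lambda_o,0)$. Then Lerch's theorem applies to the pair $(\Phi,G_1)$, both of which are continuous and locally integrable on $[0,+\infty)$ and $O(e^{\gamma t})$, and yields $\Phi\equiv G_1$ on $[0,+\infty)$. Finally I would differentiate this identity at an arbitrary $\xi>0$: since $F$ is continuous there, the fundamental theorem of calculus gives $\Phi'(\xi)=F(\xi)$, and likewise $G_1'(\xi)=G(\xi)$, whence $F(\xi)=\xi^\alpha/\Gamma(\alpha+1)$; the value at $\xi=0$ is recovered by letting $\xi\to0^+$. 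The only genuine obstacle is this bookkeeping at the endpoint and the ray-to-half-plane passage; the analytic substance is entirely contained in Lerch's theorem.
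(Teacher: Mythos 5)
Your proof is correct and follows essentially the same route as the paper's: holomorphy of the Laplace transform on $\{\mathrm{Re}>0\}$, analytic continuation from the real ray to the half-plane, and then Lerch's uniqueness theorem. The only difference is your passage to the antiderivative $\Phi$, which cleanly patches the mismatch between $F$ being continuous only on $(0,+\infty)$ and Lerch's theorem requiring continuity on $[0,+\infty)$ — a point the paper's own proof glosses over.
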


\begin{proof}
Since $F$ is locally integrable, one can apply the classical theorem on holomorphy under the integral sign to get that $\mathcal{L}\{F\}$ is holomorphic on any compact subset of $\{\mathrm{Re}>0\}$. Therefore, by analytic continuation, \eqref{eq:Laplace} implies $\mathcal{L}\{F\}(z) = z^{-\alpha-1}$ for any $z \in \{ \mathrm{Re}(\cdot)>0\}$. Since the Laplace transform of $\xi \mapsto \xi^{\alpha}$ is $z\mapsto \Gamma(\alpha +1) z^{-\alpha-1}$, Lerch's theorem gives $F(\xi)=\xi^\alpha/\Gamma(\alpha+1)$ for any $\xi\ge0$.
\end{proof}

\section{First rigidity results for spaces with an Euclidean heat kernel}

\quad \, In this section, we establish several properties of metric measure spaces equipped with a Dirichlet form admitting an $\alpha$-dimensional Euclidean heat kernel. We shall use most of these results in the next section to prove Theorem \ref{th:main}. 


\subsection{Stochastic completeness and consequences}

We begin with stochastic completeness.

\begin{lemma}\label{lem:stochastic}
Let $(X,\dist,\mu,\cE)$ be with an $\alpha$-dimensional Euclidean heat kernel. 
Then $(X,\dist,\mu,\cE)$ is stochastically complete.
\end{lemma}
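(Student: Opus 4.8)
The plan is to use the Chapman--Kolmogorov identity \eqref{eq:ChapmanKolmogorov}, which holds pointwise because $\cE$ admits a heat kernel, and to feed it the explicit Gaussian expression for $p$. Fix $x \in X$ and $s,t>0$. Evaluating \eqref{eq:ChapmanKolmogorov} at $y = x$ and using that $p(\cdot,\cdot,t)$ is symmetric (which is anyway obvious from the formula since $\dist$ is symmetric), one gets
\begin{equation*}
\frac{1}{(4\pi)^{\alpha}(ts)^{\alpha/2}} \int_X e^{-\dist^2(x,z)\left(\frac{1}{4t} + \frac{1}{4s}\right)} \di\mu(z) = p(x,x,t+s) = \frac{1}{(4\pi(t+s))^{\alpha/2}};
\end{equation*}
in particular the integral on the left is finite. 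Writing $\tau := ts/(t+s)$, so that $\frac{1}{4t}+\frac{1}{4s} = \frac{1}{4\tau}$, this rearranges into
\begin{equation*}
\int_X e^{-\frac{\dist^2(x,z)}{4\tau}} \di\mu(z) = (4\pi)^{\alpha/2}\,\frac{(ts)^{\alpha/2}}{(t+s)^{\alpha/2}} = (4\pi\tau)^{\alpha/2}.
\end{equation*}

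Next I would observe that the parameter $\tau = ts/(t+s)$ ranges over all of $(0,+\infty)$ --- already the choice $s=t$ gives $\tau = t/2$ --- so the identity $\int_X e^{-\dist^2(x,z)/(4\tau)}\di\mu(z) = (4\pi\tau)^{\alpha/2}$ holds for every $x \in X$ and every $\tau>0$. Specializing to $\tau = t$ then yields
\begin{equation*}
\int_X p(x,y,t)\di\mu(y) = \frac{1}{(4\pi t)^{\alpha/2}} \int_X e^{-\frac{\dist^2(x,y)}{4t}}\di\mu(y) = 1 \qquad \forall x \in X,\ \forall t>0,
\end{equation*}
which is exactly stochastic completeness.

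There is no real obstacle here; the two points requiring a word of care are that the Chapman--Kolmogorov identity is invoked for the explicit everywhere-defined representative of $p$ (legitimate, since the standing hypothesis prescribes $p$ pointwise and \eqref{eq:ChapmanKolmogorov} is stated pointwise), and that all integrals are finite, which is automatic because the right-hand side $p(x,x,t+s)$ is. As a bonus, the auxiliary identity $\int_X e^{-\dist^2(x,z)/(4\tau)}\di\mu(z) = (4\pi\tau)^{\alpha/2}$ is precisely the Gaussian transform of the volume-growth function $r \mapsto \mu(B_r(x))$, so a Laplace-transform uniqueness argument in the spirit of Lemma \ref{lem:Laplace} should later recover the $\alpha$-dimensional volume property claimed in Theorem \ref{th:main}.
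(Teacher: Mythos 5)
Your proof is correct and rests on essentially the same idea as the paper's: both exploit only the pointwise Chapman--Kolmogorov identity fed with the explicit Gaussian expression for $p$. The only difference is cosmetic --- the paper fixes $t$ and lets $s\to+\infty$ using monotone convergence, whereas you set $y=x$ and use the self-reproducing property of the Gaussian to land directly on $\int_X e^{-\dist^2(x,z)/(4\tau)}\di\mu(z)=(4\pi\tau)^{\alpha/2}$ for all $\tau>0$, which (as you observe) is precisely the identity the paper then uses, via Lemma \ref{lem:Laplace}, to deduce the $\alpha$-dimensional volume property.
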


\begin{proof}
Take $t,s>0$ and $x \in X$. By \eqref{eq:ChapmanKolmogorov}, for any $y \in X$ we have
\begin{equation}\label{eq:Lem3.1}
\int_X p(x,z,t)e^{-\frac{\dist^2(z,y)}{4s}} \di \mu(z) = \left(\frac{s}{t+s}\right)^{\alpha/2} e^{-\frac{\dist^2(x,y)}{4(t+s)}}.
\end{equation}
Letting $s \to +\infty$ and applying the monotone convergence theorem, we get the result.\\
\end{proof}

As a consequence of Lemma \ref{lem:stochastic}, we can show that spaces with an $\alpha$-dimensional Euclidean heat kernel have an $\alpha$-dimensional volume.

\begin{lemma}\label{lem:euclideanvolume}
Let $(X,\dist,\mu,\cE)$ be with an $\alpha$-dimensional Euclidean heat kernel. Then $(X,\dist,\mu)$ has an $\alpha$-dimensional volume.
\end{lemma}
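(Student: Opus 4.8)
The starting point is the stochastic completeness just established in Lemma~\ref{lem:stochastic}: combined with the explicit form~\eqref{eq:heatkernel} of $p$, the identity $\int_X p(x,y,t)\di\mu(y)=1$ becomes
\[
\int_X e^{-\frac{\dist^2(x,y)}{4t}}\di\mu(y)=(4\pi t)^{\alpha/2}\qquad\forall\,x\in X,\ \forall\,t>0.
\]
Fix $x\in X$ and set $V(r):=\mu(B_r(x))$. Writing $e^{-a/(4t)}=\int_a^{+\infty}\frac{1}{4t}e^{-u/(4t)}\di u$ with $a=\dist^2(x,y)\ge 0$ and applying Tonelli's theorem, the left-hand side equals $\frac1{4t}\int_0^{+\infty}e^{-u/(4t)}\,\mu(\{y:\dist^2(x,y)<u\})\di u=\frac1{4t}\int_0^{+\infty}e^{-u/(4t)}V(\sqrt u)\di u$. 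Hence, setting $\lambda=1/(4t)>0$,
\[
\int_0^{+\infty}e^{-\lambda u}\,V(\sqrt u)\di u=\pi^{\alpha/2}\,\lambda^{-\alpha/2-1}\qquad\forall\,\lambda>0.
\]

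From the finiteness of this Laplace transform for every $\lambda>0$ one reads off at once that $V$ is finite everywhere (so every ball has finite measure), hence locally bounded, and that $u\mapsto V(\sqrt u)$ is $O(e^{\eps u})$ for any $\eps>0$. One would now like to invoke Lemma~\ref{lem:Laplace}; the only caveat is that it requires continuity, whereas $r\mapsto\mu(B_r(x))$ may jump at radii with $\mu(\partial B_r(x))>0$. I would get around this by applying Lemma~\ref{lem:Laplace} not to $V(\sqrt{\,\cdot\,})$ but to its primitive $G(u):=\int_0^u V(\sqrt s)\,\di s$, which is continuous and non-decreasing and has Laplace transform $\pi^{\alpha/2}\lambda^{-\alpha/2-2}$; the lemma then gives $G(u)=\frac{\pi^{\alpha/2}}{\Gamma(\alpha/2+2)}u^{\alpha/2+1}$, and differentiating yields $V(\sqrt u)=\frac{\pi^{\alpha/2}}{\Gamma(\alpha/2+1)}u^{\alpha/2}=\omega_\alpha u^{\alpha/2}$ at every point where $V(\sqrt{\,\cdot\,})$ is continuous. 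Since $V$ is monotone and $u\mapsto\omega_\alpha u^{\alpha/2}$ is continuous, a squeezing argument between nearby continuity points upgrades this to all $u\ge 0$, i.e.\ $V(r)=\omega_\alpha r^\alpha$ for all $r>0$; as $x$ was arbitrary, $(X,\dist,\mu)$ has an $\alpha$-dimensional volume.

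The computation is routine once stochastic completeness is available, and I expect the only (minor) obstacle to be precisely this continuity bookkeeping in the application of the Laplace uniqueness statement. An alternative route that avoids Lemma~\ref{lem:Laplace} altogether is to observe that the displayed identity says that the Laplace transforms of the two Radon measures on $[0,+\infty)$ obtained by pushing forward $\mu$, respectively the model radial measure $\alpha\omega_\alpha\rho^{\alpha-1}\di\rho$, under $y\mapsto\dist(x,y)^2$ coincide, and to conclude equality of these measures directly from uniqueness of the Laplace transform for locally finite measures.
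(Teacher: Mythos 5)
Your proof is correct and follows essentially the same route as the paper: stochastic completeness gives $\int_X e^{-\dist^2(x,y)/4t}\,\di\mu(y)=(4\pi t)^{\alpha/2}$, a Cavalieri/Tonelli computation turns this into the Laplace-transform identity for $\xi\mapsto\mu(B_{\sqrt{\xi}}(x))$, and Lemma~\ref{lem:Laplace} then identifies the volume function. Your additional step of applying the uniqueness lemma to the continuous primitive $G$ rather than to $V(\sqrt{\,\cdot\,})$ itself is a sound refinement of a continuity point that the paper passes over silently.
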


\begin{proof}
Take $x \in X$. By Lemma \ref{lem:stochastic}, we have:
$$
\int_X p(x,y,t) \di \mu(y) = 1 \qquad \forall \, t>0,
$$
so that the hypothesis on the heat kernel implies:
\begin{equation}\label{eq:001}
\int_X e^{-\frac{\dist^2(x,y)}{4t}} \di \mu(y) = (4\pi t)^{\alpha/2} \qquad \forall \, t>0.
\end{equation}
By Cavalieri's principle (see for instance \cite[Lemma 5.2.1]{AmbrosioTilli}), we have 
$$
\int_X e^{-\frac{\dist^2(x,y)}{4t}}\di \mu(y) = \int_0^{+\infty} \mu(\{e^{-\frac{\dist^2(x,\cdot)}{4t}}>s\}) \di s.
$$
Since for any $y \in X$, one has $e^{-\frac{\dist^2(x,y)}{4t}}>s$ if and only if $\dist^2(x,y) < - 4t \log(s)$, then
$$
\bigg\{e^{-\frac{\dist^2(x,\cdot)}{4t}}>s\bigg\}
= \begin{cases}
\emptyset & \text{if $s\ge 1$},\\
B_{\sqrt{-4t\log(s)}}(x) & \text{if $s<1$}.
\end{cases}
$$
Therefore, the change of variable $\xi=-4t\log(s)$ yields to
$$
\int_X e^{-\frac{\dist^2(x,y)}{4t}}\di \mu(y) = \frac{1}{4t} \int_0^{+\infty} e^{-\frac{\xi}{4t}} \mu(B_{\sqrt{\xi}}(x)) \di \xi.
$$
Coupled with \eqref{eq:001}, and setting $\lambda=1/(4t)$, this leads to:
$$
\int_0^{+\infty} e^{-\lambda \xi} \mu(B_{\sqrt{\xi}}(x)) \di \xi = \pi^{\alpha/2} \lambda^{-\alpha/2-1} \qquad \forall \lambda>0.
$$
Applying Lemma \ref{lem:Laplace} and \eqref{eq:omega_n} provides the result.
\end{proof}

A second consequence is that complete spaces with an $\alpha$-dimensional Euclidean heat kernel are proper; in particular, they are locally compact. Note that the space $(\setR^n \backslash \{0\}, \dist_e,\leb^n)$ shows that completeness is a non-removable assumption.

\begin{lemma}
Let $(X,\dist,\mu,\cE)$ be with an $\alpha$-dimensional Euclidean heat kernel and such that $(X,\dist)$ is complete. Then any closed ball in $X$ is compact.
\end{lemma}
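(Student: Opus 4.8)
The plan is to deduce properness from the volume rigidity already established, rather than from the heat kernel directly. By Lemma~\ref{lem:euclideanvolume}, $(X,\dist,\mu)$ has an $\alpha$-dimensional volume, so $\mu(B_r(x))=\omega_\alpha r^\alpha$ for every ball; in particular $(X,\dist,\mu)$ satisfies the doubling property \eqref{eq:doubling} with $C_D=2^\alpha$, and hence $(X,\dist)$ is metric doubling. Thus there is an integer $N\ge 1$ such that every ball in $(X,\dist)$ can be covered by at most $N$ balls of half its radius.

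Given $x\in X$ and $r>0$, I would iterate this covering property: after $k$ iterations, $B_{2r}(x)$ is covered by at most $N^k$ balls of radius $2r\,2^{-k}=r\,2^{1-k}$. Since the closed ball of radius $r$ around $x$ is contained in $B_{2r}(x)$, for every $\eps>0$ one chooses $k$ with $r\,2^{1-k}<\eps$ and obtains a finite cover of this closed ball by $\eps$-balls; hence it is totally bounded. As $(X,\dist)$ is complete, the closed ball is a complete metric space, and a complete totally bounded space is compact. This gives the claim, and local compactness of $X$ follows at once.

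The point worth stressing is that essentially nothing is left to do here once Lemma~\ref{lem:euclideanvolume} is in place: the genuinely substantial step, namely upgrading the heat kernel identity \eqref{eq:heatkernel} to the precise volume growth $\mu(B_r(x))=\omega_\alpha r^\alpha$ (through stochastic completeness and the Laplace transform, i.e.\ Lemmas~\ref{lem:stochastic}, \ref{lem:euclideanvolume} and \ref{lem:Laplace}), has already been carried out. One could alternatively bypass the ``doubling $\Rightarrow$ metric doubling'' fact and argue directly: any maximal $\eps$-separated subset $S$ of the closed ball of radius $r$ gives pairwise disjoint balls $\{B_{\eps/2}(z)\}_{z\in S}$ all contained in $B_{r+\eps}(x)$, so comparing volumes yields $\#S\le (r+\eps)^\alpha/(\eps/2)^\alpha<\infty$, which again proves total boundedness; completeness is then exactly what converts total boundedness into compactness, and it cannot be dropped, as $(\setR^n\setminus\{0\},\dist_e,\leb^n)$ shows.
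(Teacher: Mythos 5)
Your proof is correct, but it takes a genuinely different route from the paper. The paper argues directly from the heat kernel: given a sequence $(x_k)_k$ in a closed ball, it sets $u_k:=p(x_k,\cdot,t/2)$, observes via Chapman--Kolmogorov \eqref{eq:ChapmanKolmogorov} that $\|u_k\|_{L^2}^2=p(x_k,x_k,t)$ is constant, extracts an $L^2$-weakly convergent subsequence, smooths by the semigroup to upgrade this to strong $L^2$-convergence of $v_k:=p(x_k,\cdot,t)$ (dominated convergence), and then reads off from the explicit identity $\|v_k-v_l\|_{L^2}^2=2\bigl(1-e^{-\dist^2(x_k,x_l)/(8t)}\bigr)(8\pi t)^{-\alpha/2}$ that $(x_k)_k$ is Cauchy, hence convergent by completeness. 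You instead invoke Lemma \ref{lem:euclideanvolume} (which indeed precedes this lemma in the paper, so there is no circularity) and run the standard ``doubling measure $+$ completeness $\Rightarrow$ properness'' argument; your packing variant with a maximal $\eps$-separated set is the cleanest form of it, since $\#S\le(r+\eps)^\alpha/(\eps/2)^\alpha$ follows from disjointness and the exact volume formula, and total boundedness plus completeness of the closed ball gives compactness. Your approach is more elementary and makes transparent that only the volume identity and completeness are used; the paper's approach is a self-contained $L^2$/semigroup argument that exhibits $x\mapsto p(x,\cdot,t)$ as a map with an explicit modulus of continuity inverse, a device reused implicitly elsewhere. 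Both are complete proofs; your closing remark that completeness cannot be dropped, witnessed by $(\setR^n\setminus\{0\},\dist_e,\leb^n)$, matches the paper's own comment preceding the lemma.
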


\begin{proof}
Let $B$ be a closed ball in $X$ with center $o$ and radius $R$. Take $(x_k)_k \subset B$ and $t>0$. Set $u_k:=p(x_k,\cdot,t/2)$ for any $k$, and note that $\int_X u_k^2 \di \mu=p(x_k,x_k,t)=(4\pi t)^{-\alpha/2}$ by the Chapman-Kolmogorov property \eqref{eq:ChapmanKolmogorov}. Then the sequence $(u_k)_k$ is bounded in $L^2(X,\mu)$, so it weakly converges to some $u_\infty \in L^2(X,\mu)$. Let $L$ be the operator canonically associated to $\cE$. Set $v_k:=e^{-(t/2)L}u_k$ for any $k$ and $v_\infty = e^{-(t/2)L}u_\infty$. By the Chapman-Kolmogorov property, we have $v_k(y)=p(x_k,y,t)$ for any $k$ and $y \in X$, and since $\dist^2(y,o)/2 \le \dist^2(y,x_k) + \dist^2(x_k,o) \le \dist^2(y,x_k) + R^2$, then
$$
|v_k(y)| = (4\pi t)^{-\alpha/2}e^{-\frac{\dist^2(x_k,y)}{4t}} \le (4\pi t)^{-\alpha/2}e^{-\frac{\dist^2(o,y)}{8t}} e^{\frac{R^2}{4t}}=:w(y).
$$
The Chapman-Kolmogorov property implies easily that $w$ is in $L^2(X,\mu)$. Moreover, for any $y \in X$, the $L^2$ weak convergence $u_k \to u_\infty$ implies
$$
v_k(y)=\int_X p(y,z,t/2)u_k(z)\di \mu(z) \to \int_X p(y,z,t/2)u_\infty(z)\di \mu(z) = v_\infty(y).
$$
Then $v_k \to v_\infty$ in $L^2(X,\mu)$ by Lebesgue's dominated convergence theorem. Therefore, $(v_k)_k$ is a Cauchy sequence in $L^2(X,\mu)$. Since
\begin{align*}
\|v_k - v_l\|_{L^2}^2 & = \|v_k \|_{L^2}^2 + \|v_l\|_{L^2}^2 - 2 \int_X v_k v_l \di \mu \\ & = p(x_k,x_k,2t) + p(x_l,x_l,2t) - 2 p(x_k,x_l,2t) = \frac{2-2e^{-\frac{\dist^2(x_k,x_l)}{8t}}}{(8\pi t)^{\alpha/2}}
\end{align*}
for all $k,l$, we get that $(x_k)_k$ is a Cauchy sequence, hence the result.
\end{proof}

Let us conclude with an important lemma.

\begin{lemma}
Let $(X,\dist,\mu,\cE)$ be with an $\alpha$-dimensional Euclidean heat kernel such that $(X,\dist)$ is complete. Then $(X,\dist)$ is a geodesic space.
\end{lemma}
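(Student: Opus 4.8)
The plan is to use the preceding lemma, which gives that $(X,\dist)$ is proper, hence complete and locally compact, and then to show that any two points of $X$ admit a midpoint; since a complete metric space with this property is geodesic (one builds a geodesic between $x$ and $y$ by iterating midpoints over the dyadic rationals of $[0,1]$ and completing), this finishes the proof. So fix $x,y\in X$ with $D:=\dist(x,y)>0$ (the case $D=0$ being trivial) and, more generally, fix a ratio $\theta\in(0,1)$; I will produce $z_\theta\in X$ with $\dist(x,z_\theta)=\theta D$ and $\dist(z_\theta,y)=(1-\theta)D$, the choice $\theta=\tfrac12$ then giving the midpoint.

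The key point is to exploit the \emph{equality} in the Chapman--Kolmogorov identity \eqref{eq:ChapmanKolmogorov}, not merely Gaussian bounds. Apply it with $t=\theta\lambda$ and $s=(1-\theta)\lambda$ for $\lambda>0$, insert the explicit kernel \eqref{eq:heatkernel}, and rearrange: since $\tfrac{ts}{t+s}=\theta(1-\theta)\lambda$ and $\tfrac{\dist^2(x,z)}{4t}+\tfrac{\dist^2(z,y)}{4s}=\tfrac{1}{4\lambda}\bigl(\tfrac{\dist^2(x,z)}{\theta}+\tfrac{\dist^2(z,y)}{1-\theta}\bigr)$, one obtains
$$
\int_X e^{-g_\theta(z)/4\lambda}\di\mu(z) \;=\; \bigl(4\pi\theta(1-\theta)\lambda\bigr)^{\alpha/2},\qquad g_\theta(z):=\frac{\dist^2(x,z)}{\theta}+\frac{\dist^2(z,y)}{1-\theta}-D^2.
$$
Now $g_\theta\ge 0$ on $X$: for $a,b\ge 0$ one has $\tfrac{a^2}{\theta}+\tfrac{b^2}{1-\theta}-(a+b)^2=\tfrac{((1-\theta)a-\theta b)^2}{\theta(1-\theta)}\ge0$, so taking $a=\dist(x,z)$, $b=\dist(z,y)$ and using the triangle inequality $a+b\ge D$ gives $g_\theta(z)\ge (a+b)^2-D^2\ge 0$, with equality if and only if $\dist(x,z)=\theta D$ and $\dist(z,y)=(1-\theta)D$. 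Thus $z_\theta$ will be obtained as a minimizer of $g_\theta$, once we know $\inf_X g_\theta=0$ and that this infimum is attained.

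Attainment is immediate once $\inf g_\theta=0$: since $g_\theta(z)\ge\tfrac{\dist^2(x,z)}{\theta}-D^2$, the set $\{g_\theta\le 1\}$ is closed and bounded, hence compact by properness, so the continuous function $g_\theta$ attains its infimum there. To see $\inf g_\theta=0$, argue as in the proof of Lemma \ref{lem:euclideanvolume}: by Cavalieri's principle the identity above rewrites, after setting $\nu=1/(4\lambda)$, as
$$
\int_0^{+\infty} e^{-\nu v}\,\mu\bigl(\{g_\theta<v\}\bigr)\di v \;=\; \bigl(\pi\theta(1-\theta)\bigr)^{\alpha/2}\,\nu^{-\alpha/2-1}\qquad\text{for all }\nu>0.
$$
Since $\{g_\theta<v\}\subset B_{\sqrt{\theta(D^2+v)}}(x)$, the nondecreasing function $v\mapsto\mu(\{g_\theta<v\})$ has at most polynomial growth; were it to vanish on some interval $[0,v_0]$, the left-hand side would decay exponentially as $\nu\to+\infty$, contradicting the polynomial right-hand side. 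Hence $\mu(\{g_\theta<v\})>0$ for every $v>0$, so $\{g_\theta<v\}\neq\emptyset$ for every $v>0$ and therefore $\inf_X g_\theta=0$. This yields $z_\theta$ with the claimed distances, and in particular midpoints, completing the argument.

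The main obstacle is conceptual rather than computational: recognizing that an \emph{exact} Euclidean heat kernel, through the equality case of Chapman--Kolmogorov, forces the metric segment identity $\dist(x,z)+\dist(z,y)=\dist(x,y)$ to be realized at the appropriate ratio; once this is seen, the remainder is the Cavalieri/Laplace-transform bookkeeping already used for the $\alpha$-dimensional volume property. The only mild subtlety is deducing positivity of $\mu(\{g_\theta<v\})$ from the Laplace identity without knowing that function is continuous, which the exponential-versus-polynomial decay dichotomy settles (alternatively, one could invoke Lemma \ref{lem:Laplace} after a mollification).
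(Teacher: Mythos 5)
Your proof is correct, and its core mechanism is the same as the paper's: exploit the \emph{equality} in Chapman--Kolmogorov for the exact Gaussian kernel to pin down the infimum of a quadratic distance functional, locate a minimizer by properness (the preceding lemma), and conclude via the standard midpoint-plus-completeness criterion. The execution differs in two minor but pleasant ways. First, you minimize $g_\theta(z)=\tfrac{\dist^2(x,z)}{\theta}+\tfrac{\dist^2(z,y)}{1-\theta}-D^2$, whose nonnegativity and equality case are read off directly from Cauchy--Schwarz plus the triangle inequality, yielding intermediate points of arbitrary ratio in one stroke; the paper minimizes $F(z)=\dist^2(x,z)+\dist^2(z,y)$ and needs a separate algebraic identity to extract the midpoint from the value $\inf F=D^2/2$. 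Second, to identify the infimum you use a Laplace-transform support argument (exponential versus polynomial decay of $\int_0^\infty e^{-\nu v}\mu(\{g_\theta<v\})\di v$), whereas the paper raises the Chapman--Kolmogorov identity to the power $t$ and lets $\|e^{-F/4}\|_{L^{1/t}}\to\|e^{-F/4}\|_{L^\infty}$ as $t\to0$. Both are sound; yours avoids the (mild) measure-theoretic care needed for the $L^p\to L^\infty$ limit on an infinite-measure space, at the cost of invoking the $\alpha$-dimensional volume property to control the growth of $v\mapsto\mu(\{g_\theta<v\})$.
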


\begin{proof}
Let us begin with showing that any two points $x,y \in X$ admit a midpoint, i.e.~a point $m \in X$ such that
\begin{equation}\label{eq:middle}
\dist(x,m)=\dist(y,m)=\frac{\dist(x,y)}{2}\, \cdot
\end{equation}
Set $F(z)=\dist^2(x,z)+\dist^2(z,y)$ for any $z \in X$. Since $F(z) \to +\infty$ when $\dist(x,z), \dist(z,y) \to +\infty$, then there exists a ball $B \subset X$ such that $\inf_X F = \inf_B F$. From the previous lemma, we know that balls in $X$ are compact, so $\inf_B F$ is attained in some $m \in B$. Therefore, setting \[\lambda:=F(m)=\dist^2(x,m)+\dist^2(m,y),\] we have
$$
\|e^{-\frac{F}{4}}\|_{L^\infty(X,\mu)} = e^{-\frac{\lambda}{4}}.
$$
By the Chapman-Kolmogorov identity \eqref{eq:ChapmanKolmogorov}, we have for any $t>0$
$$
\int_X e^{-\frac{\dist^2(x,z)+\dist^2(z,y)}{4t}}\di \mu(z) = e^{-\frac{\dist^2(x,y)}{8t}}(2\pi t)^{\alpha/2}
$$
which can be raised to the power $t$ to provide
$$
\|e^{-\frac{F}{4}}\|_{L^{1/t}(X,\mu)} = e^{\frac{\dist^2(x,y)}{8}}(2\pi t)^{\alpha t/2}.
$$
Letting $t$ tend to $0$, this yields to $e^{-\frac{\lambda}{4}}=e^{-\frac{\dist^2(x,y)}{8}}$ hence $\lambda=\frac{\dist^2(x,y)}{2}$, thus \begin{equation}\label{eq:lambda2} \dist^2(x,m)+\dist^2(m,y) = \frac{\dist^2(x,y)}{2}
\end{equation}
by definition of $\lambda$. Since for any $z \in X$,
\begin{align*}
\dist^2(x,z)+\dist^2(z,y) & = \frac{1}{2}(\dist(x,z)+\dist(z,y))^2 + \frac{1}{2}(\dist(x,z)-\dist(z,y))^2\\
&\ge \frac{1}{2}(\dist(x,y))^2 + \frac{1}{2}(\dist(x,z)-\dist(z,y))^2,
\end{align*}
taking $z=m$ and using \eqref{eq:lambda2} implies $\dist(x,m)=\dist(m,y)$.

The existence of midpoints implies that $(X,\dist)$ is a length space, see \cite[Th.~2.4.16, 1.]{BuragoBuragoIvanov}. Then the result follows from \cite[Th.~2.5.23]{BuragoBuragoIvanov} and \cite[Th.~2.5.9]{BuragoBuragoIvanov}.
\end{proof}
\begin{remark}\label{rem:geodesicplus}
The previous proof can be adapted to show that if a complete proper metric measure space $(X,\dist,\mu)$ can be endowed with a symmetric Dirichlet form $\cE$ admitting a heat kernel $p$ such that for any $x,y\in X$,
$$\lim_{t\to 0+}-4t\log p(x,y,t)=\dist^2(x,y),$$ where the convergence holds locally uniformly, then $(X,\dist)$ is geodesic.
\end{remark}

\subsection{Strong locality and regularity of the Dirichlet form}

Let us show now that having an $\alpha$-dimensional Euclidean heat kernel forces a Dirichlet form to satisfy several properties. We start with the following.

\begin{lemma}
Let $(X,\dist,\mu,\cE)$ be with an $\alpha$-dimensional Euclidean heat kernel. Then $\Lip_c(X) \subset \cD(\cE)$.
\end{lemma}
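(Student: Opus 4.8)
The plan is to exploit the description of $\cD(\cE)$ that becomes available once stochastic completeness is known. By Lemma~\ref{lem:stochastic}, the space $(X,\dist,\mu,\cE)$ is stochastically complete, hence $\cD(\cE)$ is exactly the set of $f\in L^2(X,\mu)$ for which
$$
t\longmapsto \frac{1}{2t}\iint_{X\times X}(f(x)-f(y))^2\,p(x,y,t)\,\di\mu(x)\,\di\mu(y)
$$
is bounded on $(0,+\infty)$. So, given $f\in\Lip_c(X)$, it suffices to bound this quantity uniformly in $t$. First I would note $f\in L^2(X,\mu)$: the support $S:=\supp f$ is compact, and by Lemma~\ref{lem:euclideanvolume} the space has $\alpha$-dimensional volume, so $\mu(S)<+\infty$; since $f$ is bounded, $\|f\|_{L^2}^2\le\|f\|_\infty^2\,\mu(S)<+\infty$.

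The heart of the argument is the second-moment identity
$$
\int_X\dist^2(x,y)\,p(x,y,t)\,\di\mu(y)=2\alpha t\qquad(x\in X,\ t>0),
$$
which I would derive from \eqref{eq:001} — rewritten, with $\lambda=1/(4t)$, as $\int_X e^{-\lambda\dist^2(x,y)}\,\di\mu(y)=\pi^{\alpha/2}\lambda^{-\alpha/2}$ — by differentiating in $\lambda$ under the integral sign; this is legitimate since the $\alpha$-dimensional volume growth of $\mu$ furnishes, for $\lambda$ in a neighbourhood of any $\lambda_0>0$, an integrable dominating function for $\dist^2(x,\cdot)e^{-\lambda\dist^2(x,\cdot)}$. (Equivalently, one can obtain the identity directly from $\mu(B_r(x))=\omega_\alpha r^\alpha$ via the layer-cake computation already used in the proof of Lemma~\ref{lem:euclideanvolume}.)

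Then the estimate assembles quickly. Writing $C:=\Lip(f)$, the integrand $(f(x)-f(y))^2p(x,y,t)$ vanishes unless $x\in S$ or $y\in S$, so, using symmetry of $p(\cdot,\cdot,t)$ together with $(f(x)-f(y))^2\le C^2\dist^2(x,y)$,
$$
\iint_{X\times X}(f(x)-f(y))^2\,p(x,y,t)\,\di\mu(x)\,\di\mu(y)\le 2C^2\int_S\int_X\dist^2(x,y)\,p(x,y,t)\,\di\mu(y)\,\di\mu(x)=4\alpha C^2\,\mu(S)\,t,
$$
so the quantity above is at most $2\alpha C^2\mu(S)$ for every $t>0$, and hence $f\in\cD(\cE)$. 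The only step requiring genuine care is the justification of differentiation under the integral sign (equivalently, the integrability bounds in the layer-cake computation), which is nonetheless routine given the explicit volume growth; the rest is bookkeeping.
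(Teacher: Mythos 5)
Your proof is correct and follows essentially the same route as the paper: both reduce the claim, via stochastic completeness and the characterization \eqref{eq:Grigor'yan} of $\cD(\cE)$, to the uniform boundedness in $t$ of the approximate energies, and both conclude from the Lipschitz bound $(f(x)-f(y))^2\le \Lip(f)^2\dist^2(x,y)$ together with the second-moment identity $\int_X\dist^2(x,y)\,p(x,y,t)\,\di\mu(y)=2\alpha t$, which the paper extracts by the layer-cake formula \eqref{eq:Fub} and you by differentiating \eqref{eq:001} in $\lambda$ (an equivalent computation). No gaps.
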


\begin{proof}
Let $f \in \Lip_c(X)$ be with support $K$. Thanks to \eqref{eq:Grigor'yan}, we only need to show that
$$
F \, : \, (0,+\infty) \ni t \mapsto \frac{1}{2t} \iint_{X\times X} (f(x)-f(y))^2 \frac{1}{(4 \pi t)^{\alpha/2}} e^{-\frac{\dist^2(x,y)}{4t}} \di \mu(x) \di \mu(y)
$$
is a bounded function. Set $D(x,y)=f(x)-f(y)$ for any $(x,y) \in X \times X$. It is easily checked that $\supp(D) \subset (K \times X) \cup (K \times X)$, so for any $t>0$, using the symmetry in $x$ and $y$ of the integrand we get
\begin{align*}
F(t) & \le \frac{\Lip(f)^2}{t} \int_K\int_X \dist^2(x,y) \frac{1}{(4 \pi t)^{\alpha/2}} e^{-\frac{\dist^2(x,y)}{4t}} \di \mu(x) \di \mu(y)\\
& = \frac{4 \Lip(f)^2}{(4 \pi t)^{\alpha/2}} \int_K \int_X \frac{\dist^2(x,y)}{4t} e^{-\frac{\dist^2(x,y)}{4t}} \di \mu(x) \di \mu(y).
\end{align*}
For any measurable function $g:X\to [0,\infty)$ and any $C^1$ function $\phi : [0,\infty) \to[0,\infty)$ satisfying $\lim_{\lambda\to+\infty}\varphi(\lambda)=0$ and $\int_0^{+\infty} |\varphi'|(\lambda)\ \mu\left(\left\{g<\lambda\right\}\right)d\lambda<\infty$, writing $\varphi(g(x))=\int_{g(x)}^{+\infty} \varphi'(\lambda)d\lambda$ and applying Fubini's theorem leads to
\begin{equation}\label{eq:Fub}
\int_X \varphi(g(x))\di\mu(x)=-\int_0^{+\infty} \varphi'(\lambda)\ \mu\left(\left\{g<\lambda\right\}\right)\di\lambda.
\end{equation}
For any $y \in K$, using this fact with $g(x)=\dist^2(x,y)/(4t)$ and $\phi(\xi)=\xi e^{-\xi}$, we get
$$
F(t) \le \frac{4 \Lip(f)^2}{(4 \pi t)^{\alpha/2}} \int_K \int_0^{+\infty} (\lambda - 1) e^{-\lambda} \mu(B_{\sqrt{4 t \lambda}}(y)) \di \lambda \di \mu(y).
$$
Setting $C_o=C_o(\alpha):=4 \int_0^{+\infty}(\lambda-1)e^{-\lambda}\lambda^{\alpha/2} \di \lambda$ and recalling that $\mu(B_{\sqrt{4 t \lambda}}(y))= \omega_{\alpha} (4 t  \lambda)^{\alpha/2}$, we obtain $F(t) \le \Lip(f)^2 \mu(K) C_o \omega_{\alpha} \pi^{-\alpha/2}$, thus $F$ is bounded.
\end{proof}

\begin{corollary}\label{cor:1}
Let $(X,\dist,\mu,\cE)$ be with an $\alpha$-dimensional Euclidean heat kernel. Then $\Lip(X) \subset \cD_{loc}(\cE)\cap C(X)$ and for some constant $C_1$ depending only on $\alpha$, we have:
\begin{equation}\label{eq:Lip}
\Gamma(f) \le C_1 \Lip(f)^2 \mu \qquad \forall f \in \Lip(X).
\end{equation}
\end{corollary}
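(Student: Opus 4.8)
The plan is to deduce the corollary by combining the preceding lemma ($\Lip_c(X)\subset\cD(\cE)$) with stochastic completeness (Lemma~\ref{lem:stochastic}), the $\alpha$-dimensional volume property (Lemma~\ref{lem:euclideanvolume}), and the heat-kernel representation \eqref{eq:Grigor'yan2} of the energy measure. The inclusion $\Lip(X)\subset C(X)$ is trivial. For $\Lip(X)\subset\cD_{loc}(\cE)$, I would fix $f\in\Lip(X)$ and a compact $K\subset X$, pick $o\in K$ and $R>0$ with $K\subset B_R(o)$, and set $\chi:=\max\big(0,\min(1,2-\dist(o,\cdot)/R)\big)$, a compactly supported Lipschitz function equal to $1$ on $B_R(o)\supset K$; then $\chi f\in\Lip_c(X)\subset\cD(\cE)$ by the previous lemma and $\chi f=f$ on $K$, which is exactly the defining property of $\cD_{loc}(\cE)$.

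For the energy bound, recall that the space is stochastically complete (Lemma~\ref{lem:stochastic}), so \eqref{eq:Grigor'yan2} is available for every $f\in\cD_{loc}(\cE)$ and every $\phi\in C_c(X)$. The computational core is the exact second-moment identity $\int_X\dist^2(x,y)\,p(x,y,t)\di\mu(y)=2\alpha t$ for all $x\in X$ and $t>0$: using the explicit form of $p$ I would write $\dist^2(x,y)\,p(x,y,t)=4t\,(4\pi t)^{-\alpha/2}\,\psi\big(\dist^2(x,y)/(4t)\big)$ with $\psi(\xi):=\xi e^{-\xi}$, apply the layer-cake identity \eqref{eq:Fub} with $g(y)=\dist^2(x,y)/(4t)$ exactly as in the proof of the preceding lemma, and insert $\mu(B_{\sqrt{4t\lambda}}(x))=\omega_\alpha(4t\lambda)^{\alpha/2}$ (Lemma~\ref{lem:euclideanvolume}); this produces $\frac{\omega_\alpha}{\pi^{\alpha/2}}\cdot 4t\int_0^{+\infty}(\lambda-1)e^{-\lambda}\lambda^{\alpha/2}\di\lambda$, which equals $2\alpha t$ by \eqref{eq:omega_n} and $\Gamma(\alpha/2+2)=(\alpha/2+1)\Gamma(\alpha/2+1)$. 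Then, for $\phi\in C_c(X)$ with $\phi\ge0$ and $f\in\Lip(X)$, bounding $(f(x)-f(y))^2\le\Lip(f)^2\dist^2(x,y)$ in \eqref{eq:Grigor'yan2} and using Tonelli's theorem together with the identity above gives, for every $t>0$,
\begin{align*}
\frac{1}{2t}\iint_{X\times X}\phi(x)(f(x)-f(y))^2p(x,y,t)\di\mu(x)\di\mu(y)
&\le\frac{\Lip(f)^2}{2t}\int_X\phi(x)\left(\int_X\dist^2(x,y)\,p(x,y,t)\di\mu(y)\right)\di\mu(x)\\
&=\alpha\,\Lip(f)^2\int_X\phi\di\mu.
\end{align*}
Letting $t\downarrow0$ in \eqref{eq:Grigor'yan2} yields $\int_X\phi\di\Gamma(f)\le\alpha\,\Lip(f)^2\int_X\phi\di\mu$ for every nonnegative $\phi\in C_c(X)$, and since $\Gamma(f)$ is a nonnegative Radon measure this is equivalent to $\Gamma(f)\le\alpha\,\Lip(f)^2\mu$; hence $C_1:=\alpha$ works.

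I do not expect a genuine obstacle here: the corollary is essentially an assembly of results already in hand, and the one real computation is the sharp second-moment identity $\int_X\dist^2(x,y)\,p(x,y,t)\di\mu(y)=2\alpha t$ — the same layer-cake manipulation as in the previous proof, now made \emph{exact} by the $\alpha$-dimensional volume rather than merely estimated. The two points deserving a word of justification are that \eqref{eq:Grigor'yan2} is legitimately available (this is exactly what stochastic completeness, i.e.\ Lemma~\ref{lem:stochastic}, guarantees) and that it applies to all of $\Lip(X)$, which is precisely the inclusion $\Lip(X)\subset\cD_{loc}(\cE)$ proved in the first step.
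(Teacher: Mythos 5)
Your proposal is correct and follows essentially the same route as the paper: localize via a compactly supported Lipschitz cutoff to get $\Lip(X)\subset\cD_{loc}(\cE)$, then bound the Grigor'yan approximation \eqref{eq:Grigor'yan2} of $\int_X\phi\,\di\Gamma(f)$ using $(f(x)-f(y))^2\le\Lip(f)^2\dist^2(x,y)$ and the layer-cake computation before letting $t\downarrow 0$. The only difference is that you carry out explicitly the "direct computation like in the proof of the previous lemma" that the paper leaves implicit, obtaining the sharp constant $C_1=\alpha$ via the exact second-moment identity $\int_X\dist^2(x,y)\,p(x,y,t)\,\di\mu(y)=2\alpha t$.
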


\begin{proof}
Take $f \in \Lip(X)$. For any compact set $K \subset X$, the function $\phi_K:=\max(1-\dist(\cdot,K),0)$ is a compactly supported Lipschitz function constantly equal to $1$ on $K$. Therefore, $f \phi_K$ coincides with $f$ on $K$, and thanks to the previous lemma, $f \phi_K$ belongs to $\cD(\cE)$. This shows that $f \in \cD_{loc}(\cE)$. Moreover, for any  non-negative $\phi \in C_c(X)$ and $t>0$, a direct computation like in the proof of the previous lemma implies
$$
\frac{1}{2t} \iint_{X\times X} \phi(x)(f(x)-f(y))^2 p(x,y,t) \di \mu(x) \di \mu(y) \le C_1 \Lip(f)^2 \int_X \phi(x) \di \mu(x)
$$
with $C_1$ depending only on $\alpha$, so that letting $t$ tend to $0$ and applying formula \eqref{eq:Grigor'yan2} yields to \eqref{eq:Lip}.
\end{proof}

We are now in a position to show the following crucial result.

\begin{proposition}\label{prop:slandreg}
Let $(X,\dist,\mu,\cE)$ be with an $\alpha$-dimensional Euclidean heat kernel. Then $\cE$ is strongly local and regular.
\end{proposition}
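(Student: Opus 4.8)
I would prove regularity first and then derive strong locality, since the Beurling--Deny decomposition invoked in the second step is available for any regular Dirichlet form. The ingredients at hand are: $\Lip_c(X)\subset C_c(X)\cap\cD(\cE)$ (the preceding lemma); $\Gamma(f)\le C_1\Lip(f)^2\mu$ for $f\in\Lip(X)$ (Corollary \ref{cor:1}); $\mu$ has $\alpha$-dimensional volume (Lemma \ref{lem:euclideanvolume}); and stochastic completeness (Lemma \ref{lem:stochastic}), which validates \eqref{eq:Grigor'yan}--\eqref{eq:Grigor'yan2}. For \textbf{regularity} I would show $\Lip_c(X)$ is a core. Its density in $C_c(X)$ for $\|\cdot\|_\infty$ is routine: writing $f=f^+-f^-$ with $f^\pm\ge0$ continuous and supported in $\supp f$, the inf-convolutions $(f^\pm)_n:=\inf_y(f^\pm(y)+n\dist(\cdot,y))$ are $n$-Lipschitz, vanish exactly on $\{f^\pm=0\}$ (hence have support in $\supp f$), and converge uniformly by uniform continuity. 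Density in $(\cD(\cE),|\cdot|_\cE)$ is the real work: given $f\in\cD(\cE)$, spectral calculus gives $P_tf\to f$ in $\cD(\cE)$ as $t\downarrow0$, and the explicit kernel \eqref{eq:heatkernel} makes the representative $P_tf(x)=\int_X p(x,y,t)f(y)\di\mu(y)$ locally Lipschitz, via
\[
\abs{p(x,y,t)-p(x',y,t)}\ \le\ \frac{e^{-\min(\dist^2(x,y),\dist^2(x',y))/4t}}{4t\,(4\pi t)^{\alpha/2}}\ \dist(x,x')\,\big(\dist(x,y)+\dist(x',y)\big)
\]
together with $\mu(B_r)=\omega_\alpha r^\alpha$. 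Then I would choose cutoffs $\psi_R\in\Lip_c(X)$ with $\psi_R\equiv1$ on $B_R(o)$, $\supp\psi_R\subset B_{2R}(o)$, $\Lip(\psi_R)\le1/R$, so $\Gamma(\psi_R)\le(C_1/R^2)\mu$ is carried by $B_{2R}(o)\setminus B_R(o)$ (Corollary \ref{cor:1}); a computation based on \eqref{eq:Grigor'yan}--\eqref{eq:Grigor'yan2} bounds $\cE\big((1-\psi_R)P_tf\big)$ by $\Gamma(P_tf)(X\setminus B_R(o))$ plus a cross term plus $\tfrac{C_1}{R^2}\int_{B_{2R}(o)}(P_tf)^2\di\mu$, and all three tend to $0$ as $R\to\infty$ since $\Gamma(P_tf)$ is finite and $P_tf\in L^2$. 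As $\psi_RP_tf\in\Lip_c(X)$, a diagonal argument then yields density and hence regularity.

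For \textbf{strong locality} the key claim would be: \emph{if $f,g\in\cD(\cE)$ and $f$ is constant on an open set $U\supset\supp g$ with $\rho:=\dist(\supp g,X\setminus U)>0$ (automatic when $\supp g$ is compact), then $\cE(f,g)=0$.} In \eqref{eq:Grigor'yan} the integrand vanishes unless one of $x,y$ lies in $\supp g$ and the other outside $U$, forcing $\dist(x,y)\ge\rho$, so Cauchy--Schwarz bounds the $t$-slice in modulus by $\tfrac12(A_tB_t)^{1/2}$, where $A_t:=\tfrac1t\iint_{\dist(x,y)\ge\rho}(f(x)-f(y))^2p(x,y,t)\di\mu\di\mu$ and $B_t$ is the analogue for $g$; here $B_t\le\tfrac1t\iint_{X\times X}(g(x)-g(y))^2p\to2\cE(g,g)$ stays bounded, while on $\{\dist(x,y)\ge\rho\}$ one has $(4\pi t)^{-\alpha/2}e^{-\dist^2(x,y)/4t}\le2^{\alpha/2}e^{-\rho^2/8t}p(x,y,2t)$, whence $A_t\le2^{\alpha/2}e^{-\rho^2/8t}\,\tfrac1t\iint_{X\times X}(f(x)-f(y))^2p(x,y,2t)=O\big(e^{-\rho^2/8t}\big)\to0$. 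Thus $\cE(f,g)=0$. Since $\cE$ is now regular, I would invoke the Beurling--Deny decomposition $\cE=\cE^{(c)}+\cE^{(j)}+\cE^{(k)}$, with $\cE^{(c)}$ strongly local, $\cE^{(j)}(f,g)=\iint_{X\times X\setminus\mathrm{diag}}(f(x)-f(y))(g(x)-g(y))\di J$, $\cE^{(k)}(f,g)=\int_Xfg\di\kappa$. Testing the claim on $f,g\in\Lip_c(X)$ with disjoint supports gives $0=\cE(f,g)=\cE^{(j)}(f,g)=-2\iint f(x)g(y)\di J$, hence $J=0$ after approximating indicators of disjoint compact sets; testing it on $\phi\in\Lip_c(X)$ with $\phi\equiv1$ on an open ball $B$ and $g\in\Lip_c(X)$ supported in $B$ gives $0=\cE(\phi,g)=\int_X g\di\kappa$, hence $\kappa=0$ after letting $B$ exhaust $X$. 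Therefore $\cE=\cE^{(c)}$ is strongly local.

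The main obstacle I anticipate is the form-norm density of $\Lip_c(X)$: truncating in space must not destroy the energy, which is precisely why one must play $P_tf\in L^2$ against the $O(1/R^2)$ total mass of $\Gamma(\psi_R)$ (a naive bound by $\|P_tf\|_\infty^2\,\Gamma(\psi_R)(X)$ fails when $\alpha\ge2$); the Gaussian-tail estimate $A_t=O(e^{-\rho^2/8t})$ behind the locality claim is a close second. Both steps genuinely use the \emph{exact} Euclidean shape of $p$ in \eqref{eq:heatkernel}, not merely two-sided Gaussian bounds.
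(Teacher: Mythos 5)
Your proposal is correct, but it takes a genuinely different route from the paper on both halves of the statement. For strong locality the paper imports Grigor'yan's Theorem~4.1, which under two-sided Gaussian bounds expresses $\cE(f)$ up to constants as $\limsup_{t\to 0}t^{-(\alpha+2)/2}\iint_{\{\dist(x,y)<\sqrt t\}}(f(x)-f(y))^2\di\mu\di\mu$ and deduces locality from the near-diagonal localization of this functional; you instead prove directly, via the Gaussian tail bound $p(\cdot,\cdot,t)\le 2^{\alpha/2}e^{-\rho^2/8t}p(\cdot,\cdot,2t)$ on $\{\dist\ge\rho\}$, that $\cE(f,g)=0$ when $f$ is constant at positive distance $\rho$ from $\supp g$, and then upgrade to full strong locality through the Beurling--Deny decomposition — which forces you to establish regularity first, reversing the paper's order. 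For the form-norm density of $\Lip_c(X)$, the paper proves that $\Lip_c(X)$ is $|\cdot|_\cE$-dense in $\Lip_o(X)\cap\cD(\cE)$ by the same cutoff/$1/R^2$ trade-off you use (plus Mazur's lemma), and then shows $P_t(L^2_c)\subset\Lip_o(X)$ is dense in $\cD(\cE)$ by an orthogonal-complement argument ($v\perp P_t(L^2_c)$ forces $LP_tv=P_tv$, impossible since $L\le 0$); your shortcut $P_tf\to f$ in $\cD(\cE)$ by spectral calculus, followed by truncating the (locally Lipschitz) function $P_tf$, is more direct and equally valid. The one point you should make explicit is the passage from Beurling--Deny locality on the core $\cD(\cE)\cap C_c(X)$ to the paper's definition of strong locality for arbitrary $f,g\in\cD(\cE)$ with $f$ constant on a neighborhood of a possibly non-compact $\supp g$ (where no uniform gap $\rho>0$ is available); this is a standard approximation for regular forms, and the paper's own appeal to Grigor'yan is no more detailed on this point, but a sentence acknowledging it would close the argument.
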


\begin{proof}
In the proof of \cite[Th.~4.1]{Grigor'yan}, it is shown that if $(X,\dist,\mu,\cE)$ admits a stochastically complete heat kernel $p$ satisfying $$t^{-\gamma/\beta} \Phi_1(\dist(x,y)t^{-1/\beta})\le p(x,y,t)\le t^{-\gamma/\beta} \Phi_2(\dist(x,y)t^{-1/\beta})$$ for any $x,y \in X$ and $t>0$, where $\beta$ and $\gamma$ are positive constants and $\Phi_1, \Phi_2$ are monotone decreasing functions from $[0,+\infty)$ to itself such that $\Phi_1>0$ and $\int^{+\infty}s^{\beta + \gamma-1}\Phi_2(s)\di s<+\infty$, then
\begin{equation}\label{eq:Grigor'yan}
\cE(f) \simeq \limsup_{t\to 0} t^{-\frac{(\beta + \gamma)}{2}} \iint_{\{\dist(x,y)<t^{1/2}\}}(f(x)-f(y))^2\di \mu(x)\di \mu(y)
\end{equation}
holds for all $f \in \cD(\cE)$, what in turn implies strong locality of $\cE$. Here we have used $A\simeq B$ to denote the existence of a constant $c>1$ such that $c^{-1}A\le B \le cA$. Choosing $\Phi_1(s)=\Phi_2(s)=e^{-s^2/4}$, $\beta=2$ and $\gamma = \alpha$, we can apply this result in our context to get strong locality of $\cE$.

To prove regularity, let us show that $\Lip_c(X)$ is a core for $\cE$.

\textit{Density in $C_c(X)$.} Let $f \in C_c(X)$ be with support $K$. For any $R>0$ and $x \in X$, set $f_R(x)=\inf_y \{f(y)+R\dist(x,y)\}$. Note that $f_R(x)\le f(x)$ for any $x \in X$, and since $f(y)+R\dist(x,y)\to +\infty$ when $\dist(x,y) \to +\infty$ and $(X,\dist)$ is proper, the infimum in the definition of $f_R$ is always attained at some $x' \in X$. Then $f_R(x)=f(x')+R\dist(x,x')$ implies
$$
\dist(x,x') \le \frac{2\|f\|_{\infty}}{R}.
$$
Being continuous with compact support, $f$ is uniformly continuous, so it admits a modulus of continuity $\omega$ which we can assume non-decreasing with no loss of generality.
Then for any $x \in X$,
$$
|f_R(x)-f(x)|=f(x)-f_R(x)=f(x)-f(x')+\underbrace{f(x')-f_R(x)}_{=-R\dist(x',x)\le 0}  \le \omega(\dist(x,x'))
$$
so that $$
\|f_R-f\|_{\infty} \le \omega\left( 2 \|f\|_{\infty}/R\right) \to 0$$ when $R \to +\infty$. Therefore, setting $\phi_K:=\max(1-\dist(\cdot,K),0)$ and $g_R:=\phi_K f_R$ for any $R>0$, we get a sequence of compactly supported Lipschitz functions $(g_R)_R$ converging uniformly to $f$. 


\textit{Density in $\cD(\cE)$.} Let $\Lip_o(X)$ be the set of Lipschitz functions $f$ on $X$ vanishing at infinity, i.e.~such that for some $o \in X$ one has $f(x)\to 0$ when $\dist(o,x)\to+\infty$. We are going to show that $\Lip_c(X)$ is dense in $\Lip_o(X)\cap \cD(\cE)$ for the norm $|\cdot|_{\cE}$. By \eqref{eq:Grigor'yan}, we know that that there exists a constant $C_\alpha>0$ such that if $f\in \cD(\cE)$, then
$$
 \frac{1}{C_\alpha}\limsup_{t\to 0}\int_X E(f,x,t)\di\mu(x)\le \cE(f)  \le C_\alpha\limsup_{t\to 0}\int_X E(f,x,t)\di\mu(x)$$ where
 $$E(f,x,t)= t^{-\frac{(\alpha+2)}{2}} \int_{B_{\sqrt{t}}(x)}(f(x)-f(y))^2\di \mu(y).$$ 
 
Let $f\in  \Lip_o(X)\cap \cD(\cE)$. For any $R>0$, we set 
 $$\varphi_R(x):=\left(1-\frac{\dist(x,B_R(o)}{R}\right)_+$$for any $x \in X$, and $f_R := f \varphi_R$.
 By monotone convergence, we have $\lim_{R\to +\infty} \|f-f_R\|_2=0.$ We look now at $E(f_R,x,t)$ and we distinguish 3 cases:
 \begin{itemize}
 \item if $x\in B_{R-\sqrt{t}}(o)$, then $E(f_R,x,t)=E(f,x,t)$;
 \item if $x\not\in B_{2R+\sqrt{t}}(o)$, then $E(f_R,x,t)=0$;
 \item if $x\in B_{2R+\sqrt{t}}(o)\setminus  B_{R-\sqrt{t}}(o)$,
 \end{itemize}
then using $f_R(x)-f_R(y)=\varphi_R(x)(f(x)-f(y))+f(y)(\varphi_R(x)-\varphi_R(y))$ we get
 \begin{align*}
 E(f_R,x,t)&\le 2 t^{-\frac{(\alpha+2)}{2}} \left(\int_{B_{\sqrt{t}}(x)}(f(x)-f(y))^2\di \mu(y)+\int_{B_{\sqrt{t}}(x)}f^2(y)(\varphi_R(x)-\varphi_R(y))^2\di\mu(y)\right)\\
 &\le 2E(f,x,t)+ 2 t^{-\frac{(\alpha+2)}{2}} \int_{B_{\sqrt{t}}(x)}f^2(y)\frac{t}{R^2}\di\mu(y)
 \end{align*}
where we have used the fact that $\varphi_R$ is $1/R$-Lipschitz. By Fubini's theorem,
\[
\int_X \int_{B_{\sqrt{t}}(x)}f^2(y)\di \mu(y) \di \mu(x) = \int_X f^2(y)\mu(B_{\sqrt{t}}(y))\di \mu(y) = \omega_\alpha t^{\frac{\alpha}{2}} \int_X f^2(y)\di \mu(y),
\]
thus
 \begin{align*}\cE(f_R)&\le C_\alpha \limsup_{t\to 0}\int_{X} E(f,x,t)\di\mu(x)+C_\alpha \frac{2}{R^2} \omega_\alpha \int_X f^2(y)\di\mu(y)\\
 &\le C_\alpha^2 \cE(f)+C_\alpha \frac{2}{R^2} \omega_\alpha \int_X f^2(y)\di\mu(y).
 \end{align*}
Hence $\{f_R\}_{R\ge 1}$ is bounded in $\cD(\cE)$. The fact that $\lim_{R\to +\infty} \|f-f_R\|_2=0$ implies that $f_R$ converges weakly to $f$ in $\cD(\cE)$ when $R\to+\infty$. By Mazur's lemma, there exists a sequence $\{u_\ell\}_\ell \subset \Lip_c(X)$ made of convex combination of $\{f_R\}_{R\ge 1}$ such that 
 $$\lim_{\ell\to +\infty} \|f-u_\ell\|_2=0.$$
Therefore, it is enough to show that $\Lip_o(X)$ contains a subset that is dense in $\cD(\cE)$ for $|\cdot|_\cE$. Let $L^2_c$ be the set of compactly supported functions $f$ in $L^2(X,\mu)$. Then $P_t(L^2_c) \subset \Lip_o(X)$ for any fixed $t>0$. Indeed, for any $f \in L^2_c$ and $x, y \in X$,
$$
|P_tf(x) - P_tf(y)|\le \frac{1}{(4\pi t)^{\alpha/2}} \int_X \Big|e^{-\frac{\dist^2(x,z)}{4t}}-e^{-\frac{\dist^2(y,z)}{4t}}\Big| |f(z)| \di \mu(z).
$$
Setting $\phi(s)=e^{-\frac{s^2}{4t}}$ and noting that $|\phi'(s)|\le |\phi'(\sqrt{2t})|=:c_t$ for all $s>0$, we get from the mean value theorem, the triangle inequality and Hölder's inequality, that $|P_tf(x) - P_tf(y)|\le  C(t,f) \dist(x,y)$ with $C(t,f):=c_t(4 \pi t)^{-\alpha/2} \mu(\supp f)^{1/2} \| f \|_{L^2}$. Moreover,
$$
|P_tf(x)| = \left|\frac{1}{(4\pi t)^{\alpha/2}} \int_{\supp f} f(y) e^{-\frac{\dist^2(x,y)}{4t}} \di \mu(y)\right|
\le \frac{ e^{-\frac{\dist^2(o,x)}{8t}} }{ (4\pi t)^{\alpha/2} } \int_{\supp f} |f(y)| e^{\frac{\dist^2(o,y)}{4t}} \di \mu(y) \to 0
$$
when $\dist(o,x) \to +\infty$.

Let us show now that $P_t(L^2_c)$ is dense in $\cD(\cE)$ by proving that its $\langle \cdot , \cdot \rangle_{\cE}$-orthogonal complement $F$ in $\cD(\cE)$ reduces to $\{0\}$. For any $v\in F$, we have:
\begin{equation}\label{eq:18.07}
\int_X v P_t f \di \mu + \cE(v,P_t f)=0 \qquad \forall f \in L^2_c.
\end{equation}
Since $P_t$ maps $L^2(X,\mu)$ into $\cD(L)$ and is self-adjoint, then
\begin{align*}
\cE(v,P_t f) & = - \int_X v L (P_t f) \di \mu =  -\int_X v \frac{\di}{\di t} P_t f \di \mu = -\frac{\di}{\di t} \int_X v P_t f \di \mu\\
 & = - \frac{\di}{\di t} \int_X (P_tv) f \di \mu
=  -\int_X \frac{\di}{\di t}(P_tv) f \di \mu = -  \int_X L(P_tv) f \di \mu =  \cE(P_tv,f)
\end{align*}
for any $f \in L^2_c$, so \eqref{eq:18.07} becomes:
$$
\int_X (P_tv) f \di \mu + \cE(P_t v,f)=0 \qquad \forall f \in L^2_c.
$$
This implies $P_t v \in \cD(L)$ with $L (P_t v) = P_t v$. Since $L$ is a non-positive operator, $1$ cannot be an eigenvalue of $L$, so we necessarily have $P_t v = 0$. This implies $v=0$ since the spectral theorem ensures that $0$ cannot be an eigenvalue of $P_t$.
\end{proof}

By the Beurling-Deny theorem, Proposition \ref{prop:slandreg} ensures the existence of a $\Gamma$ operator for any Dirichlet form $\cE$ with an $\alpha$-dimensional Euclidean heat kernel defined on a metric measure space $(X,\dist,\mu)$. We can then define the associated pseudo-distance $\dist_\cE$ as recalled in Section 2. It turns out that in this case, $\dist_\cE$ is equivalent to the initial distance $\dist$, as shown in the next proposition.

\begin{proposition}\label{eq:propA}
Let $(X,\dist,\mu,\cE)$ be with an $\alpha$-dimensional Euclidean heat kernel. Then there exists $C_2>0$ depending only on $\alpha$ such that $C_2 \dist \le \dist_{\cE} \le \dist$. In particular, the assumption \eqref{eq:A} is satisfied.
\end{proposition}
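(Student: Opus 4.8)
The plan is to establish the two inequalities $C_2\dist\le\dist_\cE$ and $\dist_\cE\le\dist$ separately; only the second requires real work.

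\textbf{Lower bound.} For $C_2\dist\le\dist_\cE$ I would just exhibit a competitor in the supremum \eqref{eq:defdist}. Fix $x\in X$: by Corollary \ref{cor:1} the $1$-Lipschitz function $\dist(x,\cdot)$ lies in $\cD_{loc}(\cE)\cap C(X)$ and satisfies $\Gamma(\dist(x,\cdot))\le C_1\mu$ with $C_1=C_1(\alpha)$. Hence $g:=C_1^{-1/2}\dist(x,\cdot)$ is admissible in \eqref{eq:defdist}, so $\dist_\cE(x,y)\ge|g(x)-g(y)|=C_1^{-1/2}\dist(x,y)$ for every $y$, and one sets $C_2:=C_1^{-1/2}$.

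\textbf{Upper bound.} For $\dist_\cE\le\dist$ I must show that every $f\in C(X)\cap\cD_{loc}(\cE)$ with $\Gamma(f)\le\mu$ obeys $|f(x)-f(y)|\le\dist(x,y)$. Replacing $f$ by $-f$ and by a truncation $\min(\max(f,-M),M)$ — which is still continuous, still has $\Gamma(\cdot)\le\mu$, and agrees with $f$ at $x,y$ once $M$ is large — I may assume $f$ bounded with $f(x)\ge f(y)$. The analytic engine is Davies' integrated maximum principle: since $\cE$ is strongly local and regular (Proposition \ref{prop:slandreg}), differentiating $t\mapsto\int_X e^{\lambda f}(P_tu_0)^2\di\mu$ and estimating the cross term by the Leibniz and chain rules for $\Gamma$ together with the Cauchy--Schwarz inequality yields
\[
\int_X e^{\lambda f}(P_tu_0)^2\di\mu\ \le\ e^{\lambda^2 t/2}\int_X e^{\lambda f}u_0^2\di\mu
\]
for all $\lambda\in\setR$, $t>0$, $u_0\in L^2(X,\mu)$; here $\Gamma(f)\le\mu$ is used both to control the cross term and to guarantee $e^{\lambda f}P_tu_0\in\cD(\cE)$ (it gives $\int(P_tu_0)^2\di\Gamma(f)\le\|P_tu_0\|_{L^2}^2<\infty$).

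Now fix $x,y$, set $d:=\dist(x,y)$ and, for small $\eps>0$, put $A:=B_\eps(y)$, $B:=B_\eps(x)$, $u_0:=1_A$, $a:=\sup_A f$, $b:=\inf_B f$; these are finite and $a\to f(y)$, $b\to f(x)$ as $\eps\to0$ by continuity of $f$. Restricting the displayed inequality to $B$, using $e^{\lambda f}\ge e^{\lambda b}$ on $B$ and $e^{\lambda f}\le e^{\lambda a}$ on $A$, and then optimising in $\lambda>0$ (legitimate for small $\eps$ when $f(x)>f(y)$; the case $f(x)=f(y)$ being trivial) gives $\|P_t1_A\|_{L^2(B)}^2\le e^{-(b-a)^2/(2t)}\mu(A)$. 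On the other hand $\dist(z,w)\le d+2\eps$ for $z\in B$, $w\in A$, so the explicit Gaussian kernel \eqref{eq:heatkernel} gives $P_t1_A(z)\ge(4\pi t)^{-\alpha/2}e^{-(d+2\eps)^2/(4t)}\mu(A)$ for $z\in B$, whence $\|P_t1_A\|_{L^2(B)}^2\ge\mu(A)^2\mu(B)(4\pi t)^{-\alpha}e^{-(d+2\eps)^2/(2t)}$. Since $\mu(A),\mu(B)$ are positive (because $\supp\mu=X$) and finite, combining the two bounds, rearranging and taking logarithms gives
\[
(b-a)^2-(d+2\eps)^2\ \le\ 2t\big(\alpha\log(4\pi t)-\log(\mu(A)\mu(B))\big),
\]
whose right-hand side tends to $0$ as $t\to0^+$. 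Hence $b-a\le d+2\eps$, and letting $\eps\to0$ yields $|f(x)-f(y)|=f(x)-f(y)\le\dist(x,y)$. Together with the lower bound this gives $C_2\dist\le\dist_\cE\le\dist$, so $\dist_\cE$ is a finite distance bi-Lipschitz equivalent to $\dist$; in particular it induces the same topology, which is assumption \eqref{eq:A}.

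\textbf{Main obstacle.} The delicate part is the upper bound, and inside it the rigorous justification of Davies' weighted $L^2$ estimate: one must check that $\tfrac{d}{dt}\int_X e^{\lambda f}u_t^2\di\mu=-2\cE(u_t,e^{\lambda f}u_t)$ is licit and bound the resulting cross term, which is exactly where strong locality and the \emph{quantitative} hypothesis $\Gamma(f)\le\mu$ (the density being $\le1$, not merely bounded) enter. It is crucial that the weight be the admissible function $f$ itself, whose $\Gamma$ is dominated by $\mu$ with constant exactly $1$: using instead $\dist(x,\cdot)$, for which Corollary \ref{cor:1} only gives $\Gamma\le C_1\mu$, would produce $\dist_\cE\le C\dist$ rather than the sharp $\dist_\cE\le\dist$. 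Everything else is routine manipulation of the explicit kernel.
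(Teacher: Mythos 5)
Your proposal is correct and follows essentially the same route as the paper: the lower bound via the competitor $C_1^{-1/2}\dist(x,\cdot)$ from Corollary \ref{cor:1}, and the upper bound via the weighted $L^2$ (Davies/Grigor'yan integral maximum principle) estimate applied to $P_t 1_A$ with $A,B$ small balls around $y,x$, combined with the explicit Gaussian lower bound on the kernel and optimization in the exponential parameter. The only cosmetic deviation is that you fix balls of radius $\eps$ and take $t\to 0$ then $\eps\to 0$, whereas the paper couples the radius to $t$ by taking $A=B_{\sqrt t}(x)$, $B=B_{\sqrt t}(y)$; both variants work, and your truncation reduction to bounded $f$ matches the paper's closing step.
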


\begin{proof}
Let us first show that $C_2\dist \le \dist_\cE$ for some $C_2>0$. Set $\Lambda:=\{f \in \Lip(X) \, : \, C_1\Lip(f)^2 \le 1\}$ where $C_1$ is as in \eqref{eq:Lip}. It follows from Corollary \ref{cor:1} that $\Lambda$ is included in the set of test functions in \eqref{eq:defdist}. Noting that $f:=C_1^{-1/2}\dist(x,\cdot)$ is in $\Lambda$ for all $x \in X$ and that $|f(x)-f(y)|=C_1^{-1/2}\dist(x,y)$ for all $x,y \in X$, with $C_2:=C_1^{-1/2}$ we get: $$\dist_\cE(x,y)\ge C_2\dist(x,y) \qquad \forall x,y \in X.$$

Let us show now that $\dist_\cE \le \dist$. To this aim, we follow the lines of \cite{Grigor'yanEd}. Let $v \in \cD_{loc}(\cE)\cap C(X)$ be bounded and such that $\Gamma(v)\le \mu$. For any $a \in \setR$, $t>0$ and $x \in X$, set $\xi_a(x,t):=a v(x) - \frac{a^2}{2}t$.

\begin{claim}
 For any $f \in L^2(X,\mu)$, the quantity
$$
I(t):=\int_X f_t^2 e^{\xi_a(\cdot,t)} \di \mu,
$$
where $f_t:=P_t f$, does not increase when $t>0$ increases.
\end{claim}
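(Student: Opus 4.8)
The plan is to reduce the statement to a differential inequality and then verify it by a Davies--Gaffney type ``completing the square''. Since $v$ is bounded we may write $e^{\xi_a(\cdot,t)}=e^{-a^2t/2}e^{av}$, so that $I(t)=e^{-a^2t/2}J(t)$ with $J(t):=\int_X f_t^2e^{av}\di\mu$; it therefore suffices to show $J'(t)\le\frac{a^2}{2}J(t)$ for every $t>0$. Two preliminary facts will be used. First, $f_t=P_tf$ is globally $\mu$-essentially bounded: by Cauchy--Schwarz and \eqref{eq:001} applied with $t/2$ in place of $t$,
$$\|f_t\|_{L^\infty(X,\mu)}\le(4\pi t)^{-\alpha/2}\big\|e^{-\dist^2(x,\cdot)/4t}\big\|_{L^2(X,\mu)}\|f\|_{L^2(X,\mu)}=(4\pi t)^{-\alpha/2}(2\pi t)^{\alpha/4}\|f\|_{L^2(X,\mu)},$$
and moreover $f_t\in\cD(L)\subset\cD(\cE)$. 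Second, $e^{av}\in\cD_{loc}(\cE)\cap L^\infty(X,\mu)$, and the chain rule \eqref{eq:chain} together with $\Gamma(v)\le\mu$ gives $\Gamma(e^{av})=a^2e^{2av}\Gamma(v)\le a^2e^{2|a|\|v\|_\infty}\mu$. From these two facts (and a routine truncation/product argument) one gets $f_te^{av}\in\cD(\cE)$, and all integrals below converge absolutely.

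Next I would differentiate $J$. Because $t\mapsto f_t$ is $C^1$ from $(0,+\infty)$ to $L^2(X,\mu)$ with $\frac{\di}{\di t}f_t=Lf_t\in L^2(X,\mu)$, the map $t\mapsto f_t^2$ is $C^1$ into $L^1(X,\mu)$ with derivative $2f_tLf_t$; multiplying by the bounded weight $e^{av}$ and integrating, $J\in C^1$ and
$$J'(t)=2\int_X f_t(Lf_t)\,e^{av}\di\mu=-2\,\cE(f_t,f_te^{av}),$$
the last equality being the definition of $L$, legitimate since $f_t\in\cD(L)$ and $f_te^{av}\in\cD(\cE)$. Expanding $\cE(f_t,f_te^{av})=\int_X\di\Gamma(f_t,f_te^{av})$ with the Leibniz rule \eqref{eq:Leibniz} and the chain rule \eqref{eq:chain} (which gives $\Gamma(e^{av},\cdot)=ae^{av}\Gamma(v,\cdot)$) yields
$$\cE(f_t,f_te^{av})=\int_X e^{av}\di\Gamma(f_t)+a\int_X f_t\,e^{av}\di\Gamma(f_t,v).$$

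It then remains to show that the right-hand side is $\ge-\tfrac{a^2}{4}J(t)$, i.e.
$$\int_X e^{av}\di\Gamma(f_t)+a\int_X f_t\,e^{av}\di\Gamma(f_t,v)+\frac{a^2}{4}\int_X f_t^2\,e^{av}\di\mu\ \ge\ 0,$$
which I would establish pointwise. Fix the reference measure $\nu:=\mu+\Gamma(f_t)$ and let $\gamma_{f_t},\gamma_{f_t,v},\gamma_v,\gamma_\mu$ be the Radon--Nikodym densities with respect to $\nu$ of $\Gamma(f_t),\Gamma(f_t,v),\Gamma(v),\mu$. The Cauchy--Schwarz inequality for the energy measure gives $|\gamma_{f_t,v}|\le\gamma_{f_t}^{1/2}\gamma_v^{1/2}$ $\nu$-a.e., and $\Gamma(v)\le\mu$ gives $\gamma_v\le\gamma_\mu$ $\nu$-a.e.; hence the $\nu$-density of the measure on the left is, $\nu$-a.e., at least
$$e^{av}\Bigl(\gamma_{f_t}-|a|\,|f_t|\,\gamma_{f_t}^{1/2}\gamma_\mu^{1/2}+\tfrac{a^2}{4}f_t^2\gamma_\mu\Bigr)=e^{av}\Bigl(\gamma_{f_t}^{1/2}-\tfrac{|a|}{2}\,|f_t|\,\gamma_\mu^{1/2}\Bigr)^2\ \ge\ 0.$$
Substituting back, $J'(t)=-2\bigl(\int_X e^{av}\di\Gamma(f_t)+a\int_X f_te^{av}\di\Gamma(f_t,v)\bigr)\le\tfrac{a^2}{2}\int_X f_t^2e^{av}\di\mu=\tfrac{a^2}{2}J(t)$, whence $I'(t)=e^{-a^2t/2}\bigl(J'(t)-\tfrac{a^2}{2}J(t)\bigr)\le0$, which is the claim.

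I expect the only genuinely delicate point to be the functional-analytic bookkeeping of the first paragraph: establishing $f_te^{av}\in\cD(\cE)$ (so that $\cE(f_t,f_te^{av})=-\int_X f_t(Lf_t)e^{av}\di\mu$) and checking that the Leibniz and chain rules apply to these merely bounded, not Lipschitz, functions. This should follow by approximating $e^{av}$ by its products with compactly supported Lipschitz cutoffs, using the uniform bound $\Gamma(e^{av})\le a^2e^{2|a|\|v\|_\infty}\mu$ and the global $L^\infty$ bound on $f_t$; the rest is a direct computation and the sign bookkeeping above.
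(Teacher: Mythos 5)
Your argument is correct and follows the same skeleton as the paper's: differentiate under the integral sign (justified by the uniform bound $e^{\xi_a}\le e^{a\|v\|_\infty}$ together with $f_t,Lf_t\in L^2$), convert $\int_X f_t(Lf_t)e^{av}\di\mu$ into energy-measure integrals via the Leibniz and chain rules, and then absorb the cross term using $\Gamma(v)\le\mu$. The only genuine difference is the last step. The paper completes the square \emph{algebraically}: it writes $-2f_te^{\xi_a/2}\Gamma(f_t,e^{\xi_a/2})=-\Gamma(f_te^{\xi_a/2})+f_t^2\Gamma(e^{\xi_a/2})+e^{\xi_a}\Gamma(f_t)$ and simply discards the non-negative measure $\Gamma(f_te^{\xi_a/2})$, which keeps the proof self-contained. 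You instead complete the square \emph{pointwise in Radon--Nikodym densities}, which requires the Cauchy--Schwarz inequality $|\Gamma(f,g)|\le\Gamma(f)^{1/2}\Gamma(g)^{1/2}$ for the energy measure of a strongly local Dirichlet form; this is standard but is an extra ingredient the paper never states or uses, so if you keep your route you should cite or prove it. Your preliminary reduction $I(t)=e^{-a^2t/2}J(t)$ is a harmless cosmetic simplification, and your remaining bookkeeping ($\|f_t\|_\infty<\infty$ via \eqref{eq:001}, $\Gamma(e^{av})=a^2e^{2av}\Gamma(v)\le a^2e^{2|a|\|v\|_\infty}\mu$, hence $f_te^{av}\in\cD(\cE)$ by truncation) is exactly the kind of justification the paper itself glosses over.
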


Indeed, for any $t>0$, writing $\xi_a$ for $\xi_a(\cdot,t)$ and $\xi_a'$ for $\frac{\di}{\di t} \xi_a(\cdot,t)$, we have
\begin{align*}
\frac{\di}{\di t} (f_t^2e^{\xi_a}) & = 2 f_t \left(\frac{\di}{\di t} f_t \right)e^{\xi_a} + f_t^2\xi_a' e^{\xi_a} = 2 f_t Lf_t e^{\xi_a}-\frac{a^2}{2}f_t^2 e^{\xi_a}.
\end{align*}
Since $e^{\xi_a} \le e^{a \|v\|_\infty}$, this implies
$$
\left|\frac{\di}{\di t} (f_t^2e^{\xi_a}) \right| \le e^{a \|v\|_\infty}(2|f_t||Lf_t| + a^2|f_t|^2/2) \, \in \, L^1(X,\mu),
$$
so we can differentiate under the integral sign to get
$$
I'(t) = 2 \int_X f_t Lf_t e^{\xi_a} \di \mu - \frac{a^2}{2} \int_X f_t^2 e^{\xi_a} \di \mu.
$$
The Leibniz rule \eqref{eq:Leibniz} implies
\begin{align*}
\int_X f_t Lf_t e^{\xi_a} \di \mu = & \, \, - \cE(f_t,f_te^{\xi_a}) = - \int_X \Gamma(f_t,f_t e^{\xi_a}) \di \mu \\ = &  - \int_X f_t \underbrace{\Gamma(f_t,e^{\xi_a})}_{=\Gamma(f_t,e^{\xi_a/2}e^{\xi_a/2})} \di \mu - \int_X e^{\xi_a} \Gamma(f_t) \di \mu\\
= & - 2 \int_X f_t e^{\xi_a/2} \Gamma(f_t,e^{\xi_a/2})\di \mu - \int_X e^{\xi_a} \Gamma(f_t) \di \mu
\end{align*}
and, starting from $\Gamma(f_te^{\xi_a/2})$, $$- 2 f_t e^{\xi_a/2} \Gamma(f_t,e^{\xi_a/2}) = - \Gamma(f_te^{\xi_a/2}) + f_t^2 \Gamma(e^{\xi_a/2}) + e^{\xi_a} \Gamma(f_t),$$
so that
\begin{align*}
I'(t) & = 2 \int_X f_t^2 \Gamma(e^{\xi_a/2}) - 2 \int_X \Gamma(f_t e^{\xi_a/2})\di \mu - \frac{a^2}{2}\int_X f_t^2 e^{\xi_a} \di \mu \\ & \le \int_X f_t^2 \left( 2 \Gamma(e^{\xi_a/2})-\frac{a^2}{2} e^{\xi_a} \right) \di \mu.
\end{align*}
Since $v$ is bounded, we can apply the chain rule \eqref{eq:chain} with $\eta(\xi):=e^{\xi/2}$ to get $\Gamma(e^{\xi_a/2})=(1/4)e^{\xi_a}\Gamma(\xi_a)$ and thus
$$
2 \Gamma(e^{\xi_a/2}) - \frac{a^2}{2}e^{\xi_a} = \frac{1}{2}e^{\xi_a}\Gamma(\xi_a) - \frac{a^2}{2}e^{\xi_a} = \frac{1}{2}e^{\xi_a} a^2 \Gamma(v)-\frac{a^2}{2}e^{\xi_a}\le 0,
$$
so $I'(t) \le 0$.

\hfill

From now on, assume $a \ge 0$. Apply the claim to $f=1_A$, where $A$ is some Borel subset of $X$. Then for any $t>0$ and any Borel subset $B$ of $X$,
$$
\int_B f_t^2 e^{\xi_a(\cdot,t)} \di \mu \le \int_X f_t^2 e^{\xi_a(\cdot,t)} \di \mu =I(t)\le I(0) = \int_{A} e^{a v} \di \mu,
$$
hence
$$
\int_B f_t^2 e^{\xi_a(\cdot,t)} \di \mu \le \mu(A) e^{a \sup_A v}.
$$
Moreover, since the heat kernel is Euclidean, we have
\begin{align*}
\int_B f_t^2 e^{\xi_a(\cdot,t)} \di \mu & =  \int_{B} \left(\int_A p(x,y,t) \di \mu(y)\right)^2 e^{\xi_a(x,t)} \di \mu(x)\\ & \ge \frac{e^{-\frac{\sup_{A\times B} \dist^2}{2t}}}{(4\pi t)^\alpha} \mu(B) \mu(A)^2 e^{a \inf_B v - a^2 \frac{t}{2}}
\end{align*}
thus
\[
\frac{e^{-\frac{\sup_{A\times B} \dist^2}{2t}}}{(4\pi t)^\alpha} \mu(B) \mu(A) e^{a (\inf_B v - \sup_A v) - a^2 \frac{t}{2}} \le 1.
\]

Take $x,y \in X$. With no loss of generality we can assume $v(y)-v(x)>0$. Choose $t$ such that $\sqrt{t}<\dist(x,y)/3$. Apply the previous inequality with $A=B_{\sqrt{t}}(x)$ and $B=B_{\sqrt{t}}(y)$. In this case, $\sup_{A\times B} \dist^2 = \dist^2(x,y) + 2 \sqrt{t}$. Moreover, since $v$ is continuous, we have $\inf_B v - \sup_A v = v(y)-v(x) + \eps(t)$ where $\eps(t)\to 0$ when $t \to 0^+$. Then
\[
\frac{e^{-\frac{\dist^2(x,y) + 2\sqrt{t}}{2t}}}{(4\pi)^\alpha} \omega_\alpha^2 e^{a (v(y)-v(x) + \eps(t)) - a^2 \frac{t}{2}} \le 1
\]
for any $t \in (0,\dist^2(x,y)/9)$ and any $a \ge 0$. Now for $t \in (0,\dist^2(x,y)/9)$, choose $a=a(t)=(v(y)-v(x) + \eps(t))/t$ to get
\[
\frac{e^{-\frac{\dist^2(x,y) + 2\sqrt{t}}{2t}}}{(4\pi)^\alpha} \omega_\alpha^2 e^{\frac{1}{2t}(v(y)-v(x) + \eps(t))^2} \le 1
\]
Apply the logarithm function, multiply the resulting inequality by $2t$ and then add $\dist^2(x,y)$ to get
\[
-2\sqrt{t} + 2t \ln(\omega_\alpha^2/(4\pi)^\alpha) + (v(y)-v(x) + \eps(t))^2 \le \dist^2(x,y).
\]
Letting $t$ tend to $0$ gives
\begin{equation}\label{16sept2}
(v(x)-v(y))^2 \le \dist^2(x,y).
\end{equation}

Since for any $u \in \cD_{loc}(\cE)$ and any $R>0$, the function $u_R:=\max(u,R)$ is in $\cD_{loc}(\cE)$ with $\Gamma(u_R)\le \Gamma(u)$, approximating any possibly unbounded $v \in \cD_{loc}(\cE)\cap C_c(X)$ with $\Gamma(v) \le \mu$ by $(v_R)_{R>0}$ provides \eqref{16sept2} for any $x,y \in X$ for such a $v$. This implies $\dist_\cE \le \dist$.
\end{proof}

\begin{remark}
Though we will not use it in the sequel, let us point out that Proposition \ref{eq:propA} can be upgraded into $\dist=\dist_\cE$ provided a suitable technical assumption holds: see I. in \cite[Th.~2.5]{terElstRobinsonSikora}.
\end{remark}

\subsection{Evaluation of $L$ on squared distance functions}

Let us show now that the operator associated to the Dirichlet form of a space with an $\alpha$-dimensional Euclidean heat kernel behaves on squared distance functions as the Laplacian does in $\setR^n$.

\begin{lemma}\label{lem:Ld^2}
Let $(X,\dist,\mu,\cE)$ be with an $\alpha$-dimensional Euclidean heat kernel. Then 
$L\dist^2(x,\cdot)=2\alpha$, $L\dist(x,\cdot)=(\alpha-1)/\dist(x,\cdot)$ on $X\backslash \{x\}$ and $\Gamma(\dist(x,\cdot))=1$ $\mu$-a.e.~on $X$.
\end{lemma}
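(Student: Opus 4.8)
The plan is to compute the action of the heat semigroup $P_t$ on $\dist^2(o,\cdot)$ and on $\dist(o,\cdot)$ using the explicit Gaussian form of $p$, and then to convert the resulting pointwise asymptotics into the weak identities defining $L$ and $\Gamma$. Fix $o\in X$, set $r:=\dist(o,\cdot)$, $\phi:=r^2$, and record the consequence of Chapman--Kolmogorov \eqref{eq:ChapmanKolmogorov} and the Euclidean kernel already used in \eqref{eq:Lem3.1}: for all $\lambda,t>0$ and $z\in X$,
\[
\int_X p(z,w,t)\,e^{-\lambda\dist^2(o,w)}\di\mu(w)=(4\lambda t+1)^{-\alpha/2}\,e^{-\frac{\lambda\dist^2(o,z)}{4\lambda t+1}}.
\]
Since $\dist^2(o,w)=-\partial_\lambda|_{\lambda=0^+}e^{-\lambda\dist^2(o,w)}$ and $\dist^2(o,w)\,p(z,w,t)$ is $\mu$-integrable in $w$ (use $\dist^2(o,w)\le 2\dist^2(o,z)+2\dist^2(z,w)$ and the $\alpha$-dimensional volume from Lemma \ref{lem:euclideanvolume}), differentiating under the integral sign and computing $-\partial_\lambda|_0$ of the right-hand side gives $P_t\phi(z)=\dist^2(o,z)+2\alpha t$ for all $z,t$. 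Likewise, inserting $\sqrt{a}=\tfrac{1}{2\sqrt\pi}\int_0^{+\infty}\lambda^{-3/2}(1-e^{-\lambda a})\di\lambda$, applying Fubini, and differentiating the resulting explicit formula for $P_tr(z)$ in $t$ at $t=0$ (the $\lambda$-integrals are standard Gamma integrals) yields $\partial_t|_{t=0}P_tr(z)=(\alpha-1)/r(z)$, with a remainder that is $O(t^2)$ uniformly for $z$ in compact subsets of $X\setminus\{o\}$.

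Next I would pass to weak formulations. Since $r$ is $1$-Lipschitz and $\phi$ is locally Lipschitz, both lie in $\cD_{loc}(\cE)\cap C(X)$ by Corollary \ref{cor:1}. Given $\psi\in\cD_c(\cE)$ with compact support $K$ and a cutoff $\chi\in\Lip_c(X)$ equal to $1$ near $K$, strong locality gives $\cE(\phi,\psi)=\cE(\phi\chi,\psi)$ with $\phi\chi\in\cD(\cE)$, and $\cE(\phi\chi,\psi)=\lim_{t\to0^+}\tfrac1t\langle\phi\chi-P_t(\phi\chi),\psi\rangle_{L^2}$ (a standard fact for elements of $\cD(\cE)$). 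On $K$ one has $\phi\chi=\phi$, and by the Gaussian decay of $p$ away from $\supp(1-\chi)$ one gets $|P_t(\phi\chi)-P_t\phi|=O(e^{-c/t})$ uniformly on $K$; combined with $P_t\phi=\phi+2\alpha t$ this yields $\cE(\phi,\psi)=-2\alpha\int_X\psi\di\mu$, i.e.\ $L\dist^2(o,\cdot)=2\alpha$ on $X$. Running the same argument with $r$ in place of $\phi$ and with $K,\chi$ inside $X\setminus\{o\}$, the uniform $O(t^2)$ estimate above gives $\cE(r,\psi)=-\int_X\tfrac{\alpha-1}{r}\psi\di\mu$ for all $\psi\in\cD_c(\cE)$ with $\supp\psi\subset X\setminus\{o\}$, that is $L\dist(o,\cdot)=(\alpha-1)/\dist(o,\cdot)$ on $X\setminus\{o\}$.

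Finally, for $\Gamma(\dist(o,\cdot))$ I would use the Leibniz rule. On $X\setminus\{o\}$ the function $r$ is locally Lipschitz and bounded below by a positive constant on compacta, so for $\psi\in\cD_c(\cE)$ with $\supp\psi\subset X\setminus\{o\}$ two applications of \eqref{eq:Leibniz} give $\cE(r^2,\psi)=2\cE(r,r\psi)-2\int_X\psi\di\Gamma(r)$; using $Lr^2=2\alpha$ and $Lr=(\alpha-1)/r$ on the relevant supports this becomes $-2\alpha\int_X\psi\di\mu=-2(\alpha-1)\int_X\psi\di\mu-2\int_X\psi\di\Gamma(r)$, whence $\Gamma(r)=\mu$ on $X\setminus\{o\}$. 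Since $\mu(\{o\})=0$ (as $\mu(B_s(o))=\omega_\alpha s^\alpha\to 0$) and $\Gamma(r)\le\alpha\mu$ crudely (from $|r(z)-r(w)|\le\dist(z,w)$ and \eqref{eq:Grigor'yan2}), one gets $\Gamma(r)(\{o\})=0$, hence $\Gamma(\dist(o,\cdot))=\mu$ as measures, i.e.\ $\Gamma(\dist(o,\cdot))=1$ $\mu$-a.e.

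The computations of $P_t\phi$ and $\partial_t P_tr$ are routine once the interchanges of $\partial_\lambda$, $\partial_t$, $\int_0^{+\infty}\di\lambda$ and $\int_X\di\mu$ are justified. The main obstacle I expect is the transition from the pointwise semigroup asymptotics to the weak identities, i.e.\ controlling $\tfrac1t(\phi\chi-P_t(\phi\chi))$ and $\tfrac1t(r-P_tr)$ on the support of the test function: this needs the uniform-in-$z$ remainder estimates and, for the squared distance, the exponentially small error produced by cutting off far from $\supp\psi$; there is also the standard bookkeeping of which test functions suffice in the Leibniz-rule step, handled by regularity of $\cE$.
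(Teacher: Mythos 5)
Your argument is correct, but it takes a genuinely different route from the paper. The paper substitutes the Gaussian kernel into the heat equation $(\partial_t - L)p(x,\cdot,t)=0$ and expands via the chain rule \eqref{eq:chainrule}, obtaining the single identity $\tfrac{\dist^2}{4t^2}+\tfrac{\alpha}{2t}-\tfrac{1}{4t}L\dist^2-\tfrac{1}{16t^2}\Gamma(\dist^2)=0$ for all $t>0$ and then separating the two unknowns by the limits $t\to 0$ (giving $\Gamma(\dist^2)=4\dist^2$, hence $\Gamma(\dist)=1$) and $t\to+\infty$ (giving $L\dist^2=2\alpha$), after which $L\dist$ follows from \eqref{eq:chainrulesquare}. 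You instead compute $P_t$ on $\dist^2(o,\cdot)$ and $\dist(o,\cdot)$ in closed form from Chapman--Kolmogorov (the $\lambda$-differentiation at $0^+$ and the subordination identity for $\sqrt{a}$ — both computations check out, yielding $P_t\dist^2(o,\cdot)=\dist^2(o,\cdot)+2\alpha t$ exactly and $\partial_t|_{t=0}P_t\dist(o,\cdot)=(\alpha-1)/\dist(o,\cdot)$), read off $L$ via the generator limit with a cutoff, and only then recover $\Gamma(\dist)$ by running the identity $L\dist^2=2\dist\,L\dist+2\Gamma(\dist)$ in the opposite direction to the paper. What your route buys is that it never applies the chain rule for $L$ to the unbounded function $\dist^2(x,\cdot)$ (whose membership in the domain $\mathbb{G}$ of \eqref{eq:chainrule} the paper leaves implicit); the cost is the longer passage from pointwise semigroup asymptotics to the weak identities, which you correctly identify as the main technical burden. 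One caveat there: the claimed $O(t^2)$ remainder for $P_t r$ cannot be obtained by naively bounding $\partial_t^2$ under the $\lambda$-integral (the resulting integrand $\sim\lambda^{1/2}e^{-\lambda d^2/(4\lambda\tau+1)}$ is not integrable at $\lambda=+\infty$ for $\tau>0$ fixed); you need to split the integral at $\lambda\sim 1/t$ and use that the tail is $O(t^{1/2}e^{-cd^2/t})$. Only the first-order limit, uniform on compacta of $X\setminus\{o\}$, is actually needed for the weak formulation, and that does hold by this splitting, so this is an overstatement rather than a gap.
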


\begin{proof}
Take $x\in X$. Note first that Corollary \ref{cor:1} guarantees that $\dist^2(x,\cdot), \dist(x,\cdot) \in \cD_{loc}(\cE)$. For any $t>0$, a direct computation relying on the chain rule \eqref{eq:chainrule} and starting from the equation
$$
\left( \frac{\di}{\di t} - L \right) \frac{e^{-\frac{\dist^2(x,\cdot)}{4t}}}{(4 \pi t)^{\alpha/2}} = 0
$$
(recall the remark after Definition 2.3) provides
$$
\frac{\dist^2(x,\cdot)}{4t^2} + \frac{\alpha}{2t} - \frac{1}{4t}L\dist^2(x,\cdot) - \frac{1}{(4t)^2}\Gamma(\dist^2(x,\cdot)) =0.
$$
Multiplying by $(4t)^2$ and letting $t$ tend to $0$ gives $\Gamma(\dist^2(x,\cdot))=4\dist^2(x,\cdot)$ hence $\Gamma(\dist(x,\cdot))=1$ by \eqref{eq:chain}, while multiplying by $4t$ and letting $t$ tend to $+\infty$ gives $L\dist^2(x,\cdot)=2\alpha$, from which follows $L\dist(x,\cdot)=(n-1)/\dist(x,\cdot)$ by \eqref{eq:chainrulesquare}.

\end{proof}

As a consequence of Lemma \ref{lem:Ld^2}, we can show that locally $L$-harmonic functions on spaces with an $\alpha$-dimensional Euclidean heat kernel are necessarily strongly harmonic.

\begin{lemma}\label{lem:Lstrong} Let $(X,\dist,\mu,\cE)$ be with an $\alpha$-dimensional Euclidean heat kernel. Let $\Omega \subset X$ be an open set and $h$ a locally integrable local solution of $Lh=0$ on $\Omega$. Then for any $x \in \Omega$, the function defined on $(0,\dist(x,^{c}\Omega))$ by $$ r\mapsto \fint_{B_r(x)}h \di \mu$$ is a constant. Therefore, $h$ has a continuous representative strongly harmonic in $\Omega$.
\end{lemma}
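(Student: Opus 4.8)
The plan is to test the equation $\cE(h,\cdot)=0$ against well-chosen \emph{radial} functions and then extract an ODE for the spherical averages of $h$. Fix $x\in\Omega$, write $R:=\dist(x,{}^c\Omega)\in(0,+\infty]$ and $\rho:=\dist^2(x,\cdot)$. By Corollary~\ref{cor:1}, $\rho\in\cD_{loc}(\cE)$, and by Lemma~\ref{lem:Ld^2}, $\Gamma(\rho)=4\rho\,\mu$ and $L\rho=2\alpha$ — the latter meaning $\rho$ is a local solution on $X$ of $Lu=2\alpha$. For $\eta\in C^\infty$ compactly supported in $[0,R^2)$ set $\phi_\eta:=\eta\circ\rho$; as $\rho$ is locally Lipschitz and $\eta$ is smooth with compact support, $\phi_\eta\in\Lip_c(X)$ with $\supp\phi_\eta\subset\overline B_c(x)\subset\Omega$ whenever $\supp\eta\subset[0,c^2]$ and $c<R$, hence $\phi_\eta\in\cD_c(\cE)$.

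The first step I would carry out is to show that $\phi_\eta\in\cD(L)$ with $L\phi_\eta=f_\eta:=2\alpha\,\eta'(\rho)+4\rho\,\eta''(\rho)\in C_c(X)$. Since $\Lip_c(X)$ is a core for $\cE$ (Proposition~\ref{prop:slandreg}) and $f_\eta\in L^2(X,\mu)$, it suffices to verify $\cE(\phi_\eta,\psi)=-\int_X f_\eta\psi\di\mu$ for every $\psi\in\Lip_c(X)$; this comes from combining the chain rule~\eqref{eq:chain} (so that $\cE(\phi_\eta,\psi)=\int_X(\eta'\circ\rho)\,\di\Gamma(\rho,\psi)$), the Leibniz rule~\eqref{eq:Leibniz} applied to the bounded functions $\eta'\circ\rho$ and $\psi$, the identity $\Gamma(\rho,\eta'\circ\rho)=4\rho\,(\eta''\circ\rho)\,\mu$, and $L\rho=2\alpha$ tested against $(\eta'\circ\rho)\psi\in\cD_c(\cE)$. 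Then, choosing $g\in\cD(\cE)$ equal to $h$ $\mu$-a.e.\ near $\overline B_c(x)$ (possible as $h\in\cD_{loc}(\Omega,\cE)$), the hypothesis $Lh=0$ on $\Omega$ gives
\[
0=\cE(h,\phi_\eta)=\cE(g,\phi_\eta)=-\int_X f_\eta\,g\di\mu=-\int_X h(y)\big[2\alpha\,\eta'(\rho(y))+4\rho(y)\,\eta''(\rho(y))\big]\di\mu(y).
\]

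Next I would convert this into a one-dimensional identity. The $\alpha$-dimensional volume property (Lemma~\ref{lem:euclideanvolume}) yields $\mu(\overline B_r(x))=\omega_\alpha r^\alpha=\mu(B_r(x))$, so $\mu(\partial B_r(x))=0$ and $\mathcal M(t):=\int_{B_{\sqrt t}(x)}h\di\mu$ is continuous on $(0,R^2)$. A weighted layer-cake identity (argued as for~\eqref{eq:Fub}, using $\{\rho<t\}=B_{\sqrt t}(x)$) gives $\int_X h\,(\psi\circ\rho)\di\mu=-\int_0^{R^2}\psi'(t)\,\mathcal M(t)\di t$ for $\psi\in C_c^\infty([0,R^2))$; taking $\psi=\eta'$ and $\psi(t)=t\,\eta''(t)$ turns the display above into
\[
\int_0^{R^2}\big[(2\alpha+4)\,w(t)+4t\,w'(t)\big]\,\mathcal M(t)\di t=0,\qquad w:=\eta''.
\]
As $\eta$ ranges over $C_c^\infty([0,R^2))$ so does $w$, and testing against $w\in C_c^\infty((0,R^2))$ gives the distributional ODE $4(t\mathcal M)'=(2\alpha+4)\mathcal M$ on $(0,R^2)$. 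Continuity of $\mathcal M$ then forces $t\mathcal M\in C^1$, hence $\mathcal M\in C^1$, hence $t\,\mathcal M'(t)=\tfrac{\alpha}{2}\mathcal M(t)$, so $\mathcal M(t)=Ct^{\alpha/2}$ for some constant $C$. Thus $\int_{B_r(x)}h\di\mu=\mathcal M(r^2)=Cr^\alpha$ and $\fint_{B_r(x)}h\di\mu=C/\omega_\alpha$ is independent of $r\in(0,R)$. For the last assertion, one checks that $\tilde h(x):=\fint_{B_r(x)}h\di\mu$ is a well-defined function on $\Omega$ which equals $h$ $\mu$-a.e.\ by Lebesgue differentiation (valid since $\mu$ is doubling), is continuous by dominated convergence (again because $\mu(\partial B_r(x))=0$), and satisfies $\tilde h(x)=\fint_{B_r(x)}\tilde h\di\mu$, i.e.\ is strongly harmonic on $\Omega$.

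I expect the main obstacle to be the very first step, i.e.\ rigorously placing $\phi_\eta$ in $\cD(L)$ with the stated $L\phi_\eta$: the chain rule~\eqref{eq:chainrule} cannot be applied verbatim since $\rho=\dist^2(x,\cdot)$ is not in $\cD(L)$ globally (only a local solution of $Lu=2\alpha$), so one has to work through a core together with the Leibniz rule, being careful that the only function ever multiplied against $h$ is the fixed compactly supported $f_\eta$, never $h$ itself — which is exactly what keeps the argument valid for merely locally integrable $h$.
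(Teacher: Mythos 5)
Your argument is correct and follows the same overall strategy as the paper: test $\cE(h,\cdot)=0$ against radial functions, use the explicit action of $L$ on them (Lemma \ref{lem:Ld^2}), convert to a one-dimensional identity for $I(r)=\int_{B_r(x)}h\di\mu$ via a layer-cake formula, and solve the resulting distributional ODE. The difference is in the parametrization: the paper takes $u=\phi\circ\dist(x,\cdot)$ with $\phi\in C_c^\infty((0,R))$, so that the singular coefficient $(\alpha-1)/\dist(x,\cdot)$ forces the test functions to vanish near the center; unwinding the identity then produces the \emph{third-order} equation $[r^{\alpha-1}(r^{1-\alpha}y')']'=0$, whose extra solutions $c$ and $br^2$ must be eliminated by two further arguments (local integrability of $h$ at $x$, and test functions constant near $0$). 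You instead take $\eta\circ\dist^2(x,\cdot)$ with $\eta\in C_c^\infty([0,R^2))$; since $L\dist^2(x,\cdot)=2\alpha$ has no singularity, your test functions need not vanish at $x$, every $w\in C_c^\infty((0,R^2))$ is realized as $\eta''$, and you land directly on the first-order equation $t\mathcal M'=\tfrac{\alpha}{2}\mathcal M$, i.e.\ $I(r)=Cr^\alpha$, with no further case analysis. You are also right that the delicate point is placing $\eta\circ\rho$ in $\cD(L)$: the paper's appeal to the chain rule \eqref{eq:chainrule} is not literally applicable (neither $\dist(x,\cdot)$ nor $\dist^2(x,\cdot)$ lies in $\cD(L)$), and your route through the core $\Lip_c(X)$, the Leibniz rule, and the local identity $L\rho=2\alpha$ tested against $(\eta'\circ\rho)\psi$ is a legitimate way to make that step rigorous; the price is a slightly longer verification, the gain is a cleaner ODE. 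Both proofs conclude identically for the last assertion (doubling plus Lebesgue differentiation to get the continuous strongly harmonic representative).
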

\begin{proof}
Take $x \in \Omega$ and set $R:=\dist(x,^c\Omega)$. From $Lh=0$ we get that for any $\phi \in \cD(L)$ with compact support in $\Omega$,
$$\langle h,L\phi\rangle_{L^2}=0.$$
Take $\phi \in C_c^\infty((0,R))$ and set
$u=\phi \circ \dist(x,\cdot)$ on $X$. Then $u$ belongs to $\cD(L)$ and has compact support included in $\Omega$. The chain rule \eqref{eq:chainrule} and Lemma \ref{lem:Ld^2} yields to
\[
Lu = \chi\circ \dist(x,\cdot)
\]
where we have set
$$
\chi(r) := \phi''(r) + \frac{\alpha - 1}{r} \phi'(r) = r^{1-\alpha} \left(r^{\alpha-1} \phi'\right)'
$$
for any $r \in (0,R)$.
Since $\chi \circ \dist(x,\cdot)=-\int_{\dist(x,\cdot)}^R \chi'(s)ds$, we have
\begin{align*}
-\int_0^R \chi'(s)\left(\int_{B_s(x)}h\di\mu\right)\di s & = -\int_0^R \chi'(s)\left(\int_X h1_{B_s(x)}\di\mu\right)\di s\\
& = \int_X \left( - \int_0^R \chi'(s) 1_{(\dist(x,\cdot),+\infty)}(s) \di s \right) h \di \mu\\
&  = \int_X (\chi \circ \dist(x,\cdot)) h \di \mu = \langle h,Lu\rangle_{L^2} = 0.
\end{align*}
This implies that the function 
$s\mapsto I(s):= \int_{B_s(x)}h\di \mu$ satisfies the equation
$$ \left[r^{\alpha-1}\left(r^{1-\alpha}y'\right)'\right]'=0$$
in the distributional sense on $(0,R)$. Then there exists real-valued constants $a$, $b$ and $c$ such that for any $s \in (0,R)$,
$$I(s)=as^\alpha+bs^2+c.$$
Since $h$ is locally integrable, $c=\lim_{s\to 0} I(s) = 0$. Then $s^{-\alpha}I(s)= a+b s^{2-\alpha}$ for any $s \in (0,R)$. Using test functions $\phi$ that are constantly equal to $1$ in a neighborhood of $0$, we can get
$(2-\alpha)b=0$, from which follows that $s \mapsto s^{-\alpha} I(s)$ is a constant. Since $(X,\dist,\mu)$ has an $\alpha$-dimensional volume, this provides the result.\\
\end{proof}

\subsection{Spaces of locally $L$-harmonic functions with polynomial growth}

Let us conclude with a result that shall be crucial in the next section. We recall that for any positive integer $m$, a function $h:X \to \setR$ has polynomial growth of rate $m$ if there exists $C>0$ such that $|h|\le C(1+\dist^m(o,\cdot))$ holds for some $o \in X$. The case $m=1$ corresponds to a linear growth. Note that functions with a fixed polynomial growth rate form a vector space.

\begin{proposition}\label{prop:finitedim}
Let $(X,\dist,\mu,\cE)$ be with an $\alpha$-dimensional Euclidean heat kernel. Then for any $m \in \setN\backslash \{0\}$, the space of locally $L$-harmonic functions $h:X\to\setR$ with polynomial growth of rate $m$ is finite dimensional.
\end{proposition}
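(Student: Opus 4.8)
The plan is to reduce this to the Colding--Minicozzi finite-dimensionality theorem for spaces of harmonic functions with polynomial growth, which holds on any complete metric measure space satisfying the doubling and Poincaré properties. The first step is therefore to verify that $(X,\dist,\mu,\cE)$ does satisfy these two properties. By Lemma \ref{lem:euclideanvolume}, $(X,\dist,\mu)$ has an $\alpha$-dimensional volume, so it is doubling with $C_D = 2^\alpha$. By Proposition \ref{prop:slandreg}, $\cE$ is strongly local and regular, and by Proposition \ref{eq:propA} assumption \eqref{eq:A} holds. Since the heat kernel is Euclidean, the double-sided Gaussian estimates \eqref{eq:LiYau} hold trivially (with $\mu(B_{\sqrt t}(x)) = \omega_\alpha t^{\alpha/2}$ and $C_G$ depending only on $\alpha$), so Proposition \ref{prop:important} gives that the local Poincaré inequality \eqref{eq:Poincare} is satisfied as well. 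Thus all the structural hypotheses needed to run the Colding--Minicozzi argument are in place.

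Next I would invoke the metric-measure-space version of the Colding--Minicozzi theorem, as developed in \cite{Hua, HuaKellXia} (and for which the references \cite{ColdingMinicozzi} are the smooth prototype): on a complete metric measure space satisfying doubling and a local Poincaré inequality, for each $m \in \setN\backslash\{0\}$ the vector space of functions $h$ that are locally $L$-harmonic (in the sense of Definition \ref{def:localsol}) and have polynomial growth of rate $m$ is finite-dimensional, with a dimension bound depending only on $C_D$, $C_P$, and $m$. The key analytic inputs of that argument --- a mean-value/reverse-Hölder in/Caccioppoli estimate for $L$-harmonic functions, a uniform doubling of the $L^2$-average $r \mapsto \fint_{B_r(x)} h^2 \di \mu$ along the Poincaré inequality, and a dimension-counting lemma comparing an orthonormal-like family on a large ball against the volume growth --- are all available from the preliminary section, and in our setting they simplify because $\mu(B_r) = \omega_\alpha r^\alpha$ exactly. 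One may also use Lemma \ref{lem:Lstrong} to pass freely between locally $L$-harmonic and strongly harmonic representatives, which is convenient since the mean value property is exactly what the dimension-counting argument exploits.

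The only real obstacle is the one of bookkeeping: making precise that the Colding--Minicozzi machinery, stated in \cite{Hua, HuaKellXia} for harmonic functions on PI spaces, applies verbatim to locally $L$-harmonic functions in the Dirichlet-form sense used here. Since $\cE$ is strongly local and regular and $(X,\dist,\mu)$ is a PI space, the Dirichlet-form Laplacian and the Cheeger Laplacian generate comparable theories of harmonic functions, and in fact Lemma \ref{lem:Lstrong} already identifies locally $L$-harmonic functions with strongly harmonic ones, for which the mean value property and the weak maximum principle (cf.\ \cite{AGG19}) hold outright. So I would phrase the proof as: the hypotheses of Proposition \ref{prop:important} are met, hence $(X,\dist,\mu)$ is a complete PI space; by \cite[Theorem ...]{ColdingMinicozzi} in the form extended to metric measure spaces in \cite{Hua, HuaKellXia}, the space of locally $L$-harmonic functions of polynomial growth rate $m$ has dimension at most $C(\alpha, m) < +\infty$; this is the claim.
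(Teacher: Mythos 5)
Your proposal is correct and follows essentially the same route as the paper: verify that the Euclidean heat kernel trivially yields the two-sided Gaussian bounds \eqref{eq:LiYau}, combine with Proposition \ref{prop:slandreg} and Proposition \ref{eq:propA} so that Proposition \ref{prop:important} delivers doubling and the Poincar\'e inequality, and then invoke the Colding--Minicozzi argument as carried over to PI metric measure spaces in \cite{Hua, HuaKellXia}. The extra remarks about Lemma \ref{lem:Lstrong} and the exact volume identity are harmless refinements of the same proof.
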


\begin{proof}
Having an $\alpha$-dimensional Euclidean heat kernel, $(X,\dist,\mu,\cE)$ trivially satisfies the Gaussian estimate \eqref{eq:LiYau}. Moreover, we know from Proposition \ref{eq:propA} that it satisfies the assumption \eqref{eq:A}. Consequently, by Proposition \ref{prop:important}, $(X,\dist,\mu,\cE)$ has the doubling and Poincaré properties. Therefore, the arguments of \cite{ColdingMinicozzi} carry over.
\end{proof}

\section{Construction of the isometry}

\quad \, In this section, we construct an isometry between a given metric measure space $(X,\dist,\mu)$ equipped with a Dirichlet form $\cE$ admitting an $\alpha$-dimensional Euclidean heat kernel and an Euclidean space $\setR^l$ equipped with a distance $\dist_Q$ associated to a suitable quadratic form $Q$.

Let us recall that a quadratic form on a $\setR$-vector space $V$ is a map $Q : V \to \setR$ for which there exists a bilinear symmetric form $\beta : V \times V \to\setR$ such that $Q(u)=\beta(u,u)$ for any $u \in V$, in which case one has
$\beta(u,v)=\frac{1}{2}(Q(u+v)-Q(u)-Q(v))$ and then
$Q(u+v)=Q(u) +2\beta(u,v) + Q(v)$ for any $u,v \in V$. Moreover, when $Q$ is positive definite, setting\begin{equation}\label{eq:dist_Q}\dist_Q(u,v):=\sqrt{Q(u-v)}\end{equation} for any $u,v \in V$ defines a distance on $V$ canonically associated to $Q$. When $V=\setR^l$, Sylvester's law of inertia states that $Q$ can be transformed into $(v_1,\ldots,v_l) \mapsto \sum_i v_i^2$ via a suitable change of basis. This implies that $(\setR^l,\dist_Q)$ and $(\setR^l,\dist_e)$ are isometric, so that the construction made in this section proves Theorem \ref{th:main}.

\subsection{The quadratic form $Q$ and the coordinate function $H$}

Let us explain how to define $Q$ in our context. We first fix a base point $o \in X$ and set $$B(x,y):=\frac{1}{2}(\dist^2(o,x)+\dist^2(o,y)-\dist^2(x,y))$$for any $x,y \in X$. Note that
\begin{equation}\label{eq:D6.1}
\dist^2(x,y)= B(x,x)+B(y,y)-2B(x,y).
\end{equation}

\begin{remark}
In case $(X,\dist)=(\setR^l,\dist_{e})$ and $o$ is the origin in $\setR^l$, the law of cosines gives $B(x,y)=\scal{x}{y}$ for any $x,y \in \setR^l$, where $\scal{\cdot}{\cdot}$ is the usual Euclidean scalar product in $\setR^l$.
\end{remark}

For any $x \in X$, it follows from Lemma \ref{lem:Ld^2} and the fact that constant functions are locally $L$-harmonic that $B(x,\cdot)$ is locally $L$-harmonic. Moreover, for any $x,y \in X$, since $\dist^2(o,y)-\dist^2(x,y) = (\dist(o,y)-\dist(x,y))(\dist(o,y)+\dist(x,y))$, $\dist(o,y)-\dist(x,y)\le \dist(o,x)$ and $\dist(x,y)\le \dist(x,o)+\dist(o,y)$, we have
\begin{align*}
B(x,y) & \le \frac{1}{2}(\dist^2(o,x)+\dist(o,x)(\dist(o,x)+2\dist(o,y)))\\
& = \dist^2(o,x)+\dist(o,x)\dist(o,y) \le C_x(1+\dist(o,y))
\end{align*}
with $C_x:=\max(\dist^2(o,x),\dist(o,x))$. This shows that $B(x,\cdot)$ has linear growth for any $x \in X$. Then $\cV:=\Span\{B(x,\cdot): x \in X\}$ is a subspace of the space of locally $L$-harmonic functions with linear growth. Using Proposition \ref{prop:finitedim}, we know that this space has a finite dimension, so $\cV$ has a finite dimension which we denote by $l$.

Let us then consider the subspace $\mathcal{D}:=\Span\{\delta_x : x \in X\}$ of the algebraic dual $\cV^*$ of $\cV$. If $f \in \cV$ is such that $\theta(f)=0$ for any $\theta \in \cD$, then $f=0$; since the duality pairing $\cV\times\cV^* \to\setR$ is non-degenerate, this implies $\cD = \cV^*$. Therefore, there exist $x_1, \cdots, x_l \in X$ such that $\{\delta_{x_1},\cdots,\delta_{x_l}\}$ is a basis of $\mathcal{V}^*$. Let $\{h_1,\cdots,h_l\}$ be the associated basis of $\mathcal{V}$. Then for any $x \in X$,
\begin{equation}\label{eq:basis}
B(x,\cdot) = \sum_{i=1}^l \delta_{x_i}(B(x,\cdot)) h_i = \sum_{i=1}^l B(x,x_i) h_i
\end{equation}
and for any $i \in \{1,\cdots,l\}$,
$$
B(x,x_i) = B(x_i,x) = \sum_{j=1}^l \delta_{x_j}(B(x_i,\cdot)) h_j(x)= \sum_{j=1}^l B(x_i,x_j)h_j(x).
$$
Therefore, we have
\begin{equation}\label{eq:D6.2}
B(x,y) = \sum_{i,j=1}^l B(x_i,x_j)h_j(x)h_i(y)
\end{equation}
for any $x,y \in X$. We now define $Q$ on $\setR^l$ by setting:
$$
Q(\xi) := \sum_{i,j=1}^lB(x_i,x_j)\xi_i \xi_j \qquad \forall \, \xi=(\xi_1,\cdots,\xi_l) \in \setR^l.
$$
Then $Q$ is a quadratic form whose  associated symmetric form $\beta$ is given by
$$\beta(\xi,\xi') = \sum_{i,j=1}^l B(x_i,x_j)\xi_i \xi_j'
$$
for any $\xi, \xi' \in \setR^l$. Note that $\beta$ is non-degenerate. Indeed, if $\xi \in \setR^l $ is such that $\beta(\xi,\cdot)=0$, then for any $y \in X$ we have $\sum_{i=1}^l \xi_i B(x_i,y)=0$ because
$$\sum_{i=1}^l \xi_i B(x_i,y)=\sum_{i,j=1}^l \xi_i h_j(y)B(x_i,x_j)=\beta(\xi,(h_1(y),\ldots,h_l(y)))=0.$$
But $\{B(x_i,\cdot)\}_i$ is a basis of $\cV$ since
$$
B(x,y)=\sum_{i=1}^l \delta_{x_i}(B(\cdot,y))h_i(x)= \sum_{i=1}^l B(x_i,y)h_i(x)
$$
for any $x,y \in X$, thus $\xi=0_l$, where $0_l$ denotes the origin in $\setR^l$.


We are now in a position to introduce our ``coordinate'' function $H:X\to\setR^l$ which we define as
$$
H:=(h_1,\cdots,h_l).
$$
This function $H$ is continuous because $h_1,\cdots,h_l$ are so. Moreover, for any $x,y \in X$, we have
\begin{equation}\label{eq:D6.3}
\beta(H(x),H(y)) = B(x,y)
\end{equation}
thanks to \eqref{eq:D6.2} and
\begin{equation}\label{eq:D6.4}
\dist^2(x,y) = Q(H(x)-H(y))
\end{equation}
thanks to \eqref{eq:D6.1}. Note that $H(o)=0_l$ because $B(x,o)=0$ for any $x \in X$. Moreover, \eqref{eq:D6.4}, the continuity of $H$ and the completeness of $(X,\dist)$ imply that $H(X)$ is a closed set of $\setR^l$.

\begin{claim}\label{claim}
$H$ is an injective map. Moreover, $\Span(H(X)) = \setR^l$ -- in fact, the closed convex hull of $H(X)$ is $\setR^l$.
\end{claim}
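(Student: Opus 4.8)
The plan is to handle injectivity in one line and then concentrate on the closed convex hull, from which the assertion about the span will follow for free.

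Injectivity of $H$ is immediate from \eqref{eq:D6.4}: if $H(x)=H(y)$ then $\dist^2(x,y)=Q(H(x)-H(y))=Q(0_l)=0$, hence $x=y$. For the convex hull, I would first record the following facts. Every element of $\cV$ is a continuous strongly harmonic function on $X$: the functions $B(x,\cdot)$ are continuous and are local solutions of $Lu=0$ on all of $X$, so Lemma \ref{lem:Lstrong} identifies each of them with its strongly harmonic representative, and since the mean value property is stable under finite linear combinations, every $g\in\cV$ — in particular each $h_i$, hence $g=\sum_{i=1}^l\xi_i h_i$ for any $\xi=(\xi_1,\dots,\xi_l)\in\setR^l$ — is continuous and strongly harmonic. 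Moreover $(X,\dist)$ is proper (Section 3), and $(X,\dist,\mu)$ has an $\alpha$-dimensional volume (Lemma \ref{lem:euclideanvolume}), hence satisfies the doubling condition \eqref{eq:doubling} and $0<\mu(B)<+\infty$ for every ball $B$; consequently Lemma \ref{lem:Harnackstrongly} applies and any non-negative strongly harmonic function on $X$ is constant. Finally, $H(o)=0_l$, so $g(o)=\scal{\xi}{H(o)}=0$ for every $g=\sum_{i=1}^l\xi_i h_i$.

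Now suppose, for contradiction, that $K:=\overline{\mathrm{conv}}(H(X))$ is a proper subset of $\setR^l$. Since $K$ is closed and convex, the separating hyperplane theorem provides $\xi\in\setR^l\setminus\{0_l\}$ and $\gamma\in\setR$ with $\scal{\xi}{q}\le\gamma$ for all $q\in K$; in particular the function $g:=\scal{\xi}{H(\cdot)}=\sum_{i=1}^l\xi_i h_i\in\cV$ satisfies $g\le\gamma$ on $X$. Then $\gamma-g$ is a non-negative strongly harmonic function on $X$, hence constant by Lemma \ref{lem:Harnackstrongly}, so $g$ is constant and equal to $g(o)=0$. This forces $\sum_{i=1}^l\xi_i h_i\equiv 0$, contradicting the linear independence of the basis $\{h_1,\dots,h_l\}$ of $\cV$ since $\xi\ne 0_l$. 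Hence $K=\setR^l$. Since $\Span(H(X))$ is a linear subspace of $\setR^l$ containing $H(X)$, it is closed and convex and contains $K=\setR^l$, so $\Span(H(X))=\setR^l$ as well.

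The only delicate point is verifying that elements of $\cV$ are genuinely strongly harmonic on all of $X$, so that the one-sided Liouville property of Lemma \ref{lem:Harnackstrongly} (equivalently, the elliptic Harnack inequality, available here via Proposition \ref{prop:important}) can be invoked; once this is granted, the separating-hyperplane argument reduces the geometric statement about the convex hull to the elementary fact that the $h_i$ satisfy no non-trivial linear relation.
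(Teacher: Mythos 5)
Your proof is correct and follows essentially the same route as the paper: both reduce the statement about the convex hull to the one-sided Liouville property applied to linear functionals composed with $H$, using $H(o)=0_l$ and the linear independence of $h_1,\dots,h_l$ to conclude. The only (cosmetic) differences are that you invoke the separating hyperplane theorem where the paper uses the characterization of $\overline{\mathrm{conv}}(H(X))$ as an intersection of half-spaces, and you pass through Lemma \ref{lem:Lstrong} to use the strongly harmonic Liouville theorem (Lemma \ref{lem:Harnackstrongly}) where the paper applies Lemma \ref{lem:ellipticHarnack} directly to the locally $L$-harmonic function $\lambda\circ H$.
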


\begin{proof}
If $H(x)=H(y)$ then \eqref{eq:D6.4} gives $\dist^2(x,y)=Q(0)=0$, so $x=y$. Then $H$ is injective. For the second statement, let us recall that the closed convex hull $\overline{\mathrm{conv}}(E)$ of a closed set $E \subset \setR^l$ is defined as the smallest convex subset of $\setR^l$ containing $E$; moreover, $\overline{\mathrm{conv}}(E)$ coincides with
$$
\bigcap_{\lambda \in \mathcal{A}(E)} \{\lambda \ge 0\}
$$
where $\mathcal{A}(E)$ is the set of affine functions on $\setR^l$ that are non-negative on $E$.  Note that being closed and convex, $\Span(E)$  contains $\overline{\mathrm{conv}}(E)$. Take $\lambda \in \mathcal{A}(H(X))$. Then $\lambda \circ H : X \to \setR$ is an affine combination of locally $L$-harmonic functions, hence it is a locally $L$-harmonic function too. Since $\lambda \circ H$ is non-negative on $X$, Lemma \ref{lem:ellipticHarnack} implies that it is a constant. Therefore, $\overline{\mathrm{conv}}(H(X))=\setR^l$, what brings the result.
\end{proof}

Note that the right-hand side in \eqref{eq:D6.4} does not define any squared distance on $\setR^l$ unless $Q$ is shown to be positive definite, see Subsection 4.3.

\subsection{Conical structure of tangent cones at infinity}


Let $(\uX,\udist,\uo)$ be a tangent cone at infinity of $(X,\dist)$ at $o$. We denote by $\{(X_i,\dist_i:=r_i^{-1}\dist,o)\}_i$, where $\{r_i\}_i \subset (0,+\infty)$ converges to $+\infty$, the sequence of rescalings of $(X,\dist,o)$ converging in the pointed Gromov-Hausdorff topology to $(\uX,\udist,\uo)$. Note that whenever $\ux \in \uX$, there exists a sequence $\{x_i\}_i\subset X$ such that $x_i \stackrel{GH}{\to} \ux$; in particular, $\dist_i(o,x_i) \to \udist(\uo,\ux)$ and $\dist(o,x_i)\to +\infty$.\\

\textbf{\underline{Step 1.}} [Construction of a Busemann function $h_\infty$ associated with a divergent sequence]

Let $\{x_i\}_i \subset X$ be a sequence such that $\dist(o,x_i) \to +\infty$. For any $i$, setting $$D_i:=\dist(o,x_i),$$ we define $$h_i(y):=D_i-\dist(x_i,y)$$ 
for any $y \in X$ and call $c_i:[0,D_i] \to X$ a minimizing geodesic joining $o$ to $x_i$.

On one hand, the triangle inequality implies that the functions $h_i$ are all $1$-Lipschitz, so by the Ascoli--Arzelà theorem, up to extracting a subsequence, we can assume that the sequence $\{h_i\}_i$ converges uniformly on compact subsets of $X$ to a $1$-Lipschitz function $h_\infty$. On the other hand, the minimizing geodesics $c_i$ being $1$-Lipschitz too, we can use again the Ascoli-Arzelà theorem to assume, up to extraction, that they converge uniformly on compact sets of $[0,+\infty)$ to a geodesic ray $\gamma$.

\begin{claim}\label{claim2}
The function $h_\infty$ constructed as above coincides with the Busemann function $b_\gamma$ associated with $\gamma$.
\end{claim}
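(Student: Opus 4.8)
The plan is to prove the two inequalities $h_\infty \ge b_\gamma$ and $h_\infty \le b_\gamma$ separately. The first is purely metric. The second is the delicate one — it is genuinely false for a general geodesic space (already for the plane with the $\ell^\infty$ distance, a ``wiggly'' choice of minimizing geodesics $c_i$ joining $o$ to $x_i$ produces a limit ray $\gamma$ with $b_\gamma \neq h_\infty$, even though $h_\infty$ is unchanged) — and this is precisely where the Euclidean heat kernel assumption enters: both $h_\infty$ and $b_\gamma$ turn out to be locally $L$-harmonic, so their difference, once known to be non-negative, must be constant by the Liouville property, and a final evaluation along $\gamma$ pins this constant to $0$.

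For $h_\infty \ge b_\gamma$, fix $y \in X$ and $t \ge 0$, and take $i$ large enough that $D_i \ge t$. Since $c_i$ is a unit-speed minimizing geodesic, the function $\tau \mapsto \tau - \dist(y,c_i(\tau))$ is non-decreasing on $[0,D_i]$ (for $\tau_1 \le \tau_2$ one has $\dist(y,c_i(\tau_2)) - \dist(y,c_i(\tau_1)) \le \dist(c_i(\tau_1),c_i(\tau_2)) = \tau_2 - \tau_1$), so $h_i(y) = D_i - \dist(y,c_i(D_i)) \ge t - \dist(y,c_i(t))$. Letting $i \to +\infty$ (using $c_i(t) \to \gamma(t)$ and $h_i(y) \to h_\infty(y)$) and then $t \to +\infty$ gives $h_\infty(y) \ge b_\gamma(y)$.

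Next I would show that $h_\infty$ is locally $L$-harmonic, mimicking the proof of Lemma \ref{lem:preparatory}. By Lemma \ref{lem:Ld^2}, $h_i$ is a local solution of $Lu = -(\alpha-1)/\dist(x_i,\cdot)$ on $X \setminus \{x_i\}$, and $\Gamma(h_i) = \Gamma(\dist(x_i,\cdot)) = \mu$ (strong locality makes $\Gamma$ invariant under adding a constant and under sign change). For $\phi \in \cD_c(\cE)$ the support of $\phi$ is compact, so $\dist(x_i,\cdot) \ge D_i - \sup_{\supp \phi}\dist(o,\cdot) \to +\infty$ uniformly on $\supp \phi$, whence $\left|\cE(h_i,\phi)\right| = \left|\int_X (Lh_i)\phi \di \mu\right| \le |\alpha-1|\,\bigl(\dist(x_i,\supp \phi)\bigr)^{-1}\int_X |\phi| \di \mu \to 0$; since moreover $h_i \to h_\infty$ in $L^2_{loc}(X,\mu)$, the Fréchet space structure of $\cD_{loc}(\cE)$ used there gives $h_\infty \in \cD_{loc}(\cE)$ with $\cE(h_\infty,\phi)=0$ for every $\phi \in \cD_c(\cE)$.

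Finally, $b_\gamma$ is locally $L$-harmonic by Lemma \ref{lem:preparatory} (applied with the constant $\alpha-1$ in place of its generic parameter, the hypotheses being provided by $L\mathbf{1}=0$, Corollary \ref{cor:1} and Lemma \ref{lem:Ld^2}), and $h_\infty, b_\gamma$ are Lipschitz, hence lie in $\cD_{loc}(\cE) \cap C(X)$ by Corollary \ref{cor:1}. So $h_\infty - b_\gamma$ is a non-negative locally $L$-harmonic function; since $(X,\dist,\mu,\cE)$ has the doubling and Poincaré properties (cf.\ the proof of Proposition \ref{prop:finitedim}), the parabolic — hence elliptic — Harnack inequality holds by Proposition \ref{prop:important}, so Lemma \ref{lem:ellipticHarnack} yields $h_\infty - b_\gamma \equiv c$ for a constant $c \ge 0$. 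To identify $c$, evaluate at $\gamma(s)$, $s>0$: $b_\gamma(\gamma(s)) = s$ by \eqref{eq:Busemann}, while for $i$ with $D_i \ge s$ one has $h_i(c_i(s)) = D_i - \dist(x_i,c_i(s)) = D_i - (D_i-s) = s$; since $c_i(s) \to \gamma(s)$, $h_i \to h_\infty$ uniformly on compact sets, and $h_\infty$ is continuous, passing to the limit gives $h_\infty(\gamma(s)) = s$. Hence $c=0$, i.e.\ $h_\infty = b_\gamma$. The main obstacle, as indicated above, is exactly the inequality $h_\infty \le b_\gamma$: it cannot be obtained by soft metric arguments and must pass through the $L$-harmonicity of $h_\infty$ together with the Liouville property coming from the doubling and Poincaré properties — the only place in this claim where the rigidity forced by the Euclidean heat kernel is genuinely used.
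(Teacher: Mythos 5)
Your proposal is correct and follows essentially the same route as the paper: the soft inequality $b_\gamma\le h_\infty$ via the triangle inequality along the geodesics $c_i$, local $L$-harmonicity of $h_\infty$ by passing to the limit in $\cE(h_i,\phi)$ exactly as in Lemma \ref{lem:preparatory}, local $L$-harmonicity of $b_\gamma$ from that lemma, and the Liouville property (Lemma \ref{lem:ellipticHarnack}) to conclude that $h_\infty-b_\gamma$ is a constant. The only (immaterial) differences are that you pin the constant down by evaluating at $\gamma(s)$ rather than at $o$, and you correct a harmless sign in $Lh_i=-(\alpha-1)/\dist(x_i,\cdot)$.
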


\begin{proof}
Thanks to Lemma \ref{lem:Ld^2} and the fact that constant functions are locally $L$-harmonic, we know that for any $i$ we have $h_i \in \cD_{loc}(\cE)$ and
$$
Lh_i = \frac{\alpha-1}{\dist(x_i,\cdot)} \qquad \text{on $X \backslash \{x_i\}$}.
$$
Therefore, for any $R\in(0,\dist(o,x_i))$, since $\dist(x_i,y)\ge D_i-\dist(y,o)\ge D_i-R$ holds for any $y \in B_R(o)$, we get
\begin{equation}\label{eq:D5.1}
|Lh_i| \le \frac{\alpha-1}{D_i-R} \qquad \text{on $B_R(o)$.}
\end{equation}
Then 
\begin{equation}\label{eq:wesh}
|\cE(h_i,\phi)| = |\langle \phi, L h_i\rangle_{L^2}| \le \frac{\alpha-1}{D_i-R}\|\phi\|_1 \to 0 \qquad \text{when $i \to +\infty$}
\end{equation}
 for any $\phi \in \cD_c(\cE)$. Since $h_i \to h_\infty$ uniformly on compact sets, then $h_i \to h_\infty$ in $L^2_{loc}(X,\mu)$. As in the proof of Lemma \ref{lem:preparatory}, this implies $h_\infty \in \cD_{loc}(\cE)$ with $Lh_\infty=0$. Moreover, $Lb_\gamma=0$ by Lemma \ref{lem:preparatory}, so $h_\infty - b_\gamma$ is a locally $L$-harmonic function. Let us show that it is non-negative.
For any $i$ and $s \in [0,D_i]$, set: $$h_{i,s}(y)=s-\dist(y,c_i(s)) \qquad \forall y \in X.$$
Since $\dist(x_i,y) \le \dist(x_i,c_i(s))+\dist(y,c_i(s))=D_i -s + \dist(y,c_i(s))$ for all $y \in X$, we have
$$
h_{i,s} \le h_i.
$$
As the curves $c_i$ pointwise converge to $\gamma$, the functions $h_{i,s}$ pointwise converge to $g_s:y \mapsto s - \dist(y,\gamma(s))$, so that letting $i$ tend to $+\infty$ provides
$$
g_s \le h_\infty
$$
and then letting $s$ tend to $+\infty$ gives
$$
b_\gamma \le h_{\infty}.
$$
Then $h_\infty - b_\gamma$ is a  non-negative locally $L$-harmonic function on $X$ hence it is a constant because of Lemma \ref{lem:ellipticHarnack}. But $b_\gamma(o)=0=h_i(o)$ for any $i$, so this constant is equal to $0$.
\end{proof}


\hfill

\textbf{\underline{Step 2.}} [Behavior of $H$ in the convergence $(X,\dist_i) \to (\uX,\udist)$ and link with $h_\infty$]

Recall that for any $1 \le j \le n$, the function $h_j$ has linear growth: $|h_j(x)| \le C_j (1+\dist(o,x))$ for any $x \in X$, where $C_j>0$ is some constant. Then the rescalings $h_j^{i}:=r_i^{-1}h_j$ are such that $|h_j^{i}(x)| \le C_j(r_i^{-1}+ \dist_i(o,x))$ for any $x \in X$, hence
$$
\|h_j^{i}\|_{L^\infty(B_r^{\dist_i}(o))} \le C_j(r_i^{-1}+r)\le C_j(1+r)
$$
holds for any $r>0$ and any $i$ such that $r_i>1$. Moreover, since $h_j$ is locally $L$-harmonic, it is strongly harmonic by Lemma \ref{lem:Lstrong} and then Lipschitz by Lemma \ref{lem:0109}: there is some constant $C_j'>0$ such that 
$$
|h_j(x)-h_j(y)|\le C_j' \dist(x,y)
$$
for any $x,y \in X$. This implies that the sequence $\{h_j^{i}\}_i$ is asymptotically uniformly continuous on $\overline{B}_r(x)$. It is immediate to check that the rescalings $h_j^{i}$ are also strongly harmonic in $(X,\dist_i,\mu_i)$ where $\mu_i:=r_i^{-\alpha}\mu$.  Then Proposition \ref{prop:AA1} and Proposition \ref{prop:stabstrongharmonic} imply that up to extracting a subsequence, we can assume that for any $j=1,\dots, l$, the functions $h_j^{i}$ converge uniformly on all compact sets to a strongly harmonic function $\uh_j : \uX \to \setR$. We set
$$
\uH := (\uh_1,\ldots,\uh_l) : \uX \to \setR^l.
$$

\begin{claim}
For any given $\ux \in \uX$, the function $X \ni y \mapsto \beta(\uH(\ux),H(y))$ is a multiple of a Busemann function.
\end{claim}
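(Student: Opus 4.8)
The plan is to fix $\ux \in \uX$ and produce an explicit Busemann function witnessing the claim. Choose a sequence $\{x_i\}_i \subset X$ with $x_i \stackrel{GH}{\to} \ux$ (such a sequence exists, as recalled before Step 1) and set $\underline{d} := \udist(\uo,\ux)$. If $\ux = \uo$, then taking $x_i \equiv o$ one gets $\uH(\uo) = \lim_i r_i^{-1}H(o) = 0_l$ since $H(o) = 0_l$, so $y \mapsto \beta(\uH(\uo),H(y))$ is identically $0$, a (zero) multiple of any Busemann function. Assume henceforth $\underline{d} > 0$. Then $\dist(o,x_i) = r_i\,\dist_i(o,x_i) \to +\infty$, so $\{x_i\}_i$ is of the type treated in Step 1: after passing to a subsequence (which preserves both $x_i \stackrel{GH}{\to} \ux$ and the convergences $h_j^i \to \uh_j$ underlying the definition of $\uH$), the functions $h_i := \dist(o,x_i) - \dist(x_i,\cdot)$ converge uniformly on compact subsets of $X$ to a Busemann function $b_\gamma$ by Claim~\ref{claim2}, and $|h_i(y)| = |\dist(o,x_i) - \dist(x_i,y)| \le \dist(o,y)$ for all $i$ and $y$.

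The key point is the identity
\begin{equation}\label{eq:busmult}
\beta(\uH(\ux),H(y)) = \underline{d}\; b_\gamma(y) \qquad \forall\, y \in X,
\end{equation}
which I would obtain by computing $\lim_i r_i^{-1}B(x_i,y)$ in two ways. First, by \eqref{eq:D6.3} one has $r_i^{-1}B(x_i,y) = \beta\big(r_i^{-1}H(x_i),H(y)\big)$; since $r_i^{-1}H(x_i) = (h_1^i(x_i),\dots,h_l^i(x_i)) \to (\uh_1(\ux),\dots,\uh_l(\ux)) = \uH(\ux)$ — using $x_i \stackrel{GH}{\to} \ux$ together with $h_j^i \to \uh_j$ uniformly on compact sets — continuity of the bilinear form $\beta$ gives $r_i^{-1}B(x_i,y) \to \beta(\uH(\ux),H(y))$. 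Second, writing $D_i := \dist(o,x_i)$ and $a_i := \dist(x_i,y)$, the definition of $B$ and the factorization $D_i^2 - a_i^2 = (D_i - a_i)(D_i + a_i) = h_i(y)\,\big(2D_i + (a_i - D_i)\big)$ yield
\[
r_i^{-1}B(x_i,y) = \frac{1}{2r_i}\big(D_i^2 - a_i^2 + \dist^2(o,y)\big) = \frac{D_i}{r_i}\,h_i(y) + \frac{h_i(y)\,(a_i - D_i)}{2r_i} + \frac{\dist^2(o,y)}{2r_i}.
\]
Here $D_i/r_i = \dist_i(o,x_i) \to \underline{d}$ and $h_i(y) \to b_\gamma(y)$, while $|a_i - D_i| \le \dist(o,y)$ and $|h_i(y)| \le \dist(o,y)$ are bounded in $i$ and $r_i \to +\infty$; hence the last two terms vanish and $r_i^{-1}B(x_i,y) \to \underline{d}\,b_\gamma(y)$. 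Comparing the two limits proves \eqref{eq:busmult}, i.e.\ the function $y \mapsto \beta(\uH(\ux),H(y))$ equals $\underline{d}$ times the Busemann function $b_\gamma$, which is the assertion.

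I do not expect a serious obstacle: the substantive input — existence of the geodesic ray $\gamma$ and the identification $\lim_i h_i = b_\gamma$ — is exactly Step 1, which we may invoke. The only points needing care are the consistency of the nested subsequence extractions (first the one underlying the definition of $\uH$ in Step 2, then the further one supplied by Step 1, noting that all the relevant limits survive further extraction) and the separate treatment of the degenerate case $\ux = \uo$, both dispatched above. The computational heart is merely the algebraic identity $D_i^2 - a_i^2 = h_i(y)\,(2D_i + O(1))$ combined with the reverse triangle inequality $|\dist(x_i,y) - \dist(o,x_i)| \le \dist(o,y)$.
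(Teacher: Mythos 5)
Your proof is correct and follows essentially the same route as the paper's: identify $\beta(\uH(\ux),H(y))$ with $\lim_i r_i^{-1}B(x_i,y)$ via \eqref{eq:D6.3} and the uniform convergence $h_j^i\to\uh_j$, then evaluate that limit through the factorization $\dist^2(o,x_i)-\dist^2(x_i,y)=(\dist(o,x_i)-\dist(x_i,y))(\dist(o,x_i)+\dist(x_i,y))$ and Step 1's identification of $\lim_i(\dist(o,x_i)-\dist(x_i,\cdot))$ with the Busemann function $b_\gamma$. Your explicit treatment of the case $\ux=\uo$ and of the nested subsequence extractions is a minor tidiness improvement over the paper, which leaves these implicit.
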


\begin{proof}
Let $\{x_i\}_i \subset X$ be such that $x_i \stackrel{GH}{\to} \ux$. Denote by $h_\infty$ the Busemann function associated to $\{x_i\}_i$ as in the previous step. Then for any $y \in X$,
\begin{align*}
\beta(\uH(\ux),H(y)) & = \lim\limits_{i \to +\infty} \beta(H_i(x_i),H(y))\\ & = \lim\limits_{i \to +\infty} \frac{1}{r_i} \beta(H(x_i),H(y))\\
& = \lim\limits_{i \to +\infty} \frac{1}{2r_i} (\dist^2(o,x_i)+\dist^2(o,y)-\dist^2(x_i,y)) \qquad \text{by \eqref{eq:D6.3}}\\
& = \lim\limits_{i \to +\infty} \frac{(\dist(o,x_i)-\dist(x_i,y))(\dist(o,x_i)+\dist(x_i,y))}{2r_i}\\
& = h_\infty(y) \left(\frac{\udist(\uo,\ux)}{2} + \lim\limits_{i \to +\infty} \frac{\dist(x_i,y)}{2r_i} \right)
\end{align*}
since $\dist(o,x_i)-\dist(x_i,y) \to h_\infty(y)$. Now
$$
\underbrace{\frac{\dist(x_i,o)-\dist(o,y)}{r_i}}_{\to \udist(\ux,\uo)} \le \frac{\dist(x_i,y)}{r_i} \le \underbrace{\frac{\dist(x_i,o)+\dist(o,y)}{r_i}}_{\to \udist(\ux,\uo)}\, ,
$$
so
\begin{equation}
\beta(\uH(\ux),H(y)) = \udist(\uo,\ux) h_\infty(y).
\end{equation}
\end{proof}

Note that we also have the following.

\begin{claim}\label{uclaim}
$\uH$ is an injective map. Moreover, $\Span(\uH(\uX)) = \setR^l$.
\end{claim}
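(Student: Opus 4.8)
The plan is to obtain injectivity of $\uH$ directly from the rescaled version of the quadratic identity \eqref{eq:D6.4}, and to obtain $\Span(\uH(\uX))=\setR^l$ by showing that the Busemann functions of the geodesic rays issuing from $o$ span $\cV$ and that each of them is realised inside the fixed cone $\uX$. For injectivity, given $\ux,\uy\in\uX$ I would pick $x_i\stackrel{GH}{\to}\ux$ and $y_i\stackrel{GH}{\to}\uy$; since $h_j^i\to\uh_j$ uniformly on compact sets and the $\uh_j$ are continuous, $(h_1^i(x_i),\dots,h_l^i(x_i))\to\uH(\ux)$ and similarly for $\uy$, while $\dist_i(x_i,y_i)\to\udist(\ux,\uy)$. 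Dividing \eqref{eq:D6.4} by $r_i^2$ and using that $Q$ is continuous and $2$-homogeneous yields $\dist_i^2(x_i,y_i)=Q\big(r_i^{-1}(H(x_i)-H(y_i))\big)$, hence $\udist^2(\ux,\uy)=Q(\uH(\ux)-\uH(\uy))$ in the limit; so $\uH(\ux)=\uH(\uy)$ forces $\udist(\ux,\uy)=0$, i.e.\ $\ux=\uy$.

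The key auxiliary fact for the span is that $\Span\{b_\gamma:\gamma\text{ a geodesic ray from }o\}=\cV$. Such rays exist because $\mu(B_r(o))=\omega_\alpha r^\alpha\to+\infty$ makes the proper geodesic space $X$ unbounded, hence non-compact; and each $b_\gamma$ lies in $\cV$, being a locally uniform limit of $r^{-1}B(\gamma(r),\cdot)\in\cV$ with $\cV$ finite-dimensional. Suppose $\Theta\in\cV^*$ annihilates every $b_\gamma$. As $\cV^*=\cD=\Span\{\delta_x:x\in X\}$, write $\Theta=\sum_m a_m\delta_{q_m}$ and set $g:=\sum_m a_m B(q_m,\cdot)\in\cV$, so that $g(x)=\Theta(B(x,\cdot))$; then $g$ is continuous, $g(o)=0$ (since $B(q_m,o)=0$), and $g$ is locally $L$-harmonic, hence strongly harmonic by Lemma \ref{lem:Lstrong}. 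For any $\{x_i\}\subset X$ with $D_i:=\dist(o,x_i)\to+\infty$, Step 1 and Claim \ref{claim2} give, after extraction, $D_i^{-1}B(x_i,\cdot)\to b_\gamma$ locally uniformly for some ray $\gamma$; applying $\Theta$ gives $D_i^{-1}g(x_i)\to\Theta(b_\gamma)=0$, and since any subsequence of a divergent sequence is again divergent, $g(x)=o(\dist(o,x))$, whence $\sup_{\partial B_r(o)}|g|=o(r)$. By Lemma \ref{lem:0109}(ii), $g$ is constant, so $g\equiv0$; therefore $\Theta$ vanishes on $\Span\{B(x,\cdot):x\in X\}=\cV$, i.e.\ $\Theta=0$, proving the auxiliary fact.

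To conclude, fix a geodesic ray $\gamma$ from $o$ and put $x_i:=\gamma(r_i)$, so that $\dist_i(o,x_i)=1$; extracting, $x_i\stackrel{GH}{\to}\ux\in\uX$ with $\udist(\uo,\ux)=1$ (using properness of $\uX$), and since the minimizing geodesics from $o$ to $x_i$ may be taken to be $\gamma|_{[0,r_i]}\to\gamma$, the Busemann limit of $\{x_i\}$ is $b_\gamma$; hence the claim established just above, combined with Claim \ref{claim2}, gives $\beta(\uH(\ux),H(y))=b_\gamma(y)$ for all $y\in X$. Since $H(X)$ spans $\setR^l$ (Claim \ref{claim}), restriction to $H(X)$ embeds $(\setR^l)^*$ into the space of functions on $X$, and under this embedding the functionals $\beta(\uH(\ux),\cdot)$ obtained in this way correspond exactly to the $b_\gamma$; as the latter span $\cV$, the former span $(\setR^l)^*$, and non-degeneracy of $\beta$ then gives $\Span(\uH(\uX))=\setR^l$.

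The step I expect to be the real obstacle is the auxiliary fact: a single tangent cone only detects asymptotic directions along which points escape at rate comparable to $r_i$, so one cannot read sublinearity of $g$ off the cone directly. One must instead produce \emph{every} Busemann function — via arbitrary divergent sequences in Step 1 for the auxiliary fact, and via the choice $x_i=\gamma(r_i)$ for the realisation inside $\uX$ — and then invoke the Liouville-type Lemma \ref{lem:0109}(ii) for sublinear strongly harmonic functions.
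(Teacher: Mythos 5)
Your proof is correct. The injectivity part is exactly the paper's argument (divide \eqref{eq:D6.4} by $r_i^2$, pass to the limit using the uniform convergence $h_j^i\to\uh_j$, and use $Q(0)=0$). For the span, however, you take a genuinely different and much longer route than the paper. The paper argues directly on the cone side: if $\lambda\in(\setR^l)^*$ vanishes on $\uH(\uX)$, then by Proposition \ref{prop:AA1} the equi-Lipschitz rescalings $r_i^{-1}\lambda\circ H$ converge to $\lambda\circ\uH=0$ uniformly over $\overline{B}^{\dist_i}_r(o)\to\overline{B}_r(o)$, which immediately gives $\sup_{\partial B_{r_i}(o)}|\lambda\circ H|=\sup_{\partial B_1^{\dist_i}(o)}|\lambda\circ H|=o(r_i)$; since $\lambda\circ H$ is strongly harmonic, Lemma \ref{lem:0109}(ii) forces it to be constant, hence zero, and Claim \ref{claim} then forces $\lambda=0$. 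This four-line argument completely sidesteps the obstacle you flag at the end: one never needs to realise \emph{every} asymptotic direction inside the single tangent cone, because the sublinearity of $\lambda\circ H$ is read off from the uniform convergence of the rescaled coordinate functions rather than from the behaviour of $g$ along arbitrary divergent sequences. Your detour — showing that the Busemann functions of rays from $o$ span $\cV$ (via the annihilator $\Theta$, the function $g=\Theta(B(\cdot,\plchldr))$, and the same Lemma \ref{lem:0109}(ii)), then realising each $b_\gamma$ as $\beta(\uH(\ux_\gamma),H(\plchldr))$ with $x_i=\gamma(r_i)$, then dualising through the non-degeneracy of $\beta$ — is sound (in particular the membership $b_\gamma\in\cV$ as a pointwise limit in a finite-dimensional space, and the subsequence argument giving $g=o(\dist(o,\plchldr))$, both check out), and it yields the extra fact that $\{b_\gamma\}$ spans $\cV$, which is of some independent interest but is not needed anywhere in the paper. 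In the end both proofs rest on the same two pillars, Lemma \ref{lem:0109}(ii) and Claim \ref{claim}; yours is essentially the dual, Busemann-function formulation of the paper's direct argument.
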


\begin{proof}
Dividing $\eqref{eq:D6.4}$ by $r_i^2$ and taking $i\to+\infty$ implies $\udist^2(x,y) = Q(\uH(x)-\uH(y))$ for any $x,y \in X$, hence the injectivity of $\uH$. To prove the second part of the statement, let us show that $\uH(\uX)$ is contained in no hyperplan of $\setR^l$. Take a linear form $\lambda :\setR^l\to\setR$ vanishing on $\uH(\uX)$. Considering the convergent sequence $(X_i,\dist_i,o_i)\to(\uX,\udist,\uo)$, Proposition \ref{prop:AA1} implies that up to extraction the equi-Lipschitz functions $r_i^{-1} \lambda \circ H : X \to \setR$ converge to $0=\lambda \circ \uH : \uX \to \setR$ over $\overline{B}^{\dist_i}_r(o) \to\overline{B}_r(o)$ for any $r>0$. Therefore, we have $$\sup_{\partial B_{r_i}(o)} |\lambda \circ H| = \sup_{\partial B_1^{\dist_i}(o)} |\lambda \circ H|= o(r_i).$$ Being a linear combination of locally $L$-harmonic functions, $\lambda \circ H$ is locally $L$-harmonic hence strongly harmonic by Lemma \ref{lem:Lstrong}. Thus Lemma \ref{lem:0109} implies that $\lambda \circ H$ is constantly equal to $0$. Since $\Span(H(X))=\setR^l$, this implies $\lambda = 0$.
\end{proof}

\hfill

\textbf{\underline{Step 3.}} [Construction of the bijection]

Our goal now is to construct a natural bijection between $\uX \backslash \{\uo\}$ and $\uS \times (0,+\infty)$, where $\uS:=\{\ux \in \uX : \udist(\uo,\ux)=1\}$.\\

Let us start with some heuristics. For $\ux \in \uX \backslash \{\uo\}$ given, we look for $\usigma \in \uS$ and $t\in(0,+\infty)$ uniquely determined by $\ux$. Here is how we are going to proceed:

1. prove that there exists only one minimizing geodesic $\uc$ joining $\uo$ to $\ux$;

2. show that $\uc$ extends in an unique way into a geodesic ray $\ugamma$.

\noindent Indeed, the unique geodesic ray $\ugamma$ that we are going to construct necessarily crosses $\uS$ at a single point $\usigma$ (otherwise $\ugamma$ would fail to be a minimizing geodesic) and it is such that $\ugamma(t)=\ux$ for a unique time $t>0$. Conversely, a pair $(\usigma,t)$ would uniquely determine a point $\ugamma(t) \in \uX$.\\

Let us proceed now with the construction. Take $\ux \in \uX \backslash \{\uo\}$. Let $\{x_i\}_i \subset X$ be such that $x_i \stackrel{GH}{\to} \ux$. For any $i$, let $c_i$ be the minimizing $\dist_i$-geodesic joining $o$ to $x_i$. As done previously, up to extracting a subsequence we can assume that $\{c_i\}_i$ converges uniformly on compact subsets of $[0,+\infty)$ to a geodesic ray $\gamma : [0,+\infty) \to X$. We know from Claim \ref{claim2} and the previous step that
\begin{equation}\label{eq:asabove}
\beta(\underline{H}(\ux),H(y)) = \udist(\uo,\ux) b_\gamma(y)
\end{equation}
holds for any $y \in X$, where $b_\gamma$ is the Busemann function associated with $\gamma$. Consider now a minimizing geodesic $\uc$ in $(\uX,\udist)$ joining $\uo$ to $\ux$ and set $$\uD:=\udist(\uo,\ux).$$ For any $s \in [0,\uD]$, acting as we did to establish \eqref{eq:asabove}, we can prove that $$\beta(\underline{H}(\uc(s)),H(y)) = s b_\gamma(y)$$ holds for any $y \in X$. Subtracting \eqref{eq:asabove} to this latter equality yields to
\begin{equation}\label{eq:draft23}
\beta\left( \underline{H}(\uc(s)) - \frac{s}{\uD}\uH(\ux) , H(y)\right) = 0
\end{equation}
for any $y \in X$. By Claim \ref{claim}, this implies 
\begin{equation}\label{eq:draft23}
\beta\left( \underline{H}(\uc(s)) - \frac{s}{\uD}\uH(\ux) , \xi \right) = 0
\end{equation}
for any $\xi \in \setR^n$ and then
\begin{equation}\label{eq:H}
\underline{H}(\uc(s)) = \frac{s}{\uD}\uH(\ux)
\end{equation}
since $\beta$ is non-degenerate. Uniqueness of $\uc$ follows: if $\uc_1$ and $\uc_2$ are two minimizing geodesics joining $\uo$ to $\ux$, for any $s \in [0,\uD]$ one has
$$
\uH(\uc_1(s)) = \frac{s}{\uD}\uH(\ux) =\uH(\uc_2(s))
$$
and thus $\uc_1(s)=\uc_2(s)$ since $\uH$ is injective.\\

Let us show now that $\uc$ extends in an unique way into a geodesic ray. Our argument is inspired by the analysis done by J. Cheeger about generalized linear functions \cite[Section 8]{CheegerRademacher}. For any $i$, set $D_i:=\dist(o,x_i)$ and write $\gamma_i:[0,+\infty) \to X$ for the geodesic ray in $(X,\dist_i)$ defined by:
$$
\gamma_i(s) = \gamma(sr_i + D_i) \qquad \forall s>0.
$$
On one hand, by Proposition \ref{prop:AA2}, we know that up to extracting a subsequence we can assume that $\{\gamma_i\}_i$ converges uniformly on compact subsets of $[0,+\infty)$ to a geodesic ray $\tilde{\gamma}:[0,+\infty] \to \uX$ whose associated Busemann function we denote by $b_{\tilde{\gamma}}.$ On the other hand, if we write $b_{\gamma_i}$ for the Busemann function associated with $\gamma_i$, we can proceed as in Step 2 with $\dist_{i}, H_{i},\gamma_{i}$ in place of $\dist, H, \gamma$ respectively to get
\begin{equation}
b_{\gamma_{i}}(y) = \frac{\beta(\uH(\ux),H_{i}(y))}{\uD}
\end{equation}
for any $y \in X$. Then the sequence $(b_{\gamma_i})$ converges pointwise to the function
$$
F:\uX \ni \uy \mapsto \frac{\beta(\uH(\ux),\uH(\uy))}{\uD}
$$
and we have the following:
\begin{claim}
\begin{equation}\label{eq:uniqueness}
F=\underline{b}_{\tilde{\gamma}}+\uD.
\end{equation}
\end{claim}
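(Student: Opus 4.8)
The plan is to prove separately the two inequalities $F\ge\underline{b}_{\tilde{\gamma}}+\uD$ and $F\le\underline{b}_{\tilde{\gamma}}+\uD$.

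The first step is to relate the function $b_{\gamma_i}$ appearing above — which equals $\beta(\uH(\ux),H_i(\cdot))/\uD$ and hence vanishes at $o$ — to the Busemann function $\widehat{b}_{\gamma_i}$ of $\gamma_i$ with the \emph{standard} normalization $\widehat{b}_{\gamma_i}(\gamma_i(0))=0$. Since $\gamma$ is a unit-speed $\dist$-geodesic ray issued from $o$, for every $s\ge0$ one has $\dist_i(o,\gamma_i(s))=\dist_i(o,\gamma(sr_i+D_i))=r_i^{-1}(sr_i+D_i)=s+\uD_i$ with $\uD_i:=r_i^{-1}D_i$, so that $\widehat{b}_{\gamma_i}(o)=\lim_{s\to+\infty}(s-\dist_i(o,\gamma_i(s)))=-\uD_i$; as two Busemann functions of the same ray differ by a constant, this gives $b_{\gamma_i}=\widehat{b}_{\gamma_i}+\uD_i$. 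Because $x_i\stackrel{GH}{\to}\ux$ gives $\uD_i=\dist_i(o,x_i)\to\udist(\uo,\ux)=\uD$, passing to the limit in $b_{\gamma_i}\to F$ yields $\widehat{b}_{\gamma_i}\to F-\uD$ uniformly on compact sets along the convergence $(X,\dist_i)\to(\uX,\udist)$.

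For the first inequality I would note that the $\widehat{b}_{\gamma_i}$ are equi-$1$-Lipschitz, so $F-\uD$ is $1$-Lipschitz on $(\uX,\udist)$; moreover $\widehat{b}_{\gamma_i}(\gamma_i(t))=t$ for all $t\ge0$ by \eqref{eq:Busemann}, and since $\gamma_i(t)\stackrel{GH}{\to}\tilde{\gamma}(t)$ this passes to the limit to give $(F-\uD)(\tilde{\gamma}(t))=t=\underline{b}_{\tilde{\gamma}}(\tilde{\gamma}(t))$. Hence for any $\uy\in\uX$ and $t\ge0$, $1$-Lipschitzness yields $(F-\uD)(\uy)\ge(F-\uD)(\tilde{\gamma}(t))-\udist(\uy,\tilde{\gamma}(t))=t-\udist(\uy,\tilde{\gamma}(t))$, and taking the supremum over $t\ge0$ gives $(F-\uD)(\uy)\ge\underline{b}_{\tilde{\gamma}}(\uy)$.

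The reverse inequality is exactly the stability statement $\widehat{b}_{\gamma_i}\to\underline{b}_{\tilde{\gamma}}$ for the Busemann functions of the converging rays $\gamma_i\to\tilde{\gamma}$, which by the first step gives $F-\uD=\lim_i\widehat{b}_{\gamma_i}=\underline{b}_{\tilde{\gamma}}$, i.e. \eqref{eq:uniqueness}. The lower-semicontinuity half of this stability is the previous paragraph; the upper-semicontinuity half, $\limsup_i\widehat{b}_{\gamma_i}(y_i)\le\underline{b}_{\tilde{\gamma}}(\uy)$ for $y_i\stackrel{GH}{\to}\uy$, is the delicate point. It is obtained by the usual argument: given $\epsilon,\rho>0$, choose for each $i$ a parameter $s_i$ with $s_i-\dist_i(y_i,\gamma_i(s_i))\ge\widehat{b}_{\gamma_i}(y_i)-\epsilon$, a $\dist_i$-geodesic $\sigma_i$ from $y_i$ to $\gamma_i(s_i)$, and $p_i:=\sigma_i(\rho)$; the ray property of $\gamma_i$ and the triangle inequality give $\widehat{b}_{\gamma_i}(p_i)\ge\widehat{b}_{\gamma_i}(y_i)+\rho-\epsilon$, while $1$-Lipschitzness gives $\widehat{b}_{\gamma_i}(p_i)\le\widehat{b}_{\gamma_i}(y_i)+\rho$; since $\{p_i\}_i$ stays bounded, properness of $\uX$ lets one pass to a limit asymptotic ray $\sigma$ of $\tilde{\gamma}$ from $\uy$, along which $F-\uD$ grows at unit rate, which together with $(F-\uD)\ge\underline{b}_{\tilde{\gamma}}$, the unit-rate growth of $\underline{b}_{\tilde{\gamma}}$ along the co-ray $\sigma$, and the vanishing of $(F-\uD)-\underline{b}_{\tilde{\gamma}}$ on $\tilde{\gamma}$ forces $(F-\uD)(\uy)=\underline{b}_{\tilde{\gamma}}(\uy)$. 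Alternatively, one may observe that each $\widehat{b}_{\gamma_i}$ is strongly harmonic on $(X,\dist_i,\mu_i)$ — a space which, like $(X,\dist,\mu,\cE)$, carries an $\alpha$-dimensional Euclidean heat kernel relative to $\dist_i$, so that Lemmas \ref{lem:preparatory} and \ref{lem:Lstrong} apply — hence $F-\uD$ is strongly harmonic on $\uX$ by Proposition \ref{prop:stabstrongharmonic}, and conclude via a minimum principle for strongly superharmonic functions on the $\alpha$-dimensional volume space $\uX$ (in the spirit of Lemma \ref{lem:Harnackstrongly}) applied to the non-negative function $(F-\uD)-\underline{b}_{\tilde{\gamma}}$, which vanishes at $\tilde{\gamma}(0)$.

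I expect this reverse inequality to be the main obstacle: the finite-parameter estimate $\dist_i(y_i,\gamma_i(s))\ge s-\widehat{b}_{\gamma_i}(y_i)$ only delivers $F-\uD\ge\underline{b}_{\tilde{\gamma}}$, so one must genuinely control how the Busemann-realizing parameter $s_i$ escapes to $+\infty$ — which is precisely where properness of the asymptotic cone, or the strong harmonicity of $F-\uD$ combined with a minimum principle, has to be used.
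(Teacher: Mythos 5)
Your normalization bookkeeping in the first paragraph is correct and in fact tidier than the paper's own wording (the paper writes $b_{\gamma_i}(y)=\beta(\uH(\ux),H_i(y))/\uD$ even though the left-hand side, with the standard normalization, equals $-\uD_i$ at $o$ while the right-hand side vanishes there; your identification $b_{\gamma_i}=\widehat{b}_{\gamma_i}+\uD_i$ is the correct reading). Your proof of $F-\uD\ge \underline{b}_{\tilde{\gamma}}$ via equi-Lipschitzness and the values along $\tilde{\gamma}$ is a correct variant of the paper's monotone-approximant argument. The problem is the reverse inequality, and neither of your two alternatives closes it. The co-ray route fails: even granting that $\sigma$ is a genuine co-ray of $\tilde{\gamma}$ in $\uX$ (nontrivial, since the $\sigma_i$ live in the varying spaces $(X,\dist_i)$ and their endpoints $\gamma_i(s_i)$ escape to infinity), what you obtain is that $g:=(F-\uD)-\underline{b}_{\tilde{\gamma}}$ is constant along $\sigma$ and vanishes on $\tilde{\gamma}$; since $\sigma$ need not meet $\tilde{\gamma}$, this does not force $g(\uy)=0$. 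No purely metric argument of this kind can work, because Busemann functions are only \emph{lower} semicontinuous under convergence of rays in general; the upper bound genuinely requires the harmonic structure.

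Your second alternative is the paper's argument, but it omits its key ingredient. To run a Liouville/minimum principle on $g\ge 0$ you need $g$ to be strongly harmonic (or at least superharmonic), hence you need $\underline{b}_{\tilde{\gamma}}$ itself to be strongly harmonic \emph{on the limit space} $\uX$. Applying Lemmas \ref{lem:preparatory} and \ref{lem:Lstrong} on the rescaled spaces $(X,\dist_i,\mu_i)$ only yields strong harmonicity of the approximants $\widehat{b}_{\gamma_i}$ and therefore, via Proposition \ref{prop:stabstrongharmonic}, of their limit $F-\uD$; it says nothing about $\underline{b}_{\tilde{\gamma}}$, which is the Busemann function of a ray \emph{of $\uX$} and is not known a priori to be the limit of the $\widehat{b}_{\gamma_i}$ — that is precisely what the claim asserts. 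From the increasing approximants $s-\udist(\cdot,\tilde{\gamma}(s))$ one only extracts one-sided mean-value information about $\underline{b}_{\tilde{\gamma}}$, which makes $g$ sub- rather than superharmonic (when $\alpha\ge 1$), and no minimum principle applies to a non-negative subharmonic function vanishing at a point. The paper fills exactly this gap by passing the Chapman--Kolmogorov identity \eqref{eq:ChapmanKolmogorov} to the limit along the rescalings to equip $(\uX,\udist,\umu)$ with a Dirichlet form $\underline{\cE}$ admitting an $\alpha$-dimensional Euclidean heat kernel, so that Lemma \ref{lem:preparatory} and Lemma \ref{lem:Lstrong} apply \emph{on $\uX$} and give that $\underline{b}_{\tilde{\gamma}}$ is strongly harmonic; then $g$ is a non-negative strongly harmonic function on a doubling space, hence constant (Lemma \ref{lem:Harnackstrongly}), and evaluating at $\ux=\tilde{\gamma}(0)$ gives $g\equiv 0$. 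You need to add this construction of $\underline{\cE}$ to make your second route complete.
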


\begin{proof}
Observe first that $F$ is strongly harmonic since it is a linear combination of the strongly harmonic functions $\uh_1,\ldots,\uh_l$. Let us show that $b_{\tilde{\gamma}}$ is strongly harmonic too. For any $i$, set $$
p_i(x,y,t):=\frac{1}{(4\pi t)^{\alpha/2}} e^{-\frac{\dist_i^2(x,y)}{4t}}= r_i^\alpha p(x,y,r_i^2 t)$$
for any $x,y \in X$ and $t>0$, and
$$
\underline{p}(\ux,\uy,t) := \frac{1}{(4 \pi t)^{\alpha/2}} e^{-\frac{\udist^2(\ux,\uy)}{4t}}
$$
for any $\ux,\uy \in X$ and $t>0$. Then for any $x,y \in X$ and $s,t>0$,
\begin{align*}
p_i(x,y,t+s) & = r_i^\alpha p(x,y,r_i^2 t + r_i^2 s) = r_i^\alpha \int_X p(x,z,r_i^2 t)p(z,y,r_i^2 s) \di \mu(z)\\
& = \int_X p_i(x,z,t)p(z,y,r_i^2s) r_i^\alpha \frac{\di \mu(z)}{r_i^\alpha} = \int_X p_i(x,z,t)p_i(z,y,s) \di \mu_i(z).
\end{align*}
For any $\ux, \uy \in \uX$ and $\{x_i\}_i, \{y_i\}_i \in X$ such that $x_i \stackrel{GH}{\to} \ux$ and $y_i \stackrel{GH}{\to} \uy$, the convergence $\dist_i(x_i,y_i) \to \udist(\ux,\uy)$ implies $p_i(x_i,y_i,t) \to \up(\ux,\uy,t)$ for any $t>0$, hence:
$$
\underline{p}(\ux,\uy,t+s) = \int_X \underline{p}(\ux,\uz,t) \underline{p}(\uz,\uy,s) \di \umu(\uz) \qquad \forall \ux, \uy \in \uX, \, \forall \, t,s>0.
$$
By a standard procedure described for instance in \cite[Section 2]{Grigor'yan}, we can construct a Dirichlet form $\underline{\cE}$ on $(\uX,\udist,\umu)$ admitting a heat kernel given by $\underline{p}$. In particular, $(\uX,\udist,\umu,\underline{\cE})$ has an $\alpha$-dimensional Euclidean heat kernel. Writing $\underline{L}$ for the associated self-adjoint operator, we deduce from Lemma \ref{lem:preparatory} that $b_{\tilde{\gamma}}$ is locally $\underline{L}$-harmonic, then Lemma \ref{lem:Lstrong} implies that $b_{\tilde{\gamma}}$ is strongly harmonic.

Let us show now that $F - \underline{b}_{\tilde{\gamma}} \ge 0$.  Take $i\in \setN$, $s>0$ and $y \in X$. Then $b_\gamma(y) \ge r_i s + D_i - \dist(\gamma(r_i s+D_i),y)$ by definition of a Busemann function, hence $r_i^{-1}b_\gamma(y) \ge s+ r_i^{-1} D_i  - \dist_i(\gamma_i(s),y)$. Since
\begin{align*}
r_i^{-1}b_\gamma(y) & = \lim\limits_{s \to +\infty} r_i^{-1} s - r_i^{-1} \dist(\gamma(s),y)\\
& = D_i r_i^{-1} + \lim\limits_{s' \to +\infty} s' - \dist_i(\gamma(D_i+r_i s'),y)\qquad \text{where $s'=(s-D_i)r_i^{-1}$}\\ & = D_i r_i^{-1} + b_{\gamma_i}(y),
\end{align*}
we get $b_{\gamma_i}(y) \ge s - \dist_i(\gamma_i(s),y)$. Letting $i$ tend to $+\infty$ provides $F(y)\ge s - \udist(\tilde{\gamma}(s),y)$, after what letting $s$ tend to $+\infty$ gives $F\ge b_{\tilde{\gamma}}$. 

By Lemma \ref{lem:ellipticHarnack}, we get that $F-b_{\tilde{\gamma}}$ is a constant function. Since $F(\ux)=\uD=\udist(\uo,\ux) $ and $b_{\tilde{\gamma}}(\ux)=0$, the claim is proved.
\end{proof}

Let $\ugamma:[0,+\infty)\to \uX$ be the concatenation of $\uc$ and $\tilde{\gamma}$, i.e.
$$
\ugamma(t):=
\begin{cases}
\uc(t) & \text{if $0<t\le \uD$,}\\
\tilde{\gamma}(t-\uD) & \text{if $t\ge\uD$.}
\end{cases}
$$
By construction, $\ugamma$ is $1$-Lipschitz: $\udist(\ugamma(t),\ugamma(s))\le |t-s|$ for any $s,t>0$. Moreover \eqref{eq:H} implies $F(\ugamma(t))=t$ when $0<t\le \uD$ while \eqref{eq:uniqueness} implies $F(\ugamma(t))=t$ when $t\ge \uD$.
Since the function $F$ is $1$-Lipschitz we get $|t-s| \le \udist(\ugamma(t),\ugamma(s))$ for any $s,t>0$, thus $\ugamma$ is a geodesic ray that extends $\uc$.

Let us show that this extension $\ugamma$ is unique. By \eqref{eq:uniqueness}, we have
\begin{equation}\label{eq:un}
\beta(\uH(\ux),\uH(\uy)) = \uD( \ub_{\tilde{\gamma}}(\uy)-\uD)
\end{equation}
for any $\uy \in \uX$ and we can obtain
\begin{equation}
\beta(\uH(\tilde{\gamma}(t)),\uH(\uy)) = t( \ub_{\tilde{\gamma}}(\uy)-\uD)
\end{equation}
for any $\uy \in \uX$ and $t>\uD$ by a similar reasoning. Then if $\ugamma'$ is another extension of $c$ obtained from a geodesic ray $\tilde{\gamma}'$ emanating from $\ux$, we get
$$
\beta(\uH(\tilde{\gamma}(t))-\uH(\tilde{\gamma}'(t)),\uH(\uy)) =0
$$
for any $\uy \in \uX$ and $t>\uD$, from which Claim $\ref{uclaim}$ yields $\tilde{\gamma}(t)=\tilde{\gamma}'(t)$.

\begin{remark}
Note that \eqref{eq:un} implies $\beta(\uH(\tilde{\gamma}(t)),\uH(\uy)) = t\beta(\uH(\tilde{\gamma}(1)),\uH(\uy))$ for all $\uy \in \uX$, hence
\begin{equation}\label{eq:conicalstructure}
\uH(\tilde{\gamma}(t)))= t\uH( \tilde{\gamma}(1)).
\end{equation}
\end{remark}
\hfill

\textbf{\underline{Step 4.}} [Construction of the isometry]

Let $\Phi$ be the inverse of the bijection constructed in the previous step, i.e.~$$\Phi : \begin{array}{ccr}
(0,+\infty) \times \uS &  \to & \uX \backslash\{\uo\} \\
(t,\usigma) & \mapsto & \ugamma_{\usigma}(t),
\end{array}
$$
where $\ugamma_{\usigma}$ is the geodesic ray obtained by extending the minimizing geodesic joining $\uo$ to $\usigma$. Note that \eqref{eq:conicalstructure} implies
\begin{equation}\label{eq:conicalstructure2}
\uH(\Phi(t,\usigma))= t\uH( \Phi(1,\usigma))
\end{equation}
for any $(t,\usigma)\in (0,+\infty) \times \uS$. Let $\dist_C$ be the cone distance on $(0,+\infty)\times \uS$ defined by
$$
\dist_C^2((t,\usigma),(t',\usigma')):=(t-t')^2 + 2 t t' \sin^2\left(\frac{\udist_{\uS}(\usigma,\usigma')}{2}\right) 
$$
for any $(t,\usigma),(t',\usigma') \in (0,+\infty) \times \uS$, where $\udist_{\uS}$ is the length distance associated with the distance on $\uS$ obtained by restricting $\udist$ to $\uS \times \uS$. We are going to establish
\begin{equation}\label{eq:isometry}
\udist(\ux,\ux') = \udist_C((t,\usigma),(t',\usigma'))
\end{equation}
for any $\ux = \Phi(t,\usigma), \, \ux' = \Phi(t',\usigma') \in \uX \backslash \{\uo\}$.

\begin{claim}\label{claim:isometrywithdelta}
There exists $\delta(\usigma,\usigma') \in [0,\pi]$ such that
\begin{equation}\label{eq:coeur2}
\udist^2(\ux,\ux') = (t-t')^2 + 4 tt' \sin^2\left(\frac{\delta(\usigma,\usigma')}{2}\right)\,\cdot
\end{equation}
\end{claim}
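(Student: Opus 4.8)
The plan is to deduce \eqref{eq:coeur2} from the conical identity \eqref{eq:conicalstructure2} together with the quadratic identity $\udist^2(\ux,\uy)=Q(\uH(\ux)-\uH(\uy))$ valid for all $\ux,\uy\in\uX$, which is obtained in the proof of Claim \ref{uclaim} by passing to the limit in \eqref{eq:D6.4}. First I would record that $\uH(\uo)=0_l$: since $\uo$ is the Gromov--Hausdorff limit of the fixed base point $o$ and each $\uh_j$ is the uniform-on-compacts limit of the rescalings $r_i^{-1}h_j$, we get $\uh_j(\uo)=\lim_i r_i^{-1}h_j(o)=0$. Next, $\usigma\in\uS$ means $\udist(\uo,\usigma)=1$, so the minimizing geodesic from $\uo$ to $\usigma$ has unit length, whence $\ugamma_{\usigma}(1)=\usigma$, i.e.\ $\Phi(1,\usigma)=\usigma$, and likewise $\Phi(1,\usigma')=\usigma'$. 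Combining this with \eqref{eq:conicalstructure2} gives $\uH(\ux)=t\,\uH(\usigma)$ and $\uH(\ux')=t'\,\uH(\usigma')$.

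I would then expand, using the symmetric bilinear form $\beta$ associated with $Q$,
$$\udist^2(\ux,\ux')=Q\big(t\,\uH(\usigma)-t'\,\uH(\usigma')\big)=t^2\,Q(\uH(\usigma))-2tt'\,\beta(\uH(\usigma),\uH(\usigma'))+t'^2\,Q(\uH(\usigma')).$$
Because $\uH(\uo)=0_l$, the quadratic identity on $\uX$ yields $Q(\uH(\usigma))=\udist^2(\usigma,\uo)=1$ and $Q(\uH(\usigma'))=1$; setting $c:=\beta(\uH(\usigma),\uH(\usigma'))$, which manifestly depends only on $\usigma$ and $\usigma'$, this becomes
$$\udist^2(\ux,\ux')=t^2+t'^2-2tt'\,c.$$

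It then remains to check that $c\in[-1,1]$, so that $\delta(\usigma,\usigma'):=\arccos c$ is a well-defined element of $[0,\pi]$ with $\cos\delta(\usigma,\usigma')=c$. Taking $t=t'=1$ in the last display gives $\udist^2(\usigma,\usigma')=2-2c$; combining $\udist(\usigma,\usigma')\ge 0$ with the triangle inequality $\udist(\usigma,\usigma')\le\udist(\usigma,\uo)+\udist(\uo,\usigma')=2$ forces $0\le 2-2c\le 4$, i.e.\ $-1\le c\le 1$. Finally, using $4tt'\sin^2(\delta/2)=2tt'(1-\cos\delta)=2tt'(1-c)$, one computes $(t-t')^2+4tt'\sin^2(\delta(\usigma,\usigma')/2)=t^2+t'^2-2tt'c=\udist^2(\ux,\ux')$, which is \eqref{eq:coeur2}. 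The only point needing care is precisely this last one: the bound $|c|\le 1$ must be drawn from the triangle inequality on $\uX$ rather than from Cauchy--Schwarz, since positive definiteness of $Q$ is only established later in Subsection 4.3; apart from that, the claim is a direct consequence of the conical structure built in Step 3, so I do not anticipate a genuine obstacle.
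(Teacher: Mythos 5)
Your argument is correct and follows essentially the same route as the paper: pass to the limit in \eqref{eq:D6.4} to get $\udist^2=Q(\uH(\cdot)-\uH(\cdot))$ on $\uX$, use the conical identity \eqref{eq:conicalstructure2} to reduce to $t^2+t'^2-2tt'\,\beta(\uH(\usigma),\uH(\usigma'))$, and bound the cross term in $[-1,1]$ via the case $t=t'=1$ and the triangle inequality (the paper phrases this through the limit quantity $\uB(\usigma,\usigma')=1-\tfrac12\udist^2(\usigma,\usigma')$, which is exactly your $c$). Your observation that the bound $|c|\le 1$ must come from the triangle inequality rather than Cauchy--Schwarz is precisely the point the paper is also careful about.
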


\begin{proof}
Choose $\{x_i\}_i, \{x_i'\}_i \subset X$ such that $x_i \stackrel{GH}{\to} \ux$ and $x_i' \stackrel{GH}{\to} \ux'$. For any $i$, divide \eqref{eq:D6.4} by $r_i^2$ to get $
\udist_i^2(x_i,x_i') = Q(H_i(x_i)-H_i(x_i')).$
Letting $i$ tend to $+\infty$ implies $
\udist^2(\ux,\ux') = Q(\uH(\ux)-\uH(\ux'))$, hence
$$
\udist^2(\ux,\ux')
= Q(t \uH(\Phi(1,\usigma)) - t' \uH(\Phi(1,\usigma')))
$$
thanks to \eqref{eq:conicalstructure2}. To compute $Q(t \uH(\Phi(1,\usigma)) - t' \uH(\Phi(1,\usigma')))$, let us use $\underline{h}_i(\usigma)$ as a shorthand for $\underline{h}_i(\Phi(1,\usigma))$. Then:
\begin{align}
\udist^2(\ux,\ux') & = Q(t\uh_1(\usigma) - t' \uh_1(\usigma'), \ldots, t\uh_l(\usigma) - t' \uh_l(\usigma')) \nonumber \\
& = \sum_{i,j=1}^l B(x_i,x_j) (t\underline{h}_i(\usigma)-t'\underline{h}_j(\usigma'))(t \underline{h}_j(\usigma)-t'\underline{h}_j(\usigma'))\nonumber \\
& = \left(\sum_{i,j=1}^l B(x_i,x_j) \uh_i(\usigma)\uh_j(\usigma)\right) t^2 +  \left(\sum_{i,j=1}^l B(x_i,x_j) \uh_i(\usigma')\uh_j(\usigma')\right) (t')^2\nonumber\\
&  - 2 t t' \left(\sum_{i,j=1}^l B(x_i,x_j) \uh_i(\usigma')\uh_j(\usigma)\right)\nonumber\\
& = Q(\uH(\usigma)) t^2 + Q(\uH(\usigma')) (t')^2 - 2 t t' \beta(\uH(\usigma),\uH(\usigma'))\nonumber\\
& = \udist^2(\usigma,\uo)t^2 + \udist^2(\usigma',\uo) (t')^2 - 2 t t' \beta(\uH(\usigma),\uH(\usigma'))\nonumber\\
& = t^2 + (t')^2 - 2 t t' \beta(\uH(\usigma),\uH(\usigma').
\end{align}
Set $\uB(\ux,\ux'):=\frac{1}{2}(\udist^2(\uo,\ux)+\udist^2(\uo,\ux')-\udist^2(\ux,\ux'))$. Write \eqref{eq:D6.3} with $x=x_i$, $x'=x_i'$, divide by $r_i^2$ and let $r_i$ tend to $+\infty$ to get $\ubeta(\uH(\ux),\uH(\ux'))=\uB(\ux,\ux')$ and then
\begin{equation}\label{eq:coeur}
\udist^2(\ux,\ux') = t^2 + (t')^2 - 2 t t' \uB(\usigma,\usigma').
\end{equation}
Assuming $t=t'=1$ provides $\uB(\usigma,\usigma')=1-\frac{1}{2}\udist^2(\usigma,\usigma')$. The triangle inequality implies $\udist^2(\usigma,\usigma')\le 4$, thus $\uB(\usigma,\usigma') \in [-1,1]$, so we can set
$$
 \cos(\delta(\usigma,\usigma')):=\uB(\usigma,\usigma')
$$
for some $\delta(\usigma,\usigma') \in [0,\pi]$.
\end{proof}

In particular, \eqref{eq:coeur2} implies
$\udist(\usigma_0,\usigma_1)=2\sin(\delta(\usigma_0,\usigma_1)/2)$
for any $\usigma_0,\usigma_1 \in \uS$.

\begin{claim}\label{claim:geodesicdelta}
The function $\delta$ defines a geodesic distance on $\uS$.
\end{claim}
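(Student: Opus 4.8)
The plan is: (1) verify that $\delta$ is a distance on $\uS$ with $\diam(\uS,\delta)\le\pi$ and that $(\uS,\delta)$ is proper; (2) show that any two points of $\uS$ at $\delta$-distance $<\pi$ admit a $\delta$-midpoint; (3) deduce that $(\uS,\delta)$ is geodesic. For Step~1, symmetry is obvious and $\delta(\usigma,\usigma')=0\iff\uB(\usigma,\usigma')=1\iff\udist(\usigma,\usigma')=0\iff\usigma=\usigma'$. For the triangle inequality, fix $\usigma_0,\usigma_1,\usigma_2\in\uS$, set $\delta_{ij}:=\delta(\usigma_i,\usigma_j)$, and assume $\delta_{01},\delta_{12}>0$ and $\delta_{01}+\delta_{12}<\pi$ (otherwise $\delta_{02}\le\pi\le\delta_{01}+\delta_{12}$, or two of the $\usigma_i$ coincide, and there is nothing to prove). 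In the Euclidean plane take three rays from the origin $O$ making angles $0,\delta_{01},\delta_{01}+\delta_{12}$, pick $A$ on the first and $C$ on the third with $|OA|=|OC|=1$, and let $B$ be the intersection of the segment $[A,C]$ with the middle ray; since $\angle AOC=\delta_{01}+\delta_{12}<\pi$, this $B$ exists, lies strictly between $A$ and $C$, and $t_1:=|OB|>0$. By the planar law of cosines $|AB|^2=1+t_1^2-2t_1\cos\delta_{01}$, $|BC|^2=1+t_1^2-2t_1\cos\delta_{12}$ and $|AC|^2=2-2\cos(\delta_{01}+\delta_{12})$; on the other hand the cone identity of Claim~\ref{claim:isometrywithdelta} gives $\udist(\usigma_0,\Phi(t_1,\usigma_1))=|AB|$, $\udist(\Phi(t_1,\usigma_1),\usigma_2)=|BC|$ and $\udist(\usigma_0,\usigma_2)^2=2-2\cos\delta_{02}$. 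Hence the triangle inequality in $\uX$ yields $2-2\cos\delta_{02}\le(|AB|+|BC|)^2=|AC|^2=2-2\cos(\delta_{01}+\delta_{12})$, so $\cos\delta_{02}\ge\cos(\delta_{01}+\delta_{12})$ and $\delta_{02}\le\delta_{01}+\delta_{12}$. Finally, since $\tfrac{2}{\pi}\delta\le\udist|_{\uS}=2\sin(\delta/2)\le\delta$, the distances $\delta$ and $\udist|_{\uS}$ are bi-Lipschitz equivalent, and as $\uS$ is closed in the proper space $(\uX,\udist)$, the space $(\uS,\delta)$ is proper, in particular complete.

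For Step~2, fix $\usigma,\usigma'\in\uS$ with $d:=\delta(\usigma,\usigma')<\pi$, so $\ell:=\udist(\usigma,\usigma')=2\sin(d/2)<2$. Recall that $(\uX,\udist)$ is a geodesic space, since it carries an $\alpha$-dimensional Euclidean heat kernel; let $\uc:[0,\ell]\to\uX$ be a minimizing geodesic from $\usigma$ to $\usigma'$. For each $s$ one has $\udist(\uo,\uc(s))\ge\max(1-s,\,1-(\ell-s))\ge1-\ell/2>0$, so $\uc$ avoids $\uo$, and we may write $\uc(\ell/2)=\Phi(t,\tau)$ with $t=\udist(\uo,\uc(\ell/2))>0$ and $\tau\in\uS$. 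Applying Claim~\ref{claim:isometrywithdelta} to the pairs $(\usigma,\uc(\ell/2))$ and $(\usigma',\uc(\ell/2))$, and using $\udist(\usigma,\uc(\ell/2))=\udist(\usigma',\uc(\ell/2))=\ell/2=\sin(d/2)$, we obtain $1+t^2-2t\cos\delta(\usigma,\tau)=\sin^2(d/2)=1+t^2-2t\cos\delta(\tau,\usigma')$, whence $\delta(\usigma,\tau)=\delta(\tau,\usigma')=:d'\in[0,\pi]$ and $2t\cos d'=t^2+1-\sin^2(d/2)=t^2+\cos^2(d/2)$. Therefore $\cos d'-\cos(d/2)=\dfrac{(t-\cos(d/2))^2}{2t}\ge0$, so $d'\le d/2$; combined with $d\le\delta(\usigma,\tau)+\delta(\tau,\usigma')=2d'$ from Step~1, this gives $d'=d/2$, i.e.\ $\tau$ is a $\delta$-midpoint of $\usigma$ and $\usigma'$.

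For Step~3, since $(\uS,\delta)$ is complete and $\delta$-midpoints exist for all pairs at distance $<\pi$, the standard midpoint criterion for geodesic spaces (see \cite{BuragoBuragoIvanov}), localized to distances $<\pi$, shows that any two points of $\uS$ at distance $<\pi$ are joined by a $\delta$-geodesic; pairs at distance exactly $\pi$ (which occur only if $\diam(\uS,\delta)=\pi$) are handled by approximating one endpoint by points at distance $<\pi$ and using compactness of $(\uS,\delta)$. Thus $(\uS,\delta)$ is geodesic. Moreover, since $|2\sin(x/2)-x|\le x^3/24$, the $\udist|_{\uS}$-length and the $\delta$-length of any rectifiable path coincide, so $\udist_{\uS}=\overline{\udist|_{\uS}}=\overline{\delta}=\delta$ (equivalently, one may invoke Lemma~\ref{lem:length} with $\dist=\udist|_{\uS}$), which is exactly what is needed to match the two cone formulas in the sequel. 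The crux is Step~2 — recognizing the radial projection of the $\uX$-midpoint as a $\delta$-midpoint through the completion-of-squares identity $\cos d'-\cos(d/2)=(t-\cos(d/2))^2/(2t)$ — while the only genuinely delicate point in Step~3 is the treatment of antipodal ($\delta=\pi$) pairs.
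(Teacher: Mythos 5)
Your proof is correct, and its two pillars coincide with the paper's: the triangle inequality is obtained by realizing the three points in a Euclidean plane through the cone formula of Claim \ref{claim:isometrywithdelta} (your point $B$ on the middle ray is exactly the paper's choice of $s$ making $z_0=t$, $z_1=se^{i\alpha}$, $z_2=re^{i(\alpha+\beta)}$ collinear, specialized to $t=r=1$), and the midpoint is produced by radially projecting the $\udist$-midpoint of the minimizing geodesic in $\uX$ onto $\uS$. Where you genuinely diverge is in proving that this projection $\tau=\Phi(t,\uc(\ell/2))$ is a $\delta$-midpoint: the paper argues variationally, noting that $F(t)=(1-t)^2+4t\sin^2(\alpha_0/2)-\sin^2(\beta/2)$ is non-negative with a zero at $t=s$, hence $F'(s)=0$, and then eliminates $s$; you instead complete the square to get $\cos d'-\cos(d/2)=(t-\cos(d/2))^2/(2t)\ge0$, i.e.\ $d'\le d/2$, and close with the already-proved triangle inequality $d\le 2d'$. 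Your route is shorter, avoids the (unstated in the paper) justification that $s$ is an interior minimum, and even recovers $t=\cos(d/2)$ as a byproduct. You also supply two points the paper leaves implicit: the completeness/properness of $(\uS,\delta)$ needed to pass from midpoints to geodesics, and an explicit acknowledgment of the antipodal case $\delta(\usigma_0,\usigma_1)=\pi$. On that last point you are no worse off than the paper, whose proof opens with ``Assume first $\delta(\usigma_0,\usigma_1)<\pi$'' and never returns to the complementary case; your approximation-plus-compactness sketch is not fully watertight either (one must still exhibit approximating points at distance $<\pi$ whose geodesics have lengths tending to $\pi$), but since what is actually used downstream is only that $\delta$ is a length distance so that Lemma \ref{lem:length} applies, this shared loose end is harmless for pairs at distance $<\pi$ and is the same gap as in the original.
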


\begin{proof}
Let us first show that $\delta$ defines a distance on $\uS$. We only prove the triangle inequality since the two other properties are immediate. For given $\usigma_0, \usigma_1,\usigma_2 \in \uS$, let us set $\alpha := \delta(\usigma_0,\usigma_1)$, $\beta:=\delta(\usigma_1,\usigma_2)$ and $\gamma:=\delta(\usigma_0,\usigma_2)$. We can assume $\alpha + \beta \le \pi$ because otherwise we would have $\alpha + \beta > \pi \ge \gamma$, thus nothing to prove. For any $t,s,r>0$, the triangle inequality for $\udist$ written with \eqref{eq:coeur2} gives
$$
\sqrt{(t-s)^2 + 4 ts\sin^2(\alpha/2)} + \sqrt{(s-r)^2 + 4 sr\sin^2(\beta/2)} \ge \sqrt{(t-r)^2 + 4tr\sin^2(\gamma/2)}.
$$
Considering the three complex numbers $z_0=t$, $z_1 = se^{i\alpha}$ and $z_2 = re^{i(\alpha + \beta)}$, this can be rewritten as
$$
|z_0-z_1| + |z_1-z_2| \ge \sqrt{(t-r)^2 + 4tr\sin^2(\gamma/2)}.
$$
Choosing $s$ so that $z_0, z_1, z_2$ are aligned implies $|z_0-z_2| = |z_0-z_1| + |z_1 - z_2|$ thus
$$
\sqrt{(t-r)^2 + 4tr\sin^2((\alpha+\beta)/2)} = |z_0-z_2| \ge \sqrt{(t-r)^2+ 4tr\sin^2(\gamma/2)}
$$
which yields to $\alpha + \beta \ge \gamma$.

Let us show now that $\delta$ is geodesic. For given $\usigma_0, \usigma_1 \in \uS$ with $\usigma_0 \neq \usigma_1$, we aim at finding $\usigma_m \in \uS$ such that
$$\delta(\usigma_0,\usigma_m)=\delta(\usigma_m,\usigma_1)=\frac12\delta(\usigma_0,\usigma_1).$$
Let $c:[0,\udist(\usigma_0,\usigma_1)]\to X$ be the minimizing $\udist$-geodesic between $\usigma_0$ and $\usigma_1$. Assume first $\delta(\usigma_0,\usigma_1)<\pi$ so that $c(\udist(\usigma_0,\usigma_1)/2)\neq 0$. Then $c(\udist(\usigma_0,\usigma_1)/2)$ writes as $\Phi(s,\usigma_m)$ for some $(s,\sigma_m) \in (0,1) \times \uS$. We have
$$
\udist(\usigma_0,\usigma_m) = \udist(\usigma_m,\usigma_1) = \frac{1}{2}\udist(\usigma_0,\usigma_1) 
$$
from which follows
\begin{equation}\label{eq:star}
(1-s)^2 + 4 s \sin^2\left(\frac{\alpha_0}{2}\right)=(1-s)^2 + 4 s \sin^2\left(\frac{\alpha_1}{2}\right)= \sin^2\left(\frac{\beta}{2}\right)
\end{equation}
thanks to \eqref{eq:coeur2}, where we have set $\alpha_0:=\delta(\usigma_0,\usigma_m)$, $\alpha_1:=\delta(\usigma_m,\usigma_1)$ and $\beta:=\delta(\usigma_0,\usigma_1).$
Note first that \eqref{eq:star} immediately implies $\alpha_0=\alpha_1$. Moreover, for any $t>0$,
$$
\udist\left(\usigma_0,\Phi(t,\sigma_m)\right)+\udist\left(\usigma_1,\Phi(t,\sigma_m)\right)\ge \udist\left(\usigma_0,\usigma_1\right)
$$
implies
$$
\sqrt{(1-t)^2 + 4t \sin^2(\alpha_0/2)} + \sqrt{(1-t)^2 + 4t \sin^2(\alpha_1/2)} \ge 2 \sin\left(\frac{\beta}{2}\right)
$$
thus
$$
(1-t)^2 + 4t \sin^2(\alpha_0/2) \ge \sin^2\left(\frac{\beta}{2}\right).
$$
Therefore, the polynomial function $F : t \mapsto (1-t)^2 + 4t\sin^2(\alpha_o/2) - \sin^2(\beta/2)$
is non-negative and vanishes only at $t=s$, so $F'(s)=0$ hence
$$
2(1-s) = 4 \sin^2\left( \frac{\alpha_0}{2}\right).
$$
Plugging this in \eqref{eq:star} leads to $\sin^2(\beta/2) = \sin^2(\alpha_0/2)$
hence $\alpha_0=\beta_2$.
\end{proof}


Claim \ref{claim:geodesicdelta} and Lemma \ref{lem:length} implies $\delta=\udist_{\uS}$ which yields \eqref{eq:isometry} by Claim \ref{claim:isometrywithdelta}.

\subsection{Equality $l=\alpha$ and positive definiteness of $Q$}

Since $\beta$ is non-degenerate, we can write
\begin{equation}\label{eq:decomposition}
\setR^l = E_+\oplus E_-
\end{equation}
where $E_+$ is a subspace of $\setR^l$ with maximal dimension where $\beta$ is positive definite and $E_-$ is its $\beta$-orthogonal complement; $\beta$ is negative definite on  $E_-$. We call $p_+$ the dimension of $E_+$ and $p_-$ the dimension of $E_-$. Note that $l = p_+ + p_-$ so in particular, $l \ge p_+$. Let us prove $p_+ \ge \alpha$, then $l = \alpha$, in order to reach our conclusion that is $l=p_+=\alpha$. \\

\textbf{\underline{Step 1.}} [$p_+ \ge \alpha$]

Let us write $\uH=(\uH_+,\uH_-)$ where $\uH_+:=\mathrm{proj}_{E_+} \circ \uH$ and $\uH_-:=\mathrm{proj}_{E_-} \circ \uH$, and $\mathrm{proj}_{E_+}, \mathrm{proj}_{E_-}$ are the projections associated to the decomposition \eqref{eq:decomposition}. Moreover, we set $q_+(v_+):=\beta(v_+,v_+)$ for any $v_+ \in E_+$ and $q_-(v_-):=\beta(v_-,v_-)$ for any $v_- \in E_-$. Then for any $\ux, \uy \in \uX$,
$$
Q(\uH(\ux)-\uH(\uy)) = q_+(\uH_+(\ux)-\uH_+(\uy)) + q_-(\uH_-(\ux)-\uH_-(\uy)),
$$
thus
\begin{equation}\label{eq:D9.1}
\udist^2(\ux,\uy) - q_-(\uH_-(\ux)-\uH_-(\uy)) = q_+(\uH_+(\ux)-\uH_+(\uy)).
\end{equation}
Since $q_- \le 0$, it follows from \eqref{eq:D9.1} that $q_+(\uH_+(\ux)-\uH_+(\uy)) \ge \udist^2(\ux,\uy)$. Moreover, $- q_-(\uH_-(\ux)-\uH_-(\uy))$ is bounded from above by $\lambda \udist^2(\ux,\uy)$ where $\lambda$ is the largest modulus an eigenvalue of $Q$ can have, so $q_+(\uH_+(\ux)-\uH_+(\uy)) \le (1+\lambda)\udist^2(\ux,\uy)$. Finally, since $\uH$ is injective, then $\uH_+$ is injective too. Therefore, the map $\uH_+$ is a bi-Lipschitz embedding of $(\uX,\udist)$ into $(E_+,\dist_{q_+})$ where $\dist_{q_+}(v_+,v_+'):=\sqrt{q_+(v_+-v_+')}$ for any $v_+, v_+' \in E_+$. This implies that $p_+$ is greater than or equal to the local Hausdorff dimension of $\uX$ which is equal to $\alpha$.\\

\textbf{\underline{Step 2}.} [$l=\alpha$]

Set $\tilde{\mu} := (\Phi^{-1})_{\#}(\umu \measrestr \uX \backslash \{\uo\})$. Then $\tilde{\mu}$ is a Borel measure on $(0,+\infty)\times \uS$ equipped with $\dist_C$. We complete $(0,+\infty)\times \uS$ by adding the point $\uo$ corresponding to the tip of this metric cone. Let $\di t \otimes \nu_t$ be the disintegration of $\tilde{\mu}$ with respect to the first variable $t$ (we refer to \cite[2.5]{AmbrosioFuscoPallara} for the definition of disintegration of a measure). Since for any $\lambda,r>0$, we have $\umu(h_{\lambda}(B_r(\uo)))=\lambda^{\alpha} \umu(B_r(\uo))$ where $h_\lambda : (t,\usigma)\mapsto (\lambda t, \usigma)$, then $\di \nu_t=t^{\alpha-1}\nu_1$ for any $t>0$ and $\unu_1(\uS)=\alpha \omega_\alpha$. Let us write $\unu$ instead of $\unu_1$.

\begin{claim}
For any $\usigma \in \uS$ and $\uh \in  \ucV:=\Span(\uh_1,\cdots,\uh_l)$,
\begin{equation}\label{proK}
\uh(\usigma) = \frac{\alpha}{\unu(\uS)} \int_{\uS} \cos(\udist_{\uS}(\usigma,\uphi)) \uh(\uphi) \di \unu(\uphi).
\end{equation}
\end{claim}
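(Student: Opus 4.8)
The plan is to use that every $\uh\in\ucV$ is strongly harmonic on $\uX$ (a linear combination of the strongly harmonic functions $\uh_1,\dots,\uh_l$, so that the mean value property holds at every point for every radius $r>0$), and to exploit the conical structure by applying it at the point $\usigma\in\uS$, i.e.\ at $\Phi(1,\usigma)$, over balls $B_r(\usigma)$ with $r\to+\infty$. Two structural facts make the computation explicit. First, conical homogeneity: \eqref{eq:conicalstructure} gives $\uH(\Phi(t,\uphi))=t\,\uH(\Phi(1,\uphi))$, hence $\uh(\Phi(t,\uphi))=t\,\uh(\uphi)$ for every $\uh\in\ucV$. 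Second, the cone measure: the disintegration $\tilde{\mu}=t^{\alpha-1}\di t\otimes\unu$ recorded above, together with $\umu(\{\uo\})=0$, yields $\int_{\uX}g\di\umu=\int_0^{+\infty}\!\int_{\uS}g(\Phi(t,\uphi))\,t^{\alpha-1}\di\unu(\uphi)\,\di t$; moreover $\umu(B_r(\usigma))=\omega_\alpha r^\alpha$ because $\uX$ has an $\alpha$-dimensional volume, and $\omega_\alpha=\unu(\uS)/\alpha$. Finally I would use \eqref{eq:coeur} together with the identity $\uB=\cos(\udist_{\uS})$ on $\uS\times\uS$, in the form $\udist^2(\Phi(1,\usigma),\Phi(t,\uphi))=1+t^2-2t\cos(\udist_{\uS}(\usigma,\uphi))$.

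Fix $r>1$. Solving $t^2-2t\cos(\udist_{\uS}(\usigma,\uphi))+1-r^2<0$ in $t>0$: since $r>1$ the discriminant $r^2-\sin^2(\udist_{\uS}(\usigma,\uphi))$ is positive and the smaller root is negative for every $\uphi$, so, up to the $\umu$-null set $\{\uo\}$, $B_r(\usigma)=\{\Phi(t,\uphi):0<t<\tau_r(\uphi)\}$ with $\tau_r(\uphi)=\cos(\udist_{\uS}(\usigma,\uphi))+\sqrt{r^2-\sin^2(\udist_{\uS}(\usigma,\uphi))}$. Inserting this description and the two structural facts into $\uh(\usigma)=\fint_{B_r(\usigma)}\uh\di\umu$ turns the identity into
$$\uh(\usigma)\,\omega_\alpha\,r^\alpha=\frac{1}{\alpha+1}\int_{\uS}\uh(\uphi)\,\tau_r(\uphi)^{\alpha+1}\di\unu(\uphi).$$
Then I would expand, uniformly in $\uphi\in\uS$, $\tau_r(\uphi)^{\alpha+1}=r^{\alpha+1}+(\alpha+1)\cos(\udist_{\uS}(\usigma,\uphi))\,r^{\alpha}+O(r^{\alpha-1})$ (the generalized binomial expansion applies since $\cos(\udist_{\uS}(\usigma,\cdot))$ is bounded, even though $\alpha$ need not yet be an integer), divide by $\omega_\alpha r^\alpha$ and let $r\to+\infty$. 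The term proportional to $r$ must vanish because the left-hand side is independent of $r$; this forces $\int_{\uS}\uh\di\unu=0$, and the remaining limit is exactly $\uh(\usigma)=\omega_\alpha^{-1}\int_{\uS}\uh(\uphi)\cos(\udist_{\uS}(\usigma,\uphi))\di\unu(\uphi)$, which is \eqref{proK} after rewriting $\omega_\alpha^{-1}=\alpha/\unu(\uS)$.

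The only delicate point is the asymptotic bookkeeping: one must check that the $O(r^{\alpha-1})$ remainder in the expansion of $\tau_r(\uphi)^{\alpha+1}$ is uniform over the compact set $\uS$, so that it contributes $o(r^\alpha)$ once integrated against the bounded measure $\uh\,\di\unu$; and that $\uh\in L^1(B_r(\usigma),\umu)$, which holds since $\uh$ has linear growth and $\umu$ is finite on balls. Everything else is substitution; in particular $\int_{\uS}\uh\di\unu=0$ comes for free from the same mean value identity (alternatively it follows from applying the mean value property at $\uo$, using $\uh(\uo)=0$ since $\uH(\uo)=0_l$).
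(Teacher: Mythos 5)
Your proof is correct, and it takes a genuinely different route from the paper's. The paper proves \eqref{proK} by exploiting the heat semigroup on the cone: it uses the representation $\uh(\ux)=\int_{\uX}\up(\ux,\uy,t)\uh(\uy)\di\umu(\uy)$ (valid because $\uh$ is locally $\uL$-harmonic for the Dirichlet form $\underline{\cE}$ constructed on $(\uX,\udist,\umu)$), rewrites it in cone coordinates via the disintegration $t^{\alpha-1}\di t\otimes\unu$, differentiates in the radial variable at $r=0$ to isolate the cosine term, and evaluates the resulting Gaussian integral. You instead use only the mean value property of $\uh$ (strong harmonicity of the $\uh_j$, already established in Step 2 of Section 4.2, passes to linear combinations), the explicit cone distance $\udist^2(\Phi(1,\usigma),\Phi(t,\uphi))=1+t^2-2t\cos(\udist_{\uS}(\usigma,\uphi))$, and the same disintegration, extracting the cosine term from the asymptotics of the radial profile $\tau_r(\uphi)$ of the ball $B_r(\usigma)$ as $r\to+\infty$. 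What your approach buys is elementarity: it bypasses the construction of the heat kernel on the cone for this step and the (not entirely trivial) justification that an unbounded harmonic function of linear growth is invariant under the semigroup, as well as the differentiation under the integral sign at $r=0$; the price is the asymptotic bookkeeping, which you correctly identify and which is indeed uniform over the compact set $\uS$ since $\cos(\udist_{\uS}(\usigma,\cdot))$ and $\sin^2(\udist_{\uS}(\usigma,\cdot))$ are bounded. The identity $\int_{\uS}\uh\di\unu=0$, which you obtain by matching the coefficient of $r$ (or from the mean value property at $\uo$ together with $\uh(\uo)=0$), is a correct and necessary byproduct in both treatments. All the structural inputs you invoke (the bijection $\Phi$, formula \eqref{eq:coeur}, the identification $\delta=\udist_{\uS}$, the disintegration, and $\umu(B_r(\ux))=\omega_\alpha r^\alpha$) are available at the point in the paper where this claim appears.
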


\begin{proof}
Take $\uh \in \ucV$ and $t>0$. Since $\uh_1,\cdots,\uh_l$ are locally $\uL$-harmonic, then for any $\ux \in \uX$,
\begin{equation}\label{eq:13.56}
\uh(\ux) = \int_{\uX} \up(\ux,\uy,t)\uh(\uy)\di\umu(\uy).
\end{equation}
Use the notation $\ux = \Phi(r,\usigma)$ and $\uy=\Phi(s,\uphi)$ and note that $\uh(\Phi(t,\usigma)) = r \uh(\usigma)$ and $\uh(\Phi(s,\uphi)) = s \uh(\uphi)$ thanks to \eqref{eq:conicalstructure}. Then \eqref{eq:13.56} writes
\begin{equation}\label{eq:draft21}
r \uh(\usigma)=\frac{1}{(4\pi t)^{\alpha/2}}\int_0^{+\infty} \int_{\uS} e^{\frac{-r^2-s^2+2rs\cos(\udist_{\uS}(\usigma,\phi))}{4t}}\uh(\uphi)s^\alpha\di s \di \unu(\uphi).
\end{equation}
Since
$$
\frac{\di}{\di r} \left( e^{\frac{-r^2+2rs\cos(\udist_{\uS}(\usigma,\uphi))}{4t}}\right) = \left( - \frac{r}{2t} + \frac{s\cos(\udist_{\uS}(\usigma,\uphi)}{2t}\right)e^{\frac{-r^2+2rs\cos(\udist_{\uS}(\usigma,\uphi))}{4t}},
$$
differentiating \eqref{eq:draft21} with respect to $r$ and evaluating at $r=0$ gives
$$
\uh(\usigma) = \frac{1}{(4\pi t)^{\alpha/2}} \int_0^{+\infty} e^{-\frac{s^2}{4t}}\frac{s^{\alpha+1}}{2t} \di s \int_{\uS} \cos(\udist_{\uS}(\usigma,\uphi)) \uh(\uphi) \di \unu(\uphi).
$$
A direct computation using the change of variable $\xi=\frac{s^2}{4t}$ shows that
$$
\frac{1}{(4\pi t)^{\alpha/2}} \int_0^{+\infty} e^{-\frac{s^2}{4t}}\frac{s^{\alpha+1}}{2t} \di s = \frac{1}{\omega_\alpha} = \frac{\alpha}{\unu(\uS)}\, .
$$
\end{proof}
Set $\cW:=\{\uh_{|S} \, : \, \uh \in \ucV\}$. Note that \eqref{eq:conicalstructure} implies that the restriction map $\ucV\rightarrow \cW$ is a bijection, hence $\dim \cW=l$.
We introduce the operator 
$\mathcal{K}\colon L^2(\uS,\di\unu)\rightarrow L^2(\uS,\di\unu)$ defined by
$$\mathcal{K}(f)(\usigma):= \frac{\alpha}{\unu(\uS)} \int_{\uS} \cos(\udist_{\uS}(\usigma,\uphi)) f(\uphi) \di \unu(\uphi)
$$
for any $f \in L^2(\uS,\di\unu)$ and $\umu$-a.e.~$\usigma \in \uS$. Since for any $\usigma,\usigma' \in \uS$, \begin{equation}\label{expcos}\cos(\dist_S(\usigma,\usigma')=\uB(\usigma,\usigma')=\beta\left(\uH(\Phi(1,\usigma)),\uH(\Phi(1,\usigma'))\right)=\sum_{i,j} B(x_i,x_j) \uh_i(\usigma)\uh_j(\usigma'),\end{equation}
then the image of $\mathcal{K}$ is contained in $\cW$ and according to \eqref{proK}, we have $$\mathcal{K}f=f \qquad \text{for every $f \in \cW$.}$$
Hence $\mathcal{K}$ is the orthogonal projection onto $\cW$ and 
if $\underline{k}_1,\ldots, \underline{k}_l$ form an orthonormal basis of $\cW$ for the $L^2(\uS,\unu)$ scalar product, then for any $f\in L^2(\uS,\di\nu):$
$$\mathcal{K}(f)(\usigma)= \sum_{i=1}^l \underline{k}_i(\usigma)  \int_{\uS} \underline{k}_i(\uphi) f(\uphi) \di \unu(\uphi)
$$ This implies
\begin{equation}\label{eq:0409}
\frac{\alpha}{\unu(\uS)} \cos(\udist_{\uS}(\usigma,\uphi)) = \sum_{i=1}^l \underline{k}_i(\usigma) \underline{k}_i(\uphi)
\end{equation}
for $\unu\otimes \unu$-a.e.~$(\usigma,\uphi) \in \uS \times \uS$. Since for any $i$, the function $\underline{k}_i$ admits a continuous representative -- still denoted by $\underline{k}_i$ -- defined by
$$
\underline{k}_i (\usigma) = \int_{\uS} \cos(\udist_{\uS}(\usigma,\uphi)) \uh(\uphi) \di \unu(\uphi)
$$
for any $\usigma \in \uS$, then \eqref{eq:0409} holds for all $(\usigma,\uphi) \in \uS \times \uS$. In particular, we can take $\usigma = \uphi$ in \eqref{eq:0409} to get
$$
\frac{\alpha}{\unu(\uS)} = \sum_{i=1}^l \underline{k}_i(\usigma)^2.
$$
Integrating over $\uS$ with respect to $\unu$ gives $\alpha = l.$\\

\subsection{Conclusion}

From the previous subsections, we get that $H$ is an isometric embedding of $(X,\dist)$ into $(\setR^l,\dist_Q)$ or, as explained at the beginning of this section, into $(\setR^l,\dist_e)$. Therefore, $H(X)$ equipped with the restriction of $\dist_e$ is geodesic. Minimizing geodesics in $(\setR^l,\dist_e)$ being straight lines, this implies that $H(X)$ is convex. Being also closed, $H(X)$ is equal to its closed convex hull that is equal to $\setR^l$ by the proof of Claim \ref{claim}, hence Theorem \ref{th:main} is proved.

\section{Almost rigidity result for the heat kernel}

In this section, we show how our rigidity result (Theorem \ref{th:main}) provides an almost rigidity result (Theorem \ref{th:almostrigidity}). We fix a positive constant $T>0$, a positive integer $n$, and we recall that $\setB^n_r$ stands for an Euclidean ball in $\setR^n$ with radius $r>0$ (where this ball is centered as no importance), and $\dist_{GH}$ for the Gromov-Hausdorff distance.

We begin with the following lemma:
\begin{lemma}\label{lem:voleu}  If $(X,\dist,\mu)$ is a complete metric measure space endowed with a symmetric Dirichlet form $\cE$ 
 admitting a heat kernel $p$ such that for some $\gamma >1$, 
\begin{equation}\label{eq:estheatkernel}
 \frac{\upgamma ^{-1}}{(4 \pi t)^{n/2}} e^{-\upgamma\frac{\dist^2(x,y)}{4t}}\le p(x,y,t) \le \frac{\upgamma }{(4 \pi t)^{n/2}} e^{-\frac{\dist^2(x,y)}{4\upgamma t}}
\end{equation}
for all $x, y \in X$ and $t \in (0,T]$, then there exists positive constants $c(n,\upgamma), C(n,\upgamma)$ such that for any $x\in X$ and $r\le \sqrt{T}$,
$$c(n,\upgamma)\, r^n\le \mu(B_r(x)))\le C(n,\upgamma)\, r^n.$$
\end{lemma}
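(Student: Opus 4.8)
The plan is to derive the upper and lower volume bounds separately, in both cases exploiting the submarkovian property of the semigroup $(P_t)_{t>0}$ associated to $\cE$: approximating $\mathbf 1$ from below by characteristic functions of finite-measure sets (which exist since $\mu$ is $\sigma$-finite) and passing to the limit by monotone convergence gives $\int_X p(x,y,t)\di\mu(y)\le 1$ for $\mu$-a.e.\ $x$ and every $t>0$. Since the two-sided estimate \eqref{eq:estheatkernel} forces, through Proposition \ref{prop:important}, a parabolic Harnack inequality and hence local Hölder continuity of $p$, one may work with a continuous representative, so the pointwise statements below can be made for an arbitrary $x\in X$.

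For the upper bound, fix $x\in X$, $r\le\sqrt T$ and set $t:=r^2\le T$. Every $y\in B_r(x)$ satisfies $\dist^2(x,y)<t$, so the lower estimate in \eqref{eq:estheatkernel} gives $p(x,y,t)\ge\upgamma^{-1}e^{-\upgamma/4}(4\pi t)^{-n/2}$; integrating over $B_r(x)$ and using $\int_X p(x,y,t)\di\mu(y)\le 1$ yields $\mu(B_r(x))\le\upgamma\,e^{\upgamma/4}(4\pi)^{n/2}r^n$, which is the desired inequality with $C(n,\upgamma):=\upgamma e^{\upgamma/4}(4\pi)^{n/2}$.

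For the lower bound, the idea is to combine the on-diagonal lower bound $p(x,x,t)\ge\upgamma^{-1}(4\pi t)^{-n/2}$ (take $y=x$ in \eqref{eq:estheatkernel}) with the identity $p(x,x,t)=\int_X p(x,y,t/2)^2\di\mu(y)$ coming from Chapman--Kolmogorov \eqref{eq:ChapmanKolmogorov} and symmetry of $p$. Splitting this integral over $B_r(x)$ and its complement and bounding $p(x,\cdot,t/2)$ above via \eqref{eq:estheatkernel}, one gets, for $t\le T$,
\[
\frac{\upgamma^{-1}}{(4\pi t)^{n/2}}\le\frac{\upgamma^2}{(2\pi t)^n}\,\mu(B_r(x))+\frac{\upgamma^2}{(2\pi t)^n}\int_{X\setminus B_r(x)}e^{-\frac{\dist^2(x,y)}{\upgamma t}}\di\mu(y).
\]
To handle the tail integral I would use that $e^{-\dist^2/(\upgamma t)}\le e^{-r^2/(2\upgamma t)}e^{-\dist^2/(2\upgamma t)}$ on $X\setminus B_r(x)$ and then dominate the remaining Gaussian by a heat kernel: with $s:=\upgamma^2t/2$ one has $e^{-\dist^2(x,y)/(2\upgamma t)}=e^{-\upgamma\dist^2(x,y)/(4s)}\le\upgamma(4\pi s)^{n/2}p(x,y,s)$, so submarkovianity gives $\int_{X\setminus B_r(x)}e^{-\dist^2(x,y)/(\upgamma t)}\di\mu(y)\le\upgamma(2\pi\upgamma^2t)^{n/2}e^{-r^2/(2\upgamma t)}$, provided $s=\upgamma^2t/2\le T$. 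Substituting and multiplying by $\upgamma^{-2}(2\pi t)^n$ leaves an inequality of the shape $a_1(n,\upgamma)t^{n/2}\le\mu(B_r(x))+a_2(n,\upgamma)t^{n/2}e^{-r^2/(2\upgamma t)}$ with $a_1,a_2>0$. Finally I would take $t:=\theta r^2$ with $\theta=\theta(n,\upgamma)\in(0,\min(1,2\upgamma^{-2}))$ small enough that $a_2e^{-1/(2\upgamma\theta)}\le a_1/2$ (possible since the exponential tends to $0$); this choice is legitimate because $r\le\sqrt T$ then forces $t\le T$, $t/2\le T$ and $\upgamma^2t/2\le T$, and it gives $\mu(B_r(x))\ge\tfrac12 a_1(n,\upgamma)\theta^{n/2}r^n=:c(n,\upgamma)r^n$.

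The main obstacle is the tail estimate in the lower bound: a direct annular decomposition of $\int_{X\setminus B_r(x)}e^{-\dist^2(x,y)/(\upgamma t)}\di\mu(y)$ would require volume upper bounds at scales larger than $\sqrt T$, which the hypotheses do not provide. Bypassing this by majorizing the Gaussian by a heat kernel at the slightly larger time $\upgamma^2t/2$ and invoking submarkovianity is the key trick; the price is only the exponent loss already built into \eqref{eq:estheatkernel} together with the bookkeeping needed to keep the auxiliary times $t/2$ and $\upgamma^2t/2$ inside $(0,T]$, which is precisely why one is led to take $t$ comparable to $r^2$ with a proportionality constant depending on $n$ and $\upgamma$.
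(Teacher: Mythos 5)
Your proof is correct. The upper bound is exactly the paper's argument (integrate the lower heat kernel bound over $B_r(x)$, use $\int_X p(x,y,t)\,\di\mu(y)\le 1$ from sub-Markovianity, take $t=r^2$), and the lower bound starts the same way (on-diagonal lower bound plus Chapman--Kolmogorov, then split the integral of $e^{-\dist^2(x,y)/(2\upgamma t)}$ over $B_r(x)$ and its complement). Where you genuinely diverge is the tail estimate. The paper handles $\int_{X\setminus B_r(x)}e^{-\dist^2(x,y)/(2\upgamma t)}\di\mu(y)$ by Cavalieri's principle, which forces it to control $\mu(B_\rho(x))$ for \emph{all} radii $\rho$, including $\rho\ge\sqrt T$; it gets the needed large-scale bound $\mu(B_\rho(x))\le\upgamma(4\pi T)^{n/2}e^{\upgamma\rho^2/(4T)}$ by taking $t=T$ in the upper heat kernel estimate, and then has to do some bookkeeping with the two resulting integrals. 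You instead peel off the factor $e^{-r^2/(2\upgamma t)}$ and absorb the remaining Gaussian into a heat kernel at the later time $s=\upgamma^2t/2$ via the lower bound in \eqref{eq:estheatkernel}, so that sub-Markovianity kills the tail integral in one line. This avoids Cavalieri and the large-radius volume estimate entirely and is shorter; the only price is the extra constraint $\upgamma^2t/2\le T$, which your choice $t=\theta r^2$ with $\theta<\min(1,2\upgamma^{-2})$ handles. Two minor remarks: your claim that the hypotheses ``do not provide'' volume upper bounds above scale $\sqrt T$ is not quite accurate (they do, as the paper shows), though your workaround is valid regardless; and your appeal to Proposition \ref{prop:important} for a continuous representative is not really available here (that proposition needs strong locality, regularity and assumption \eqref{eq:A}, none of which are hypotheses of the lemma) --- but it is also unnecessary, since the a.e.\ versions of the pointwise inequalities suffice: for the claimed bounds at an arbitrary $x$ one can compare $\mu(B_r(x))$ with $\mu(B_{r\pm\epsilon}(x'))$ for $x'$ in the full-measure set where the estimates hold, using $\supp\mu=X$.
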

\begin{remark} The upper bound is quite classical, the novelty is the lower bound which was nonetheless known  for stochastically complete spaces (see \cite[Th.~2.11]{Grigor'yan}).
\end{remark}
\proof For any $x \in X$ and $r>0$, integrating the lower bound in \eqref{eq:estheatkernel} gives
$$e^{-\upgamma\frac{r^2}{4t}} \mu\left(B_r(x)) \right)\le \int_{B_r(x)} e^{-\upgamma\frac{\dist^2(x,y)}{4t}}\di\mu(y)\le  \gamma (4 \pi t)^{n/2}$$
hence $\mu\left(B_r(x)) \right)\le   \gamma (4 \pi t)^{n/2} e^{\upgamma\frac{r^2}{4t}}$ for any $t \in (0,T]$. Consequently, when $r\le \sqrt{T}$, choosing $t=r^2$ provides
\begin{equation}\label{eq:uppervolume} \mu\left(B_r(x) \right)\le e^{\frac \upgamma 4} \upgamma(4 \pi )^{n/2}\, r^n,
\end{equation}
while when $r\ge \sqrt{T}$, choosing $t=T$ gives
\begin{equation}\label{eq:uppervolumeT}
\mu\left(B_r(x) \right)\le \upgamma\,(4 \pi T)^{n/2}\,  e^{\upgamma\frac{r^2}{4T}}\,.
\end{equation}
Note that \eqref{eq:uppervolume} is the desired upper bound. Take $t \in (0,T/2]$. Combining \eqref{eq:estheatkernel} with the Chapman-Kolmogrov formula, we get
$$
\frac{\upgamma^{-1}}{(8\pi t)^{n/2}} \le p(x,x,2t) = \int_X p(x,y,t)^2 \di \mu(y) \le \frac{\upgamma^2}{(4 \pi t)^n} \int_X e^{-\frac{\dist^2(x,y)}{2 \gamma t}} \di \mu(y)
$$
hence
\begin{equation}\label{eq:est4}
\upgamma^{-3}\,(2\pi)^{n/2}\, t^{n/2} \le  \int_X e^{-\frac{\dist^2(x,y)}{2\upgamma t}}\di\mu(y) \le \mu(B_r(x))+\int_{X\setminus B_r(x)} e^{-\frac{\dist^2(x,y)}{2\upgamma t}}\di\mu(y).
\end{equation}
From now on, assume $r \le \sqrt{T}$. By Cavalieri's principle and the estimates \eqref{eq:uppervolume} and \eqref{eq:uppervolumeT}, we get
\begin{align}\label{eq:est1}
& \int_{X\setminus B_r(x)} e^{-\frac{\dist^2(x,y)}{2\upgamma t}}\di\mu(y)=\int_r^{+\infty} e^{-\frac{\rho^2}{2\upgamma t}}\frac{\rho}{\upgamma t}\mu(B_\rho(x))\di\rho \nonumber\\
\le & \, \,  (4 \pi )^{n/2}e^{\frac{\upgamma}{4}} \int_r^{\sqrt{T}} e^{-\frac{\rho^2}{2\upgamma t}}\frac{\rho}{ t} \, \rho^n \di \rho+(4 \pi T)^{n/2}\int_{\sqrt{T}}^{+\infty} e^{-\frac{\rho^2}{2\upgamma t}}\frac{\rho}{ t}\,  e^{\upgamma\frac{\rho^2}{4T}}\di\rho.\end{align}
A direct computation shows that for any $n \in \setN$, there exists $C_0>0$ depending only on $n$ such that for any $A\ge 1$,
$$
\int_A^{+\infty} e^{-\frac{\xi^2}{2}} \xi^{n+1} \di \xi \le C_0 A^n e^{-\frac{A^2}{2}}.
$$
Therefore, using the change of variable $\xi= \rho/\sqrt{\upgamma t}$ to get
$$
\int_r^{\sqrt{T}} e^{-\frac{\rho^2}{2\upgamma t}}\frac{\rho}{ t} \rho^n \di \rho\le \int_r^{+\infty} e^{-\frac{\rho^2}{2\upgamma t}}\frac{\rho}{ t}\, \rho^n \di\rho =  \upgamma^{\frac n2+1}  t ^{n/2} \int_{r/\sqrt{\upgamma t}}^{+\infty} e^{-\frac{\xi^2}{2}}\xi^{n+1} \di \xi,
$$
we obtain that $r\ge \sqrt{\upgamma t}$ implies
\begin{equation}\label{eq:est2} \int_r^{\sqrt{T}} e^{-\frac{\rho^2}{2\upgamma t}}\frac{\rho}{\ t}\, \rho^n \di \rho\le C_0 \upgamma r^n e^{-\frac{r^2}{2\upgamma t}}.
\end{equation}
To bound the second term in \eqref{eq:est1}, assume $t\le T/\upgamma^2$. Then a straightforward computation shows that $-\frac{\rho^2}{2\gamma t} + \frac{\gamma \rho^2}{4T} \le - \frac{\rho^2}{4 \gamma t}$ holds, thus
\begin{equation}\label{eq:est3}
\int_{\sqrt{T}}^{+\infty} e^{-\frac{\rho^2}{2\upgamma t}}\frac{\rho}{ t}\,  e^{\upgamma\frac{\rho^2}{4T}}\di \rho \le \int_{\sqrt{T}}^{+\infty} e^{-\frac{\rho^2}{4\upgamma t}}\frac{\rho}{ t}\, \di \rho = 2\upgamma e^{-\frac{T}{4\upgamma t}}.
\end{equation}
Combining \eqref{eq:est1}, \eqref{eq:est2} and \eqref{eq:est3}, we get existence of a constant $C>0$ depending only on $n$ such that if
$r^2 \ge \gamma^2 t$ (this implies both $r\ge \sqrt{\gamma t}$ and $t \le T/\gamma^2$), then
$$\int_{X\setminus B_r(x)} e^{-\frac{\dist^2(x,y)}{2\upgamma t}}\di\mu(y)\le C\left(\upgamma e^{\frac{\upgamma}{4}}r^n e^{-\frac{r^2}{2\upgamma t}}+\upgamma  T^{n/2} e^{-\frac{T}{4\upgamma t}}\right).$$
Then \eqref{eq:est4} implies 
\begin{equation}\label{eq:lem}
\upgamma^{-3}\,(2\pi)^{n/2}\, t^{n/2} \le \mu(B_r(x))+C\left(\upgamma e^{\frac{\upgamma}{4}}r^n e^{-\frac{r^2}{2\upgamma t}}+\upgamma  T^{n/2} e^{-\frac{T}{4\upgamma t}}\right)
\end{equation}
for any $t \in (0,r^2/\gamma^2)$, what can be rewritten as
$$
c'(n,\gamma)t^{n/2}\le \mu(B_r(x))+Ct^{n/2}\left(\upgamma e^{\frac{\upgamma}{4}} F(r^2/t)+\upgamma  G(T/t)\right)
$$
where $c'(n,\gamma):=\upgamma^{-3}\,(2\pi)^{n/2}$ and $F(s):=s^{n/2}e^{-\frac{s}{2\gamma}}$, $G(s):=s^{n/2}e^{-\frac{s}{4\gamma}}$ for any $s\ge 0$. The function $G$ is decreasing on $(2 n\gamma,+\infty)$ so if $r^2/t \ge 2 n\gamma$, since $r^2 \le T$, we get $G(T/t) \le G(r^2/t)$. As $\lim\limits_{s \to +\infty}F(s)=\lim\limits_{s \to +\infty} G(s)=0$, then there exists $s(n,\gamma)>0$ such that if $s\ge s(n,\gamma)$,
$$
C\left(\upgamma e^{\frac{\upgamma}{4}} F(s)+\upgamma  G(s)\right) \le \frac{c'(n,\gamma)}{2}\, \cdot
$$
Then for any $t >0$ such that $r^2/t \ge \max(\gamma^2,2 n\gamma,s(n,\gamma))=:\theta(n,\gamma)$, we get
$$
\frac{c'(n,\gamma)}{2}t^{n/2} \le \mu(B_r(x)).
$$
Choosing $t=t(r)$ such that $\theta(n,\gamma) t \le r^2 \le 2 \theta(n,\gamma)t$, we get
$$
\frac{c'(n,\gamma)}{2^{n/2+1}\theta(n,\gamma)^{n/2}} r^{n} \le \mu(B_r(x)).
$$

\endproof

We shall also need the next proposition.
\begin{proposition}\label{prop:5.3} Let $(X,\dist,\mu)$ be a measure metric space satisfying the local doubling condition, namely there exists $r_o>0$ and $C_D>0$ such that $\mu(B_{2r}) \le C_D \mu(B_r)$ for any $r \in (0,r_o)$, and such that for some $\alpha >0$, we have
$$\int_X \frac{1}{(4 \pi t)^{\alpha/2}} e^{-\frac{\dist^2(x,z)}{4t}}\frac{1}{(4 \pi s)^{\alpha/2}} e^{-\frac{\dist^2(z,y)}{4s}} \di \mu(z)=\frac{1}{(4 \pi (t+s))^{\alpha/2}} e^{-\frac{\dist^2(x,y)}{4(t+s)}}$$
 for all $x,y \in X$ and $t,s \in (0,T)$. Then there exists a symmetric Dirichlet form $\cE$ on $(X,\dist,\mu)$ admitting an $\alpha$-Euclidean heat kernel.
\end{proposition}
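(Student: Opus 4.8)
The plan is to first reconstruct, on the time window $(0,T)$, the semigroup attached to the candidate kernel $p(x,y,t):=(4\pi t)^{-\alpha/2}e^{-\dist^2(x,y)/4t}$, then extend it to a genuine heat semigroup, and finally promote the Gaussian form of its kernel to all times by analyticity. For $t\in(0,T)$ let $P_t$ be the integral operator with kernel $p(\cdot,\cdot,t)$; taking the assumed identity with $s=t$ and $x=y$ gives $\int_X p(x,z,t)^2\di\mu(z)=(8\pi t)^{-\alpha/2}<+\infty$, so each $P_t$ is well defined and $p(x,\cdot,t)\in L^2(X,\mu)$. First I would show $(X,\dist,\mu)$ has an $\alpha$-dimensional volume: taken with $s=t$, the hypothesis reads $\int_X e^{-\dist^2(x,y)/(4u)}\di\mu(y)=(4\pi u)^{\alpha/2}$ for every $u\in(0,T/2)$, and then, exactly as in the proof of Lemma~\ref{lem:euclideanvolume}, combining Cavalieri's principle with the uniqueness of the Laplace transform (cf.~Lemma~\ref{lem:Laplace}; the growth control on $u\mapsto\mu(B_{\sqrt u}(x))$ needed to apply it follows from the finiteness just observed together with local doubling) yields $\mu(B_r(x))=\omega_\alpha r^\alpha$ for all $x\in X$, $r>0$. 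Re-expanding the integral with this volume formula gives $\int_X p(x,y,u)\di\mu(y)=1$ for \emph{every} $u>0$, i.e.\ the candidate kernel is stochastically complete at all times.

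Next I would upgrade $(P_t)_{0<t<T}$ to a semigroup. Each $P_t$ ($0<t<T$) is symmetric and positivity preserving (its kernel is symmetric and non-negative), a contraction on $L^\infty$ since $P_t\mathbf 1=\mathbf 1$, and a contraction on $L^1$ by Fubini's theorem together with $\int_X p(x,y,t)\di\mu(x)=1$; hence, by Riesz--Thorin interpolation, a contraction on $L^2(X,\mu)$. The hypothesis is precisely $P_sP_t=P_{s+t}$ for $s,t\in(0,T)$, so these operators pairwise commute, and one extends the family to $[0,+\infty)$ by $P_t:=P_{t/n}^{\,n}$ for any $n$ with $t/n<T$, a choice a routine check (using commutativity and the semigroup relation) shows to be independent of $n$. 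The resulting $(P_t)_{t\ge0}$ is a strongly continuous, symmetric, Markovian contraction semigroup on $L^2(X,\mu)$; the only non-formal point is strong continuity at $0$, which holds because the $\alpha$-dimensional volume and stochastic completeness make the Gaussian densities $p(\cdot,\cdot,t)$ an approximate identity as $t\downarrow0$. By the standard correspondence between symmetric Markovian semigroups and Dirichlet forms (the construction recalled in \cite[Section~2]{Grigor'yan}), $(P_t)_{t\ge0}$ is the semigroup of a symmetric Dirichlet form $\cE$ on $(X,\mu)$.

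It remains to identify the heat kernel of $\cE$, and here lies the only real obstacle: for $t\in(0,2T)$ one has $P_t=P_{t/2}\circ P_{t/2}$ with $t/2\in(0,T)$, so the kernel of $P_t$ equals $\int_X p(x,z,t/2)p(z,y,t/2)\di\mu(z)=p(x,y,t)$ by the hypothesis, yet no purely algebraic iteration of a Chapman--Kolmogorov identity valid only on $(0,T)$ seems to reach times beyond $2T$. To cross this threshold I would use analyticity. Being self-adjoint, $(P_t)_{t\ge0}$ is an analytic semigroup, so $t\mapsto\langle P_tf,g\rangle_{L^2}$ is real-analytic on $(0,+\infty)$ for all $f,g\in L^2(X,\mu)$; on the other hand, for $f,g\in L^1(X,\mu)\cap L^\infty(X,\mu)$ with bounded support --- a dense class, $\mu$ being finite on bounded sets by the volume formula --- the function
$$
t\longmapsto\iint_{X\times X}\frac{1}{(4\pi t)^{\alpha/2}}\,e^{-\frac{\dist^2(x,y)}{4t}}\,f(y)g(x)\di\mu(x)\di\mu(y)
$$
is real-analytic on $(0,+\infty)$ as well (differentiation under the integral sign being legitimate since the integrand is smooth in $t$ and locally bounded away from $t=0$ on a domain of finite measure). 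These two real-analytic functions coincide on $(0,2T)$, hence on all of $(0,+\infty)$, and a density argument then shows that for every $t>0$ the operator $P_t$ is the integral operator with kernel $(4\pi t)^{-\alpha/2}e^{-\dist^2(x,y)/4t}$. Thus $\cE$ admits an $\alpha$-Euclidean heat kernel in the sense of Definition~\ref{def:Euclideanheatkernel}, which proves Proposition~\ref{prop:5.3}. The hard part, as indicated, is precisely this last passage from small to arbitrary times; the analyticity of the self-adjoint semigroup supplies it, but crucially only after the $\alpha$-dimensional volume has pinned down the correct behaviour of the candidate kernel at large times (and guaranteed that the Gaussian operators are genuine $L^2$-contractions and an approximate identity).
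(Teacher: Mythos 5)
Your proof is correct, and it reaches the conclusion by a genuinely different (though related) route from the paper's. The paper does not derive the exact $\alpha$-dimensional volume: it invokes the local doubling hypothesis together with \cite[Lem.~3.9]{Carron} to get an exponential volume bound, uses that to show both sides of the Chapman--Kolmogorov identity are holomorphic in complex time, continues the identity first in $s$ and then in $t$ to all of $\setC_+\times\setC_+$, deduces stochastic completeness and the $L^2$-continuity at $0$, and only then feeds the globally valid kernel into Grigor'yan's construction. You instead extract the exact volume $\mu(B_r(x))=\omega_\alpha r^\alpha$ from the $s=t$ case of the hypothesis via Cavalieri and Laplace inversion, build the semigroup by iterating the small-time operators, and cross the $t<T$ barrier by matching two real-analytic functions of $t$ (the spectral representation of $\langle P_tf,g\rangle$ for the self-adjoint semigroup against the Gaussian pairing). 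The trade-offs: your volume argument makes the local doubling hypothesis essentially superfluous (the finiteness of $\int_Xe^{-\dist^2(x,\cdot)/(4u)}\di\mu$, which is built into the hypothesis, already gives the a priori growth bound $\mu(B_{\sqrt\xi}(x))=O(e^{\gamma\xi})$ with $\gamma$ slightly above $1/(2T)$ --- note that this is a \emph{fixed} rate, so you need the version of Lemma \ref{lem:Laplace} with a single growth exponent rather than ``$O(e^{\eps t})$ for every $\eps$''; Lerch's theorem as recalled in the paper covers this), whereas the paper's route delivers the global Chapman--Kolmogorov identity directly, which is precisely the input its cited construction requires. Your approach defers that identity to a consequence and instead leans on self-adjointness/analyticity of the iterated semigroup; both proofs ultimately rest on the same analytic-continuation-in-time idea.
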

\proof By \cite[Lem.~3.9]{Carron}, the space $(X,\dist,\mu)$ satisfies $
\mu(B_R(x))/\mu(B_r(x)) \le c_o e^{c_1 R/r}$ for any $x \in X$, $r\in(0,r_o)$ and $R\ge r$, where $c_o$ and $c_1$ depend only on $C_D$. For any $C>0$ and $z \in X$, applying \eqref{eq:Fub} with $\phi(\lambda)=\lambda^2 e^{-C\lambda^2}$ and $g(y)=\dist(z,y)$ yields to
\begin{align}\label{eq:exp}
\int_X e^{-C\dist^2(z,y)}\di \mu(y) & = \int_0^{+\infty} 2 C \lambda e^{-C\lambda^2} \mu(B_\lambda(y))\di \lambda \nonumber \\
& \le \left(\int_0^r 2 C \lambda e^{-C\lambda^2} \di \lambda + \int_r^{+\infty} 2 C \lambda c_o e^{-C\lambda^2 + c_1 \lambda/r} \di \lambda\right)\mu(B_r(y)) \nonumber \\
& \le c_2 \mu(B_r(y))
\end{align}
for any $r \in(0,r_o)$, where $c_2$ depends only on $C_D$ and $C$. For any $x,y\in X$ and $t\in \setC\setminus \{0\}$,
we set 
$$\bP_\alpha(x,y,t):=\frac{1}{(4 \pi t)^{\alpha/2}} e^{-\frac{\dist^2(x,y)}{4t}}\, .$$
Take $x,y \in X$ and $t\in(0,T)$. By assumption, the identity
$$\int_X \bP_\alpha(x,z,t)e^{-\frac{\dist^2(z,y)}{4s}}\di \mu(z)= \left(\frac{s}{t+s}\right)^{\frac \alpha2}e^{-\frac{\dist^2(z,y)}{4(t+s)}}$$
is valid for any $s \in (0,T-t)$. However both expressions are holomorphic in $s\in \setC_+:=\{z\in \setC, \mathrm{Re} z>0\}$ (the left-hand side can be proved holomorphic by a suitable application of the dominated convergence theorem using \eqref{eq:exp}), thus the identity holds for any $s\in \setC_+$. Freezing $s$ and letting $t$ be variable, we can apply the same reasoning to get the identity valid for any $s, t \in \setC_+$. In particular, we obtain:
$$\int_X \bP_\alpha(x,z,t) \bP_\alpha(z,y,s)\di\mu(z)= \bP_\alpha(x,y,t+s) \qquad \forall s,t >0.$$
Thus for any $x \in X$ and $t>0$,
\begin{align*}
\int_X \bP_\alpha(x,z,t) \di \mu(z) & = (4 \pi t)^{\alpha/2} \int_X \left( \frac{1}{(4 \pi t)^{\alpha/2}} e^{-\frac{\dist^2(x,z)}{8t}} \right)^2 \di \mu(z)\\
& = (16 \pi t)^{\alpha/2} \int_X \bP_\alpha(x,z,2t)^2 \di \mu(z)\\ & =(16 \pi t)^{\alpha/2}  \bP_\alpha(x,x,4t) = 1.
\end{align*}
This easily implies that for any $f\in L^2(X,\mu)$, if
 $f_t(x)=\int_X \bP_\alpha(x,z,t)f(z) \di\mu(z)$, then 
 $$\lim_{t\to 0+} \|f_t-f\|_{L^2}=0.$$ 
Then by a standard procedure described for instance in \cite[Section 2]{Grigor'yan}, we can build a symmetric Dirichlet form whose heat kernel is $\bP_\alpha$.
\endproof

We can now prove Theorem \ref{th:almostrigidity}.
\proof The metric spaces considered in this proof are all complete. Assume that the result is not true. Then there exists some $\upepsilon>0$  such that for any $\updelta>0$ we can find:\begin{itemize}
\item $T_\updelta>0$,
\item a  metric measure space $(X_\updelta,\dist_\updelta, \mu_\updelta)$ endowed with a symmetric Dirichlet form $\cE_\updelta$ 
 admitting a heat kernel $p_\updelta$ satisfying
$$(1-\updelta) \frac{1}{(4 \pi t)^{n/2}} e^{-\frac{\dist_\updelta^2(x,y)}{4(1-\updelta)t}}\le p_\upepsilon(x,y,t) \le (1+\updelta) \frac{1}{(4 \pi t)^{n/2}} e^{-\frac{\dist_\updelta^2(x,y)}{4(1+\updelta) t}} $$
for any $x,y \in X_\updelta$ and $t\in (0,T_\updelta]$,
\item $x_\updelta\in X_\updelta$ and $r_\updelta\in (0,\sqrt{T_\updelta}]$ such that $\dist_{\mathrm GH}\left( B_{r_\updelta}(x_\updelta), \setB^n_{r_\updelta}\right) \ge \upepsilon r_\updelta$.
\end{itemize}
By a rescaling of the distance and of the measure, we can assume that $r_\updelta=1$ and $T_\updelta=1$.
It follows from Lemma \ref{lem:voleu} that the set of pointed metric measure space
$$\left\{(X_\updelta,\dist_\updelta,\mu_\updelta,x_\updelta)\right\}_{\updelta\in (0,1/2)}$$ satisfies a uniform local doubling condition, thus it is precompact for the pointed measure Gromov-Hausdorff topology. Therefore, we can consider an infinitesimal sequence $\{\updelta_\ell\}_\ell \subset (0,1/2)$ and a sequence of pointed metric measure spaces $$\left\{(X_\ell,\dist_\ell,\mu_\ell,x_\ell)\right\}_{\ell}$$ converging to some pointed metric measure space $(X_\infty,\dist_\infty,\mu_\infty,x_\infty)$ such that for any $\ell$:
\begin{itemize}
\item the space $(X_\ell,\dist_\ell,\mu_\ell)$ is endowed with a symmetric Dirichlet form $\cE_\ell$ 
 admitting a heat kernel $p_\ell$ satisfying
\begin{equation}\label{eq:14.17}
(1-\updelta_\ell) \frac{1}{(4 \pi t)^{n/2}} e^{-\frac{\dist_\ell^2(x,y)}{4(1-\updelta_\ell)t}}\le p_\ell(x,y,t) \le (1+\updelta_\ell) \frac{1}{(4 \pi t)^{n/2}} e^{-\frac{\dist_\ell^2(x,y)}{4(1+\updelta_\ell)t}}\end{equation}
for any $x,y \in X_\ell$ and $t\in (0,1]$,
\item $\dist_{\mathrm GH}\left( B_1(x_\ell), \setB^n_1\right) \ge \upepsilon. $
\end{itemize}
In particular, letting $l$ tend to $+\infty$ gives:
\begin{equation}\label{disHG}\dist_{\mathrm GH}\left( B_1(x_\infty), \setB^n_1\right) \ge \upepsilon.\end{equation}
Since for any $\ell$ we have
$$\int_{X_\ell}p_\ell(x,z,t)p_\ell(z,y,s)d\mu_\ell(z)=p_\ell(t+s,x,y)$$
for all $x,y \in X_\ell$ and $t,s>0$, we deduce from \eqref{eq:14.17} that when $t+s<1$,
$$\frac{(1-\updelta_\ell)^{\frac n2+1}}{(1+\updelta_\ell)^{n+1}}\ \bP_n\left(x,y,\frac{1-\updelta_\ell}{1+\updelta_\ell}(t+s)\right)\le \int_{X_\ell}\bP_n(x,z,t)\bP_n(z,y,s)d\mu_\ell(z)$$
and
 $$\int_{X_\ell}\bP_n(x,z,t)\bP_n(z,y,s)d\mu_\ell(z)\le \frac{(1+\updelta_\ell)^{\frac n2+1}}{(1-\updelta_\ell)^{n+1}}\ \bP_n\left(x,y,\frac{1+\updelta_\ell}{1-\updelta_\ell}(t+s)\right).$$
From this, we obtain for any $x,y\in X_\infty$ and any $t,s>0$ with $t+s<1$,
$$\int_{X_\infty}\bP_n(x,z,t)\bP_n(z,y,s)d\mu_\infty(z)= \bP_n(x,y,t+s).$$
Then Proposition \ref{prop:5.3} and Theorem \ref{th:main} imply that $(X_\infty,d_\infty)$ is isometric to $(\setR^n,\dist_e)$. But this is in contradiction with \eqref{disHG}.
\endproof

\section{A new proof of Colding's almost rigidity theorem}

In this section, we show how our almost rigidity result (Theorem \ref{th:almostrigidity}) can be used to give an alternative proof of the almost rigidity theorem for the volume of Riemannian manifolds with non-negative Ricci curvature (Theorem \ref{th:Colding}). Here again $n$ is a fixed positive integer and $\mathbb{B}_r^n$ is an Euclidean ball in $\setR^n$ with radius $r>0$.

We recall that whenever $(M^n,g)$ has non-negative Ricci curvature, the Bishop-Gromov comparison theorem states that the function $r\mapsto \omega_n^{-1} r^{-n}\vol (B_r(x))$ is non-increasing for any $x \in M$
and the quantity
\begin{equation}\label{eq:theta}
\uptheta=\lim_{r\to +\infty} \frac{\vol (B_r(x))}{\omega_n\, r^n}
\end{equation}
does not depend on $x$. When $\uptheta>0$, we say that $(M^n,g)$ has Euclidean volume growth, in which case one has
\begin{equation}\label{eq:volumelowerbound}
\vol (B_r(x)) \ge \uptheta \omega_n r^n
\end{equation}
for any $x \in X$ and $r>0$. Note that a manifold satisfying \eqref{eq:vol} has Euclidean volume growth with $\uptheta \ge 1-\delta$. Our proof of Theorem \ref{th:Colding} is a direct application of Theorem \ref{th:almostrigidity} together with the following heat kernel estimate.
\begin{theorem}\label{th:estimate} There exists a function $\upgamma\colon [0,1]\to [1,\infty)$ satisfying $\lim_{\theta\to 1^-} \upgamma(\theta)=1$ such that 
whenever $(M^n,g)$ is a complete Riemannian manifold with non-negative Ricci curvature and Euclidean volume growth, then the heat kernel $p$ of $(M^n,g)$ satisfies
$$\frac{1}{(4\pi t)^{\frac n2}}e^{-\frac{d^2(x,y)}{4t}}\le p(x,y,t)\le \upgamma(\uptheta) \frac{1}{(4\pi t)^{\frac n2}}e^{-\frac{d^2(x,y)}{\upgamma(\uptheta)4t}}
$$
for all $x,y \in M$ and $t>0$, where $\uptheta$ is given by \eqref{eq:theta}.
\end{theorem}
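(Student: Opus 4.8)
\textbf{Proof plan for Theorem \ref{th:estimate}.} The plan is to establish the two-sided Gaussian bound with multiplicative constant $\upgamma(\uptheta)$ tending to $1$ as $\uptheta \to 1^-$, relying on the classical Li--Yau estimates as a first input, then sharpening them using the near-Euclidean volume growth together with the works of Cheeger--Yau and Li--Tam--Wang cited in the introduction.

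First I would record the lower bound, which is the easy direction. By the Cheeger--Yau theorem \cite{CheegerYau}, on a complete manifold with $\Ric \ge 0$ the heat kernel satisfies $p(x,y,t) \ge (4\pi t)^{-n/2} e^{-d^2(x,y)/4t}$ pointwise; this is exactly the required lower bound, with no dependence on $\uptheta$ at all, so nothing more is needed there. The substance of the theorem is therefore the upper bound and, crucially, the fact that its constant degenerates to $1$ in the Euclidean-growth limit.

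For the upper bound, the first step is to invoke the Li--Yau parabolic estimates \cite{LiYau}: under $\Ric \ge 0$ one has, for every $\eps>0$, an inequality of the form $p(x,y,t) \le C(n,\eps)\, \vol(B_{\sqrt t}(x))^{-1} e^{-d^2(x,y)/(4+\eps)t}$, together with the matching on-diagonal lower bound $p(x,x,t)\ge c(n)\vol(B_{\sqrt t}(x))^{-1}$. The next step is to feed in the Euclidean volume growth: the Bishop--Gromov monotonicity gives $\uptheta\,\omega_n r^n \le \vol(B_r(x)) \le \omega_n r^n$ for all $r>0$, so $\vol(B_{\sqrt t}(x))$ is comparable to $(4\pi t)^{n/2}$ up to constants that pinch together as $\uptheta\to1$. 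The delicate point — and the one where I would lean on Li--Tam--Wang \cite{LiTamWang} — is that the \emph{exponent} in the Gaussian must also be controlled sharply: one needs to replace the lossy $1/(4+\eps)$ by $1/(\upgamma(\uptheta)\,4)$ with $\upgamma(\uptheta)\to1$. The mechanism is an integrated (or iterated) maximum-principle / mean-value argument along geodesics combined with the rigidity in Bishop--Gromov: when $\uptheta$ is close to $1$, large balls are almost Euclidean in a quantitative Gromov--Hausdorff sense, which forces the heat kernel's Gaussian profile to be almost exact. Concretely I would run a Moser-type iteration (or the Davies--Gaffney/Grigor'yan $L^2$ mean-value exponential weighting) whose loss in the exponent is governed by a doubling-type constant, and then show that constant $\to 1$ under $\uptheta\to1$ using the volume pinching.

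The main obstacle I anticipate is precisely this last point: upgrading the qualitative Li--Yau Gaussian to one whose exponential constant is not merely $n$-dependent but tends to the sharp value $1/4$ as the volume ratio tends to $1$. This requires more than the Harnack inequality; it needs the Colding/Cheeger--Colding volume rigidity philosophy, made effective, or the refined heat-kernel asymptotics of Li--Tam--Wang which already incorporate the Euclidean-growth constant $\uptheta$ into the bounds. I would structure the argument so that every constant appearing — in the volume comparison, in the on-diagonal bound, and in the exponential decay — is written explicitly as a function of $n$ and $\uptheta$, take $\upgamma(\uptheta)$ to be the largest of these (suitably combined), and then verify the limit $\lim_{\uptheta\to1^-}\upgamma(\uptheta)=1$ by inspection, using $\lim_{\uptheta\to1^-}\uptheta^{-1}=1$ and the continuity of the elementary expressions involved. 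A secondary technical point is handling the regime $t$ large versus $t$ small uniformly; here Euclidean volume growth (rather than just local doubling) is what makes the estimate global in $t$, exactly as in the self-improvement step used in the proof of Lemma \ref{lem:voleu}.
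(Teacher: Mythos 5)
Your lower bound is exactly the paper's: the Cheeger--Yau comparison gives $p\ge \bP_n$ pointwise with no dependence on $\uptheta$, and you correctly locate all the difficulty in the upper bound and in making its constant tend to $1$; you also cite the right source (Li--Tam--Wang). However, the mechanism you propose for that crucial step would fail, and the fallback you mention would be circular. A Moser-type iteration, or a Davies--Gaffney/Grigor'yan $L^2$ exponential-weighting argument, produces a multiplicative constant governed by doubling and Sobolev constants, and that constant does \emph{not} tend to $1$ as $\uptheta\to1^-$ --- it is already much larger than $1$ on $\setR^n$ itself, so ``volume pinching forces the iteration constant to pinch to $1$'' is not true. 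And invoking quantitative Gromov--Hausdorff closeness of balls to Euclidean balls when $\uptheta$ is close to $1$ is precisely Colding's theorem, which Section 6 of the paper re-proves \emph{from} Theorem \ref{th:estimate}; you cannot use it here.

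The actual mechanism, which your plan does not identify, is an interplay of inequalities whose constants are sharp from the start. Fix $x,y,t$, set $r=(1+\delta)^{-1}\dist(x,y)$ and $\tau=(1+\delta)t$. (i) The \emph{integrated} Li--Yau bounds (\cite[Formula (2.1)]{LiTamWang}) bound $\int_{M\setminus B_r(x)}p(x,z,\tau)\,\di\vol(z)$ by the corresponding Euclidean Gaussian tail with constant exactly $1$. (ii) The Cheeger--Yau lower bound is used a \emph{second} time, together with $\vol(B_s(x))\ge\uptheta\,\omega_n s^n$, to show that $\int_{M\setminus B_r(x)}\bP_n(x,z,\tau)\,\di\vol(z)$ already accounts for a fraction $\uptheta$ of that tail; subtracting, the heat content of the small ball $B_{\delta r}(y)$ is at most $(1-\uptheta)$ times the Gaussian tail plus the explicit Cheeger--Yau model contribution on $B_{\delta r}(y)$, and the first term is the one that vanishes as $\uptheta\to1$. (iii) The Li--Yau Harnack inequality converts this average over $B_{\delta r}(y)$ into a pointwise bound at $y$, at the cost of factors $(\tau/t)^{n/2}$ and $e^{\delta\dist^2(x,y)/4(1+\delta)^2t}$ controlled by $\delta$. (iv) Finally one optimizes $\delta=\delta(\uptheta)\to0$. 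This is bookkeeping of heat mass, not a parabolic regularity iteration; without step (ii) --- the second use of Cheeger--Yau to extract the factor $(1-\uptheta)$ --- there is no route from your outline to a constant that converges to $1$.
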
 

\begin{remark} Our proof of the above heat kernel upper bound follows the arguments of P. Li, L-F. Tam  and J. Wang \cite{LiTamWang}.
\end{remark}

\begin{proof}
The lower bound is the comparison theorem of J.~Cheeger and S-T.~Yau \cite{CheegerYau}: for any $t>0$ and $x,y\in M$, we have
\begin{equation}\label{eq:CheegerYau}\bP_n(x,y,t)\le p(x,y,t)\end{equation} where
$\displaystyle \bP_n(x,y,t)=(4\pi t)^{-\frac n2}e^{-\frac{d^2(x,y)}{4t}}.$ Consequently we only need to prove the upper bound.

Take $x,y \in X$ and $t>0$. We shall need the following estimates from P. Li and S-T. Yau (see \cite[Formula (2.1)]{LiTamWang}): for any $r,\tau>0$,
\begin{equation}\label{eq:LY1}\int_{B_r(x)}p(x,z,\tau)\di\vol(z)\ge \int_{\mathbb{B}^n_r}\frac{1}{(4\pi \tau)^{\frac n2}}e^{-\frac{\|\xi\|^2}{4\tau}}\di \xi
\end{equation}
and 
\begin{equation}\label{eq:LY2}
\int_{M\setminus B_r(x)}p(x,z,\tau)\di \vol(z)\le \int_{\setR^n\setminus \mathbb{B}_r^n}\frac{1}{(4\pi \tau)^{\frac n2}}e^{-\frac{\|\xi\|^2}{4\tau}}\mathrm{\di}\xi.
\end{equation}

For $\delta>0$ to be precisely chosen later, set $r:=(1+\delta)^{-1}\dist(x,y)$ and $\tau := (1+\delta)t$. Note that $B_{r}(x) \cap B_{\delta r}(y) = \emptyset$. By the Harnack inequality of P.~Li and S-T.~Yau \cite{LiYau}, we have
\begin{equation}\label{LY}
p(x,y,t)\le \left(\frac{\tau}{t}\right)^{\frac n2}\, e^{\frac{\dist(z,y)^2}{4(\tau-t)}} p(x,z,\tau)\end{equation}
for every $z\in M$, so that averaging over the ball $B_{\delta r}(y)$ gives
\begin{align}\label{eq:Colding111}p(x,y,t)&\le e^{\frac{\delta^2r^2}{4(\tau-t)} } \left(\frac{\tau}{t}\right)^{\frac n2}\fint_{B_{\delta r}(y)}p(x,z,\tau)\di\vol(z) \noindent \\
&\le e^{\frac{\delta d^2(x,y)}{4(1+\delta)^2t}} \left(\frac{\tau}{t}\right)^{\frac n2}\fint_{B_{\delta r}(y)}p(x,z,\tau)\di\vol(z).
\end{align}
Now 
\begin{align*}
\int_{B_{\delta r}(y)}p(x,z,\tau)\di \vol(z)&=\int_{M\setminus B_r(x)}p(x,z,\tau)\di \vol(z)- \int_{M\setminus (B_r(x)\cup B_{\delta r}(y))} p(x,z,\tau)\di \vol(z)\\
&\le \int_{\setR^n\setminus \mathbb{B}_r^n}\frac{1}{(4\pi \tau)^{\frac n2}}e^{-\frac{\|\xi\|^2}{4\tau}}\di \xi - \int_{M\setminus (B_r(x)\cup B_{\delta r}(y)}\mathbb{P}_n(x,z,\tau)\di \vol(z)
\end{align*}
thanks to \eqref{eq:LY2} and \eqref{eq:CheegerYau}. Continuing,
\begin{align*}
\int_{B_{\delta r}(y)}p(x,z,\tau)\di \vol(z) \le \int_{\setR^n\setminus \setB_r^n}\frac{1}{(4\pi \tau)^{\frac n2}}e^{-\frac{\|\xi\|^2}{4\tau}}\di \xi & - \int_{M\setminus B_r(x)}\bP_n(x,z,\tau)\di \vol(z)\\
&+ \int_{B_{\delta r}(y)}\bP_n(x,z,\tau)\di \vol(z)\\
\le \int_{\setR^n\setminus \setB_r^n}\frac{1}{(4\pi \tau)^{\frac n2}}e^{-\frac{\|\xi\|^2}{4\tau}}\di \xi & - \int_{M\setminus B_r(x)}\bP_n(x,z,\tau)\di \vol(z)\\
&+\vol(B_{\delta r}(y)) \frac{1}{(4\pi \tau)^{\frac n2}}e^{-\frac{(\dist(x,y)-\delta R)^2}{4\tau}}.
\end{align*}
By Cavalieri's principle and \eqref{eq:volumelowerbound}, we have
\begin{align*}
\int_{M\setminus B_r(x)}\bP_n(x,z,\tau)\di \vol(z) & =\int_{r}^{+\infty} \frac{1}{(4\pi \tau)^{\frac n2}}e^{-\frac{s^2}{4\tau}}\frac{s}{2\tau}\vol(B_s(x))\di s\\
& \ge \int_{r}^{+\infty} \frac{1}{(4\pi \tau)^{\frac n2}}e^{-\frac{s^2}{4\tau}}\frac{s}{2\tau} \theta \omega_n s^n \di s = \theta \int_{\setR^n\setminus \setB_r^n}\frac{1}{(4\pi \tau)^{\frac n2}}e^{-\frac{\|\xi\|^2}{4\tau}}\di \xi,
\end{align*}
hence
\begin{equation}\label{eq:Colding222}
\int_{B_{\delta r}(y)}p(x,z,\tau)\di \vol(z) \le (1-\theta)\int_{\setR^n\setminus \setB_r^n}\frac{1}{(4\pi \tau)^{\frac n2}}e^{-\frac{\|\xi\|^2}{4\tau}}\di \xi +\vol(B_{\delta r}(y)) \frac{1}{(4\pi \tau)^{\frac n2}}e^{-\frac{(\dist(x,y)-\delta r)^2}{4\tau}}\, \cdot
\end{equation}
As pointed out in \cite[Formula (2.6)]{LiTamWang}, direct computations show that there exists a constant $C=C(n)>0$ such that
$$
\int_{\setR^n\setminus \setB_r^n}\frac{1}{(4\pi \tau)^{\frac n2}}e^{-\frac{\|\xi\|^2}{4\tau}}\di \xi \le C\left(1+\left(\frac{r}{\sqrt{4\pi\tau}}\right)^n\right)e^{-\frac{r^2}{4\tau}}.
$$
This together with \eqref{eq:Colding222} and  \eqref{eq:Colding111} yields to
\begin{align*}
p(x,y,t)&\le (1-\uptheta) C\left(1+\left(\frac{r}{\sqrt{4\pi\tau}}\right)^n\right)e^{-\frac{r^2}{4\tau}}e^{\frac{\delta \dist^2(x,y)}{4(1+\delta)^2t}} \left(\frac{\tau}{t}\right)^{\frac n2}\frac{1}{\vol (B_{\delta r}(y))}\\
& \qquad \qquad \qquad \qquad \qquad \qquad \qquad +\frac{1}{(4\pi t)^{\frac n2}}e^{-\frac{(\dist(x,y)-\delta r)^2}{4\tau}}e^{\frac{\delta \dist^2(x,y)}{4(1+\delta)^2t}} \, .
\end{align*}
It is easily checked that $
-\frac{r^2}{4\tau} =-\frac{(\dist(x,y)-\delta r)^2}{4\tau} = - \frac{\dist^2(x,y)}{4(1+\delta)^3 t}$
hence
\begin{align*}
p(x,y,t)&\le\left[(1-\uptheta) C\left(1+\left(\frac{r}{\sqrt{4\pi\tau}}\right)^n\right)\left(1+\delta\right)^{\frac n2}\underbrace{\frac{1}{\vol (B_{\delta r}(y))}}_{\le 1/(\uptheta \omega_n \delta^n r^n)}+\frac{1}{(4\pi t)^{\frac n2}}\right]e^{-\frac{(1-\delta-\delta^2)\dist^2(x,y)}{4(1+\delta)^3t}}\\
& \le\left[(\uptheta^{-1}-1) C\left(1+\left(\frac{r}{\sqrt{4\pi\tau}}\right)^n\right)\frac{(4\pi \tau)^{\frac n2}}{(4\pi t)^{\frac n2}}\frac{1}{\omega_n \delta^n r^n}+\frac{1}{(4\pi t)^{\frac n2}}\right]e^{-\frac{(1-\delta-\delta^2)\dist^2(x,y)}{4(1+\delta)^3t}}\\
& = \left[(\uptheta^{-1}-1)\left(1+\left(\frac{r}{\sqrt{4\pi\tau}}\right)^n\right) \frac{C}{\omega_n}\frac{(4\pi \tau)^{\frac n2}}{\delta^n r^n} +1\right]\frac{1}{(4\pi t)^{\frac n2}}e^{-\frac{(1-\delta-\delta^2)\dist^2(x,y)}{4(1+\delta)^3t}} \, .
\end{align*}
Now we distinguish two cases. According to \cite[Formula (2.4)]{LiTamWang}, if $\dist(x,y)\le \delta \sqrt{t}$, then
\begin{equation}\label{esti1}p(x,y,t)\le \frac{1}{\uptheta} \frac{1}{(4\pi t)^{\frac n2}} e^{-\frac{d^2(x,y)}{4t}} e^{\frac{\delta^2}{4}}.\end{equation}
If $\dist(x,y)\ge \delta \sqrt{t}$, then $\frac{r}{\sqrt{\tau}}\ge \frac{\delta}{(1+\delta)^{2}}$, thus
\[
\left(1+\left(\frac{r}{\sqrt{4\pi\tau}}\right)^n\right) \frac{(4\pi \tau)^{\frac n2}}{\delta^n r^n} = \left( \frac{(4\pi \tau)^{\frac n2}}{r^n} + 1 \right)\delta^{-n} \le (4\pi)^{\frac n2}\left(\frac{\delta+ 1}{\delta}\right)^{2n}+\left(\frac{1}{\delta}\right)^n.
\]
Therefore, if $\delta<1/2$, we get
$$\left(1+\left(\frac{r}{\sqrt{4\pi\tau}}\right)^n\right) \frac{(4\pi \tau)^{\frac n2}}{\delta^n r^n} \le C'\delta^{-2n}$$ where $C'$ depends only on $n$, which yields to
$$
p(x,y,t) \le \left[(\uptheta^{-1}-1)\Lambda \delta^{-2n}+1\right]\frac{1}{(4\pi t)^{\frac n2}}e^{-\frac{(1-\delta-\delta^2)\dist^2(x,y)}{4(1+\delta)^3t}}.
$$
where $\Lambda:=CC'/\omega_n$ depends only on $n$. Now we choose \begin{equation}\label{eq:delta}
\delta=\delta(\uptheta):=\min\left\{ \frac 12\, ,  \left(\left(\uptheta^{-1}-1\right)\Lambda\right)^{\frac{1}{2n+1}}\right\},\end{equation}
so that when $\left(\left(\uptheta^{-1}-1\right)\Lambda\right)^{\frac{1}{2n+1}} < 1/2$ then $$\delta(\uptheta)= \left(\uptheta^{-1}-1\right)\Lambda\delta(\uptheta)^{-2n},$$ hence
\begin{equation}\label{eq:end}
p(x,y,t)\le \frac{\delta(\uptheta)+1}{(4\pi t)^{\frac n2}}e^{-\frac{(1-\delta(\uptheta)-\delta(\uptheta)^2)\dist^2(x,y)}{4(1+\delta(\uptheta))^3t}},\,\end{equation}
and when $\left(\left(\uptheta^{-1}-1\right)\Lambda\right)^{\frac{1}{2n+1}} \ge 1/2$ -- which corresponds to the case $\uptheta \le 1-\eps_n$ with $\eps_n:=(1+2^{2n+1}\Lambda)^{-1}$ depending only on $n$ -- then $\delta(\theta)=1/2$ implies
\begin{equation}\label{eq:end2}
p(x,y,t)\le \frac{(\uptheta^{-1} - 1)\Lambda 2^{2n} + 1}{(4\pi t)^{\frac n2}}e^{-\frac{(1-\delta(\uptheta)-\delta(\uptheta)^2)\dist^2(x,y)}{4(1+\delta(\uptheta))^3t}}\,.\end{equation}
Note that $\delta(\uptheta) \to 0$ when $\uptheta \to 1$. Therefore, setting $F(\uptheta):=(1+\delta(\uptheta))^3/(1-\delta(\uptheta)-\delta(\uptheta)^2)$ and
\[
\gamma(\uptheta):=
\begin{cases}
\max(1+\delta(\uptheta),F(\uptheta)) & \text{if $1-\eps_n<\uptheta<1$,}\\
\max(2^{2n}\Lambda (\uptheta^{-1} - 1)+1,F(\uptheta)) & \text{if $0<\uptheta \le 1-\eps_n$,}
\end{cases}
\]
we get the result.

\end{proof}

For completeness, let us provide a short proof of Theorem \ref{th:Colding}.

\begin{proof}
Take $\upepsilon>0$. By Theorem \ref{th:almostrigidity}, there exists $\delta'=\delta'(n,\upepsilon)>0$ such that if $(M^n,g)$ is complete and satisfying $\Ric \ge 0$ and \eqref{eq:almostheatkernel} with $\delta$ replaced by $\delta'$, then any ball with radius $r$ in $M$ is $(\upepsilon r)$-GH close from a ball with same radius in $\setR^n$. But Theorem \ref{th:estimate} implies that there exists $\delta=\delta(n,\delta')=\delta(n,\upepsilon)>0$ such that if $1-\delta\le \theta$ holds, then $\gamma(\theta)-1 \le \delta$ and thus \eqref{eq:almostheatkernel} is true. The result follows.
\end{proof}

\section{Case of a Spherical heat kernel}

In this section, for any Riemannian manifold $(M^n,g)$, we define the operator $L$ acting on $L^2(M)$ as the Friedrich extention of the operator $\tilde{L}$ defined by the formula:
\[
- \int_M (\tilde{L}u)v = \int_M \langle \nabla u, \nabla v \rangle \qquad \forall u,v \in C^\infty_c(M).
\]
The spectral theorem implies that $L$ generates a semi-group $(e^{tL})_{t>0}$ which admits a smooth heat kernel.

The heat kernel of $(e^{tL})_{t>0}$ on the sphere $\mathbb{S}^n$ equipped with the canonical spherical metric $g_{\mathbb{S}^n}$ admits a well-known expression, namely
$$
K_t^{(n)}(\dist_{\mathbb{S}^n}(x,y)) 
$$
for any $x,y \in \mathbb{S}^n$ and $t>0$, where $\dist_{\mathbb{S}^n}$ is the Riemannian distance canonically associated with $g_{\mathbb{S}^n}$ and \begin{equation}\label{eq:K_t}
K_t^{(n)}(r):=\sum_{i=0}^{+\infty} e^{\lambda_i t} C_i^{(n)}(r)\end{equation} for any $r>0$, with $\lambda_i := -i(i+n-1)$ and $
C_i^{(n)}(\cdot):=(2i+n-1)(n-1)^{-1}\sigma_n^{-1} G_i^{\frac{n-1}{2}}(\cos(\cdot))$ for any  $i \in \setN$. Here the functions $G_i^\alpha$ are the Gegenbauer polynomials (see e.g.~\cite{AtkinsonHan}). For our purposes, it is worth mentioning that $$C_0^{(n)}(r)=\frac{1}{\sigma_n} \quad \text{and} \quad C_1^{(n)}(r)=\frac{n+1}{\sigma_n}\cos(r)$$ for any $r>0$. Moreover, the sum in \eqref{eq:K_t} converges uniformly in $C([0,+\infty))$.


\begin{theorem}
Let $(X,\dist,\mu)$ be a complete metric measure space equipped with a Dirichlet form $\cE$ admitting a spherical heat kernel $p$, that is
\begin{equation}\label{eq:spher}
p(x,y,t) = K_t^{(n)}(\dist(x,y))
\end{equation}
for any $x, y \in X$ and $t>0$. Then $(X,\dist)$ is isometric to $(\mathbb{S}^n,\dist_{\mathbb{S}^n})$.
\end{theorem}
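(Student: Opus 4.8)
The plan is to mirror the Euclidean argument of Theorem~\ref{th:main}, replacing the Busemann-function/linear-growth machinery by the eigenspace $E_1 := \mathrm{Ker}(-L - \lambda_1 I)$ associated with the first nonzero eigenvalue $\lambda_1 = n$ of $-L$. First I would record the analytic consequences of \eqref{eq:spher}. Arguing exactly as in Lemma~\ref{lem:stochastic} (use Chapman--Kolmogorov and the explicit semigroup eigenvalues), the form $\cE$ is stochastically complete, and integrating \eqref{eq:spher} against $1$ together with the Gegenbauer expansion forces $(X,\dist,\mu)$ to have $\mu(X) = \sigma_n$ and, more precisely, an $n$-dimensional ``spherical volume'' $\mu(B_r(x)) = \sigma_{n-1}\int_0^r \sin^{n-1}(s)\,\di s$ for $r \le \pi$ — this comes from the same Cavalieri/Laplace-transform trick as in Lemma~\ref{lem:euclideanvolume}, using that the coefficient functions $C_i^{(n)}$ integrate against the spherical volume profile to reproduce $e^{\lambda_i t}$. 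From the small-time asymptotics $K_t^{(n)}(r) \sim (4\pi t)^{-n/2} e^{-r^2/4t}$ one gets, as in Proposition~\ref{prop:slandreg}, that $\cE$ is strongly local and regular, satisfies \eqref{eq:A}, and (via Remark~\ref{rem:geodesicplus}, since $-4t\log p(x,y,t) \to \dist^2(x,y)$ locally uniformly) that $(X,\dist)$ is geodesic, proper, with diameter $\le \pi$. Also, as in Proposition~\ref{prop:important}, doubling and Poincaré hold.

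The key computation replacing Lemma~\ref{lem:Ld^2} is $L\cos(\dist(x,\cdot)) = -n\cos(\dist(x,\cdot))$ on $X$: apply the heat equation $(\partial_t - L)K_t^{(n)}(\dist(x,\cdot)) = 0$, differentiate the expansion \eqref{eq:K_t} term by term, and isolate the $i=1$ term using that $C_1^{(n)}(r) = \tfrac{n+1}{\sigma_n}\cos r$ is the eigenfunction of $\partial_r^2 + (n-1)\cot(r)\partial_r$ with eigenvalue $-n$, together with $L\dist(x,\cdot) = (n-1)\cot(\dist(x,\cdot))$ and $\Gamma(\dist(x,\cdot)) = 1$ (derived exactly as in Lemma~\ref{lem:Ld^2} from the leading small-time behaviour). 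Thus each $B(x,\cdot) := \cos(\dist(x,\cdot))$ lies in $E_1$; since $E_1$ is finite dimensional (elliptic regularity / the Colding--Minicozzi-type bound on the dimension of $L$-eigenspaces under doubling+Poincaré, cf.~Proposition~\ref{prop:finitedim}), the span $\cV := \Span\{\cos(\dist(x,\cdot)) : x \in X\}$ is finite dimensional, say of dimension $l+1$ — I would set it up so that a convenient basis gives a map $H = (h_0,\dots,h_l) : X \to \setR^{l+1}$ with, by the spherical law of cosines and the dual-basis trick of Section~4.1, $\beta(H(x),H(y)) = \cos(\dist(x,y))$ for a symmetric bilinear form $\beta$, and $\beta$ nondegenerate because $\{\cos(\dist(x_i,\cdot))\}$ is a basis. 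In particular $Q(H(x)) = \beta(H(x),H(x)) = 1$, so $H$ lands in the quadric $\Sigma := \{Q = 1\}$, and $\dist(x,y) = \arccos(\beta(H(x),H(y)))$ shows $H$ is injective.

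Next I would run the analogue of Section~4.2--4.3 to pin down the signature of $Q$ and the dimension. Since $(X,\dist)$ is compact there is no ``tangent cone at infinity'' argument; instead I would use the mean-value/reproducing-kernel identity coming from $L$-harmonic eigenfunctions: for $h \in \cV$, $h(x) = \int_X \up_1(x,y)h(y)\,\di\mu(y)$ where $\up_1$ is the spectral projector kernel onto $E_1$, which by \eqref{eq:spher} equals $C_1^{(n)}(\dist(x,y)) = \tfrac{n+1}{\sigma_n}\cos(\dist(x,y))$. This gives $h(x) = \tfrac{n+1}{\sigma_n}\int_X \cos(\dist(x,y)) h(y)\,\di\mu(y)$ for all $h \in \cV$, i.e.\ the integral operator $\mathcal{K}f(x) = \tfrac{n+1}{\sigma_n}\int_X \cos(\dist(x,y))f(y)\,\di\mu$ acts as the orthogonal projection onto $\cV$ in $L^2(X,\mu)$; exactly as in the computation around \eqref{eq:0409}, choosing an orthonormal basis $k_0,\dots,k_l$ of $\cV$, taking $x=y$ in the resulting identity $\tfrac{n+1}{\sigma_n}\cos(\dist(x,y)) = \sum_i k_i(x)k_i(y)$ and integrating over $X$ yields $l+1 = \tfrac{(n+1)\mu(X)}{\sigma_n} = n+1$, so $l = n$; moreover, taking $x = y$ gives $\sum_i k_i(x)^2 = \tfrac{n+1}{\sigma_n} > 0$, and positivity of $\mathcal{K}$ as a projection forces $Q$ (hence $\beta$) to be positive definite. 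Therefore $\Sigma = \{Q = 1\}$ is, after the change of basis diagonalising $Q$ (Sylvester), the round unit sphere $\mathbb{S}^n \subset \setR^{n+1}$, and $\dist(x,y) = \arccos(\scal{H(x)}{H(y)})$ is precisely the chordal-to-geodesic relation on $\mathbb{S}^n$, so $H$ is an isometric embedding of $(X,\dist)$ into $(\mathbb{S}^n,\dist_{\mathbb{S}^n})$ onto a closed subset. Finally, $H(X)$ is closed (completeness of $X$ + $H$ bi-Lipschitz onto its image) and geodesically convex in $\mathbb{S}^n$ (geodesics of $X$ map to geodesics of $\mathbb{S}^n$, since both are characterised by the metric), and the only closed geodesically convex subset of $\mathbb{S}^n$ that spans $\setR^{n+1}$ is $\mathbb{S}^n$ itself; spanning holds because any affine function nonnegative on $H(X)$ pulls back to a nonnegative function in $\Span(\cV \cup \{1\})$ which, being an $L$-eigenfunction plus a harmonic constant... more carefully, a nonnegative element of $\cV$ is forced to vanish since $\int_X h\,\di\mu = 0$ for every $h \in E_1$ (orthogonality to constants). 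Hence $H(X) = \mathbb{S}^n$ and $(X,\dist) \cong (\mathbb{S}^n,\dist_{\mathbb{S}^n})$.

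The main obstacle I anticipate is the term-by-term manipulation of the Gegenbauer series \eqref{eq:K_t} to extract clean statements — both the identity $L\cos(\dist(x,\cdot)) = -n\cos(\dist(x,\cdot))$ (which requires justifying that one may apply $L$ and $\partial_t$ under the uniformly convergent sum, and that the orthogonality of the $C_i^{(n)}$ against the spherical volume profile is inherited by $\mu$ — this last point needs the $n$-dimensional spherical volume computation to be in hand first) and the reproducing-kernel identity $\up_1 = C_1^{(n)}\circ\dist$ (which is where the precise normalisations $C_0^{(n)} = 1/\sigma_n$, $C_1^{(n)} = \tfrac{n+1}{\sigma_n}\cos$ and $\mu(X) = \sigma_n$ all have to fit together). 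Everything downstream — finite-dimensionality, the embedding into the quadric, Sylvester, convexity — is then a faithful transcription of Sections~3--4, so the real work is localising the spectral information carried by the spherical heat kernel into these two algebraic facts.
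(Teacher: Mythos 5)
Your overall architecture matches the paper's: extract the spectral projectors from the Gegenbauer expansion, identify $\cV=\Span\{\cos(\dist(x,\cdot))\}$ with $E_1$, embed $X$ into the quadric $\Sigma=\{Q=1\}$ via the dual-basis map $H$ with $\beta(H(x),H(y))=\cos(\dist(x,y))$, count $\dim E_1=n+1$ by the trace identity $\sum_i k_i(x)^2=\tfrac{n+1}{\sigma_n}$ integrated against $\mu(X)=\sigma_n$, and finish by a convexity argument. Two of your choices are genuinely different from the paper and worth comparing. For $L\cos(\dist(x,\cdot))=-n\cos(\dist(x,\cdot))$ the paper does not differentiate the series term by term or compute $L\dist(x,\cdot)$; it observes that uniqueness of the projection-valued measure forces the projector $P_1$ to have kernel $C_1^{(n)}(\dist(x,y))$, and $P_1$ commuting with $L$ immediately gives $Lp_1(x,\cdot)=\lambda_1 p_1(x,\cdot)$ — this sidesteps all the convergence issues you flag as your main obstacle. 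For positive definiteness of $Q$, your route (the projector kernel is positive semi-definite, so $\sum_{i,j}\xi_i\xi_j\cos(\dist(x_i,x_j))=\tfrac{\sigma_n}{n+1}\sum_k(\sum_i\xi_i\phi_k(x_i))^2\ge 0$, combined with non-degeneracy of $\beta$) is correct and actually cleaner than the paper's, which instead writes $\setR^{n+1}=E_+\oplus E_-\oplus\mathrm{Ker}\,\beta$ and shows $\dim E_+>n$ by bi-Lipschitz embedding of $X$ into $(E_+,q_+)$ together with a computation that the Hausdorff dimension of $X$ equals $n$ (via Minakshisundaram--Pleijel and Cheeger--Yau bounds on $K_t^{(n)}$). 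Your route also makes the Colding--Minicozzi input and the exact spherical volume profile unnecessary (finite-dimensionality of $E_1$ is automatic since $P_1$ is Hilbert--Schmidt).

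The genuine gap is in the final surjectivity step. The statement you rely on — that the only closed geodesically convex subset of $\mathbb{S}^n$ that linearly spans $\setR^{n+1}$ is $\mathbb{S}^n$ itself — is false: a closed geodesic ball of radius $\rho\in(0,\pi/2)$ (or, for $n=1$, any short closed arc) is closed, totally geodesic in the sense that all minimizing geodesics between its points stay inside it, and linearly spans $\setR^{n+1}$. The hypothesis that actually works is that $H(X)$ is contained in no closed hemisphere, and this is exactly what the orthogonality $\int_X h\,\di\mu=0$ for $h\in E_1$ gives you: if $\beta(H(\cdot),p)\le 0$ on $X$ then this function of $\cV$ has zero integral, hence vanishes identically, forcing $H(X)$ into the hyperplane $p^{\perp}$ and contradicting spanning. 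But even with this corrected hypothesis, "closed $+$ totally convex $+$ in no closed hemisphere $\Rightarrow$ all of $\Sigma$" is not a one-line fact; the paper proves it by taking $p\in\Sigma\setminus H(X)$, showing first that $r:=\delta(p,H(X))<\pi/2$ (by the hemisphere argument above), then picking a nearest point $q\in H(X)$ and observing that every minimizing geodesic of length $<\pi$ issued from $q$ and passing through the open ball $B_r(p)$ must avoid $H(X)$ by convexity; the union of such geodesics is an open hemisphere, so $H(X)$ lies in the complementary closed hemisphere — a contradiction. Your proposal has all the ingredients (convexity, the $\int h\,\di\mu=0$ orthogonality) but does not assemble them into a valid argument, and as written the step would fail. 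A secondary point to tighten: passing from $\cos(\dist(x,y))=\beta(H(x),H(y))$ to "$H$ is an isometric embedding" presupposes $\dist\le\pi$ and an identification of $\dist$ with the induced length distance on $\Sigma$; the paper handles this through the chordal identity $Q(H(x)-H(y))=4\sin^2(\dist(x,y)/2)$, the geodesy of $(X,\dist)$ from Remark \ref{rem:geodesicplus}, and Lemma \ref{lem:length}, rather than by invoking $\arccos$ directly.
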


\begin{proof}
Let $L$ be the self-adjoint operator canonically associated with $\cE$ and $(P_t)_{t>0}$ the associated semi-group. Assumption \eqref{eq:spher} implies that for any $t>0$ and $f \in L^2(X,\mu)$,
$$
\int_{-\infty}^{+\infty} e^{t \lambda} \di (f,E_\lambda f) = \sum_{i=0}^{+\infty} e^{\lambda_i t} \iint_{X\times X} C_i(\dist(x,y)) f(x) f(y) \di \mu(x) \di \mu(y)
$$
holds, where $(f,E_\lambda f)$ is the projection-valued measure of $L$ associated with $f$, see e.g.~\cite[p.~262-263]{ReedSimon}. Uniqueness of the map $f \mapsto (f,E_\lambda f)$ implies that the spectrum of $L$ is given by $\lambda_0, \lambda_1, \lambda_2, \ldots$ and that the projection operators $P_i : L^2(X,\mu) \to E_i:=\mathrm{Ker}(L-\lambda_i \mathrm{Id})$, for any $i \in \setN$, have a kernel $p_i$ such that for any $x,y \in X$,
$$
p_i(x,y)=C_i(\dist(x,y)).
$$
Since $P_i$ commutes with $L$ for any $i$, we have $P_i Lg = \lambda_i P_i g$ for any $g \in \cD(L)$, thus $
\langle p_i(x,\cdot), Lg \rangle_{L^2} = \lambda_i \langle p_i(x,\cdot), g \rangle_{L^2}$
for any $x \in X$. This implies $p_i(x,\cdot) \in \cD(L)$ with $$Lp_i(x,\cdot)=\lambda_i p_i(x,\cdot)$$
for any $x \in X$. In case $i=0$, as $\lambda_0=0$ and $p_0(x,y)=C_0(\dist(x,y))=1/\sigma_n$ for any $x, y \in X$, we get $L\textbf{1}=0$ thus $P_0 \textbf{1}=\textbf{1}$. This implies $\int_Xp_0(x,y) \di \mu(y)=1$ for any $x \in X$, hence
\begin{equation}\label{eq:volsphere}
\mu(X) = \sigma_n\, .
\end{equation}
In case $i=1$, we have $\lambda_1=-n$ and $p_1(x,y)=C_1(\dist(x,y))=\frac{n+1}{\sigma_n} \cos(\dist(x,y))$
for any $x,y \in X$, hence
\begin{equation}\label{eq:cosdist}
L_x\cos(\dist(x,y)) = - n \cos(\dist(x,y)).
\end{equation}
Let $\phi_1,\ldots, \phi_l$ be continuous functions forming a $L^2(X,\mu)$-orthogonal basis of $E_1$. Observe that
\begin{equation}\label{eq:p1}
P_1f(x) = \int_X p_1(x,y) f(y) \di \mu(y) = \int_X\frac{n+1}{\sigma_n} \cos(\dist(x,y)) f(y) \di \mu(y)
\end{equation}
and
$$
P_1f(x)=\sum_{j=1}^l \left(\int_X \phi_i(y) f(y) \di \mu(y)\right)\phi_i(x)= \int_X \left[\sum_{j=1}^l \phi(y)\phi(x)\right] f(y)\di \mu(y)
$$
holds for any $f \in L^2(X,\mu)$ and $x \in X$. This implies
$$\sum_{i=1}^l\phi(x)^2 = \frac{n+1}{\sigma_n}$$ for any $x \in X$, hence integration over $X$ and \eqref{eq:volsphere} provides $$l=n+1.$$ Setting $$\cV:=\Span\{\cos(\dist(x,\cdot))  :  x \in X\}$$ we get $\cV \subset E_1$ thanks to \eqref{eq:cosdist}. Since $E_1$ is the image of $L^2(X,\mu)$ by $P_1$, the reverse inclusion follows from \eqref{eq:p1}, hence $$\cV=E_1.$$
Acting as in Subsection $4.1$, we can show that there exist $x_1, \ldots, x_{n+1} \in X$ such that $\{\delta_{x_1},\ldots,\delta_{x_{n+1}}\}$ is a basis of $\cV^*$ whose associated basis $\{h_1,\ldots,h_{n+1}\}$ of $\cV$ permits to write
\begin{equation}\label{eq:cos1}
\cos(\dist(x,y)) = \sum_{i,j=1}^{n+1} c_{ij} h_i(x) h_j(y)
\end{equation}
for any $x,y \in X$, where $c_{ij}:=\cos(\dist(x_i,x_j))$ for any $i,j$. Let $\beta$ be the bilinear form defined by $$
\beta(\xi,\xi') = \sum_{i,j=1}^{n+1} c_{ij}\xi_i \xi_j'$$ for any $\xi=(\xi_1,\cdots,\xi_{n+1}), \xi'=(\xi_1',\cdots,\xi_{n+1}') \in \setR^{n+1}$ and $Q$ the associated quadratic form. Set
$$
H:
\begin{array}{ccl}
X & \to & \setR^{n+1}\\
x & \mapsto & (h_1(x),\ldots,h_{n+1}(x)).
\end{array}
$$
Then \eqref{eq:cos1} writes as
\begin{equation}\label{eq:cos2}
\cos(\dist(x,y)) = \beta(H(x),H(y)).
\end{equation}
Choosing $y=x$ implies $H(x) \in \Sigma:=\{\xi \in \setR^{n+1} \, : \, \beta(\xi,\xi) =1\}$, so $H(X)$ is a subset of $\Sigma$. A direct computation provides:
\begin{equation}\label{eq:cos3}
Q(H(x)-H(y))=4\sin^2\left( \frac{\dist(x,y)}{2}\right) \qquad \forall x,y \in X,
\end{equation}
from which follows that $H$ is an injective map. Writing $
\setR^{n+1} = E_+ \oplus E_- \oplus \mathrm{Ker} \beta$ where $E_+, E_-$ are subspaces of $\setR^{n+1}$ where $\beta$ is positive definite and negative definite respectively, we can proceed as in 4.3, Step 1 (using the same notations) to get that $H_+$ is a bi-Lipschitz embedding of $(X,\dist)$  onto its image in $(E_+,q_+)$. Therefore, $\dim(E_+)$ is greater than the Hausdorff dimension of $X$.

\begin{claim}
The Hausdorff dimension of $X$ is $n$.
\end{claim}

\begin{proof}
The short-time expansion of the heat kernel on Riemannian manifolds \cite{MinakshisundaramPleijel} and the Cheeger-Yau estimate \cite{CheegerYau} implies that for some $C>0$ and $t_o>0$,
$$
\frac{1}{(4\pi t)^{n/2}} e^{-\frac{r^2}{4t}} \le K_t^{(n)}(r) \le \frac{C}{(4\pi t)^{n/2}} e^{-\frac{r^2}{5t}}
$$
holds for any $r \in (0,\pi)$ and $t\in (0,t_0)$. Therefore, proceeding as in the proof of Lemma \ref{lem:voleu}, we get existence of a positive constant $C$ such that for any $x \in X$ and any $r\in (0,\sqrt{t_0})$,
$$C^{-1}r^n\le \mu(B_r(x))\le C r^n.$$
Hence the claim is proved.
\end{proof}

Thus $n+1 \ge \dim(E_+)> n$, so $\dim(E_+)=n+1$. This shows that $\beta$ is positive definite, thus the distance $\dist_Q$ is well-defined. The associated length distance $\delta$ on $\Sigma$ is then given by:
$$
\dist_Q(\xi,\xi') = 2 \sin\left( \frac{\delta(\xi,\xi')}{2} \right) \qquad \forall \xi, \xi' \in \Sigma,
$$
so that one eventually has:
$$
\delta(H(x),H(y)) = \dist(x,y) \qquad \forall x,y \in X,
$$
i.e.~$H$ is an isometric embedding of $(X,\dist)$ into $\Sigma$ equipped with $\delta$.
Since $$\lim_{t\to 0^+} -4t \log K_t^{(n)}(r)=r^2,$$ we get from Remark \ref{rem:geodesicplus} that
$(X,\dist$) is a geodesic space. Then $H(X)$ is a closed totally geodesic subset of $\Sigma$, meaning that minimizing geodesics joining two points in $H(X)$ are all contained in $H(X)$. We assume that there exists $p \in \Sigma \backslash H(X)$ and set $r:=\delta(p,H(X))$.

\begin{claim}
We have $r< \pi/2$.
\end{claim}

\begin{proof}
Assume $r\ge \pi/2$. Then $H(X)$ is contained in the hemisphere $\{\sigma \in \Sigma : \beta(\sigma,p)\le 0\}$. Set $\lambda(\xi)=\beta(\xi,p)$ for any $\xi \in \setR^{n+1}$. Then $\lambda \circ H:X\to\setR$ is non-positive, and $\lambda \circ H(x)=0$ if and only if $H(x)=0$, which is impossible, so $\lambda \circ H$ is actually negative. But $\lambda \circ H$ is a linear combination of $h_1,\ldots,h_n$ thus it is an element of $\cV$. Since functions in $\cV = E_1$ are $L^2$-orthogonal to constant functions, we reach a contradiction, namely $\int_X \xi \circ H \di \mu = 0$.
\end{proof}

In fact, the same reasoning can be used to prove that $H(X)$ is contained in no hemisphere of $\Sigma$.

We are now in a position to conclude. Since $H(X)$ is closed there exists $q\in H(X)$ such that $\delta(p,q)=r$. The convexity of $H(X)$ implies that any miminizing geodesic of length $<\pi$ starting at $q$ and passing through the open ball $B_r(p)$ cannot meet $H(\Sigma)$. But the union of these minimizing geodesics is an open hemisphere, so $H(X)$ is contained in the complementary hemisphere, hence a contradiction.
\end{proof}

\end{document}